\documentclass[12pt]{amsart}
\usepackage{amscd,amssymb,graphics,color,a4wide,hyperref,verbatim}

\textwidth=450pt 
\oddsidemargin=-6pt
\evensidemargin=-6pt

\usepackage{multicol, fullpage}
\usepackage[usenames,dvipsnames]{xcolor} 
\usepackage{tikz, tikz-3dplot, pgfplots}
\usepackage{tkz-graph}
\usepackage{tikz-cd}
\usetikzlibrary[positioning,patterns] 
\usetikzlibrary{matrix,arrows,decorations.pathmorphing}

\usepackage{graphicx}
\usepackage{psfrag}
\usepackage{marvosym}
\usepackage{amsmath}
\usepackage{amsfonts}
\usepackage{amssymb}
\usepackage{amsthm}
\usepackage{mathrsfs}
\usepackage{enumitem}

\input xy
\xyoption{all}

\usepackage{ytableau}

\usepackage{slashed}

\usepackage{comment}

\usepackage{calligra}
\usepackage{mathrsfs}
\DeclareMathAlphabet{\mathcalligra}{T1}{calligra}{m}{n}
\DeclareFontShape{T1}{calligra}{m}{n}{<->s*[1.5]callig15}{}

\footskip 1cm
\textheight 22.5cm

\newtheorem{theorem}{Theorem}[section]
\newtheorem{lemma}[theorem]{Lemma}
\newtheorem{proposition}[theorem]{Proposition}
\newtheorem{corollary}[theorem]{Corollary}

\theoremstyle{definition}
\newtheorem{definition}[theorem]{Definition}

\newtheorem{example}[theorem]{Example}

\newtheorem{remark}[theorem]{Remark}

\numberwithin{equation}{section}

\newtheorem{thm}{Theorem}[section] 
\theoremstyle{plain} 
\newcommand{\thistheoremname}{}
\newtheorem{genericthm}[thm]{\thistheoremname}
  
\newtheorem*{genericthm*}{\thistheoremname}
\newenvironment{namedthm*}[1]
  {\renewcommand{\thistheoremname}{#1}%
   \begin{genericthm*}}
  {\end{genericthm*}}


\newcommand{\CC} {\mathbb{C}}

\newcommand{\HH} {\mathbb{H}}

\newcommand{\LL} {\mathbb{L}}

\newcommand{\PP} {\mathbb{P}}

\newcommand{\RR} {\mathbb{R}}

\newcommand{\ZZ} {\mathbb{Z}}

\newcommand {\shA} {\mathcal{A}}
\newcommand {\shB} {\mathcal{B}}
\newcommand {\shC} {\mathcal{C}}
\newcommand {\shD} {\mathcal{D}}
\newcommand {\shE} {\mathcal{E}}

\newcommand {\shH} {\mathcal{H}}

\newcommand {\shQ} {\mathcal{Q}}
\newcommand {\shR} {\mathcal{R}}

\newcommand {\shT} {\mathcal{T}}
\newcommand {\shP} {\mathcal{P}}
\newcommand {\shV} {\mathcal{V}}

\newcommand {\sA} {\mathscr{A}}

\newcommand {\sC} {\mathscr{C}}
\newcommand {\sD} {\mathscr{D}}
\newcommand {\sE} {\mathscr{E}}
\newcommand {\sF} {\mathscr{F}}

\newcommand {\sH} {\mathscr{H}}

\newcommand {\sL} {\mathscr{L}}

\newcommand {\sO} {\mathscr{O}}

\newcommand {\sR} {\mathscr{R}}

\newcommand {\sU} {\mathscr{U}}

\newcommand {\sX} {\mathscr{X}}
\newcommand {\sY} {\mathscr{Y}}

\newcommand {\foa}  {\mathfrak{a}}

\newcommand {\foc}  {\mathfrak{c}}


\newcommand {\Coh} {\operatorname{Coh}}
\newcommand {\coh} {\operatorname{coh}}

\newcommand {\cone} {\operatorname{cone}}

\newcommand {\End} {\operatorname{End}}
\newcommand {\Ext} {\operatorname{Ext}}
\newcommand{\sExt}{\mathscr{E} \kern -3pt xt}

\newcommand {\Gr} {\operatorname{Gr}}

\newcommand {\Hom} {\operatorname{Hom}}
\newcommand {\sHom}{\mathscr{H}\kern-5pt\mathcalligra{om}}

\newcommand {\id} {\operatorname{id}}
\newcommand {\Id} {\operatorname{Id}}

\newcommand {\kk} {\Bbbk}

\newcommand {\Ker} {\operatorname{Ker}}

\newcommand {\Pf} {\operatorname{Pf}}

\newcommand {\Proj} {\operatorname{Proj}}

\newcommand {\pt} {\operatorname{pt}}

\newcommand {\rank} {\operatorname{rank}}

\newcommand {\Spec} {\operatorname{Spec}}

\newcommand {\Sym} {\operatorname{Sym}}

\newcommand {\Tor} {\operatorname{Tor}}
\newcommand {\Tot} {\operatorname{Tot}}



\renewcommand {\HH} {\operatorname{HH}}




\title[]{Categorical Pl\"ucker Formula and Homological Projective Duality}
\author[Q.Y.\ JIANG, N.C.\ Leung, Y. \Xie]{Qingyuan Jiang, Naichung Conan Leung, Ying Xie}

\address{The Institute of Mathematical Sciences and Department of Mathematics,
The Chinese University of Hong Kong, Shatin, N.T., Hong Kong}
\email{qyjiang@math.cuhk.edu.hk}

\address{The Institute of Mathematical Sciences and Department of Mathematics,
The Chinese University of Hong Kong, Shatin, N.T., Hong Kong}\email{leung@math.cuhk.edu.hk}

\address{The Institute of Mathematical Sciences and Department of Mathematics,
The Chinese University of Hong Kong, Shatin, N.T., Hong Kong}\email{yxie@math.cuhk.edu.hk}


\begin{document}

\begin{abstract} \noindent Homological Projective duality (HP-duality) theory, introduced by Kuznetsov \cite{Kuz07HPD}, is one of the most powerful frameworks in the homological study of algebraic geometry. The main result (HP-duality theorem) of the theory gives complete descriptions of bounded derived categories of coherent sheaves of (dual) linear sections of HP-dual varieties.
We show the theorem also holds for more general intersections beyond linear sections. More explicitly, for a given HP-dual pair $(X,Y)$, then analogue of HP-duality theorem holds for their intersections with another HP-dual pair $(S,T)$, provided that they intersect properly. 
We also prove a relative version of our main result. Taking $(S,T)$ to be dual linear subspaces (resp. subbundles), our method provides a more direct proof of the original (relative) HP-duality theorem.
\vspace{-2mm}
\end{abstract}

\maketitle

\section{Introduction}

Homological Projective duality (HP-duality for short), as a homological generalization of classical projective duality, is one of the most powerful theories in the homological study of algebraic geometry. Since its introduction by Kuznetsov \cite{Kuz07HPD}, HP-duality theory has become the most important method to produce interesting semiorthogonal decompositions of derived categories of coherent sheaves on projective varieties, as well as relate derived categories of different varieties, cf. \cite{Kuz04, Kuz06HPDGr, Kuz08quadric, Kuz10cubic4, IK10, ABB14, Kuz14SODinAG, RT15HPD, HT15, CZ15, BBF16, HTa16}.

The Homological Projective duality is very closely related to equivalence and dualities in physical theories of Gauge Linear Sigma Models (GLSMs), cf. \cite{HHP08, HT07, DSh08, CDH+10, Sha10, Hor13, HK13}. Mathematically, this is related to the idea of relating the derived category of subvariety $X = Z(s) \subset P$ of a regular section $s \in \Gamma(P, \shV)$ of a vector bundle $\shV$ to the total space $\Tot \shV$ coupled with the section $s$ (the mathematical formulation of 'Landau-Ginzburg (LG) models'), cf. \cite{Isi12, Shi12, ADS15}, and when $\shV$ is given by Geometric Invariant Theory (GIT) quotients, the study of how derived categories behave under variation of GIT quotients (VGIT), cf. \cite{Kaw02, Vdb04, Seg11, BFK12, HW12, DS12}.
This thread of ideas has been a very powerful approach of constructing HP-duals as well as finding applications of HP-duality, cf. \cite{BDF+, BDF+14, H-L15, ADS15, ST14, Ren15, ReEd16}.


The main result of the theory, HP-duality theorem (Kuznetsov \cite[Thm. 6.3]{Kuz07HPD}) states if a projective variety $X$ with a morphism $X \to \PP V$, where $V$ is a vector space of dimension $N$, has a Homological Projective dual (HP-dual for short) $Y \to \PP V^*$ with respect to a semiorthogonal decomposition of the form (called Lefschetz decomposition, cf. \S \ref{sec:lef})
	$$D(X) = \langle \shA_0, \shA_1(1), \ldots, \shA_{i-1}(i-1) \rangle, \quad \shA_0 \supset \shA_1 \supset \ldots \supset \shA_{i-1},$$
where $(k)$ means $\otimes \sO_X(k)$. Then the bounded derived categories of coherent sheaves on its HP-dual $Y$ has a decomposition obtained by 'complementary boxes' of components of $D(X)$:
	$$D(Y) = \langle  \shB^1(2-N), \shB^2(3-N), \ldots, \shB^{N-1} \rangle, \quad \shB^1 \subset \shB^2 \subset \ldots \subset \shB^{N-1},$$
where $\shB^k$ is defined in by (\ref{def:B^k}), see the first diagram of Figure \ref{Figure:lf_2}. Moreover, not only we have a semiorthogonal decomposition for any complete
linear section $X_{L^\perp}: =X \times_{\PP V } L^\perp$ 
	$$D(X_{L^\perp})  = \langle \sC_{X_{L^\perp}}, \shA_l(l),  \ldots, \shA_{i-1}(i-1) \rangle,$$
where $\shA_k(k)$ terms are 'ambient parts', i.e. restrictions from $X$ to $X_{L^\perp}\subset X$, and $L^\perp \subset \PP V$ is a projective linear subspace, and a similar decomposition of $D(Y_L)$ :
	$$D(Y_L)  = \langle \shB^1(2-l), \ldots, \shB^{l-2}(-1), \shB^{l-1}, \sC_{Y_{L}} \rangle,$$
where $L \subset \PP V$ is the dual projective linear subspace of $L$, and $Y_L: =Y \times_{\PP V^*} L$ is dual complete linear intersection of $Y$,
but also a derived equivalence $\sC_{X_{L^\perp}} \simeq \sC_{Y_{L}}$ of 'primitive parts' of $D(X_{L^\perp})$ and $D(Y_L)$. See Thm. \ref{thm:HPD} for the precise statement.
 
As dual linear subspaces are first examples of HP-dual pairs, it is natural to ask:
 
\medskip\noindent\textbf{Question.} How much can we generalize these results by replacing $(L,L^\perp)$ by some other classes of pairs of HP-dual spaces $(S,T)$ beyond linear sections?

\medskip The answer turns out to be: we can replace $(L,L^\perp)$ by \emph{any other pair} of HP-dual spaces $(S,T)$, provided they intersect properly. Therefore the framework of HP-duality theory is even more powerful as we already witnessed, and the HP-duality theorem can take a more general and stronger form as follows: assume we have another HP-dual pair $(S \to \PP V^*,T \to \PP V)$, 
with the following decompositions:
	\begin{align*}
	D(S) & = \langle\shC_0, \shC_1(1), \ldots, \shC_{l-1}(l-1) \rangle,\quad \shC_0 \supset \shC_1 \ldots \supset \shC_{l-1}.\\
	D(T) & = \langle \shD^1(2-N), \shD^2(3-N), \ldots, \shD^{N-1} \rangle,\quad \shD^1 \subset \shD^2 \subset \ldots \shD^{N-1},
	\end{align*}
where $\shD^k$'s are the 'complementary boxes' of $\shC_k$'s defined by (\ref{def:D^k}) (also see Fig. \ref{Figure:lf_2}). Then
\begin{namedthm*}{Main theorem}[HP-duality theorem for general intersections; Thm. \ref{thm:HPDgen}]\label{thm:intro} If the two above HP-dual pairs $(X,Y)$ and $(S,T)$ are admissible (Def. \ref{def:admissible}), then we have semiorthogonal decompositions for the fiber products $X_T := X \times_{\PP V} T$ and $Y_S : = Y \times_{\PP V^*} S$ into 'ambient' and 'primitive' parts:
	\begin{align*} D(X_T) & = \Big\langle \sE_{X_T}, \,\big\langle (\shA_k \boxtimes \shD^k) \otimes \sO_{X_T}(k) \big\rangle_{k=1, \ldots, i-1}\Big\rangle, \\
		D(Y_S) & = \Big\langle  \big\langle  (\shB^k \boxtimes \shC_k) \otimes \sO_{Y_S}(1-l+k) \big\rangle_{k=1, \ldots, l-1} \, ,\sE_{Y_S} \Big\rangle,
	\end{align*}
and a derived equivalence of the 'primitive' parts 
	$$\sE_{X_T} \simeq \sE_{Y_S}.$$
\end{namedthm*}

Our strategy is based playing the 'game on chessboard' introduced by Kuznetsov and Thomas \footnote{Cf. \cite{RT15HPD}. We were told by Richard Thomas that the 'chess game' was initially invented by Kuznetsov. The computing patterns for proving fully-faithfulness part already appeared in Kuznetsov's \cite{Kuz07HPD}.}. The most subtle part of this strategy is the proof of generation (cf. \S \ref{sec:generation}). Our proof does not rely on the statement of the original HP-duality theorem \cite[Thm. 6.3]{Kuz07HPD}.
In fact, our method in the case $(S,T)=(\PP V^*,\emptyset)$ can be viewed as answering the question of Kuznetsov (posed after Prop. 5.10 in \cite{Kuz07HPD}) of 'finding a direct proof' of the fact that $\shB^1(2-N), \ldots, \shB^{N-1}$ generate $\sC \simeq D(Y )$,  see Rmk. \ref{rmk:dual-decomposition}, and in the case $(S,T)= (L,L^\perp)$, also gives a 'direct proof' of the original HP-duality theorem, cf. Rmk. \ref{rmk:KuznetsovHPD}.

The admissible condition (Def. \ref{def:admissible}) in the theorem is a a technical way of saying the two pairs $X$ and $T$, resp. $Y$ and $S$ intersect 'dimensionally transversely' (Lem. \ref{lem:admissible pairs} for criteria), and we expect these conditions which guarantee certain 'flatness' can be removed if we consider derived intersections $X\times^R_{\PP V} T$ and $Y \times^R_{\PP V^*} S$ and the corresponding $dg$-categories.

The 'ambient' parts are given by restrictions of $\shA_k \boxtimes \shD^k \subset D(X \times T)$ (resp. $\shB^k \boxtimes \shC_k \subset D(Y \times S)$) from product space to $X \times_{\PP V} T \subset X \times T$ (resp. $ Y \times_{\PP V^*} S \subset Y \times S$). The terms appearing in the 'ambient' parts are illustrated in Figure \ref{Figure:lf_2}.
The derived equivalence of the 'primitive' parts $\sE_{X_T} \simeq \sE_{Y_S}$ are given explicitly by Fourier-Mukai functors in Rmk. \ref{rmk:FM}. The statements of the theorem is at least twofold:

\begin{enumerate}[leftmargin=*, wide, align=left]
\item \textit{Decomposition part} states that for the above HP-dual pairs, the derived categories of fibre products $X_T$ and $Y_S$ are decomposed into 'ambient' part and 'primitive' part, with 'ambient' parts coming from ambient spaces $X \times T$ and $Y\times S$ respectively. This shares the spirit of \emph{Lefschetz decomposition} for linear sections in topology.
Decomposition result itself can be \emph{improved}: the decomposition for $X_T$ holds for any $f: X \to \PP V$ with a Lefschetz decomposition, as long as the pair $(S,T)$ is geometric (e.g. linear sections or quadric sections) and $X_T$ satisfies a quite weak dimensional condition, cf. Cor. \ref{cor:HPDdec}; Starting with $f: X \to \PP V$ and $q: S \to \PP V^*$ with Lefschetz decompositions, then there are always decompositions on categorical levels (i.e. for the sections of the respective HP-dual categories), cf. Prop. \ref{prop:HPDcat}. 
\item \textit{Comparison part} states the 'primitive' parts, which do not come from ambient spaces, of two decompositions are derived equivalent. This shares the spirit of \emph{topological Pl\"ucker formula}, which compares the Euler characteristics of intersections of classical projective dual varieties. The relation will be explored more later.
\end{enumerate}

Based on the theorem, we also reprove HP-duality is a reflexive relation ({\cite[Thm. 7.3]{Kuz07HPD}}), and show the Fourier-Mukai kernel of dual direction can be given by the same kernel, cf. \S \ref{sec:duality}, Thm. \ref{thm:duality}. As our proof is categorical, it is straightforward to generalize the results to noncommutative settings, see \S \ref{sec:ncHPD}. In \S \ref{sec:examples}, we apply the result to some concrete examples including intersections of quadrics and quadric sections of Pfaffian-Grassmannian dualities. In \S \ref{sec:relative} we will prove the relative version of our main result, Thm. \ref{thm:HPDgen:rel}.

\medskip\noindent \textbf{Strategy of proof.} The strategy of the proof of main theorem is as follows:
\begin{enumerate}[leftmargin=*, wide, align=left]
	\setlength\itemsep{0.5 em}
	\item \textbf{Base change.}  In order to compare the categories $D(Y_S)$ and $D(X_T)$, notice from definition of HP-dual, $D(Y)$ (resp. $D(T)$) naturally embeds into $D(\shH_X)$ (resp. $D(\shH_S)$). If we base-change $D(\shH_X)$ along $S \to \PP V^*$ and base-change $D(\shH_S)$ along $X \to \PP V$, we put $D(Y_S)$ and $D(X_T)$ into a common ambient category $D(\shH_{X,S}) = D(\shH_{S,X})$. Cf. \S\ref{sec:base-change}. 
	\item \textbf{Game on the 'chessboard'.} After the base-change step, the proof of the theorem is reduced to playing the 'chess game' on the 'chessboard' (see Figure \ref{Fig_chessboard}, \ref{Figure:l<=i} and \ref{Figure:l>=i}) introduced by Kuznetsov and Thomas. The process is further separated into:
	\begin{enumerate}[leftmargin=*, wide, align=left]

		\item[$(i)$] \emph{Fully-faithfulness.} We show the desired 'ambient' parts embed into $D(X_T)$ and $D(Y_S)$ respectively. Also the 'primitive' part of $D(X_T)$ embeds into the 'primitive' part of $D(Y_S)$. Since the projection functors are mutation functors passing through certain regions in the 'chessboard', the key is to compute the region of the desired categories after mutations. This is done in \S \ref{sec:fullyfaithful}. 
		\item[$(ii)$]  \emph{Generation.} In order to show the derived equivalence of primitive parts, it remains to show the image generates the desired category. Our strategy is to show the right orthogonal of the image inside $D(Y_S)$ is zero. By adjunction of the mutation functor, this reduces to show certain vanishing conditions for an element $b \in D(Y_S)$ forces $b=0$. This is done following a specific 'Zig-Zag' scheme as described in \S \ref{sec:generation}.
	\end{enumerate}
\end{enumerate}

The underlying reason why this works is, although the derived categories of $X_T$ and $Y_S$ are quite non-linear (in many examples contains Calabi-Yau or fractional Calabi-Yau categories components), their \textit{'difference'} inside $D(\shH)$ is \textit{linear}. All the functors involved in the statement of the theorem (inclusion functors, projection functors as mutations, Serre functors, twist functors by $\sO_X(1)$ and $\sO_S(1)$, etc) can be reflected precisely on the 'chessboards' in Figure \ref{Figure:l<=i} and \ref{Figure:l>=i}. Then everything reduces to playing the 'chess game' on the 'chessboards'.

\medskip \noindent \textbf{Pl\"ucker formula.} As Kuznetsov's HP-duality theorem can be regarded as homological counterpart of classical Lefschetz theory of cohomology of linear sections of projective varieties, our investigation is motivated by the \emph{topological Pl\"ucker formula} for classical dual varieties. Recall if $X$ and $T$ are subvarieties of $\PP V$ which intersect transversely, and same holds for their classical projective dual $X^\vee, T^\vee \subset \PP V^*$. Then
		$$(-1)^*\left(\chi(X\cap T) - \frac{\chi^M(X) \cdot\chi^M(S)}{N} \right)=  \chi(X^\vee \cap T^\vee) - \frac{\chi^M(X^\vee) \cdot\chi^M(T^\vee)}{N},$$
where $* = \dim X + \dim T + \dim X^\vee + \dim T^\vee$, $\chi(-)$ is the topological Euler characteristic, $\chi^M(-)$ is the weighted Euler characteristic $\chi(-, Eu[-])$ with respect to constructible function $Eu([-])$ for singular space, and $Eu$ is MacPherson's Euler obstruction. Cf. \cite{L01, CL15}. 

Taking Hochschild homology of Thm. \ref{thm:HPDgen} and then taking Euler characteristics, we get the corresponding version for HP-duals:
 
\begin{proposition}[Pl\"ucker formula for HP-duals, Cor \ref{cor:plucker}] Assume $X_T$ and $Y_S$ are smooth, then (recall $N = \dim V$)
	$$\chi(X_T)  - \frac{\chi(X) \cdot \chi(T)}{N} = \chi(Y_S) - \frac{\chi(Y)\cdot \chi(S)}{N},$$
where $\chi(-)$ is the ordinary topological Euler characteristic for topological spaces.
\end{proposition}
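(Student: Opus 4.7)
The plan is to apply Hochschild homology $HH_\ast$ to the two semiorthogonal decompositions furnished by Theorem~\ref{thm:HPDgen} and pass to Euler characteristics. Write $e(\mathcal T) := \chi(HH_\ast(\mathcal T))$. The three standard properties of $HH_\ast$ that we need are: additivity under admissible SODs; invariance under autoequivalences (in particular under line-bundle twists $\otimes \sO(k)$); and the K\"unneth formula $HH_\ast(\shA \boxtimes \shB) \cong HH_\ast(\shA) \otimes HH_\ast(\shB)$ for external product subcategories. Combined with the derived equivalence $\sE_{X_T} \simeq \sE_{Y_S}$ of primitive parts, and the HKR identification $e(D(Z)) = \chi(Z)$ for smooth projective $Z$ (which applies to $X_T$, $Y_S$ by assumption), these properties yield
\begin{equation}\label{eq:HHdiff}
\chi(X_T) - \sum_{k=1}^{i-1} e(\shA_k)\,e(\shD^k) \;=\; \chi(Y_S) - \sum_{k=1}^{l-1} e(\shB^k)\,e(\shC_k).
\end{equation}

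To rewrite the right-hand side as $\frac{1}{N}(\chi(X)\chi(T) - \chi(Y)\chi(S))$, I would invoke two auxiliary identities from HPD. First, applying the same HH machinery to Kuznetsov's HPD decomposition of the universal hyperplane section, $D(\shH_X) = \langle D(Y),\, \shA_1(1)\boxtimes D(\PP V^*),\ldots,\shA_{i-1}(i-1)\boxtimes D(\PP V^*)\rangle$, and using that $\shH_X \to X$ is a $\PP^{N-2}$-bundle so $\chi(\shH_X) = (N-1)\chi(X)$, yields the identity $N\, e(\shA_0) = \chi(X) + \chi(Y)$; symmetrically $N\, e(\shC_0) = \chi(S) + \chi(T)$. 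Second, the \emph{complementary box} identities $e(\shA_k) + e(\shB^k) = e(\shA_0)$ and $e(\shC_k) + e(\shD^k) = e(\shC_0)$ for every $k$ (with the conventions $e(\shA_k)=0$ for $k\geq i$, $e(\shB^0)=0$, and symmetrically for $\shC, \shD$). These can either be taken as input from Kuznetsov's HPD theory, or bootstrapped from Theorem~\ref{thm:HPDgen} itself: specialize $(S,T)=(L,L^\perp)$ to dual linear sections of varying dimension $l$ and take successive differences of \eqref{eq:HHdiff} in $l$, which isolates exactly $e(\shA_l)+e(\shB^l) = \frac{\chi(X)+\chi(Y)}{N} = e(\shA_0)$.

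With these in hand, substituting $e(\shD^k) = e(\shC_0)-e(\shC_k)$ and $e(\shB^k) = e(\shA_0)-e(\shA_k)$ into the right-hand side of \eqref{eq:HHdiff}, the ``cross'' terms $\pm e(\shA_k)e(\shC_k)$ cancel --- they contribute with opposite signs on $k \in \{1,\ldots,\min(i,l)-1\}$ and vanish outside this range by the conventions --- leaving
$$e(\shC_0)\bigl(\chi(X)-e(\shA_0)\bigr) - e(\shA_0)\bigl(\chi(S)-e(\shC_0)\bigr) \;=\; e(\shC_0)\chi(X) - e(\shA_0)\chi(S).$$
A direct substitution of $\chi(T) = Ne(\shC_0)-\chi(S)$ and $\chi(Y) = Ne(\shA_0)-\chi(X)$ identifies this with $\frac{1}{N}(\chi(X)\chi(T)-\chi(Y)\chi(S))$, completing the proof. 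The only potentially delicate point is the categorical justification of the complementary-box identities at the level of Hochschild Euler characteristics; the bootstrapping approach above sidesteps any direct analysis of how $\shB^k$ sits inside $\shA_0$, reducing everything to \eqref{eq:HHdiff} and the single-hyperplane identity.
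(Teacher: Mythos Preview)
Your argument is correct, but it takes a longer route than the paper's. The paper does \emph{not} apply $HH_\ast$ to the decompositions (\ref{sod:X_T}) and (\ref{sod:Y_S}) of $D(X_T)$ and $D(Y_S)$; instead it applies it to the two base-change decompositions (\ref{sod:H for X}) and (\ref{sod:H for S}) of $D(\shH)$ from Proposition~\ref{prop:base-change}, obtaining
\[
\chi(\shH) \;=\; \chi(X_T) + \chi(X)\bigl(\chi(S) - e(\shC_0)\bigr) \;=\; \chi(Y_S) + \chi(S)\bigl(\chi(X) - e(\shA_0)\bigr),
\]
and then substitutes $e(\shA_0) = (\chi(X)+\chi(Y))/N$ and $e(\shC_0) = (\chi(S)+\chi(T))/N$ from part~(1). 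This sidesteps the complementary-box identities and the cross-term cancellation altogether: only $e(\shA_0)$ and $e(\shC_0)$ enter, never the individual $e(\shA_k)$, $e(\shB^k)$, $e(\shC_k)$, $e(\shD^k)$.

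Your approach instead uses the finer decompositions of $D(X_T)$ and $D(Y_S)$ --- which is natural, since those are what the main theorem literally asserts --- but then has to collapse that finer information back down via the complementary-box identities. Incidentally, those identities require no bootstrapping: since $\shB^k \simeq \langle \foa_0,\ldots,\foa_{k-1}\rangle$ and $\shA_k = \langle \foa_k,\ldots,\foa_{i-1}\rangle$ (stated right after (\ref{def:B^k})), additivity of $e$ on the SOD $\shA_0 = \langle \foa_0,\ldots,\foa_{i-1}\rangle$ gives $e(\shA_k)+e(\shB^k)=e(\shA_0)$ immediately, and likewise for $\shC,\shD$. So your detour through linear sections of varying dimension is unnecessary, though not wrong.
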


Hence our theorem Thm. \ref{thm:HPDgen} can be viewed as a 'double categorification' of the topological Pl\"ucker formula. It states the comparison of Euler characteristics for intersections of HP-dual varieties, as analogy of topological ones, actually is a shadow of what happens on categorical levels: (i) the decompositions of derived categories of both intersections into ambient parts and primitive parts, and (ii) the equivalence of the primitive parts.

The topological Pl\"ucker formula comes from the second author's study of categorifications of Lagrangian intersections inside hyperk\"ahler manifolds. It will be very interesting to further explore the relations between our Thm. \ref{thm:HPDgen}, as a categorical Pl\"ucker formula for HP-duals, and the topological ones from Lagrangian intersections.

\medskip\noindent\textbf{Related works.}
While preparing this paper, we learned that Alexander Kuznetsov and Alex Perry also claimed very similar results using different methods, the categorical joins.

\medskip
\noindent \textbf{Acknowledgement.} The authors would like to thank specially Richard Thomas for many helpful communications and discussions. The first and second author would like to thank Andrei C\v{a}ld\v{a}raru for useful discussions and helps during their visit at University of Wisconsin-Madison. The first and third author would like to thank Alex Perry for discussions during their stay at Harvard in May of 2016. The authors like to thank the hospitality of CMSA of Harvard during their two visits, and the members there, especially Hansol Hong for discussions. The authors also appreciate very much the hospitality of YMSC of Tsinghua, and ITP of Chinese Academy of Sciences in Beijing during their visit in the summer of 2016. 
The authors would like to thank their fellow colleagues at IMS for many helpful discussions, especially Mathew Young. The research described in this article was substantially supported by grants from the Research Grants Council of the Hong Kong Special Administrative Region, China (Project No. CUHK14303516 and 14302215).

\medskip
\noindent \textbf{Conventions.} We fix an algebraically closed field $\kk$ of characteristic zero. All varieties considered in this paper are assumed to be embeddable $\kk$-varieties, i.e. $\kk$-varieties admitting finite surjections onto smooth $\kk$-varieties. Morphisms between varieties, fibered products, etc are considered inside the category $\operatorname{Var}_\kk$ of $\kk$-varieties. Products of varieties are fibered products over $\pt$, where $\pt : = \Spec \kk$ denotes the final object of $\operatorname{Var}_\kk$. All categories are assumed to be $\kk$-linear. We fix $V$ to be  a $N$-dimensional $\kk$-vector space, where $N$ is a fixed positive integer, and let $V^*$ be the dual vector space. Denote by $\PP(V):=\Proj \Sym^\bullet V^*$ the projective space of $V$, i.e. the moduli space parametrizing one dimensional subspaces of $V$.

We use $D(X)$ to denote the \textbf{bounded} derived category of coherent sheaves on an algebraic variety $X$, i.e. $D(X) : = D^b (\coh(X))$. For a morphism $f: X \to Y$ between algebraic varieties, we denote by $f_*: D(X) \to D(Y)$ and $f^*: D(Y) \to D(X)$ the derived pushforward and derived pullback funtors (when they are well-defined). We use $\otimes$ for the derived tensor product. For a $\kk$-linear category $\shC$, we denote by $\Hom_{\shC}$ the $\kk$-linear hom spaces inside the category, and by $R\Hom_{\shC}$ the derived hom functor. When $X$ is a variety, we denote by $\Hom_X$ the $\kk$-linear hom spaces inside $D(X)$, by $\sHom_X$ the local sheaf hom functor, and by $R\Hom_X$, resp. $R\sHom_X$ the corresponding derived functors. We may omit the subscript if it is clear from the context. For two objects $A,B \in \shC$, we denote by $0 \in \Hom(A,B)$ the zero element of the $\kk$-vector space $\Hom(A,B)$. We also use $0 \in \shC$ to denote the zero object of the $\kk$-linear category $\shC$.

\section{Preliminaries}

\subsection{Semiorthogonal decompositions} We first recall basic facts about semiorthogonal decompositions of triangulated categories. Standard references are \cite{B}, \cite{BK}. See also \cite{Kuz09HHSOD}.

\begin{definition} \label{def:sod} A \textbf{semiorthogonal decomposition} of the triangulated category $\shT$:
	\begin{equation} \label{SOD:T}
	\shT = \langle \shA_1, \shA_2, \ldots, \shA_{n} \rangle,
	\end{equation}
	 is a sequence of full triangulated subcategories $\shA_1, \ldots, \shA_n$ such that
	\begin{enumerate}
		\item $\Hom_\shT (a_k ,a_l) = 0$ for all $a_k \in \shA_k$ and $a_l \in \shA_l$ \footnote{We will also use $\Hom_{\shT}(\shA_k, \shA_l)=0$ to denote this condition. Notice since $\shA_k$, $\shA_l$ are triangulated subcategories, this condition is equivalent to vanishing of the derived $\Hom$, $R\Hom_{\shT}(\shA_k, \shA_l)$=0.}, if $k > l$.

		\item For any object $T \in \shT$ there is a sequence of morphism:
		\begin{equation*}
		\begin{tikzcd} [back line/.style={dashed}, row sep=1.5 em, column sep=1.5 em]
	0=T_n \ar{rr} 	& 	& T_{n-1} \ar{rr} \ar{ld}		&		& T_{n-2} \ar{r} \ar{ld}	&\cdots \ar{r} 	& T_{1} \ar{rr} 	&	& T_0=T \ar{ld}, \\
							& a_{n} \ar{lu}{[1]} &	& a_{n-1}  \ar{lu}{[1]} & & & & a_1 \ar{lu}{[1]}
		\end{tikzcd}
		\end{equation*}
			such that each cone $a_k = \cone (T_{k} \to T_{k-1}) \in \shA_k$, $k=1,\ldots, n$.
	\end{enumerate}
	The subcategories $\shA_k$ are called \textbf{components} of $\shT$ with respect to (\ref{SOD:T}). A sequence $\shA_1, \ldots, \shA_n$ satisfying the first condition will be called \textbf{semiorthogonal}. We denote by $\langle \shA_1, \ldots, \shA_n \rangle$ the smallest triangulated subcategory containing all these $\shA_k$'s.
\end{definition}

\begin{remark} The first condition implies the objects $T_k \in \shT$ and $a_k \in \shA_k$ are uniquely determined by and functorial on $T$, cf. \cite[Lem. 2.4]{Kuz09HHSOD}. The functors $\shT \to \shA_k$, $T \mapsto a_k$ are called \textbf{projection functors}, and $a_k$ is called the \textbf{component} of $T$ in $\shA_k$ with respect to the decomposition (\ref{SOD:T}). 
\end{remark}

\begin{example} For a projective space $\PP^{l-1}$ with $l \ge 1$ we have {\em Beilinson's decomposition} \cite{Bei}
	\begin{equation}\label{sod:beilinson}
	D(\PP^{l-1}) = \langle \sO_{\PP^{l-1}}, \sO_{\PP^{l-1}}(1), \ldots, \sO_{\PP^{l-1}}(l-1)\rangle,
	\end{equation}
by which we mean $\langle \sO_{\PP^{l-1}}(k)\rangle \simeq D(\pt)$ for $k=0,1,\ldots, l-1$ and the sequence of subcategories $\langle  \sO_{\PP^{l-1}} \rangle , \ldots, \langle  \sO_{\PP^{l-1}}(l-1) \rangle$ gives rise to a semiorthogonal decomposition of $D(\PP^{l-1})$. 
\end{example}

For a triangulated subcategory $\shA$ of $\shT$, we define its \textbf{left} and \textbf{right orthogonals} to be
	\begin{align*}
	{}^\perp \shA &: = \{ T \in \shT ~|~ R\Hom(T, A) = 0, \quad \forall A\in \shA \}, \\
	\shA^{\perp} &:= \{T \in \shT~|~ R\Hom(A,T) = 0, \quad \forall A \in \shA\}.
	\end{align*}
It is clear these are triangulated subcategories of $\shT$ which are closed under taking direct summands. The following fact is simple but very useful.

\begin{lemma} \label{lem:component_van} 
Assume $\shT$ admits a semiorthogonal decomposition (\ref{SOD:T}), and let $\shA$ be a triangulated subcategory of $\shT$. Let $a \in \shT$, and $a_k$ be its component in $\shA_k$. If $a \in \shA^\perp$ (resp. ${}^\perp \shA$), and $a_k \in \shA^\perp$ (resp. ${}^\perp \shA$) for all $k \ne l$, then $a_l \in \shA^\perp$ (resp. ${}^\perp \shA$).
\end{lemma}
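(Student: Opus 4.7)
The plan is to use the triangulated structure together with the canonical filtration provided by the semiorthogonal decomposition. To show $a_l \in \shA^\perp$, it suffices to verify $R\Hom_{\shT}(b, a_l) = 0$ for every $b \in \shA$. I would fix such a $b$ and apply the functor $R\Hom_{\shT}(b, -)$ to each of the triangles $T_k \to T_{k-1} \to a_k \to T_k[1]$ produced by the filtration in Definition~\ref{def:sod}.

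The argument then splits into two directions of propagation through the filtration. Starting from the top end $T_n = 0$ and descending, the triangles $T_k \to T_{k-1} \to a_k$ for $k = n, n-1, \ldots, l+1$ combine with the hypothesis $a_k \in \shA^\perp$ (so $R\Hom(b, a_k) = 0$) and the two-out-of-three property in the associated long exact sequence to yield $R\Hom(b, T_k) = 0$ for all $k \ge l$. Starting from the bottom end $T_0 = a$, where $R\Hom(b, a) = 0$ by assumption, the triangles for $k = 1, 2, \ldots, l-1$ combined with $R\Hom(b, a_k) = 0$ give $R\Hom(b, T_k) = 0$ for all $k \le l-1$ by the same two-out-of-three argument. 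Applying $R\Hom(b,-)$ to the remaining triangle $T_l \to T_{l-1} \to a_l$, whose outer two terms we have just shown to be $\Hom$-acyclic against $b$, then forces $R\Hom(b, a_l) = 0$, as desired.

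The left-orthogonal case is formally dual: one applies $R\Hom_{\shT}(-, b)$ to the same filtration triangles and runs the same double induction from both ends. I do not anticipate any real obstacle here beyond bookkeeping; the only point requiring care is that the filtration indexing runs opposite to the direction in which the components $\shA_k$ are listed, so one must be careful which end of the decomposition one is starting the induction from, and that the filtration is functorial so that the components $a_k$ of $a$ are unambiguously defined.
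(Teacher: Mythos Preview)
Your proposal is correct and is essentially the paper's argument spelled out in detail. The paper's proof is a single sentence observing that $\shA^\perp$ (resp.\ ${}^\perp\shA$) is a triangulated subcategory, hence closed under cones; your double induction through the filtration triangles is precisely the unpacking of that closure-under-cones statement.
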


\begin{proof} The lemma follows directly from the fact that $\shA^\perp$ (resp. ${}^\perp \shA$) is a triangulated subcategory, i.e. is closed under shifts and taking cones. \end{proof}

Semiorthogonal decompositions of $\shT$ can be obtained from a subcategory $\shA$ and its orthogonals if it is admissible.
A full triangulated subcategory $\shA$ of $\shT$ is called \textbf{admissible} if the inclusion functor $i: \shA\to \shT$ has both a right adjoint $i^!: \shT \to \shA$ and a left adjoint $i^*: \shT \to \shA$. If $\shA\subset \shT$ is admissible, then $\shA^\perp$ and ${}^\perp \shA$ are both admissible, and $\shT = \langle \shA^\perp, \shA \rangle$ and $\shT = \langle \shA, {}^\perp \shA\rangle$.

\subsection{Mutations.} Given a semiorthogonal decomposition of $\shT$, one can obtain a whole collection of new decompositions by functors called \textbf{mutations}. 

\begin{definition} Let $\shA$ be an admissible subcategory of a triangulated category $\shT$. Then the functor $\LL_\shA: = i_{\shA^\perp} i^*_{\shA^{\perp}}$  (resp. $\RR_{\shA} : =  i_{{}^\perp \shA} i^!_{ {}^\perp \shA}$) is called the \textbf{left (resp. right) mutation through $\shA$}., where $ i_{\shA^\perp}$ (resp. $ i_{{}^\perp \shA}$) denotes the inclusion functor $\shA^\perp \to \shT$ (resp. ${}^\perp \shA \to \shT$), and $ i^*_{\shA^{\perp}}$ (resp. $i^!_{{}^\perp \shA}$) is its left (resp. right) adjoint.
\end{definition}

We will only focus on the left mutation functors in this paper; the statements on right mutations are similar. The following results are standard, cf. \cite{B}, \cite{BK}, or \cite{Kuz06Hyp}.
\begin{lemma} \label{lem:mut} Let $\shA$ and $\shA_1, \ldots, \shA_n$ be admissible subcategories of a triangulated category $\shT$ where $n \ge 2$ is an integer. Let $k$ be an integer, $2 \le k \le n$.
\begin{enumerate}
	\item $(\LL_{\shA})\,|_{\shA} = 0$ is the zero functor, $(\LL_{\shA})\,|_{{}^\perp \shA} : {}^\perp \shA \to \shA^\perp$ is an equivalence of categories, and $(\LL_{\shA})\,|_{\shA^\perp} = \Id_{\shA^\perp}: \shA^\perp \to \shA^\perp$ is equal to the identity functor.
	\item For any $b \in \shT$, there is a distinguished triangle
		$$ i_\shA i^!_{\shA} (b) \to b \to \LL_{\shA} b \xrightarrow{[1]}{}.$$
	\item \label{lem:mut:sod} If $\shA_1, \ldots, \shA_n$ is a semiorthogonal sequence, then $\LL_{\langle \shA_1,\shA_2, \ldots, \shA_n \rangle} = \LL_{\shA_1} \circ \LL_{\shA_2} \circ \cdots \circ \LL_{\shA_n}$.
	\item \label{lem:mut:span} If $\shA_1, \ldots, \shA_{k-1}, \shA_k, \shA_{k+1}, \ldots, \shA_n$ is a semiorthogonal sequence inside $\shT$, then 
		$$\shA_1, \ldots, \LL_{\shA_{k-1}} (\shA_k), \shA_{k-1}, \shA_{k+1}, \ldots, \shA_n$$
		 is also a semiorthogonal sequence, and it generates the same subcategory
		$$\langle \shA_1, \ldots, \shA_{k-1}, \shA_k, \shA_{k+1}, \ldots, \shA_n\rangle = \langle \shA_1, \ldots, \LL_{\shA_{k-1}} (\shA_k), \shA_{k-1}, \shA_{k+1}, \ldots, \shA_n \rangle.$$
	\item Let $F: \shT \to \shT$ be an autoequivalence, then $F \circ \LL_\shA  \simeq \LL_{F(\shA)} \circ F$.
\end{enumerate}
\end{lemma}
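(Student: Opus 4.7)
The plan is to prove the five items in the order $(2) \Rightarrow (1) \Rightarrow (5) \Rightarrow (4) \Rightarrow (3)$, since each part leverages the earlier ones. The underlying theme is that since $\shA \subset \shT$ is admissible, $\shT$ admits the semiorthogonal decomposition $\shT = \langle \shA^\perp, \shA \rangle$, and $\LL_\shA$ is exactly the projection functor onto the left component of this decomposition composed with its inclusion.

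For part (2), I would start directly from the definition $\LL_\shA = i_{\shA^\perp} i^*_{\shA^\perp}$. For any $b \in \shT$, the decomposition $\shT = \langle \shA^\perp, \shA \rangle$ provides a canonical triangle $b' \to b \to b'' \xrightarrow{[1]}$ with $b' \in \shA$ and $b'' \in \shA^\perp$. The object $b'$ is characterized as $i_\shA i^!_\shA(b)$ by adjunction (since $\shT \to \shA$, $T \mapsto T_1$ in the notation of Definition 2.1 is the right adjoint to inclusion), and $b'' = \LL_\shA b$ by construction. Part (1) is then an immediate corollary: if $b \in \shA$ then $i_\shA i^!_\shA(b) = b$, forcing $\LL_\shA b = 0$; if $b \in \shA^\perp$ then $i_\shA i^!_\shA(b) = 0$, forcing $\LL_\shA b = b$; the equivalence $(\LL_\shA)|_{{}^\perp \shA}$ follows because any $b \in {}^\perp \shA$ satisfies $R\Hom(b, i_\shA i^!_\shA(b)) = R\Hom(i^*_\shA b, i^!_\shA b) = 0$, so its image in $\shA^\perp$ determines $b$ uniquely, and an inverse is given symmetrically by right mutation through $\shA$.

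For part (5), I would observe that since $F$ is an autoequivalence, $F$ preserves all $R\Hom$'s, so $F(\shA^\perp) = F(\shA)^\perp$ and $F$ intertwines the two decompositions $\shT = \langle \shA^\perp, \shA \rangle$ and $\shT = \langle F(\shA)^\perp, F(\shA)\rangle$; applying $F$ to the triangle of (2) and comparing with the analogous triangle for $F(b)$ yields the natural isomorphism $F \circ \LL_\shA \simeq \LL_{F(\shA)} \circ F$. For part (4), the main technical step, the goal is to show that after replacing $\shA_k$ by $\LL_{\shA_{k-1}}(\shA_k)$ and interchanging with $\shA_{k-1}$, one still has a semiorthogonal sequence generating the same triangulated subcategory. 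It suffices to argue locally on $\langle \shA_{k-1}, \shA_k \rangle$: the equivalence $(\LL_{\shA_{k-1}})|_{{}^\perp \shA_{k-1}}$ from part (1), combined with the fact that $\shA_k \subset {}^\perp \shA_{k-1}$ (from semiorthogonality), shows $\LL_{\shA_{k-1}}(\shA_k) \subset \shA_{k-1}^\perp$, giving the semiorthogonality of the swapped pair, while the triangle of part (2) realizes every object of $\langle \shA_{k-1}, \shA_k \rangle$ as an extension and conversely, so the two subcategories agree. The Hom-vanishing against the other $\shA_j$'s is automatic because both $\shA_k$ and $\shA_{k-1}$ already satisfy the required vanishing and $\LL_{\shA_{k-1}}(\shA_k)$ is built from them via a triangle.

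Finally, for part (3), I would proceed by induction on $n$, the case $n=2$ reading $\LL_{\langle \shA_1, \shA_2\rangle} = \LL_{\shA_1} \circ \LL_{\shA_2}$. For this base case, I would check directly that $\LL_{\shA_1} \circ \LL_{\shA_2}$ lands in $\langle \shA_1, \shA_2 \rangle^\perp = \shA_1^\perp \cap \shA_2^\perp$ (using part (1): the outer $\LL_{\shA_1}$ kills $\shA_1$-components, and $\LL_{\shA_2}$ already kills $\shA_2$-components, and moreover $\LL_{\shA_1}$ preserves $\shA_2^\perp$ because of semiorthogonality $\shA_1 \subset \shA_2^\perp$) and that the induced natural transformation $\Id \to \LL_{\shA_1}\circ \LL_{\shA_2}$ has fiber in $\langle \shA_1, \shA_2\rangle$, obtained by splicing the two triangles from part (2). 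Uniqueness of the decomposition of $b$ with respect to $\shT = \langle \langle \shA_1, \shA_2\rangle^\perp, \langle \shA_1, \shA_2\rangle \rangle$ then forces the composition to equal $\LL_{\langle \shA_1, \shA_2\rangle}$. The inductive step is identical, replacing $\shA_1$ by $\langle \shA_1, \ldots, \shA_{n-1}\rangle$. The main obstacle I expect is keeping the bookkeeping of orthogonals straight in part (4), in particular verifying the semiorthogonality of the rearranged sequence with the untouched components $\shA_j$ for $j \ne k-1, k$; this is where using part (2) to write $\LL_{\shA_{k-1}}(\shA_k)$ as cones makes the argument cleanest.
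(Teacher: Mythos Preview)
Your proposal is correct and follows the standard arguments; the paper itself does not supply a proof but simply cites \cite{B}, \cite{BK}, and \cite{Kuz06Hyp}, where these facts appear. Your route through $(2) \Rightarrow (1) \Rightarrow (5) \Rightarrow (4) \Rightarrow (3)$ is essentially the one found in those references, so there is nothing to compare.
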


\subsection{Serre functor}
\begin{definition} For a triangulated category $\shT$, a \textbf{Serre functor} is a covariant functor $S: \shT \to \shT$ that is an autoequivalence, such that for any two objects $F,G \in \shT$, there is a bi-functorial isomorphism 
		$$\Hom(F,G) = \Hom(G, S(F))^*,$$
		where $^*$ denotes the dual vector space over $\kk$.
\end{definition}
If a Serre functor exists, then it is unique up to canonical isomorphisms. If $X$ is a smooth projective variety of dimension $n$, then $D(X)$ has a Serre functor given by $S_X (-) = - \otimes \omega_X [n]$, where $\omega_X$ is the dualizing sheaf on $X$. For example, when $X = \PP^{l-1}$ projective space, then $S_X = \otimes \sO(-l)[l-1]$. Serre functor $S_\shT$ commutes with any $\kk$-linear auto-equivalence of $\shT$.

The following are properties of Serre functors, cf. \cite{B, BK}.
\begin{lemma} \label{lem:serre} Let $\shT$ be a triangulated subcategory with a Serre functor $S$, and $\shA$ be an admissible subcategory. Then
\begin{enumerate}
	\item $S({}^\perp \shA) = \shA^\perp$, and $S^{-1}(\shA^\perp) = {}^\perp \shA$. In particular, if $\shT = \langle \shA, \shB\rangle$, then $\shB = {}^\perp \shA$ , $\shA = \shB^{\perp}$, and $\shT  = \langle S(\shB), \shA \rangle = \langle \shB, S^{-1}(\shA)\rangle$.
	\item $\shA$ also admits a Serre functor given by $S_{\shA} = i_{\shA}^! \circ S \circ i_{\shA}$.
\end{enumerate}
\end{lemma}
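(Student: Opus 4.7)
The plan for part (1) is a direct two-way chase using the defining bifunctorial isomorphism of the Serre functor. For the inclusion $S({}^\perp\shA) \subseteq \shA^\perp$, I would take $B \in {}^\perp\shA$; for any $A \in \shA$, Serre duality gives $\Hom(A, S(B)) \cong \Hom(B, A)^* = 0$, so $S(B) \in \shA^\perp$. For the reverse inclusion, given $C \in \shA^\perp$, I apply Serre duality in the form $\Hom(S^{-1} C, A) \cong \Hom(A, C)^* = 0$ to conclude $S^{-1} C \in {}^\perp\shA$; this simultaneously yields the identity $S^{-1}(\shA^\perp) = {}^\perp\shA$.

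For the ``in particular'' clause, I would first observe that semiorthogonality $\Hom(\shA, \shB) = 0$ gives $\shB \subseteq {}^\perp\shA$, and that the generation hypothesis $\shT = \langle \shA, \shB\rangle$ upgrades this to equality: any $T \in {}^\perp\shA$ has a decomposition triangle with terms $a \in \shA$ and $b \in \shB$, and applying $\Hom(-, a)$ forces $a = 0$, so $T \in \shB$. The symmetric argument gives $\shA = \shB^\perp$. Combining with part (1) yields $S(\shB) = S({}^\perp\shA) = \shA^\perp$, and admissibility of $\shA$ provides $\shT = \langle \shA^\perp, \shA\rangle = \langle S(\shB), \shA\rangle$; the other decomposition follows identically from $S^{-1}(\shA) = {}^\perp\shB$.

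For part (2), define $S_\shA := i_\shA^! \circ S \circ i_\shA$, which is well-defined since $\shA$ is admissible. The plan is to verify the Serre duality property by composing three bifunctorial isomorphisms: for $F, G \in \shA$,
\begin{align*}
\Hom_\shA(F, G) & \cong \Hom_\shT(i_\shA F, i_\shA G) & & (\text{full faithfulness of } i_\shA) \\
& \cong \Hom_\shT(i_\shA G, S(i_\shA F))^* & & (\text{Serre duality in } \shT) \\
& \cong \Hom_\shA(G, S_\shA F)^* & & ((i_\shA, i_\shA^!)\text{-adjunction}).
\end{align*}
To check that $S_\shA$ is an autoequivalence, a parallel computation using the $(i_\shA^*, i_\shA)$-adjunction exhibits $i_\shA^* \circ S^{-1} \circ i_\shA$ as its quasi-inverse. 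The only point to watch is that all three isomorphisms are bifunctorial, but this is automatic at each step, so no serious obstacle arises: the entire argument is a formal exercise in adjunctions and duality, once admissibility provides the adjoints $i_\shA^!$ and $i_\shA^*$.
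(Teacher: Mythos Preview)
Your proof is correct and is essentially the standard argument. The paper itself does not provide a proof of this lemma: it simply states the result as a collection of well-known properties of Serre functors, citing Bondal \cite{B} and Bondal--Kapranov \cite{BK}. Your write-up is precisely the kind of direct verification one finds in those references, so there is nothing to compare beyond noting that you have filled in what the paper treats as background.
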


\subsection{Fourier-Mukai transform} Let $X$, $Y$ be smooth varieties throughout this section.
\begin{definition} A \textbf{Fourier-Mukai transform}  (or a Fourier-Mukai functor) given by a Fourier-Mukai \textbf{kernel} $\shP \in D(X \times Y)$, denoted by $\Phi_{\shP}$, is the functor
	$$\Phi_{\shP}: = \pi_{Y*} \,( \pi_X^*\,(-) \otimes \shP): D(X) \to D(Y),$$
where $\pi_{X}: X\times Y \to X$ and $\pi_Y: X \times Y \to Y$ are projection functors. Sometimes we use the notation $\Phi^{X \to Y}_{\shP} = \Phi_{\shP}$ if we need to emphasize it is a functor from $D(X)$ to $D(Y)$.
\end{definition}

Let $S$ another smooth variety, and $f:X \to S$, $g:Y \to S$ be proper maps. Denote the $p_X$, $p_Y$ the projection from $X\times_S Y$ to $X$, $Y$, and $i: X\times_S Y \hookrightarrow X \times Y$ the inclusion. 
\begin{definition} The relative Fourier-Mukai transform $\Phi^S_{\shP}: D(X) \to D(Y)$ given by a kernel $\shP \in D(X \times_S Y)$, is defined to be the functor
	$$\Phi^S_\shP (-) :=  p_{Y*} \, (p_{X}^*\, (-) \otimes \shP): D(X) \to D(Y).$$
\end{definition}

The situation is illustrated in diagram 
\begin{equation}
\begin{tikzcd}[row sep=1.5 em, column sep=3.0 em]
	& X\times_S Y \ar{ldd}[swap]{p_X} \ar{rdd}{p_Y} \ar[left hook->]{d}{i} \\ 
	& X\times Y \ar{ld}{\pi_X} \ar{rd}[swap]{\pi_Y} \\
	X && Y
\end{tikzcd}
\end{equation}

The Fourier-Mukai transform $\Phi^S_\shP$ is \textbf{$S$-linear} (cf. next section), and these are sometimes called \textbf{geometric $S$-linear} functors $D(X) \to D(Y)$. For $\shP \in D(X\times_S Y)$, the relative Fourier-Mukai transform $\Phi^S_\shP$ is nothing but the ordinary one with kernel $i_* \shP \in D(X \times Y)$, i.e.
	$$\Phi_{i_*\,\shP} = \Phi^S_{\shP}: D(X) \to D(Y).$$
In fact, by projection formula
	$$  p_{Y*} \, (p_{X}^*\, \shE \otimes \shP) = \pi_{Y*} i_* ( p_X^*\, \shE \otimes \,\shP) = \pi_{Y*} i_* (i^*\, \pi_X^*\, \shE \otimes  \shP)  =  \pi_{Y*} \,( \pi_X^*\, \shE \otimes i_*\,\shP).$$
Hence we abandon the notation $\Phi^S_\shP$ and use $\Phi_{i_*\,\shP}$ or simply $\Phi_{\shP}$ to denote this functor. 

\begin{lemma}[Adjoints]\label{lem:adjoint} Let $\shP \in D(X \times_S Y)$, and assume $X\times_S Y$ is smooth. Then the left and right adjoint of $\Phi_{\shP}: D(X) \to D(Y)$ can be given by Fourier-Mukai kernels
	\begin{align*}  \shP^L : =  \shP^\vee \otimes \omega_{p_X} [\dim p_X], \qquad \shP^R : = \shP^\vee \otimes \omega_{p_Y} [\dim p_Y] . 
	\end{align*}
Here $ \omega_{p_X} : = \omega_{X \times_S Y} \otimes p_X^*\,\omega_X^\vee$, $\omega_{p_Y} : = \omega_{X \times_S Y} \otimes p_Y^* \, \omega_Y^\vee$, and $\dim p_X = \dim X \times_S Y - \dim X$, $\dim p_Y = \dim X \times_S Y - \dim Y $, $(-)^\vee: = R\sHom(-, \sO_{X\times_S Y})$ is the dual on $X \times_S Y$.
\end{lemma}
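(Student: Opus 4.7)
The plan is to decompose the Fourier-Mukai transform into its three building blocks
\[ \Phi_\shP = p_{Y*} \circ \bigl((-) \otimes \shP\bigr) \circ p_X^* : D(X) \to D(X\times_S Y) \to D(X\times_S Y) \to D(Y), \]
take left and right adjoints of each piece separately, and reassemble them using the projection formula. Since $X, Y$, and $X \times_S Y$ are all smooth, the projections $p_X, p_Y$ are morphisms between smooth varieties and hence locally complete intersections, and they are proper as base changes of the proper maps $g, f$. Grothendieck-Verdier duality thus gives $p_X^! \simeq p_X^*(-) \otimes \omega_{p_X}[\dim p_X]$, and similarly for $p_Y$; in particular both adjoint triples $p_X^* \dashv p_{X*} \dashv p_X^!$ and $p_Y^* \dashv p_{Y*} \dashv p_Y^!$ are available with explicit formulas. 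Smoothness of $X\times_S Y$ also makes $\shP$ a perfect complex, so the tensor-hom adjunction gives $(-)\otimes \shP \dashv (-)\otimes \shP^\vee$ and $(-)\otimes \shP^\vee \dashv (-)\otimes \shP$; in other words, both the left and right adjoints of $(-)\otimes \shP$ are given by $(-)\otimes \shP^\vee$.

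For the right adjoint $\Phi_\shP^R$, I compose the right adjoints of the three blocks in reverse order: first $p_Y^! = p_Y^*(-)\otimes \omega_{p_Y}[\dim p_Y]$, then $(-)\otimes \shP^\vee$, then $p_{X*}$. Pooling the tensor factors by associativity and commutativity gives
\[ \Phi_\shP^R(G) \simeq p_{X*}\bigl(p_Y^*G \otimes \shP^\vee \otimes \omega_{p_Y}[\dim p_Y]\bigr), \]
which is the Fourier-Mukai functor $D(Y)\to D(X)$ with kernel $\shP^R = \shP^\vee \otimes \omega_{p_Y}[\dim p_Y]$, as claimed. The left adjoint is similar, but requires the left adjoint of $p_X^*$, which is slightly less standard: rewriting $p_X^*(-) \simeq p_X^!(-)\otimes \omega_{p_X}^\vee[-\dim p_X]$ and using $p_{X*}\dashv p_X^!$ yields $(p_X^*)^L \simeq p_{X*}\bigl((-)\otimes \omega_{p_X}[\dim p_X]\bigr)$. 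Composing this with $(-)\otimes \shP^\vee$ and $p_Y^*$ then produces the Fourier-Mukai functor with kernel $\shP^L = \shP^\vee \otimes \omega_{p_X}[\dim p_X]$.

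The only delicate point is securing the Grothendieck-Verdier identity $p^! \simeq p^*(-)\otimes \omega_p[\dim p]$ in the form used above, and this is exactly what the smoothness of $X\times_S Y$ (together with that of $X, Y$) buys us: $p_X$ and $p_Y$ are proper l.c.i.\ morphisms with dualizing line bundles $\omega_{p_X} = \omega_{X\times_S Y}\otimes p_X^*\omega_X^\vee$ and analogously for $p_Y$, and the relative dualizing complex is concentrated in a single degree. As a sanity check, when $S = \pt$ one has $\omega_{p_X} \simeq \pi_Y^*\omega_Y$ and $\dim p_X = \dim Y$, so the formulas specialize to the classical absolute Fourier-Mukai adjoint formulas $\shP^L = \shP^\vee \otimes \pi_Y^*\omega_Y[\dim Y]$ and $\shP^R = \shP^\vee \otimes \pi_X^*\omega_X[\dim X]$.
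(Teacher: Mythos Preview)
Your proof is correct and uses essentially the same ingredients as the paper: Grothendieck--Verdier duality for $p_X$ (and $p_Y$) in the form $p^! \simeq p^*(-)\otimes\omega_p[\dim p]$, together with the tensor--hom adjunction available because $\shP$ is perfect on the smooth variety $X\times_S Y$. The only difference is packaging: the paper writes out the chain of $\Hom$-set identifications $\Hom_X(\Phi_{\shP^L}(E),F)=\cdots=\Hom_Y(E,\Phi_\shP(F))$ directly, whereas you phrase the same computation as ``take adjoints of each factor and compose in reverse order''; these are the same argument in slightly different clothing.
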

 
\begin{proof} For $E \in D(Y), F \in D(X)$, let $\shP^L \in D(X\times_S Y)$ be defined as in the lemma, then	
	\begin{align*} & \Hom_X(\Phi_{\shP^L} (E), F) \\
		& = \Hom_X(p_{X*} (\shP^L \otimes p_Y^*\, E), F)   \\
		& =  \Hom_{X\times_S Y} (\shP^L \otimes  p_Y^*\,E, p_X^*\,F \otimes \omega_{p_X} [\dim p_X] )  \qquad \text{(Grothendieck-Verdier duality for $p_X$)} \\
		& = \Hom_{X\times_S Y}(\shP^\vee \otimes p_Y^*\,E \otimes \omega_{p_X} [\dim p_X], p_X^*\,F \otimes \omega_{p_X} [\dim p_X] ) \\
		& = \Hom_{X\times_S Y}(p_Y^*\,E, \shP \otimes p_X^*\,F) = \Hom_Y(E, \Phi_{\shP}(F)).
	\end{align*}
Similarly for $\shP^R$.
\end{proof}

Above defined kernel $\shP^L$ and $\shP^R$ is compatible with base-change when the fiber products $X\times_S Y$ are smooth. 

\begin{definition}\label{def:op}
Let $X$, $Y$ be smooth varieties, the \textbf{opposite} of a functor $\Phi: D(X) \to D(Y)$ is defined to be $\Phi^{op}: = (-)^\vee \circ \Phi \circ (-)^\vee$, where $(-)^\vee = R\sHom(-, \sO)$.
\end{definition}

\begin{lemma} \label{lem:op} Let $\shP \in D(X \times_S Y)$, and assume $X\times_S Y$ is smooth. Then the opposite of $\Phi_{i_* \shP}$ is given by a geometric Fourier-Mukai transform $ \Phi_{i_* (\shP^{op}) }$ with kernel
	$$\shP^{op} = R\sHom_{X\times_S Y} (\shP,  \omega_{p_Y} [\dim p_Y] ) \in D(X \times_S Y), $$	
\end{lemma}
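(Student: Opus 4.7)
The plan is a direct computation unwinding the definition $\Phi^{op} = (-)^\vee \circ \Phi \circ (-)^\vee$ and applying Grothendieck--Verdier duality to the proper map $p_Y \colon X \times_S Y \to Y$. Concretely, for $E \in D(X)$ I would compute
\[
\Phi_{i_*\shP}^{op}(E) = \bigl( p_{Y*}(p_X^* (E^\vee) \otimes \shP) \bigr)^\vee
= R\sHom_Y\!\bigl( p_{Y*}(p_X^* (E^\vee) \otimes \shP),\, \sO_Y\bigr).
\]
Because $p_Y$ is proper and $X \times_S Y$ is assumed smooth so that $p_Y^!\,\sO_Y = \omega_{p_Y}[\dim p_Y]$, Grothendieck--Verdier duality pushes the $R\sHom$ inside:
\[
R\sHom_Y\!\bigl( p_{Y*}(p_X^* (E^\vee) \otimes \shP),\, \sO_Y\bigr)
= p_{Y*}\, R\sHom_{X \times_S Y}\!\bigl(p_X^*(E^\vee) \otimes \shP,\, \omega_{p_Y}[\dim p_Y]\bigr).
\]

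Next, I would use the tensor--hom adjunction to rewrite the internal $R\sHom$ as
\[
R\sHom_{X \times_S Y}\!\bigl(p_X^*(E^\vee),\, R\sHom_{X\times_S Y}(\shP,\, \omega_{p_Y}[\dim p_Y])\bigr)
= R\sHom_{X\times_S Y}(p_X^*(E^\vee),\, \shP^{op}),
\]
and then pull the $E^\vee$ outside: since $X$ is smooth, $E \in D(X) = D^b(\coh(X))$ is a perfect complex, so $p_X^*(E^\vee) = (p_X^* E)^\vee$ is perfect on $X \times_S Y$ and
\[
R\sHom_{X\times_S Y}(p_X^*(E^\vee),\, \shP^{op}) = p_X^* E \otimes \shP^{op}.
\]
Substituting back yields $\Phi_{i_*\shP}^{op}(E) = p_{Y*}(p_X^* E \otimes \shP^{op}) = \Phi_{i_*\shP^{op}}(E)$, which is the desired formula.

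The only subtle points are bookkeeping: ensuring that Grothendieck--Verdier for $p_Y$ is applied with the correct relative dualizing complex $\omega_{p_Y}[\dim p_Y]$ (which requires the smoothness of $X \times_S Y$ and $Y$ that the statement assumes), and ensuring that the reflexivity $(E^\vee)^\vee \simeq E$ is valid, which again holds because $E$ is perfect on the smooth variety $X$. Once these are in place, the computation is a chain of standard functorial identities. The main conceptual step --- and therefore the one to write out carefully --- is the Grothendieck--Verdier step, as the remaining manipulations are formal adjunctions.
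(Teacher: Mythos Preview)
Your proof is correct and follows essentially the same route as the paper's: unwind $(-)^\vee \circ \Phi_\shP \circ (-)^\vee$, apply local Grothendieck--Verdier duality for $p_Y$ to move the outer $R\sHom_Y(-,\sO_Y)$ inside as $R\sHom_{X\times_S Y}(-,\omega_{p_Y}[\dim p_Y])$, and then use tensor--hom adjunction together with perfectness of $p_X^*(E^\vee)$ to extract $p_X^* E$. Your write-up is in fact slightly more explicit about the tensor--hom step and the perfectness justification than the paper's, but the argument is the same.
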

\begin{proof} For $F \in D(X)$, we have
	\begin{align*} 
	 \Phi_{\shP}^{op}(F) & = R\sHom_Y(\Phi_{\shP}(F^\vee), \sO_Y) \\
	& = R\sHom_Y(p_{Y*} (p_X^*F^\vee \otimes \shP), \sO_Y) \\
	&= p_{Y*} R\sHom_{X\times_S Y}(p_X^*\,F^\vee \otimes \shP, p_Y^! \sO_Y)  \qquad \text{(local Groth.-Verdier duality for $p_Y$)} \\
	&= p_{Y*}  R\sHom_{X\times_S Y} (p_X^*\, F^\vee \otimes \shP, \omega_{p_Y}[\dim p_Y])	\qquad (p_Y^! (-) = p_Y^*(-) \otimes \omega_{p_Y}[\dim p_Y]) \\
	& = p_{Y*} ( p_X^* F \otimes R\sHom(\shP, \omega_{p_Y}[\dim p_Y]) ).
	\end{align*}
\end{proof}

In general, if $X\times_S Y$ is not smooth, the kernel of left adjoint $\Phi_{\shP}^L$, or the opposite $\Phi_\shP^{op}$, where $\shP \in D(X \times_S Y)$, may not be described easily as a complex on $X\times_S Y$, due to the fact that $R\sHom(-, \sO_{X\times_S Y})$ and $p_X^!$, $p_Y^!$ may not behave well when $X\times_S Y$ is singular. However, the kernel of their composition is very simple.

\begin{lemma}\label{lem:op_of_L}
If $\shP \in D(X \times_S Y)$, then $(\Phi_{\shP}^L)^{op}: D(Y) \to D(X)$ is a Fourier-Mukai transform with kernel (the pushforward to $D(X \times Y)$ of)
	$$(\shP^L)^{op} = \shP \in D(X \times_S Y).$$
\end{lemma}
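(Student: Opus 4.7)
My plan is to compute the kernel of $(\Phi_\shP^L)^{op}$ entirely on the smooth ambient product $X\times Y$, thereby bypassing any pathology of $X\times_S Y$. Set $\shK:=i_*\shP\in D^b(\coh(X\times Y))$, so that $\Phi_\shP$ is the Fourier--Mukai transform $\Phi_\shK^{X\to Y}$. I then apply Lem.~\ref{lem:adjoint} and Lem.~\ref{lem:op} in the trivial case $S=\pt$: since $X\times_\pt Y = X\times Y$ is smooth, the formulae of those lemmas are valid unconditionally. The whole argument is a cancellation of canonical twists followed by an appeal to reflexivity of perfect complexes.

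First, by Lem.~\ref{lem:adjoint} the left adjoint $\Phi_\shP^L:D(Y)\to D(X)$ has Fourier--Mukai kernel
$$\shK^L \;=\; \shK^{\vee}\otimes \omega_{\pi_X}[\dim \pi_X] \;=\; \shK^{\vee}\otimes \pi_Y^*\omega_Y[\dim Y],$$
where I used the Künneth computation $\omega_{\pi_X}=\omega_{X\times Y}\otimes \pi_X^*\omega_X^{\vee}=\pi_Y^*\omega_Y$ together with $\dim\pi_X=\dim Y$. Since $\Phi_\shP^L$ has target $X$, applying the opposite formula of Lem.~\ref{lem:op} to the $Y\to X$ direction (so the relevant relative canonical is now $\omega_{\pi_X}[\dim\pi_X]=\pi_Y^*\omega_Y[\dim Y]$) yields
$$(\shK^L)^{op} \;=\; R\sHom\bigl(\shK^{\vee}\otimes \pi_Y^*\omega_Y[\dim Y],\; \pi_Y^*\omega_Y[\dim Y]\bigr).$$
Pulling the shifted line bundle out of the inner slot gives $R\sHom(\shK^{\vee},\sO_{X\times Y}) = \shK^{\vee\vee}$ after the twists cancel.

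To finish, I invoke reflexivity on the smooth variety $X\times Y$: any object of $D^b(\coh(X\times Y))$ is perfect, hence $\shK^{\vee\vee}=\shK$. This identifies the kernel of $(\Phi_\shP^L)^{op}\colon D(Y)\to D(X)$ with $\shK = i_*\shP$, as required. The only potential obstacle is conceptual rather than technical: although $\shP^L$ and $\shP^{op}$ may fail to lift individually to objects of $D(X\times_S Y)$ when the fibre product is singular (since Lem.~\ref{lem:adjoint} and Lem.~\ref{lem:op} require smoothness of the fibre product), their composition $(\shP^L)^{op}$ always makes sense on the ambient smooth product, and the exact cancellation of the canonical-twist factors reduces everything to the standard double-dual identity for perfect complexes. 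This explains why the identity holds without any hypothesis on $X\times_S Y$.
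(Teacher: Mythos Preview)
Your proof is correct and follows exactly the same approach as the paper: both work on the smooth ambient product $X\times Y$ by setting $\shK=i_*\shP$, apply Lem.~\ref{lem:adjoint} and Lem.~\ref{lem:op} with $S=\pt$, and observe that the canonical twists cancel to leave $\shK^{\vee\vee}=\shK$. You have simply written out in detail the computation that the paper compresses into one line.
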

\begin{proof} 
For $\shE \in D(X\times Y)$, denote $\pi_X$, $\pi_Y$ the projection from $X\times Y$ to $X$ and $Y$. Then composing the formula of Lem. \ref{lem:op} with Lem. \ref{lem:adjoint} for $S = \pt$, we get 
	$$(\shE^L)^{op} = {\shE}  \in D(X \times Y).$$
Apply to $\shE = i_* \shP$, we get the desired formula.
\end{proof}
 
 \begin{remark} Symmetrically we have $(\shP^{op})^R = \shP$.
 \end{remark}
 
The kernel of the composition of two Fourier-Mukai transforms are given by \textbf{convolutions}. The following direct generalization in relative setting of \cite[Prop. 5.10]{Huy} may be well-known, but we include a proof here for lack of reference.
\begin{lemma}[Convolution] \label{lem:rel_convolution} Let $X$, $Y$ be $S$-varieties, and $Y$, $Z$ be $T$-varieties, and $S \to U$, $T \to U$ are maps between smooth varieties, such that the induced $U$-variety structures on $Y$ agree. Consider $S$-linear resp. $T$-linear Fourier-Mukai transform $\Phi_\shP: D(X) \to D(Y)$, and $\Phi_\shQ: D(Y) \to D(Z)$, where $\shP \in D(X\times_S Y)$ and $\shQ \in D(Y \times_T Z)$. Then their composition is a $U$-linear Fourier-Mukai transform $\Phi_\shR = \Phi_\shQ \circ \Phi_\shP: D(X) \to D(Z)$, with Fourier-Mukai kernel $\shR$ given by the \textbf{convolution} of kernels 
	$$\shR = p_{XZ*}\, (p_{XY}^*\, \shP \otimes p_{YZ}^*\, \shQ) \in D(X \times_U Z),$$
where maps are indicated in diagram (\ref{diagram:convolution}), if the following squares are {Tor-independent} \footnote{For the definition and criteria of Tor-independence, see \S \ref{sec:base-change}.}:
\begin{equation} \label{eqn:convolution_sq}
\begin{tikzcd} 
	X \times_S Y \times_T Z  \ar{r}{p_{YZ}} \ar{d}{p_{XY}}& Y \times_T Z  \ar{d}{u}\\
	X\times_S Y \ar{r}{p} & Y.
\end{tikzcd}
\end{equation}
In particular, if $S=T=U = \Spec \kk$, the square (\ref{eqn:convolution_sq}) is automatically Tor-independent, and we obtain the usual convolution formula.
\end{lemma}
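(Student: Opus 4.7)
The plan is to follow the standard Huybrechts-style computation in the absolute case \cite[Prop. 5.10]{Huy}, using Tor-independent base change in place of flat base change wherever derived pullbacks and pushforwards need to be interchanged. Concretely, for $F \in D(X)$, I would unfold
\[
\Phi_\shQ \circ \Phi_\shP (F) = q_{Z*}\bigl(q_Y^*\, p_{Y*}(p_X^*\,F \otimes \shP) \otimes \shQ\bigr),
\]
where $p_X, p_Y$ denote the projections out of $X \times_S Y$ and $q_Y, q_Z$ the projections out of $Y \times_T Z$ (so $q_Y$ plays the role of $u$ in (\ref{eqn:convolution_sq})).

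The first key move is to apply the Tor-independent base change hypothesis on the square (\ref{eqn:convolution_sq}), which gives the isomorphism $q_Y^* \circ p_{Y*} \simeq p_{YZ*} \circ p_{XY}^*$. Substituting and then using the projection formula for $p_{XY}$ on $W := X \times_S Y \times_T Z$ turns the inner expression into $p_{YZ*}(p_{XY}^*\,p_X^*F \otimes p_{XY}^*\,\shP)$; a second projection formula (now for $p_{YZ}$, with $\shQ$) then yields
\[
\Phi_\shQ \circ \Phi_\shP (F) = q_{Z*}\, p_{YZ*}\bigl(\pi_X^*\,F \otimes p_{XY}^*\,\shP \otimes p_{YZ}^*\,\shQ\bigr),
\]
where $\pi_X := p_X \circ p_{XY} : W \to X$ coincides with $p_X \circ p_{XZ}$ when we write $\pi_X$ through the canonical factorisation $W \xrightarrow{p_{XZ}} X \times_U Z \to X$ (which exists because the compositions $W \to X \to U$ and $W \to Z \to U$ agree by construction of $W$ as a fibre product over $Y$, hence over $U$).

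Next I would push the composition $q_{Z*} \circ p_{YZ*} = \pi_{Z*}$ through $p_{XZ}$, writing $\pi_{Z*} = p_{Z*}^{U} \circ p_{XZ*}$ where $p^U_X, p^U_Z$ are the projections from $X \times_U Z$. A final application of the projection formula for $p_{XZ}$ (which is legal since $p_{XZ}^*\,(p_X^U)^*F = \pi_X^*F$) moves $F$ outside, giving
\[
\Phi_\shQ \circ \Phi_\shP (F) = p_{Z*}^U\Bigl( (p_X^U)^*\,F \otimes p_{XZ*}\bigl( p_{XY}^*\,\shP \otimes p_{YZ}^*\,\shQ \bigr)\Bigr) = \Phi_\shR(F),
\]
with $\shR$ as claimed. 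Naturality in $F$ and compatibility with morphisms show this is an isomorphism of functors, and $U$-linearity is automatic since $\shR$ is supported on $X \times_U Z$. The specialisation to $S=T=U=\Spec \kk$ is immediate: the square (\ref{eqn:convolution_sq}) becomes a product diagram, which is automatically Tor-independent, and we recover the usual convolution.

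The main technical point — and the only place where real care is needed — is the justification that the square (\ref{eqn:convolution_sq}) being Tor-independent really gives the derived base change isomorphism $q_Y^* \circ p_{Y*} \simeq p_{YZ*} \circ p_{XY}^*$ on all of $D(X \times_S Y)$ (not just on bounded-below complexes of flats). Everything else is formal manipulation with projection formulas and composition of $Rf_*$'s; the derived projection formula in this generality is standard for proper morphisms between our (embeddable) varieties. Thus once base change is invoked, the proof is just bookkeeping along the diagram.
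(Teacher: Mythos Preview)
Your proposal is correct and follows essentially the same approach as the paper: both arguments reduce to the single Tor-independent base-change $u^*\,p_* \simeq p_{YZ*}\,p_{XY}^*$ on the square (\ref{eqn:convolution_sq}), together with repeated projection formulas and composition of pushforwards, exactly as in \cite[Prop.~5.10]{Huy}. The only cosmetic difference is direction: the paper starts from $\Phi_\shR(\shE)$ and unwinds to $\Phi_\shQ\circ\Phi_\shP(\shE)$, whereas you start from the composition and arrive at $\Phi_\shR$.
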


\begin{proof}
\begin{equation}\label{diagram:convolution}
\begin{tikzcd}[back line/.style={}]		
		&	& X\times_S Y \times_T Z   \ar{ld}[swap]{p_{XY}} \ar{d}{p_{XZ}}\ar{dr}{p_{YZ}}	 \ar[bend right]{ddll}[swap]{p_X} \ar[bend left]{ddrr}{p_Z} \\
		& X\times_S Y   \ar{ld}[swap]{q} \ar[back line]{rd}[swap]{p} & X\times_U Z \ar[crossing over]{lld}{s} \ar[back line]{rrd}[swap]{r} & Y \times_T Z  \ar[crossing over]{ld}{u} \ar{rd}{t} \\
	X \ar{rd}{a_X}	&	& Y\ar{ld}[swap]{a_Y} \ar{rd}{b_Y}	&	& Z \ar{ld}{b_Z}\\
		&S \ar{r}{c_S}	& U	&T  \ar{l}[swap]{c_T} 	
\end{tikzcd}
\end{equation}
The computation is similar to the proof in absolute case \cite[Prop. 5.10]{Huy}:
\begin{align*} \Phi_{\shR}(\shE) & = r_*\, (s^* \,\shE  \otimes p_{XZ*}\,(p_{XY}^* \,\shP \otimes p_{YZ}^*\,\shQ)) \\
	& = r_*\,(p_{XZ*} \, (p_X^*\,\shE \otimes p_{XY}^* \shP \otimes  p_{YZ}^*\,\shQ)) \\
	& =  p_{Z*}\,(p_X^*\,\shE \otimes p_{XY}^* \shP \otimes  p_{YZ}^*\,\shQ)\\
	& = p_{Z*}\,(p_{XY}^*\,(q^*\,\shE \otimes \shP) \otimes p_{YZ}^* \shQ) \\
	& = t_* \,(p_{YZ*}\, p_{XY}^*\,( q^* \, \shE \otimes \shP) \otimes \shQ) \\
	& = t_*\,( u^*\, p_* ( q^* \, \shE \otimes \shP) \otimes \shQ ) \qquad (\text{base-change by Tor-independence})\\
	& =t_*\,( u^*\,  \Phi_\shP (\shE) \otimes \shQ) =   \Phi_{\shQ} \circ \Phi_{\shP} (\shE).
\end{align*}
The only place we use the condition of the lemma is the base-change step, where $p_{YZ*}\, p_{XY}^* = u^*\,p_*$ is guaranteed by the Tor-independence of the square (\ref{eqn:convolution_sq}), cf. Lem. \ref{lem: Tor_ind}.
\end{proof}

\begin{remark} 
\begin{enumerate}[leftmargin=0 cm, itemindent=*, align=left]
	\item	 It should be noted that the Tor-independent condition of the square (\ref{eqn:convolution_sq}) are not automatic. For a given $U$, usually we have the freedom of chosing the bases $S$ and $T$. The condition may not be satisfied by $S=T=U$, but by other choices of $S,T$.
	\item  For given bases $S$, $T$, if the square (\ref{eqn:convolution_sq}) is Tor-independent, then the only condition on $U$ is the following diagram commutes:
	\begin{equation} \label{diagram:commute_Y}
	\begin{tikzcd} Y  \ar{r}{b_Y} \ar{d}[swap]{a_Y} & T \ar{d}{c_T} \\
	S \ar{r}{c_S} & U.
	\end{tikzcd}
	\end{equation}
	Hence the lemma holds for $U = \Spec \kk$, as well as the 'maximal choice' of $U$ such that diagram (\ref{diagram:commute_Y}) commute. 
\end{enumerate}
\end{remark}

\subsection{Derive categories over a base.} \label{sec:base-change} The following is essential in the first step of our proof of main theorem. Readers are recommended to refer to \cite{Kuz06Hyp}, \cite{Kuz07HPD} or \cite{Kuz11Bas} for more details. 

Assume $S$ is a smooth\footnote{The smoothness is not necessary. In general we need only to consider the derived category of perfect complexes $D^{perf}(S)$ on $S$ rather than the bounded derived category $D(S)$ of coherent sheaves.} algebraic variety, then a triangulated category $\shT$ is called $S$-linear if it admits a module structure over the tensor triangulated category $D(S)$. Let $f:X \to S$ be a map between smooth varieties, then $D(X)$ is naturally equipped with $D(S)$-modules structure from the (derived) pullback. 

\begin{definition} An admissible subcategory $\shA \subset D(X)$ is called \textbf{$S$-linear} if for any $a \in \shA$ and $F \in D(S)$, we have $a \otimes f^* F \in \shA$. 
\end{definition}

We may regard an $S$-linear triangulated subcategory as a family of categories over $S$. Under certain conditions we can pullback the the family through base change $\phi: T \to S$ to get a family of categories over $T$ with desired properties.

\begin{definition} \label{def:bc} A base change $\phi: T \to S$ is called \textbf{faithful} with respect to a morphism $f: X \to S$ if the cartesian square
	\begin{equation}\label{eqn:fiber}
	\begin{tikzcd}
	X_T \arrow{d}{}[swap]{f_T}  \arrow{r}{\phi_T} & X \arrow{d}{f} \\
	T\arrow{r}{\phi}  & S
	\end{tikzcd}
	\end{equation}
is \textbf{exact cartesian}, i.e., the natural transformation $ f^* \circ \phi_* \to \phi_{T *} \circ f_T^*: D(T) \to D(X)$ is an isomorphism. A base change $\phi: T \to S$ is called \textbf{faithful with respect to a pair $(X, Y)$} if $\phi$ is faithful with respect to morphisms $f: X \to S$, $g: Y \to S$, and $f\times_S g : X \times_S Y \to S$.
\end{definition}

The condition for faithful base-change is typically satisfied by so called $\Tor$-independent squares. A fiber square (\ref{eqn:fiber}) is called \textbf{Tor-independent} if for all $t \in T$, $x \in X$, and $s \in S$ with $\phi(t) = s = f(x)$, and all $i > 0$, $\Tor_i^{\sO_{S,\,s}}(\sO_{T,\,t}, \sO_{X,\,x}) = 0$.

\begin{lemma}\label{lem: Tor_ind}A base-change $\phi: T \to S$ for $X \to S$ is faithful if and only if the square (\ref{eqn:fiber}) is Tor-independent.
\end{lemma}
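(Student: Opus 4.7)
My plan is to reduce the assertion to a purely algebraic statement about Tor groups and then verify both implications directly. Since the question is local on $S$, $X$, $T$, and $X_T$, I would first cover all four schemes by compatible affine opens and reduce to the case $S=\Spec A$, $X=\Spec B$, $T=\Spec C$, $X_T=\Spec(B\otimes_A C)$. Under the equivalence between $D(\Spec R)$ and the bounded derived category of $R$-modules (available because $S$, $X$, $T$ are smooth and hence every coherent sheaf has finite Tor-dimension), the natural transformation in Def.~\ref{def:bc} becomes, for a complex $H$ of $C$-modules, the comparison map
$$ B \otimes_A^{L} H \;\longrightarrow\; (B\otimes_A C)\otimes_C^{L} H $$
of complexes of $B$-modules, where the right-hand side is regarded as a $B$-module via $B\to B\otimes_A C$.

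For the ``only if'' direction I would apply this map to the distinguished test object $H=C$: the left-hand side is $B\otimes_A^{L} C$ while the right-hand side is $B\otimes_A C$ concentrated in degree zero. The map being an isomorphism is therefore precisely the vanishing $\Tor_i^A(B,C)=0$ for $i>0$. Localizing at any compatible triple of points $(s,x,t)$ with $\phi(t)=s=f(x)$ gives the stalk-wise Tor-vanishing that constitutes Tor-independence of the square.

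For the ``if'' direction, assume Tor-independence, so that after localizing at each compatible triple we have $B\otimes_A^{L} C \simeq B\otimes_A C$ in degree zero. Then for an arbitrary $H\in D(C\text{-mod})$, associativity of the derived tensor product along $A\to C$ gives
$$ B\otimes_A^{L} H \;\simeq\; (B\otimes_A^{L} C)\otimes_C^{L} H \;\simeq\; (B\otimes_A C)\otimes_C^{L} H, $$
where the second isomorphism uses Tor-independence. This shows the comparison map is an isomorphism for every $H$, i.e., $\phi$ is faithful with respect to $f$.

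The principal subtlety is to verify that the abstract base-change natural transformation of Def.~\ref{def:bc}, obtained from the unit/counit of the two pushforward-pullback adjunctions, really does correspond to the explicit associativity comparison map on the module side. This is a standard adjunction bookkeeping but must be done carefully to avoid mis-identifying the direction of the map. Once this identification is in place, the remaining work is purely formal: Tor-independence is exactly the condition needed to collapse $B\otimes_A^L C$ to its underived version, and that single piece of information propagates to all of $D(T)$ through associativity of $\otimes^L$.
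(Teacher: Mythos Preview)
Your sketch is essentially correct and follows the standard argument that one finds in the cited references. Note, however, that the paper does not give its own proof: it simply refers to \cite[Thm.~3.10.3]{Lip09} and to \cite{Kuz06Hyp}. What you have written is a faithful outline of the affine-local module-theoretic argument underlying that reference, namely that after reducing to $S=\Spec A$, $X=\Spec B$, $T=\Spec C$, the base-change map identifies with $B\otimes_A^L H \to (B\otimes_A C)\otimes_C^L H$, whose being an isomorphism for $H=C$ is exactly Tor-independence, and which then propagates to all $H$ by associativity of $\otimes^L$.

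Two small cautions. First, you invoke smoothness of $S$, $X$, $T$ to identify $D(-)$ with bounded derived categories of modules; the paper only assumes $S$ smooth (and even that is noted as inessential), so strictly speaking your reduction is slightly stronger than needed. The argument in \cite{Lip09} is carried out for quasi-compact quasi-separated schemes and unbounded quasi-coherent derived categories, avoiding this hypothesis. Second, the passage from global $\Tor_i^A(B,C)=0$ to the pointwise condition in the definition of Tor-independence (and back) relies on the compatibility of Tor with localization, which you mention but do not spell out; this is routine but worth flagging. Neither point is a gap in the logic, only in the level of generality.
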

\begin{proof} \cite[Thm. 3.10.3]{Lip09}; This is also essentially proved in \cite{Kuz06Hyp}.
\end{proof}

Therefore we will interchange freely between the terms 'exact cartesian', 'Tor-Independent', 'faithful base change' to refer this same condition. 
The following are certain typical situations when the condition is satisfied.
\begin{lemma}\label{lemma-faithful-base-change}
 Let $f:X \to S$ be a morphism and $\phi: T \to S$ a base change.
	\begin{enumerate}
	\item If $\phi$ is flat, then it is faithful.
	\item If $T$ and $X$ are smooth and $X_T$ has expected dimension, i.e. $\dim X_T = \dim X + \dim T - \dim S$, then $\phi: T\to S$ is faithful with respect to the morphisms $f: X \to S$.
	\item If $\phi: T\to S$ is a closed embedding and $T\subset S$ is a locally complete intersection, and both $S$ and $X$ are Cohen-Macaulay, and $X_T$ has expected dimension, i.e. $\dim X_T  = \dim X + \dim T - \dim S$, then $\phi: T\to S$ is faithful with respect to the morphisms $f: X \to S$.
	\end{enumerate}
\end{lemma}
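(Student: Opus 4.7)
The plan is to reduce everything via Lemma~\ref{lem: Tor_ind} to the pointwise Tor-vanishing $\Tor_i^{\sO_{S,s}}(\sO_{T,t}, \sO_{X,x}) = 0$ for $i > 0$ and all $t, x$ with $\phi(t) = f(x) = s$, and then to produce a finite free resolution of one factor by a Koszul complex on a regular sequence. Part (1) is immediate: flatness of $\phi$ means $\sO_{T,t}$ is flat over $\sO_{S,s}$, so all higher Tors vanish by definition, with no use of the remaining hypotheses.

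For part (3), I would work locally at $t \in T$ mapping to $s \in S$ and invoke the LCI hypothesis to choose a regular sequence $g_1, \ldots, g_c \in \sO_{S,s}$ generating the ideal of $T$, where $c = \dim S - \dim T$. The Koszul complex on this sequence is a finite free resolution of $\sO_{T,t}$ over $\sO_{S,s}$, so after pulling back by $f$ the group $\Tor_i^{\sO_{S,s}}(\sO_{T,t}, \sO_{X,x})$ is computed by the Koszul complex of the pulled-back sequence $f^*g_1, \ldots, f^*g_c$ in $\sO_{X,x}$. Vanishing for $i > 0$ is then equivalent to this pulled-back sequence being a regular sequence in the Cohen-Macaulay local ring $\sO_{X,x}$. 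Here is where the hypotheses conspire: by unmixedness in Cohen-Macaulay rings, a length-$c$ sequence is regular iff the ideal it generates has height $c$; this ideal locally cuts out $X_T \subset X$, and the expected-dimension hypothesis gives exactly $\codim(X_T, X) = \dim X - (\dim X + \dim T - \dim S) = c$, which is what is needed.

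Part (2) reduces to the same Koszul argument via the classical diagonal trick. Rewrite the fiber product as $X_T = (X \times T) \times_{S \times S} \Delta_S$, where $\Delta_S \hookrightarrow S \times S$ is the diagonal. Smoothness of $S$ makes $\Delta_S$ a regular embedding of codimension $\dim S$, and smoothness of $X$ and $T$ makes $X \times T$ smooth, hence Cohen-Macaulay. A standard change-of-rings argument identifies $\Tor_\bullet^{\sO_S}(\sO_X, \sO_T)$ with $\Tor_\bullet^{\sO_{S \times S}}(\sO_{\Delta_S}, \sO_{X \times T})$, reducing to the setup of (3): the expected codimension of $X_T$ in $X \times T$ is exactly $\dim S$, matching the codimension of the Koszul sequence defining $\Delta_S$, so the pulled-back sections remain a regular sequence on $X \times T$ and all higher Tors vanish. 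The single substantive point underlying both (2) and (3) — and the real reason the Cohen-Macaulay and expected-dimension hypotheses appear — is the unmixedness conversion from codimension to regularity of a sequence, which I expect to be the main technical step to pin down carefully (though it is standard).
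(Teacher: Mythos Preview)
Your argument is correct and complete. The paper itself gives no proof at all --- it simply cites \cite[Cor.~2.23, 2.27]{Kuz06Hyp} and \cite[Lem.~2.31, 2.35]{Kuz07HPD} --- so there is nothing to compare at the level of argument structure. What you have written is essentially the standard proof one would find by chasing those references: reduce to Tor-independence via Lemma~\ref{lem: Tor_ind}, resolve by a Koszul complex on a regular sequence, and use the Cohen--Macaulay unmixedness criterion (height equals length implies regularity) to convert the expected-dimension hypothesis into regularity of the pulled-back sequence.

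One small remark on part~(2): your diagonal trick needs $S$ smooth to make $\Delta_S \hookrightarrow S \times S$ a regular embedding. The statement of the lemma does not explicitly assume this, but the ambient section of the paper does (``Assume $S$ is a smooth algebraic variety'' appears just before Definition~\ref{def:bc}), so you are fine in context. If you wanted a version with $S$ merely Cohen--Macaulay, you could instead use the graph $\Gamma_\phi \hookrightarrow T \times S$, which is regular because $T$ is smooth, and run the same Koszul argument with $X \times T$ replaced by $X \times_S (T \times S) \cong X \times T$; but this is a refinement beyond what the paper needs.
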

\begin{proof} \cite[Cor. 2.23, 2.27]{Kuz06Hyp}, \cite[Lem. 2.31, 2.35]{Kuz07HPD}. 
\end{proof}

The following will be useful later:

\begin{lemma} \label{lem:3squares} If in the following cartesian squares of varieties, the right square is exact cartesian:
\begin{equation*}
	\begin{tikzcd}
	X'' \ar{d} \ar{r} & X' \ar{d} \ar{r} & X \ar{d} \\
	S'' \ar{r}		& S' \ar{r}		& S.
	\end{tikzcd}
\end{equation*}
Then the ambient square is exact cartesian if and only if the left square is.

\end{lemma}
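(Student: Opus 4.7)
By Lemma~\ref{lem: Tor_ind}, being exact cartesian is equivalent to Tor-independence, so I would reformulate the statement as: assuming the right square is Tor-independent, the outer square is Tor-independent if and only if the left square is. This is a purely local question about derived tensor products of stalks, which I plan to settle by an associativity argument (rather than by chasing the base-change natural transformation, since one direction there would require $\phi'_{X*}$ to reflect isomorphisms, which it does not in general).

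Fix points $x \in X$ and $s'' \in S''$ with common image $s := f(x) = \psi(s'')$ in $S$, where $\psi \colon S'' \to S$ is the composition, and set $s' := \phi''(s'') \in S'$, with $\phi''$ the bottom arrow of the left square. Associativity of derived tensor product along the chain of local rings $\sO_{S,s} \to \sO_{S',s'} \to \sO_{S'',s''}$ yields
\[
\sO_{X,x} \otimes^{L}_{\sO_{S,s}} \sO_{S'',s''} \;\simeq\; \bigl(\sO_{X,x} \otimes^{L}_{\sO_{S,s}} \sO_{S',s'}\bigr) \otimes^{L}_{\sO_{S',s'}} \sO_{S'',s''}.
\]
The Tor-independence hypothesis on the right square says that the inner factor is discrete, i.e.\ equal to $R := \sO_{X,x} \otimes_{\sO_{S,s}} \sO_{S',s'}$. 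Thus Tor-independence of the outer square at $(x, s'')$ is equivalent to $\Tor_i^{\sO_{S',s'}}(R, \sO_{S'',s''}) = 0$ for all $i > 0$.

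The ring $R$ is the coordinate ring of the germ $\Spec \sO_{X,x} \times_{\Spec \sO_{S,s}} \Spec \sO_{S',s'}$; its maximal ideals are in bijection with the points $x' \in X'$ lying over both $x$ and $s'$, and for each such $x'$ the localization $R_{\fom_{x'}}$ is $\sO_{X', x'}$. Since a module vanishes iff all its localizations at maximal ideals vanish, and $\Tor$ commutes with (flat) localization, the vanishing above is in turn equivalent to $\Tor_i^{\sO_{S',s'}}(\sO_{X',x'}, \sO_{S'',s''}) = 0$ for every $x' \in X'$ over $(x, s')$ and every $i > 0$, which is exactly Tor-independence of the left square at the pair $(x', s'')$. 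Letting $(x, s'')$ range over all outer pairs produces all left pairs $(x', s'')$ with $f'(x') = \phi''(s'')$ and vice versa, so the outer and left Tor-independence conditions coincide.

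The one delicate point I expect is the identification of the maximal ideals of $R$ with $X'$-points over $(x, s')$ together with $R_{\fom_{x'}} \cong \sO_{X', x'}$; this follows from the standard affine-local computation of fibre products of schemes, and requires no additional hypothesis (flatness, smoothness, or the like). Everything else in the argument is formal, and the same reasoning establishes both directions of the iff simultaneously.
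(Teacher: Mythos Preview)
Your argument is correct. The paper itself does not prove this lemma at all: its entire proof is the citation \cite[Lem.~2.25]{Kuz06Hyp}. So you are supplying a self-contained argument where the paper simply defers to Kuznetsov.

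Your route---reducing to Tor-independence via Lemma~\ref{lem: Tor_ind} and then using associativity of the derived tensor product together with localization of $R=\sO_{X,x}\otimes_{\sO_{S,s}}\sO_{S',s'}$ at its maximal ideals---is the standard commutative-algebra proof and is sound. The only point that deserves a word of care is the localization step: one should note that $R\to R_{\fom_{x'}}$ is flat as a map of $R$-modules, so that $R_{\fom_{x'}}\otimes_R \Tor_i^{\sO_{S',s'}}(R,\sO_{S'',s''})\simeq \Tor_i^{\sO_{S',s'}}(\sO_{X',x'},\sO_{S'',s''})$, and that $R\ne 0$ (since $k(x)\otimes_{k(s)}k(s')\ne 0$ for field extensions), so the maximal ideals of $R$ really do sweep out all the relevant points $x'$. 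With that said, the bijection between outer pairs $(x,s'')$ and the collections of left pairs $(x',s'')$ is exactly as you describe, and both directions of the ``iff'' fall out simultaneously. This is arguably more informative than the paper's bare citation.
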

\begin{proof} \cite[Lem. 2.25]{Kuz06Hyp}.
\end{proof}

The power of faithful base change is that it preserves $S$-linear fully faithful Fourier-Mukai transforms and $S$-linear semiorthogonal decomposition. Let us fix $S=P$ be a smooth projective variety. 

\begin{proposition}[{\cite[Prop. 2.38]{Kuz07HPD}}] \label{prop:bcFM} If $\phi: T \to P$ is a faithful base change for a pair $(X, Y)$ where $f:X\to P$ and $g:Y\to P$, and varieties $X$ and $Y$ are projective over $P$ and smooth, and $K\in D(X\times_P Y)$ is a kernel such that $\Phi_K : D(X) \to D(Y)$ is fully faithful. Then $\phi_{K_T} : D(X_T) \to D(Y_T)$ is fully faithful, where the Fourier-Mukai kernel is  $K_T := \phi_T^*K$.\footnote{$K_T$ a priori only belongs to the bounded above derived category  $D^-(X_T \times_T Y_T)$ of quasi-coherent complexes with coherent cohomologies. \cite[Lem. 2.4]{Kuz06Hyp} guarantees that $\Phi_{K_T}$ preserves boundedness.}
\end{proposition}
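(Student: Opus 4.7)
The plan is to reformulate fully-faithfulness at the level of Fourier--Mukai kernels and then base-change. Specifically, $\Phi_K$ is fully faithful iff the unit of the adjunction $(\Phi_K,\Phi_{K^R})$ is an isomorphism; at the level of kernels, this says the canonical map
$$\eta_K:\O_{\Delta_X}\longrightarrow K^R\star K$$
is an isomorphism in $D(X\times_P X)$, where $\star$ denotes the relative convolution over $Y$ supplied by Lemma~\ref{lem:rel_convolution}. The same criterion over $T$ reads: $\Phi_{K_T}$ is fully faithful iff $\eta_{K_T}:\O_{\Delta_{X_T}}\to K_T^R\star K_T$ is an isomorphism in $D(X_T\times_T X_T)$. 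So it suffices to show that $\phi_{(T)}^{*}\eta_K$ can be identified with $\eta_{K_T}$, where $\phi_{(T)}:X_T\times_T X_T\to X\times_P X$ is the induced base change; then the hypothesis that $\eta_K$ is an isomorphism immediately forces the same for $\eta_{K_T}$.

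The proof then reduces to three compatibilities with base change. (i) \emph{Diagonal:} $\phi_{(T)}^{*}\O_{\Delta_X}\cong \O_{\Delta_{X_T}}$, immediate from the fact that $\Delta_{X_T}$ is the base change of $\Delta_X$ along $T\to P$, together with exact cartesianness of the square $\phi_{(T)}$, obtained by pasting two of the cartesian squares provided by pair-faithfulness (Lemma~\ref{lem:3squares}). (ii) \emph{Right-adjoint kernel:} $\phi_T^{*}K^R\cong K_T^R$. This follows from the compatibility $\phi_T^{Y*}\Phi_K\cong\Phi_{K_T}\phi_T^{X*}$ (a direct consequence of faithful base change and the projection formula) together with uniqueness of right adjoints; alternatively, in the case $X\times_P Y$ is smooth, one can argue directly from Lemma~\ref{lem:adjoint} using that relative dualizing complexes and relative dimensions are preserved under the flat / exact-cartesian base change $\phi_T$ (Lemma~\ref{lemma-faithful-base-change}). (iii) \emph{Convolution:} $\phi_{(T)}^{*}(K^R\star K)\cong K_T^R\star K_T$, by applying Lemma~\ref{lem:rel_convolution} on both sides and observing that the triple-fibre-product square $X_T\times_T Y_T\times_T X_T\to X\times_P Y\times_P X$ is exact cartesian---again by pasting exact cartesian squares and invoking pair-faithfulness of $\phi_T$, which by Definition~\ref{def:bc} includes faithfulness with respect to $X\times_P Y\to P$.

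Combining (i)--(iii) identifies $\phi_{(T)}^{*}\eta_K$ with $\eta_{K_T}$ up to canonical isomorphism, whence the latter is an isomorphism and $\Phi_{K_T}$ is fully faithful. The main obstacle is compatibility (ii): identifying $K_T^R$ with $\phi_T^{*}K^R$ when $X\times_P Y$ is singular, as Lemma~\ref{lem:adjoint} no longer applies directly. The cleanest resolution is the abstract one, transporting the defining adjunction isomorphism $\Hom_Y(\Phi_K(-),-)\cong\Hom_X(-,\Phi_{K^R}(-))$ along $\phi_T^{*}$ using the base-change compatibility of $\Hom$-sheaves that itself follows from the Tor-independence hypotheses. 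Everything else is a mechanical composition of exact cartesian squares produced by the pair-faithfulness hypothesis.
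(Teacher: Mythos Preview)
The paper does not supply its own proof of this statement; it is quoted from \cite[Prop.~2.38]{Kuz07HPD} and used as a black box. So there is no in-paper argument to compare against.

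Your outline is the standard one and is essentially how Kuznetsov proceeds: translate full faithfulness into the kernel-level statement that the unit $\O_{\Delta_X}\to K^R\star K$ is an isomorphism, then pull this back along the faithful base change. Compatibilities (i) and (iii) go through exactly as you say, via Lemma~\ref{lem:3squares} and Lemma~\ref{lem:rel_convolution} together with the pair-faithfulness hypothesis. For (ii), however, your first justification does not quite work as written: from $\phi_T^{Y*}\Phi_K\cong\Phi_{K_T}\phi_T^{X*}$, passing to right adjoints yields $\Phi_{K^R}\,\phi^Y_{T*}\cong\phi^X_{T*}\,\Phi_{K_T^R}$, a relation between \emph{pushforwards}, which does not by itself give $\phi_T^*K^R\cong K_T^R$. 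The clean fix (and what your final paragraph is gesturing at, though phrased at the level of $\Hom$-sets rather than kernels) is to base-change the entire adjunction datum rather than a single intertwining relation: the unit $\O_{\Delta_X}\to K^R\star K$ and the counit $K\star K^R\to\O_{\Delta_Y}$ are morphisms of kernels in $D(X\times_P X)$ and $D(Y\times_P Y)$, and the triangle identities are equalities of such morphisms; all of these pull back along the faithful base change by your (i) and (iii), and the pulled-back data then exhibit $\phi_T^*K^R$ as a right-adjoint kernel for $K_T$. In particular the pulled-back unit \emph{is} the unit for $\Phi_{K_T}$, and the argument closes. With that amendment your proof is complete.
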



\begin{proposition} [{\cite[Thm. 5.6]{Kuz11Bas}}]\label{prop:bcsod} If $f:X \to P$ a map between smooth varieties, $D(X) = \langle \shA_1, \ldots, \shA_n \rangle$ is a semiorthogonal decompositions by admissible $P$-linear subcategories. Let $\phi: T \to P$ is a faithful base change for $f$, then we have a $T$-linear semiorthogonal decomposition
	$$D(X_T) = \langle \shA_{1T}, \ldots, \shA_{nT} \rangle $$
where $\shA_{kT}$ is the base-change category of $\shA_k$ to $T$, which depends only on $\shA_k$, i.e. independent of the embedding $\shA_k \subset D(X)$, and satisfies $\phi_T^*(a) \in \shA_{kT}$ for any $a \in \shA_{kT}$, and $\phi_{T*}(b) \in \shA_k$ for $b \in \shA_{kT}$ with proper support over $X$. 
\end{proposition}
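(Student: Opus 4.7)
The plan is to construct the base-changed components $\shA_{kT}$ via the projection functors of the given $P$-linear decomposition, and then verify semiorthogonality, generation, and independence using the faithful (equivalently, Tor-independent) base-change formulas recorded in Lem.~\ref{lem: Tor_ind} and Lem.~\ref{lem:3squares}.

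First, I would exploit $P$-linearity of the given decomposition: since $\shA_k \subset D(X)$ is $P$-linear and admissible, the associated projection functor $\pi_k: D(X)\to D(X)$ with image $\shA_k$ is $P$-linear. For $X$ smooth and projective over $P$, such a $P$-linear endofunctor can be represented by a Fourier--Mukai kernel $K_k \in D(X \times_P X)$; I would \emph{define}
$$
\shA_{kT} := \Im\bigl(\pi_{kT}\bigr) \subset D(X_T), \qquad \pi_{kT} := \Phi_{K_{kT}},
$$
where $K_{kT} := (\phi_T \times \phi_T)^* K_k \in D(X_T \times_T X_T)$ is the base-changed kernel. Proposition~\ref{prop:bcFM}, together with the fact that the kernels of $\pi_k\circ\pi_k \simeq \pi_k$ survive base change by faithfulness, shows $\pi_{kT}$ is again an idempotent projector, and the adjunctions descend so that $\shA_{kT}$ is admissible in $D(X_T)$. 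The compatibility $\phi_T^*\shA_k \subset \shA_{kT}$ and $\phi_{T*}\shA_{kT}\subset \shA_k$ (for proper support) then follow from the base change $\phi_T^* \circ \pi_k \simeq \pi_{kT}\circ \phi_T^*$ and its $\phi_{T*}$-adjoint counterpart.

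For semiorthogonality, take $a \in \shA_{kT}$, $b \in \shA_{lT}$ with $k > l$, and let $K_{kl} \in D(X\times_P X)$ be a kernel representing the $P$-linear functor $\shA_l \hookrightarrow D(X) \xrightarrow{\pi_k} \shA_k$, which is zero by assumption. By Lem.~\ref{lem:rel_convolution} applied in the relative setting over $P$, the composition $\pi_{kT}\circ(\text{inclusion of }\shA_{lT})$ has kernel $(K_{kl})_T$, and Tor-independence (guaranteed by faithfulness of $\phi$ for the pair $(X,X)$, which follows from faithfulness for $X$) makes this base change agree with the base change of the zero kernel, hence zero. For generation, I would apply the family of base-changed projectors to an arbitrary $E \in D(X_T)$: the original decomposition exhibits $\mathrm{id}_{D(X)}$ as an iterated cone of the $\pi_k$'s, and base-changing this filtration of the diagonal kernel on $X\times_P X$ along $\phi$ yields a filtration of $\mathrm{id}_{D(X_T)}$ whose successive cones are the $\pi_{kT}(E)\in \shA_{kT}$.

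The main obstacle is the intrinsic character of $\shA_{kT}$, namely that it depends only on the abstract $P$-linear category $\shA_k$ and not on the specific embedding $\shA_k \hookrightarrow D(X)$. To handle this I would give an equivalent description of $\shA_{kT}$ as the $D(T)$-enriched category of ``$T$-families in $\shA_k$'': an object of $\shA_{kT}$ is identified with an object $E\in D(X_T)$ together with the property that its image under $\phi_{T*}(-\otimes f_T^*F)$ lies in $\shA_k$ for every $F \in D(T)$ with proper support. Checking that both descriptions coincide reduces, via Lem.~\ref{lem:3squares} and the projection formula, to the defining property of $P$-linearity of $\shA_k$, after which independence of embedding is immediate. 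This intrinsic viewpoint is also what one would need to make the construction compatible with further base changes $T'\to T\to P$, a compatibility that will be used implicitly whenever the ``chessboard'' argument iterates base change.
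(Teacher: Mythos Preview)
The paper does not give its own proof of this proposition; it is simply quoted as \cite[Thm.~5.6]{Kuz11Bas} and used as a black box. So there is no ``paper's proof'' to compare against here --- only Kuznetsov's original argument in \cite{Kuz11Bas}.

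That said, your sketch is broadly in the spirit of how such results are proved, but there is one genuine gap worth flagging. You write that ``faithfulness of $\phi$ for the pair $(X,X)$ follows from faithfulness for $X$''. This is not automatic: faithfulness for the pair in the sense of Def.~\ref{def:bc} requires in addition that $\phi$ be faithful for $X\times_P X \to P$, i.e.\ that the square with $X_T\times_T X_T$ be Tor-independent. Faithfulness for $X$ alone does not imply this in general (e.g.\ $X\times_P X$ can be badly singular even when $X$ and $X_T$ are fine). Kuznetsov's actual argument in \cite{Kuz11Bas} avoids this issue by defining $\shA_{kT}$ \emph{intrinsically} first --- essentially as the minimal triangulated subcategory of $D(X_T)$ closed under $D(T)$-action and containing $\phi_T^*\shA_k$ --- and then proving semiorthogonality and generation by reducing to statements about $R\Hom$'s that can be checked after pushforward to $X$, rather than by base-changing kernels on $X\times_P X$. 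Your kernel-based route would work cleanly only under the stronger hypothesis that $\phi$ is faithful for the pair $(X,X)$, which is not what the proposition assumes.
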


\subsection{Exterior Products.} The exterior products of admissible subcategories of bounded derived categories of general algebraic varieties can be defined conveniently using base-change, cf. \cite[\S 5.5, 5.6]{Kuz11Bas}. Here we restrict ourselves to the case of product $X \times Y$ of two smooth quasi-projective varieties, say $X$, $Y$. Denote by $\pi_X$ (resp. $\pi_Y$) the projection from $X\times Y$ to $X$ (resp. $Y$). Notice $D(X)$ and $D(Y)$ are isomorphic to their respective categories of perfect complexes. Assume we have semiorthogonal decompositions by admissible subcategories
	$$D(X) = \langle \shA_1, \ldots, \shA_m\rangle, \quad \text{and} \quad D(Y) = \langle \shB_1, \ldots, \shB_n \rangle.$$ 
Define 
	$$\shA_i \boxtimes D(Y) : = \langle \pi_X^* \shA_i \otimes \pi_Y^* D(Y)\rangle \subset D(X \times Y),\quad  i=1,2,\ldots, m$$
by which we mean $\shA_i \boxtimes D(Y)$ is the minimal triangulated subcategory of $D(X \times Y)$ which is closed under taking summands and contains all objects of the form $\pi_X^* a \otimes \pi_Y ^* F$ for $a \in \shA_i$ and $F \in D(Y)$. We define $D(X) \boxtimes \shB_j$ similarly. Then the results of \cite[\S 5.6]{Kuz11Bas} can be summarized:

\begin{proposition} \label{prop:product}
 With the notations and assumptions as above, there are $Y$-linear and resp. $X$-linear semiorthogonal decompositions
	$$D(X\times Y) = \big\langle \shA_i \boxtimes D(Y)\big\rangle_{1 \le i \le m}, \quad\text{and}\quad D(X\times Y) = \big\langle D(X)\boxtimes \shB_j \big\rangle_{1 \le j \le n}.$$	
Furthermore, 
	$$D(X\times Y) = \big\langle \shA_i \boxtimes \shB_j \big\rangle_{1\le i \le m, 1 \le j \le n},$$
where the \textbf{exterior product} is defined by $\shA_i \boxtimes \shB_j : = \shA_i \boxtimes D(Y) \cap D(X) \boxtimes \shB_j \subset D(X \times Y)$. Moreover, we have semiorthogonal decompositions
	$$ \shA_i \boxtimes D(Y) = \langle \shA_i \boxtimes \shB_1, \ldots, \shA_i \boxtimes \shB_n \rangle \quad \text{and} \quad D(X) \boxtimes \shB_j =\langle \shA_1 \boxtimes \shB_j, \ldots, \shA_m \boxtimes \shB_j\rangle. $$
\end{proposition}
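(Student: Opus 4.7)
The plan is to build the three decompositions in sequence: first the one-sided ones via the base-change theorem Proposition \ref{prop:bcsod}, then a generator-based refinement within each piece, then the two-sided decomposition by splicing, and finally to reconcile the refined pieces with the intersection-based definition of $\shA_i \boxtimes \shB_j$.

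View $D(X) = \langle \shA_1, \ldots, \shA_m\rangle$ as a trivially $\pt$-linear SOD. Since $X$ and $Y$ are both smooth quasi-projective, $X \times Y$ has expected dimension $\dim X + \dim Y$, so by Lemma \ref{lemma-faithful-base-change}(2) the map $Y \to \pt$ is a faithful base change for $X \to \pt$. Proposition \ref{prop:bcsod} then delivers a $Y$-linear SOD $D(X\times Y) = \langle \shA_{1,Y}, \ldots, \shA_{m,Y}\rangle$, and the characterization of the base-changed category identifies $\shA_{i,Y}$ as the smallest $Y$-linear triangulated subcategory closed under summands containing $\pi_X^*\,\shA_i$, that is, with $\shA_i \boxtimes D(Y)$; swapping the roles of $X$ and $Y$ yields the analogous $X$-linear SOD. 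Now set $\shC_{i,j} := \langle \pi_X^*\,\shA_i \otimes \pi_Y^*\, \shB_j \rangle \subset D(X\times Y)$ for the subcategory generated by pure tensors (temporary notation). The K\"unneth-type isomorphism
$$R\Hom_{X\times Y}\big(\pi_X^*\, a_1 \otimes \pi_Y^*\, b_1,\, \pi_X^*\, a_2 \otimes \pi_Y^*\, b_2\big) \simeq R\Hom_X(a_1, a_2) \otimes_\kk R\Hom_Y(b_1, b_2)$$
implies that the $\shC_{i,j}$ form a semiorthogonal sequence in lexicographic order. For each $a \in \shA_i$ and $G \in D(Y)$, decomposing $G$ along the SOD of $D(Y)$ and tensoring with $\pi_X^*\,a$ exhibits $\pi_X^*\,a \otimes \pi_Y^*\,G$ as a successive cone of terms in the $\shC_{i,j}$, yielding $\shA_i \boxtimes D(Y) = \langle \shC_{i,1}, \ldots, \shC_{i,n}\rangle$. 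Splicing these refinements into the $Y$-linear SOD produces the combined decomposition $D(X\times Y) = \langle \shC_{i,j}\rangle$, and the symmetric argument starting from the $X$-linear SOD gives $D(X) \boxtimes \shB_j = \langle \shC_{1,j}, \ldots, \shC_{m,j}\rangle$.

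The main hurdle is the last step: identifying $\shC_{i,j}$ with the intersection $\shA_i \boxtimes \shB_j := \shA_i \boxtimes D(Y) \cap D(X) \boxtimes \shB_j$. The inclusion $\shC_{i,j} \subset \shA_i \boxtimes \shB_j$ is immediate from the generators. For the converse, take $F$ in the intersection and decompose it along the combined SOD into components $F_{i',j'} \in \shC_{i',j'}$. By uniqueness of SOD components, the partial sum $\bigoplus_{j'} F_{i',j'}$ (realized as the relevant successive cone of the refinement) equals the $i'$-th component of $F$ in the $Y$-linear SOD; the hypothesis $F \in \shA_i \boxtimes D(Y)$ forces this to vanish for $i' \ne i$, and semiorthogonality within $\shA_{i'} \boxtimes D(Y) = \langle \shC_{i',j'}\rangle_{j'}$ then forces each individual $F_{i',j'} = 0$ for $i' \ne i$. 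The symmetric argument using $F \in D(X) \boxtimes \shB_j$ together with the $X$-linear refinement eliminates the remaining components $F_{i,j'}$ with $j' \ne j$, so $F = F_{i,j} \in \shC_{i,j}$. With this identification in hand, the asserted refinement $\shA_i \boxtimes D(Y) = \langle \shA_i \boxtimes \shB_j\rangle_j$ and its symmetric counterpart become precisely the refinements established in the previous paragraph.
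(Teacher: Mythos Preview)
The paper does not give its own proof of this proposition; it simply records it as a summary of results from Kuznetsov's base-change paper \cite[\S 5.5--5.6]{Kuz11Bas}. Your argument is a correct direct expansion of exactly that framework: you invoke Proposition~\ref{prop:bcsod} (itself taken from \cite{Kuz11Bas}) for the one-sided decompositions, use K\"unneth for semiorthogonality of the tensor pieces, refine, splice, and then identify the tensor-generated pieces with the intersection description. This is the natural route and matches what the cited reference does.

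Two minor remarks. First, writing the $i'$-th component of $F$ as $\bigoplus_{j'} F_{i',j'}$ is notationally off: it is an iterated extension, not a direct sum. Your parenthetical already acknowledges this, and the logic (vanishing of the block forces vanishing of each graded piece by semiorthogonality) is fine. Second, when you conclude $\shA_i \boxtimes D(Y) = \langle \shC_{i,1}, \ldots, \shC_{i,n}\rangle$ from the fact that the pure-tensor generators lie in the right-hand side, you are implicitly using that the right-hand side is closed under summands; this follows once the full semiorthogonal decomposition of $D(X\times Y)$ is in place (components of an SOD are admissible, hence thick), so there is no gap, but it is worth noting the order of the argument.
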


\medskip
\subsection{Lefschetz decompositions} \label{sec:lef} The next two sections are devoted to Kuznetsov's theory of Homological Projective Duality. The HP-duality theory considers varieties admitting a special type of decompositions, called Lefschetz decompositions, introduced in \cite{Kuz07HPD}.
\begin{definition} Let $X$ be a variety and $\sO(1)$ be a line bundle on $X$. A \textbf{Lefschetz decomposition} of $D(X)$ is a semiorthogonal decomposition of the form
	\begin{equation}\label{lef:X} 
	D(X) = \langle \shA_0, \shA_1 (1), \ldots, \shA_{i-1}(i-1)\rangle,
	\end{equation}
	with $\shA_0 \supset \shA_1 \supset \cdots \supset \shA_{i-1}$ a descending sequence of admissible subcategories, where $(k)$ denotes $\otimes \sO(k)$. The number $i \in \ZZ_{>0}$ is called the \textbf{length} of the decomposition. The Lefschetz decomposition (\ref{lef:X}) is called \textbf{rectangular} if $\shA_{0} = \shA_{1} = \cdots = \shA_{i-1}$. 
\end{definition}

\begin{remark} A Lefschetz decomposition is totally determined by its first component $\shA_0$: all other $\shA_k$'s can be reconstructed from $\shA_0$ via $\shA_k = {}^\perp {\shA_0}(-k) \cap \shA_{k-1}$, cf. \cite[Lem. 2.18]{Kuz08Lef}. 
\end{remark}

We adopt the following notations from Kuznetsov \cite{Kuz07HPD}: let $\foa_k$ to be the right orthogonal of $\shA_{k+1}$ inside $\shA_{k}$ for $0 \le k \le i-1$. Then $\foa_k$'s are admissible subcategories, and 
	$$\shA_k = \langle \foa_{k}, \foa_{k+1}, \ldots, \foa_{i-1}\rangle.$$
Hence it makes sense to adopt conventions $\shA_k = \shA_0$ if $k \le 0$ and $\shA_k = 0$ if $k \ge i$. Let $\alpha_0: \shA_0 \to D(X)$ be the inclusion functor and $\alpha_0^*: D(X) \to \shA_0$ be its left adjoint. 

\begin{lemma} \label{lem:sod:A_0} Assume $1 \le k\le i$, then $\alpha_0^*(\foa_0(1)), \ldots, \alpha_0^*(\foa_{k-1}(k))$ is a semiorthogonal collection, and we have
	$$\langle \alpha_0^*(\foa_0(1)), \ldots, \alpha_0^*(\foa_{k-1}(k)), \shA_1(1), \ldots, \shA_{k}(k)\rangle = \langle \shA_0(1), \ldots, \shA_{k-1}(k)\rangle.$$
In particular, set $k=i$, then $\shA_0 = \langle \alpha_0^*(\foa_0(1)), \alpha_0^*(\foa_1(2)), \ldots, \alpha_0^*(\foa_{i-1}(i))\rangle$. This gives another semiorthogonal decomposition of $\shA_0$.
\end{lemma}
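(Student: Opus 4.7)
The plan is to prove the identity by induction on $k$, relying on a preliminary observation that $\alpha_0^*$ acts ``locally'' on the category $\foa_{j-1}(j)$. First, I would verify that for each $1 \le j \le i$,
$$\alpha_0^*(\foa_{j-1}(j)) \;=\; \LL_{\shA_1(1)} \circ \LL_{\shA_2(2)} \circ \cdots \circ \LL_{\shA_{j-1}(j-1)}(\foa_{j-1}(j)),$$
so that in particular $\alpha_0^*(\foa_0(1)) = \foa_0(1) \in \shA_0$. Since $\alpha_0^*$ coincides with the left mutation through $\shA_0^\perp = \langle \shA_1(1), \ldots, \shA_{i-1}(i-1)\rangle$, which factors as $\LL_{\shA_1(1)} \circ \cdots \circ \LL_{\shA_{i-1}(i-1)}$ by Lemma \ref{lem:mut}(\ref{lem:mut:sod}), it is enough to check that $\LL_{\shA_m(m)}$ acts trivially on $\foa_{j-1}(j)$ for $m \ge j$, equivalently $\Hom(\shA_m(m), \foa_{j-1}(j)) = 0$. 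For $m = j$ this is the defining orthogonality $\Hom(\shA_j, \foa_{j-1}) = 0$ of the SOD $\shA_{j-1} = \langle \foa_{j-1}, \shA_j \rangle$, twisted by $\sO(j)$. For $m \ge j+1$, the inclusion $\shA_m(m-j) \subset \shA_{m-j}(m-j)$ places it in a strictly later Lefschetz component than $\shA_0$, so $\Hom(\shA_m(m-j), \foa_{j-1}) \subset \Hom(\shA_{m-j}(m-j), \shA_0) = 0$.

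I would then induct on $k$. The base case $k=1$ is immediate: $\shA_0(1) = \langle \foa_0(1), \shA_1(1)\rangle = \langle \alpha_0^*(\foa_0(1)), \shA_1(1)\rangle$. For the inductive step, using $\shA_{k-1}(k) = \langle \foa_{k-1}(k), \shA_k(k)\rangle$ together with the inductive hypothesis yields
$$\langle \shA_0(1), \ldots, \shA_{k-1}(k)\rangle = \langle \alpha_0^*(\foa_0(1)), \ldots, \alpha_0^*(\foa_{k-2}(k-1)), \shA_1(1), \ldots, \shA_{k-1}(k-1), \foa_{k-1}(k), \shA_k(k)\rangle.$$
I would then mutate $\foa_{k-1}(k)$ one step at a time past $\shA_{k-1}(k-1), \shA_{k-2}(k-2), \ldots, \shA_1(1)$ to the left, invoking Lemma \ref{lem:mut}(\ref{lem:mut:span}) at each step. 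The cumulative effect replaces $\foa_{k-1}(k)$ by $\LL_{\shA_1(1)} \circ \cdots \circ \LL_{\shA_{k-1}(k-1)}(\foa_{k-1}(k))$, which by the preliminary observation equals $\alpha_0^*(\foa_{k-1}(k))$; the resulting SOD is exactly the LHS.

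For the ``in particular'' statement, take $k = i$: by convention $\shA_i = 0$, so the RHS reduces to the Lefschetz SOD twisted by $\sO(1)$, namely $\langle \shA_0(1), \shA_1(2), \ldots, \shA_{i-1}(i)\rangle = D(X)$. The LHS then has the form $\langle \alpha_0^*(\foa_0(1)), \ldots, \alpha_0^*(\foa_{i-1}(i)), \shA_1(1), \ldots, \shA_{i-1}(i-1)\rangle$, whose leading $i$ components all lie in $\shA_0$. Comparing with the original Lefschetz decomposition $D(X) = \langle \shA_0, \shA_1(1), \ldots, \shA_{i-1}(i-1)\rangle$ and using uniqueness of SOD components forces $\shA_0 = \langle \alpha_0^*(\foa_0(1)), \ldots, \alpha_0^*(\foa_{i-1}(i))\rangle$. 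The main bookkeeping concern throughout is maintaining semiorthogonality at each intermediate stage, but this is automatic by Lemma \ref{lem:mut}(\ref{lem:mut:span}); once the local description of $\alpha_0^*$ on $\foa_{j-1}(j)$ is established, everything reduces to routine mutation combinatorics.
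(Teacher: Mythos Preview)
Your proof is correct and follows essentially the same approach as the paper's: both arguments identify $\alpha_0^*(\foa_{j-1}(j))$ with $\LL_{\shA_1(1)}\circ\cdots\circ\LL_{\shA_{j-1}(j-1)}(\foa_{j-1}(j))$ and then use the mutation property Lemma~\ref{lem:mut}(\ref{lem:mut:span}) to reorder the refined Lefschetz sequence. The only difference is organizational---the paper performs all mutations in one pass starting from $\langle \foa_0(1),\shA_1(1),\foa_1(2),\shA_2(2),\ldots,\foa_{k-1}(k),\shA_k(k)\rangle$ and identifies the mutated terms with $\alpha_0^*(\foa_{j-1}(j))$ at the end via the twisted decomposition $D(X)(1)$, whereas you isolate that identification as a preliminary observation and then induct on $k$; the underlying mechanism is identical.
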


\begin{proof} For a proof using Serre functor, cf. \cite[Lem. 4.3]{Kuz07HPD}. We give another proof as an application of properties of mutation functors. By properties (\ref{lem:mut:sod}) and (\ref{lem:mut:span}) of Lem. \ref{lem:mut},
	\begin{align*}  &\langle \shA_0(1), \ldots, \shA_{k-1}(k)\rangle = \langle \foa_0(1), \shA_1(1), \foa_1(2), \shA_2(2), \ldots, \foa_{k-1}(k), \shA_k(k)\rangle \\
		=&  \langle \foa_0(1), \LL_{\shA_1(1)}\foa_1(2),\shA_1(1),  \shA_2(2), \foa_2(3),  \ldots, \foa_{k-1}(k), \shA_k(k) \rangle \\
		=& \langle  \foa_0(1), \LL_{\shA_1(1)} \foa_1(2), \LL_{\langle \shA_1(1),\shA_2(2) \rangle} \foa_2(3), \shA_1(1), \shA_2(2), \shA_3(3) \ldots,  \foa_{k-1}(k), \shA_k(k)\rangle \\
		=& \langle  \foa_0(1), \LL_{\shA_1(1)} \foa_1(2),  \ldots, \LL_{\langle \shA_1(2), \ldots, \shA_{k-1}(k-1)\rangle}\foa_{k-1}(k), \shA_1(1), \shA_2(2), \ldots, \shA_{k}(k) \rangle
	\end{align*}
	Notice since $\alpha_0^* = \LL_{\langle \shA_1(1), \ldots, \shA_{i-1}(i-1)\rangle}$, and we have semiorthogonal decomposition
		$$D(X) = D(X)(1) =  \langle \foa_0(1), \shA_1(1), \foa_1(2), \shA_2(2), \ldots, \foa_{i-1}(i)\rangle.$$
	Therefore $ \foa_0(1) = \alpha_0^* ( \foa_0(1))$, $ \LL_{\shA_1(1)} \foa_1(2) = \alpha_0^*(\foa_1(2))$, $\cdots$, and $\LL_{\langle \shA_1(2), \ldots, \shA_{k-1}(k-1)\rangle}\foa_{k-1}(k) = \alpha_0^*(\foa_{k-1}(k))$.
\end{proof}

Using the new decomposition of $\shA_0$, we can build up a series of ascending subcategories\footnote{The relation between our notation $\shB^k$'s and Kuznetsov's notation $\shB_{l}$'s in \cite{Kuz07HPD} are $\shB^k \equiv \shB_{N-k-1}$.}
	\begin{equation}
		\shB^k :=  \langle \alpha_0^* (\foa_0(1)), \ldots, \alpha_0^* (\foa_{k-1}(k))\rangle \subset \shA_0, \quad \text{for} \quad k =1, 2, 3, \ldots
	\end{equation} \label{def:B^k}
	Then $\shB^1 \subset \shB^2 \subset \cdots \subset \shB^{N-1} = \shA_0$. We denote $\shB^{k} = 0$ if $k \le 0$, and $\shB^k = \shA_0$ if $k\ge N-1$. Notice we have isomorphism
	$$\shB^k \simeq \langle \foa_0, \foa_1 \ldots, \foa_{k-1}\rangle,$$
hence we can regard $\shB^k$ as the complementary of $\shA_{k}$ inside $\shA_0$. See the first diagram in Figure \ref{Figure:lf_2}.

\subsection{Hyperplane Section and Homological Projective Duality.} HP-duality theory is mainly concerned about the derived category of linear sections of a given projective variety. One important idea, similar to the classical topological theory of Lefschetz, is to consider the universal family of hyperplane sections of a variety over the dual projective space, instead of only one hyperplane section. Let $X$ be a smooth projective variety, with a morphism to projective space $f: X \to \PP V$, and $X$ has a Lefschetz decomposition (\ref{lef:X}) with respect to $\sO_X(1) : = f^* \sO_{\PP V} (1)$. Note that $f$ is not necessarily an embedding. We will make the following assumption in all considerations of HP-duality theory:

\medskip\noindent \emph{Assumption $(\dagger)$.} For $f: X \to \PP V$
with a Lefschetz decomposition (\ref{lef:X}),
we assume $\overline{ f(X) }$ is of dimension at least $2$, and
the length of~(\ref{lef:X}) is less than $\dim_k V$.
  
\begin{definition} The \textbf{universal hyperplane section} $\shH_X$ is defined to be the subscheme $X \times_{\PP V} Q  \subset X \times \PP V^*$, where $Q \subset \PP V \times \PP V^*$ is the incidence quadric $Q=\{(x,H) ~|~ x \in H\}$.
\end{definition}

Consider the projection $\pi_X: \shH_X \to X$, then it is easy to see $\shH_X$ is a smooth projective variety of dimension $\dim X + N - 2$ (where $N = \dim_\kk V$). From Prop. \ref{prop:product}, we have $\PP V^*$-linear semiorthogonal decomposition
	$$D(X \times \PP V^*) = \langle \shA_0 \boxtimes D(\PP V^*), \shA_1(1) \boxtimes D(\PP V^*), \cdots, \shA_{i-1}(i-1)\boxtimes D(\PP V^*) \rangle.$$

\begin{lemma} \label{sod:H_X} Denote $i_{\shH_X}: \shH_X \to X \times \PP V^*$ the inclusion morphism and $i_{\shH_X}^*$ the derived pullback functor. Then $i_{\shH_X}^*$ is fully faithful on the subcategories $\shA_1 \boxtimes D(\PP V^*)$, $\ldots$,  $ \shA_{i-1}(i-1)\boxtimes D(\PP V^*) $, and the images induce a $\PP V^*$-linear decomposition
	\begin{equation} \label{sod:H_X}
	D(\shH_X) = \langle \sC,  \shA_1(1)\boxtimes D(\PP V^*), \cdots, \shA_{i-1}(i-1)\boxtimes D(\PP V^*)\rangle,
	\end{equation}
where the $\PP V^*$-linear subcategory $\sC\subset D(\shH_X)$ is defined as the right orthogonal of the images.
\end{lemma}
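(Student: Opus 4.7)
My plan is to reduce every Hom computation on $\shH_X$ to the ambient product $X \times \PP V^*$ via the Koszul triangle of the divisor, and then extract all the needed vanishings from the product Lefschetz SOD. Since $\shH_X$ is cut out by a section of $\sO(1,1) := \sO_X(1) \boxtimes \sO_{\PP V^*}(1)$, tensoring the resulting short exact sequence $0 \to \sO(-1,-1) \to \sO \to i_{\shH_X*}\sO_{\shH_X} \to 0$ with any $G \in D(X \times \PP V^*)$ and applying $R\Hom(F,-)$ produces, via the $(i_{\shH_X}^*, i_{\shH_X*})$-adjunction, the distinguished triangle
$$R\Hom(F, G(-1,-1)) \to R\Hom(F, G) \to R\Hom(i_{\shH_X}^* F, i_{\shH_X}^* G) \xrightarrow{[1]}.$$
Thus every Hom on $\shH_X$ is pinned down by two Homs on $X \times \PP V^*$.

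For semi-orthogonality and fully-faithfulness, take $F \in \shA_k(k) \boxtimes D(\PP V^*)$ and $G \in \shA_l(l) \boxtimes D(\PP V^*)$ with $1 \le l \le k \le i-1$. The middle term $R\Hom(F,G)$ vanishes for $k > l$ directly from the product SOD of Prop.~\ref{prop:product}. The key observation for the left term is that $G(-1,-1) \in \shA_l(l-1) \boxtimes D(\PP V^*)$, and since the Lefschetz filtration provides $\shA_l \subset \shA_{l-1}$, one has $\shA_l(l-1) \subset \shA_{l-1}(l-1)$; applying the product SOD once more gives $R\Hom(F, G(-1,-1)) = 0$ as soon as $k > l-1$, i.e., $k \ge l$. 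Hence for $k > l$ both outer terms of the triangle vanish, yielding the semi-orthogonality of the images $i_{\shH_X}^*(\shA_k(k) \boxtimes D(\PP V^*))$ in the order prescribed by (\ref{sod:H_X}); and for $k = l \ge 1$ only the left term vanishes, giving $R\Hom(F,G) \xrightarrow{\sim} R\Hom(i_{\shH_X}^* F, i_{\shH_X}^* G)$, which is precisely the desired fully-faithfulness.

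It remains to upgrade this to the semi-orthogonal decomposition (\ref{sod:H_X}), for which I would verify admissibility. Each $\shA_k(k) \boxtimes D(\PP V^*)$ is admissible in $D(X \times \PP V^*)$ by Prop.~\ref{prop:product}, and because $\shH_X$ is a smooth divisor inside the smooth ambient $X \times \PP V^*$, the closed embedding $i_{\shH_X}$ has both adjoints: the right adjoint is $i_{\shH_X*}$, while the left adjoint is $i_{\shH_X*}(-) \otimes \sO(1,1)[-1]$, obtained from the formula $i_{\shH_X}^! = i_{\shH_X}^*(-) \otimes \sO_{\shH_X}(1,1)[-1]$ for a smooth divisor. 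Composing with the adjoints of the inclusion $\shA_k(k) \boxtimes D(\PP V^*) \hookrightarrow D(X \times \PP V^*)$ shows each fully-faithful image $i_{\shH_X}^*(\shA_k(k) \boxtimes D(\PP V^*))$ is admissible in $D(\shH_X)$, so their triangulated span is admissible as well. Defining $\sC$ as its right orthogonal then tautologically produces (\ref{sod:H_X}); $\PP V^*$-linearity of $\sC$ follows because each image is $\PP V^*$-linear (as $i_{\shH_X}^*$ is compatible with the $\pi_{\PP V^*}$-action) and the right orthogonal of a $\PP V^*$-linear family is $\PP V^*$-linear. The only nontrivial technical point is bookkeeping the left adjoint of $i_{\shH_X}^*$ in the smooth-divisor setting; all other steps reduce to direct applications of the Lefschetz semi-orthogonality.
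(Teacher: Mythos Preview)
Your proof is correct and follows essentially the same approach as the paper: the paper's one-line proof refers precisely to the cone formula (\ref{cone-for-Hom}) obtained from the divisor short exact sequence (with $S = \PP V^*$), which is exactly the distinguished triangle you wrote down, and the required vanishings are read off from the Lefschetz decomposition in the same way. Your additional paragraphs on admissibility (via the explicit left adjoint $i_{\shH_X*}(-\otimes \sO_{\shH_X}(1,1))[-1]$ of $i_{\shH_X}^*$) and on $\PP V^*$-linearity of $\sC$ spell out details the paper leaves implicit, but the core argument is identical.
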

Here we use the same notations $\shA_k(k)\boxtimes D(\PP V^*)$ to denote their fully faithful images inside $D(\shH_X)$ under the pullback functor $i_{\shH_X}^*$, for simplicity of notations. 
\begin{proof} The result follow easily from computing $R\Hom_{\shH_X}$ in terms of $R\Hom_{X\times \PP V^*}$, cf. \cite[Lem. 5.3]{Kuz07HPD}, or the more general situation (\ref{cone-for-Hom}) where we can take $S = \PP V^*$.
\end{proof}

The derived category $\sC$ in the decomposition (\ref{sod:H_X}) is called the \textbf{HP-dual category} of $X\to \PP V$ with respect to (\ref{lef:X}). As a $\PP V^*$-linear category the HP-dual always exists. The most interesting case is when the category is realized \textbf{geometrically}. This leads to the concept of Homological Projective Duality.

\begin{definition} A variety \footnote{for generalizations to noncommutative schemes, see \S \ref{sec:ncHPD}.} $Y$ together with a morphism $Y \to \PP V^*$ is called \textbf{Homological Projective dual} or \textbf{HP-dual} to $X \to \PP V$ with respect to the Lefschetz decomposition (\ref{lef:X}), if there exists an object $\shE_X \in D(\shH_X \times_{\PP V^*} Y)$ such that the Fourier-Mukai transform $\Phi_{\shE_X} : D(Y) \to D(\shH_X)$ gives an equivalence of categories $D(Y)\simeq \sC \subset D(\shH)$ where $\sC$ is HP-dual category in (\ref{sod:H_X}).
\end{definition}

The simplest examples of HP-dual are given by dual linear projective subspaces.

\begin{example}[Dual linear subspaces] \label{ex:dual-linear-sections} Let $\PP^{l-1} \simeq L \subset \PP V^*$ be a projective linear subspace, $1 \le l \le N$, with the Beilinson decomposition (\ref{sod:beilinson}). Consider the universal hyperplane $\shH_L \subset \PP V \times L$ for $L$. The projection from $\shH_L$ to $\PP V^*$ has fiber dimension $l-2$ except from the base loci $L^\perp$, over which the fibers are the whole $L$. Recall the loci $L^\perp \subset \PP V$ is called \textbf{dual linear subspace} of $L$, and is defined by 		
$$L^\perp := \{ x \in \PP V ~|~ s(x) = 0 \quad\text{for any}\quad s \in L\}.$$
Therefore we have diagram
\begin{equation*}
	\begin{tikzcd}
	L^\perp \times L \ar{d}{p} \ar[hook]{r}{j} & \shH_L \ar{d}{\pi} \ar[hook]{r}{\iota} & \PP(V) \times L \ar{ld}\\
	L^\perp \ar[hook]{r}         & \PP(V) 
	\end{tikzcd}
\end{equation*}
The projection $\pi: \shH_L \to \PP V$ is like a 'combination' of blowing up and projective bundle. It is well known from this geometry we have Orlov type results: $j_*\,p^* : D(L^\perp) \to D(\shH_L)$ is fully faithful, and there is a decomposition
	$$D(\shH_L) = \langle j_*\,p^* D(L^\perp), \pi^*D(\PP V) (0,1), \ldots ,  \pi^*D(\PP V) (0,l-1)\rangle.$$
Cf. \cite[Prop. 3.6]{RT15HPD}, \cite[Thm. 8.2]{Kuz07HPD} or Prop. \ref{prop:orlov_rel}. Notice the functor $j_*\,p^*$, therefore the whole decomposition, is $\PP V$-linear, since the Fourier-Mukai kernel of $j_*\,p^*$ is $\sO_{L^\perp \times_{\PP V} \shH_L} \in D(L^\perp \times_{\PP V} \shH_L)$. Therefor $L^\perp$ is the HP-dual of $L$ with respect to Beilinson decomposition.

\end{example}

The main result of HP-duality theory is the following. Assume the image of $X \to \PP V$ is not contained in any hyperplane (i.e. nondegenerate).
\begin{theorem}[HP-duality theorem, Kuznetsov {\cite[Thm. 6.3]{Kuz07HPD}}] \label{thm:HPD} Suppose $Y \to \PP V^*$ is HP-dual to $X \to \PP V$ with respect to (\ref{lef:X}). Then
	\begin{enumerate}[leftmargin=*]
		\item There exists a semiorthogonal decomposition
			\begin{equation}\label{lef:Y} D(Y) = \langle \shB^0(1-N), \shB^1(2-N), \cdots \shB^{N-2}(-1), \shB^{N-1} \rangle.
			\end{equation}
			with $0 = \shB^0 \subset \shB^1 \subset \cdots \subset \shB^{N-1} = \shA_0$ defined by (\ref{def:B^k}) \footnote{This type of decomposition is called a \textbf{dual Lefschetz decomposition}. 
	Taking $R\sHom_Y(-,\sO_Y)$, one can canonically obtain a Lefschetz decomposition from a dual one, and vice visa. Cf. also \S \ref{sec:duality}.}. In particular, $Y$ is smooth.
		\item For any pair of dual projective linear subspace $L\subset \PP V$ and $L^\perp \subset \PP V^*$, if they are \textbf{admissible}, i.e. the intersections
			$$X_{L^\perp} : = X \times_{\PP V} L^\perp \quad \text{and} \quad Y_L : = Y \times_{\PP V^*} L$$
			are of expected dimensions. Then there are semiorthogonal decompositions
				\begin{align*}
				D(X_{L^\perp}) & = \langle \sC_L, \shA_l(l),  \ldots, \shA_{i-1}(i-1) \rangle, \\
				D(Y_L) & = \langle \shB^1(2-l), \ldots, \shB^{l-2}(-1), \shB^{l-1}, \sC_L \rangle
				\end{align*}
			with a same triangulated category $\sC_L$ as the 'primitive' parts.
	\end{enumerate}
\end{theorem}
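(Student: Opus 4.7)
The plan is to derive both statements from the chess-game strategy announced in \S\ref{sec:generation}, specialized to the case where the auxiliary HP-dual pair is $(L, L^\perp)$ equipped with Beilinson decompositions (cf.\ Example~\ref{ex:dual-linear-sections}). Part~(2) corresponds to a pair with $1 \le l \le N-1$, while part~(1) is the degenerate case $L = \PP V^*$, $L^\perp = \emptyset$ (so $l = N$, $X_{L^\perp} = \emptyset$, and the primitive part collapses), which is tantamount to proving that the $\shB^k$'s generate $\sC \simeq D(Y)$. I would treat both uniformly.

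First, I would construct a common ambient category. Using Prop.~\ref{prop:bcFM} and Prop.~\ref{prop:bcsod}, I base-change the $\PP V^*$-linear decomposition of $D(\shH_X)$ along $L \hookrightarrow \PP V^*$ and, dually, base-change the $\PP V$-linear decomposition of the universal hyperplane family of $L$ along $X \to \PP V$. The admissibility hypothesis (that $X_{L^\perp}$ and $Y_L$ have expected dimensions), combined with Lem.~\ref{lemma-faithful-base-change} and Lem.~\ref{lem:3squares}, guarantees both base changes are faithful. Both sides then embed fully faithfully into a common triangulated category---the ``chessboard''---whose rows are indexed by the blocks of the Lefschetz decomposition of $X$ and whose columns by the blocks of the Beilinson decomposition of $L$.

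Next, I would establish the fully-faithful half of the statement by mutation bookkeeping. Using Lem.~\ref{lem:sod:A_0} to re-express $\shA_0$ via the $\alpha_0^*\,\foa_k(k{+}1)$'s, each candidate component $\shA_k(k)|_{X_{L^\perp}}$ and $\shB^k(1-l+k)|_{Y_L}$ can be located as an explicit square of the chessboard. Iterating the mutation identities of Lem.~\ref{lem:mut} together with Serre-functor symmetry (Lem.~\ref{lem:serre}), I track how the projection functors move these squares and verify that (a) the two ``ambient'' collections are semiorthogonal inside their respective derived categories, and (b) the images of the putative primitive parts land in a common subcategory of the chessboard, yielding a fully-faithful functor between them.

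The hard part, as emphasized in \S\ref{sec:generation}, is generation: once the ambient blocks and the primitive piece are identified, one must show that together they span $D(X_{L^\perp})$ and $D(Y_L)$. My plan is the zigzag argument: take $b$ in the right orthogonal of the constructed collection inside $D(Y_L)$, translate the orthogonality $R\Hom(\shB^k(1-l+k), b) = 0$ into a pattern of vanishings along one diagonal of the chessboard, and then propagate these vanishings row-by-row and column-by-column using the Beilinson resolution of the diagonal of $L$ together with the twist by $\sO(1)$. The cascade should force $b$ to be orthogonal to a generating collection of the common ambient category, hence $b = 0$. Once generation is in hand, the derived equivalence of primitive parts follows formally from the matching of orthogonals, and setting $l = N$ yields the dual Lefschetz decomposition of $D(Y)$ in part~(1).
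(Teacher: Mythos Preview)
Your proposal is correct and follows essentially the same approach as the paper: base-change to a common ambient $D(\shH)$, establish fully-faithfulness via mutation bookkeeping on the chessboard, then prove generation by the zigzag/staircase argument, with part~(1) recovered as the degenerate case $S=\PP V^*$, $T=\emptyset$. One technical correction: the tool that propagates vanishings in the generation step is not the Beilinson resolution of the diagonal of $L$ but the vanishing Lemma~\ref{lem:van}, which expresses $R\Hom_\shH$ as a cone of a twisted and an untwisted term via the fact that $\shH\subset X\times L$ is a $(1,1)$-divisor---it is precisely the $(-1,-1)$ shift in this cone that produces the staircase pattern (cf.\ Rmk.~\ref{rmk:staircase}).
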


Notice the linear sections $X_{L^\perp}$ and $Y_L$ need not to be smooth\footnote{actually when they are singular, there is an equivalence of their 'singularity categories', cf. \cite[\S 7.5]{Kuz07HPD}.}; They are only required to be dimensional transverse. The HP-duality is a reflective relation: $X \to \PP V$ is HP-dual to $Y \to \PP V^*$ with respect to (the dual of) decomposition (\ref{lef:Y}), cf. \cite[Thm. 7.3]{Kuz07HPD} and \S \ref{sec:duality}.

The power of the theorems is at least twofold. First it produces interesting semiorthogonal decompositions of all (complete) linear sections of the algebraic varieties. Almost all known examples of semiorthogonal decompositions of algebraic varieties fit into the framework of HP-duality or its variants, cf. \cite{Kuz14SODinAG}. Second it gives striking relations between derived categories of different varieties. See the rich amount of literatures by Kuznetsov and others listed in the introduction for various applications of HP-duality theorems (and its variants).

\begin{remark} The classical projective dual of a variety always exists, but usually singular. On the contrary, the HP-dual of $f: X \to \PP V$ always exists as a category, but it is very hard for it to be a variety. However, in many important situations the HP-dual is very 'close' to an honest variety and we still have geometric applications of the HP-dual theorem (see discussions in \cite{RT15HPD, Kuz14SODinAG} etc, also Ex. \ref{Pf-Gr}, and \S \ref{sec:ncHPD}).

The relations between HP-dual variety and the classical projective dual variety is described by \cite[Theorem 7.9]{Kuz07HPD}: the critical value of the HP-dual variety $Y \to \PP V^*$ of $X$ is classical projective dual of $X$. This hints the following heuristic but instructive principle: {\it the HP-dual is usually (or at leas closely related to) the {\em (noncommutative crepant) resolutions of singularities} (\cite{Kuz08Lef, Vdb04, BLVdb10, BLVdb11, SVdb15}) of the classical projective duals}. See for example \cite{Kuz06HPDGr, BBF16}.

The other way to construct HP-duals (also closely related to the above principle) is to use the thread of ideas of Gauge Linear Sigma Models (GLSMs), Landau-Ginzburg (LG) models, and Variation of GIT quotients (VGIT), mentioned in the introduction. See the work of Ballard et al. \cite{BDF+} for a powerful and systematic approach to construct HP-duals as LG models, and see their later work \cite{BDF+14}, Halpern-Leistner \cite{H-L15}, and works by Addington, Donovan, Rennemo, Segal, Thomas \cite{ADS15, ST14, Ren15, ReEd16} for related constructions and generalizations. 
\end{remark}

\begin{example}[Pfaffian-Grassmannian duality, {\cite{Kuz06HPDGr, BC09, ADS15}}]\label{Pf-Gr} Let $W$ be a $\kk$-linear space of dimension $m$ with $m = 6,7$. Then $X=\Gr(2,W)$ with the Pl\"ucker embedding $X \to \PP(\wedge^2 W)$ has a natural Lefschetz decomposition $D(X) = \langle \shA_0, \shA_1(1), \ldots, \shA_{m-1}(m-1) \rangle$, where 
	\begin{align*}
		&\shA_0 = \ldots = \shA_{6} = \langle S^2 \sU, \sU, \sO \rangle, & \quad  \text{if} \quad m = 7, \\
		& \shA_0 = \shA_1 = \shA_{2} =  \langle S^2 \sU, \sU, \sO \rangle, ~\shA_3 = \shA_4 = \shA_5 = \langle \sU, \sO \rangle, & \quad \text{if} \quad  m = 6,
	\end{align*}
where $\sU$ is the tautological rank $2$ bundle on $\Gr(2,W)$, and $S^2 \sU$ is the symmetric product.
Then \cite{Kuz06HPDGr} shows $X$ with the above decomposition is HP-dual to a non-commutative resolution $(Y,\sR_Y)$ of the Pfaffian variety $Y=\Pf(2 \lfloor 2m/2 \rfloor - 2, W) \subset \PP(\wedge^2 W^*)$, where $\Pf(2k, W)$ is the loci inside $\PP(\wedge^2 W^*)$ where the skew-symmetric form has rank smaller than or equal to $2k$ . If $m=7$, and we take $L$ to be a generic $\PP^6 \subset \PP(\wedge^2 W^*)$, then we get two Calabi-Yau threefolds $X_{L^\perp}$ and $Y_L$, which are non-birational equivalent but are derived equivalent $D(X_{L^\perp}) \simeq D(Y_L)$. This is the first example of a pair of provable non-birational Calabi-Yau threefolds which are derived equivalent. If $m=6$, and $L$ is a generic $\PP^5 \subset \PP(\wedge^2 W^*)$, then we get a decomposition of Pfaffian cubic fourfold $Y_L$ with the 'primitive part' of the decomposition equivalent to the derived category of the $K3$ surface $X_{L^\perp}$.
\end{example}

\section{Main results}\label{sec:main}
The goal of this chapter to prove Thm. \ref{thm:HPDgen}, which generalizes HP-duality theorem to the situation when $(L,L^\perp)$ is replaced by any other HP-dual pair $(S,T)$. The strategy of proving Thm. \ref{thm:HPDgen} is first to use base-change to put everything in a common playground, and then play the game of mutations 
in \cite{RT15HPD}. 

\subsection{Preparations.}\label{sec:set up} Let us first set up certain notations. We start with a HP-dual pair $(X,Y)$, where $X$ is a smooth projective variety, with a morphism $f: X \to \PP V$, and a Lefschetz decomposition (\ref{lef:X}) with respect to the line bundle $\sO_X(1) = f^* \sO_{\PP V} (1)$ satisfying assumption $(\dagger)$, and $Y \to \PP V^*$ is the HP-dual, i.e. we have $g:Y \to \PP V^*$ a morphism from a projective variety $Y$, with an equivalence of category $D(Y) \simeq \sC$ (where $\sC$ is the HP-dual category defined by (\ref{sod:H_X})), given by a Fourier-Mukai kernel $\shE_X \in D(\shH_X \times_{\PP V^*} Y)$.

Now consider another HP-pair $(S,T)$. Let $S$ be a smooth projective, with a morphism $g:S\to \PP V^*$, and a Lefschetz decomposition 
	  \begin{equation} \label{lef:S}
            D(S) = \langle \shC_{0}, \shC_1(1), \ldots, \shC_{l-1}(l-1)\rangle, \quad \shC_0 \supset \shC_1 \supset \cdots \supset \shC_{l-1}
   	 \end{equation}
with respect to the line bundle $\sO_S(1) = g^* \sO_{\PP V^*}(1)$. We also assume $g:S \to \PP V^*$ satisfies assumption $(\dagger)$, i.e. $\dim_k \overline{g(S)} \ge 2$, and $l \le N -1$. As in Sec. \ref{sec:lef},  we define $\foc_k$ to be the right orthogonal of $\shC_{k+1}$ inside $\shC_{k}$ for $k = 0,1,\ldots, l-1$, then
	\begin{equation} \label{def:C_k}
	\shC_k = \langle \foc_k, \foc_{k+1}, \ldots, \foc_{l-1}\rangle.
	\end{equation}
Lem. \ref{lem:sod:A_0} holds for $S$, with $\shA_k$ (resp. $\foa_{k}$) for $0 \le k \le i-1$ replaced by $\shC_k$ (resp. $\foc_{k}$) for $0 \le k \le l-1$. We also define
    \begin{equation} \label{def:D^k}
        \shD^k =  \langle \gamma_0^* (\foc_0(1)), \ldots, \gamma_0^* (\foc_{k-1}(k))\rangle \subset \shC_0, \quad \text{for} \quad k = 1,2, 3, 4, \ldots
    \end{equation}
where $\gamma_0: \shC_0 \to D(S)$ is the inclusion functor and $\gamma_0^*: D(S) \to \shC_0$ is its left adjoint. See the lower diagram of Figure \ref{Figure:lf_2}. We set $\shD^k = 0$ if $k \le 0$, and $\shD^k = \shC_0$ if $k \ge l$.

Let $T \to \PP V$ be a HP-dual to $S \to \PP V^*$ with respect to (\ref{lef:S}). This means there exists $\shE_S \in D(T \times_{\PP V} \shH_S)$ where $\shH_S \subset \PP V \times S$ is the universal hyperplane section for $S$, such that the Fourier-Mukai functor $ \Phi_{\shE_S} : D(T) \to D(\shH_S)$ is fully faithful, and there exists a $\PP V$-linear semi-orthogonal decomposition
	\begin{equation}\label{sod:H_S}
	D(\shH_S) =  \langle \Phi_{\shE_S} (D(T)),~ D(\PP V)\boxtimes  \shC_1(1), \ldots, D(\PP V) \boxtimes \shC_{l-1}(l-1) \rangle.
	\end{equation}

	From Thm. \ref{thm:HPD} we know $D(Y)$ (resp. $D(T)$) can be built up from $\shB^k$'s (resp. $\shD^k$'s). But we will not use this result. In fact we will reprove this in our main result Thm. \ref{thm:HPDgen}. By Lem. \ref{lem:serre}, we also have decomposition
	\begin{equation*}
	D(\shH_S) =  \Big\langle  \langle S_{\shH_S}( D(\PP V)\boxtimes  \shC_k(k)\rangle_{k=1,\ldots, l-1}~,~\Phi_{\shE_S} (D(T)) \Big\rangle,
	\end{equation*}
	where  $S_{\shH_S} := S_{D(\shH_S)} $ is the Serre functor of $D(\shH_S)$. Since $\shH_S \subset \PP V \times S$ is a smooth divisor in $|\sO_{\PP V \times S}(1,1)|$, then $S_{\shH_S} =\otimes \omega_{\shH_S} [\dim \shH_S]$, $S_{D(S)} = \otimes \omega_S [\dim S]$. But $\omega_{\shH_S} = \omega_{\PP V}\boxtimes \omega_S (1,1)|_{\sH_S}$, and triangulated subcategories are stable under shift, so we have
	$$  S_{\shH_S}( D(\PP V)\boxtimes  \shC_k(k)) = D(\PP V) \boxtimes S_{D(S)}(\shC_k)(k+1), \quad \text{for} \quad k=1, 2, \ldots, l-1.$$

	If the decomposition of $D(S)$ is rectangular $\shC_{0} = \ldots = \shC_{l-1} = \shC$, then $S_{D(S)} (\shC) = \shC(-l)$. This motivates us to introduce the following notation,	
	\begin{equation}\label{def:C^L} 
	\shC_{k}^L :=  S_{D(S)} (\shC_k) \otimes \sO_S(l), \quad \text{for} \quad k = 1, 2, \ldots, l-1.
	\end{equation}
Then $S_{D(S)}(\shC_k) = \shC^L_k (-l)$, and
	\begin{equation} \label{sod:serre:H_S}
	D(\shH_S) =  \langle  D(\PP V) \boxtimes \shC_{1}^L (2-l), \ldots, D(\PP V) \boxtimes\shC_{l-1}^L ,~\Phi_{\shE_S} (D(T)) \rangle,
	\end{equation}
 Notice $\shC_{k}^L \simeq \shC_k$ and $\shC_{1}^L \supset \shC_{2}^L \supset \ldots \supset \shC_{l-1}^L$. Using $\shC^L_k$ and Serre functor, we have following semiorthogonal decompositions for $D(S)$ for any $k=1,2,\ldots, l-1$,
	\begin{align}\label{sod:S:C^L}
	\begin{split} D(S) & = \langle \shC_0, \shC_1(1), \ldots, \shC_{k-1}(k-1), \shC_k(k), \ldots, \shC_{l-1}(l-1) \rangle	\\
		& = \langle S_{D(S)}(\shC_k(k)), \ldots, S_{D(S)}(\shC_{l-1}(l-1)), \shC_0, \shC_1(1), \ldots, \shC_{k-1}(k-1)\rangle \\
		& = \langle \shC_k^L(k-l+1), \ldots, \shC_{l-1}^L, \shC_0(1), \shC_1(2), \ldots, \shC_{k-1}(k)\rangle.
	\end{split}
	\end{align}	

\subsection{Base change} \label{sec:base-change}
We will be interested in the derived categories of the fibered products
		$$X_T : = X \times_{\PP V} T , \quad \text{and}\quad Y_S : = Y \times_{\PP V^*} S.$$
	In order to compare derived categories of $X_T$ and $Y_S$, we use the trick of base-change.

Recall $Q \subset \PP V \times \PP V^*$ is the incidence quadric. For two maps $f:X \to \PP V$ and $q:S \to \PP V^*$, we will use $Q(X,S) \subset X \times S$ to denote the incidence quadric inside $X \times S$, i.e. $Q(X,S): = X \times_{\PP V} Q \times_{\PP V^*} S$.

\begin{definition}\label{def:admissible} Two pairs of morphisms $(X \to \PP V, \,Y \to \PP V^*)$, and $(S \to \PP V^*, \,T \to \PP V)$ are called \textbf{admissible} if the morphism $X \to \PP V$ (resp. $S \to \PP V^*$) considered as a base-change is \textbf{faithful} (Def. \ref{def:bc}) with respect to $(\shH_S, T )$ (resp. $(\shH_X, Y)$), and $Q(X,S) \ne X \times S$.
\end{definition}

\begin{remark} This is a technical way to say the two pairs intersect properly, and the condition is satisfied typically for Cohen-Macaulay varieties inside smooth variety which have intersections of expected dimensions (cf. \cite[Prop. A.1]{Add}). The Lem. \ref{lem:admissible pairs} will give certain criteria for admissibility. We use this technical definition for the purpose of later generalizations to noncommutative cases.  As we will see it holds for almost all examples of HP-duals in considerations. Similar to the discussion in Kuznetsov's \cite{Kuz06Hyp}, we also expect these conditions which guarantee certain 'flatness' can be removed if we consider derived intersection $X\times^R_{\PP V} T$ and $Y \times^R_{\PP V^*} S$ and the corresponding $dg$-categories.
\end{remark}

A map $f: X \to \PP V$ is called \textbf{non-degenerate} if the image $f(X)$ is not contained in any proper linear subspace. This is equivalent to the natural map $V^* \to H^0(X,\sO_X(1))$ being an inclusion. If $f: X \to \PP V$ (resp. $q: S \to \PP V^*$) is degenerate, we will use $L_X \subset \PP V$ (resp. $L_S \subset \PP V^*$) to denote the minimal linear subspace containing $f(X)$ (resp. $q(S)$). 

\begin{lemma} \label{lem:H_flat} Suppose $X$ is integral, and $f: X \to \PP V$ is non-degenerate. Then the universal hyperplane $\shH_X$ is flat over the dual projective space $\PP V^*$.
\end{lemma}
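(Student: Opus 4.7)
\medskip
\noindent\emph{Proof plan.} The strategy is to realize $\shH_X$ as the zero locus of a single equation inside the flat family $X\times \PP V^*\to \PP V^*$, and then invoke the standard local flatness criterion for hypersurfaces in a flat family: a divisor $V(g)\subset Z$ is flat over the base $B$ (when $Z\to B$ is flat) iff for every $b\in B$ the restriction $g|_{Z_b}$ is a non-zero-divisor on the fiber $Z_b$. Non-degeneracy of $f$ together with integrality of $X$ will give precisely this non-vanishing on every fiber.

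More explicitly, the incidence divisor $Q\subset \PP V\times \PP V^*$ is cut out by the tautological pairing $V\otimes V^*\to \kk$, viewed as a section $\sigma\in H^0(\PP V\times \PP V^*,\,\sO(1,1))$. Pulling back along $f\times \id$ one obtains a section $\tilde\sigma\in H^0\!\bigl(X\times \PP V^*,\,\sO_X(1)\boxtimes \sO_{\PP V^*}(1)\bigr)$ whose vanishing locus is exactly $\shH_X = X\times_{\PP V} Q$. The second projection $p\colon X\times \PP V^*\to \PP V^*$ is flat (it is a base change of $X\to \Spec \kk$). So I would first record these two facts, which place us in the setting of the local criterion above.

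Next, I would verify the non-vanishing on fibers. For a closed point $H\in \PP V^*$, the fiber of $p$ over $H$ is $X\times\{H\}\cong X$, and the restriction of $\tilde\sigma$ to this fiber is the section $s_H\in H^0(X,\sO_X(1))$ obtained as the image of any generator of the line $H\subset V^*$ under the natural map $\rho\colon V^*\to H^0(X,\sO_X(1))$ induced by $f$. By assumption $f$ is non-degenerate, i.e.\ $\rho$ is injective, so $s_H\neq 0$. Since $X$ is integral, $\sO_X(1)$ is a line bundle on an integral scheme, so any nonzero global section is a non-zero-divisor on $\sO_X$. Hence $s_H$ is a non-zero-divisor on the fiber for every closed point $H$.

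By the local criterion of flatness applied to the short exact sequence
\begin{equation*}
0\longrightarrow \sO_{X\times \PP V^*}(-\shH_X)\xrightarrow{\,\tilde\sigma\,} \sO_{X\times \PP V^*}\longrightarrow \sO_{\shH_X}\longrightarrow 0,
\end{equation*}
the $\sO_{\PP V^*}$-module $\sO_{\shH_X}$ is flat, i.e.\ $\pi\colon \shH_X\to \PP V^*$ is flat. The only subtle step is the fiberwise non-vanishing, which is the sole place where the hypotheses (integrality of $X$ and non-degeneracy of $f$) are used; everything else is a formal consequence of the flatness criterion for divisors in a flat family over a Noetherian base.
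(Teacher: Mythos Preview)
Your proposal is correct and follows essentially the same approach as the paper: both view $\shH_X$ as an effective Cartier divisor in the flat family $X\times\PP V^*\to\PP V^*$, use non-degeneracy of $f$ together with integrality of $X$ to verify that the defining section is a non-zero-divisor on every fiber, and then conclude via the criterion that a Cartier divisor in a flat family is itself flat precisely when it restricts to a Cartier divisor on each fiber (the paper cites this as the relative effective Cartier divisor criterion from \cite{Kol} or \cite{FAG}, which is the same content as your local flatness argument for the short exact sequence).
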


\begin{proof} 
$\shH_X \subset X \times \PP V^*$ is an effective Cartier divisor, and $X \times \PP V^*$ is flat over $\PP V^*$. The non-degeneracy condition for $f$ exactly says for any $s \in \PP V^*$, the corresponding section $s \ne 0 \in \Gamma(X, \sO_X(1))$. Since $X$ is integral, this implies the divisor $\shH_{X} \subset X \times \PP V^*$ is cut out locally at any point $x \in \shH_{X} \times_{\PP V^*} \{s\}$ on the fiber $X_s: = X \times \{s\}$ by a non-zerodivisor $s_x \in \sO_{X_s,x} = \sO_{X \times \PP V^*} \otimes_{\sO_{\PP V^*, s}} k(s)$. By \cite[Def.-Prop. 1.11]{Kol} or \cite[Lem. 9.3.4]{FAG}, $\shH_X$ is a relative effective Cartier divisor over $\PP V^*$, i.e. it is flat over $\PP V^*$.
\end{proof}

\begin{lemma}[Criterion for admissibility of pairs] \label{lem:admissible pairs} 
Let $f: X \to \PP V$, $g: Y \to \PP V^*$ and $q: S \to \PP V^*$, $p: T \to \PP V^*$ be two pairs of morphisms.
\begin{enumerate} [leftmargin = *]
\item If both $f: X \to \PP V$ and $q: S \to \PP V^*$ are non-degenerate, then the two pairs are admissible if $X_T$ and $Y_S$ are of expected dimensions.
\item If $f: X \to \PP V$ is non-degenerate.
	\begin{enumerate} [leftmargin = 0.5 cm]
	\item If $Q(S,T) = S \times T$  (for example $S = L$, $T = L^\perp$ dual linear subspaces). Then the two pairs are admissible if $X_T$ and $Y_S$ are of expected dimensions.
	\item If $Q(S,T) \neq S \times T$, and $Q(X_T, S)$ is a divisor of $X_T \times S$ \footnote{by which we mean the quadric $Q(X_T, S)$ is of pure codimension one inside the equidimensional variety $X_T \times S$. This condition is to guarantee the position of the intersection $X_T$ is not too bizarre, i.e. the very unlikely situation when $X$ and $T$ intersects exactly inside $L_S^\perp$, will not happen.}. Then the two pairs are admissible if $X_T$ and $Y_S$ are of expected dimensions.
	\end{enumerate}
\item If $q: S \to \PP V^*$ is non-degenerate, the statements are exactly symmetric to (2).
\item If both $f: X \to \PP V$ and $q: S \to \PP V^*$ are degenerate. Then we assume $L_X \not \subset L_S^\perp$ \footnote{which is equivalent to $L_S \not \subset L_X^\perp$, which is also equivalent to $Q(X,S) \neq X \times S$.}. If one of the following is satisfied: whether $Q(X,Y) = X \times Y$, or $Q(X,Y) \ne X \times Y$ but $Q(X,Y_S) \subset X \times Y_S$ is a divisor; And similarly one of the following is satisfied: whether $Q(S,T) = S \times T$, or $Q(S,T) \ne S \times T$ but $Q(X_T,S) \subset X_T \times S$ is a divisor. Then the two pairs are admissible if $X_T$ and $Y_S$ are of expected dimensions.
\end{enumerate}
\end{lemma}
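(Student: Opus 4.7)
The plan is to reduce Def.~\ref{def:admissible} to a handful of Tor-independence conditions via Lem.~\ref{lem: Tor_ind}, and to verify each by combining Lem.~\ref{lemma-faithful-base-change}, the three-squares Lem.~\ref{lem:3squares}, and the flatness statement Lem.~\ref{lem:H_flat}. Concretely, faithfulness of $X \to \PP V$ with respect to $(\shH_S, T)$ unpacks to Tor-independence of three base changes along $X \to \PP V$: of $\shH_S$, of $T$, and of $\shH_S \times_{\PP V} T$; symmetrically for $S \to \PP V^*$ against $(\shH_X, Y)$.

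The central observation I would exploit is that $\shH_S \subset \PP V \times S$ is the Cartier divisor cut out by the tautological section of $\sO(1,1)$, so its pullback to $X \times S$ is precisely $Q(X, S) \subset X \times S$; this pulled-back section is a regular element (and hence the base change is Tor-independent) if and only if $Q(X, S) \neq X \times S$, which is exactly the third admissibility condition. Applying the same divisor principle to $\shH_S \times_{\PP V} T \simeq Q(S, T) \subset S \times T$ and to its further pullback $Q(X_T, S) \subset X_T \times S$ handles the triple fiber product $X \times_{\PP V} \shH_S \times_{\PP V} T$: when $Q(S, T) = S \times T$, the triple product degenerates to $X_T \times S$ and Tor-independence follows from that of $X_T$, with the $S$-factor riding along the K\"unneth decomposition; when $Q(S, T) \neq S \times T$, Lem.~\ref{lem:3squares} combined with the divisor hypothesis on $Q(X_T, S)$ upgrades Tor-independence of $X_T$ to that of $X \times_{\PP V} \shH_S \times_{\PP V} T$. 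The expected-dimension hypotheses on $X_T$ and $Y_S$, together with smoothness or the Cohen--Macaulay variant, feed directly into Lem.~\ref{lemma-faithful-base-change}(2)--(3) to deliver Tor-independence of $X_T$ and $Y_S$ themselves.

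With these pieces in place, case~(1) is the cleanest: non-degeneracy of both $f$ and $q$ makes $\shH_X \to \PP V^*$ and $\shH_S \to \PP V$ flat by Lem.~\ref{lem:H_flat}, so every $\shH$-base-change is handled by flat base change, and non-degeneracy automatically forces $Q(X, S) \neq X \times S$ since $f(X)$ is not contained in any hyperplane. In cases~(2) and~(3) only one of $f, q$ is non-degenerate, so one flatness statement remains while the opposite side must be handled by subcases (2a)/(2b); in case~(4), neither flatness is available, so both sides rely on the divisor hypotheses together with $L_X \not\subset L_S^\perp$ (equivalent to $Q(X, S) \neq X \times S$). The main obstacle I anticipate is the bookkeeping for the triple base change $X \times_{\PP V} \shH_S \times_{\PP V} T$ in subcases (2b) and (4): one must verify, via Lem.~\ref{lem:3squares} applied to the factorization $\shH_S \times_{\PP V} T \to T \to \PP V$, that the pulled-back defining section of $\sO(1,1)$ on $X_T \times S$ remains a regular element, which is precisely the role of the `divisor' hypothesis on $Q(X_T, S)$ (and symmetrically on $Q(X, Y_S)$).
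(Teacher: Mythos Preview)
Your proposal is correct and follows essentially the same approach as the paper: reduce admissibility to Tor-independence of the six relevant squares, then verify each via the expected-dimension criterion (Lem.~\ref{lemma-faithful-base-change}), flatness of $\shH_X \to \PP V^*$ and $\shH_S \to \PP V$ from non-degeneracy (Lem.~\ref{lem:H_flat}), the divisor/regular-section argument for $Q(X,S) \subset X \times S$, and the three-squares Lem.~\ref{lem:3squares} for the triple products. The only presentational difference is in case~(2b): where you iterate Lem.~\ref{lem:3squares} through the factorisation $Q(T,S) \hookrightarrow T \times S \to T$ and invoke the regular-section criterion directly, the paper instead factors through the graph embedding $\Gamma_f : X \hookrightarrow X \times \PP V$ and appeals to the Cohen--Macaulay criterion Lem.~\ref{lemma-faithful-base-change}(3); both arguments are equivalent and yield the same conclusion.
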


\begin{proof} The key is to check six squares are exact cartesian. For the first set of squares
	$$
	\begin{tikzcd}
	X_T \ar{d} \ar{r} &  T \ar{d}\\
	X \ar{r}{f}          &\PP V
	\end{tikzcd}
	\qquad\quad
	\begin{tikzcd}
	\shH \ar{d} \ar{r} & \shH_S \ar{d}\\
	X \ar{r}{f}          &\PP V
	\end{tikzcd}
	\qquad\quad
	\begin{tikzcd}
	Q(X_T,S) \ar{d} \ar{r} &  Q(T,S) \ar{d}\\
	X \ar{r}{f}          &\PP V
	\end{tikzcd}	
	$$
The first square is exact cartesian if $X_T$ are of expected dimension by Lem. \ref{lemma-faithful-base-change}, (2). The second is exact cartesian if $L_X \not \subset L_S^\perp$. For last square, if $Q(T,S) = T \times S$, then $Q(X_T,S) = X_T \times S$, and the square is exact cartesian if $X_T$ are of expected dimension. If $q:S \to \PP V^*$ is non-degenerate, then the family $\shH_S \to \PP V$ is a flat family, and so is map $Q(T,S) \to T$. Since we have	
	$$
	\begin{tikzcd}
	Q(X_T,S) \ar{d}{f_Q} \ar{r} & X_T \ar{d}{f_T} \ar{r} & X \ar{d}{f}\\
	Q(T,S)  \ar{r}   &T \ar{r}          &\PP V,
	\end{tikzcd}
	$$	
hence the ambient square is exact cartesian by Lem. \ref{lem:3squares}. If the $Q(T,S)$ is a divisor, then consider the squares 
	$$
	\begin{tikzcd}
	Q(X_T,S) \ar{d} \ar[hook]{r}&  X \times Q(T,S) \ar{d} \ar{r} & Q(T,S) \ar{d}\\
	X   \ar[hook]{r}{\Gamma_f}   &X \times \PP V \ar{r}          &\PP V
	\end{tikzcd}	
	$$
where $\Gamma_f$ is the graph embedding of $X$. The right square is exact cartesian since the projection $X \times \PP V \to \PP V$ is a smooth map. For left square, the condition $Q(X_T,S)$ is a divisor guarantees it is of expected dimension, and since $\Gamma_f$ is a locally complete intersection embedding, $X \times Q(T,S)$ is Cohen-Macaulay, the left square is also exact cartesian by (3) of Lem. \ref{lemma-faithful-base-change}. From Lem. \ref{lem:3squares}, the ambient square is exact cartesian. The argument is similar for the second set of squares:
	$$
	\begin{tikzcd}
	Y_S \ar[d] \ar[r] &  Y \ar[d]\\
	S \ar{r}{q}          &\PP V^*
	\end{tikzcd}
	\qquad\quad
	\begin{tikzcd}
	\shH \ar[d] \ar[r] & \shH_X \ar[d]\\
	S \ar{r}{q}          &\PP V^*
	\end{tikzcd}
	\qquad\quad
	\begin{tikzcd}
	Q(X,Y_S) \ar[d] \ar[r] &  Q(X,Y) \ar[d]\\
	S \ar{r}{q}          &\PP V^*
	\end{tikzcd}
	$$ \end{proof}

Back to the situation considered in Sec. \ref{sec:set up}. If the two HP-dual pairs $(X,Y)$ and $(S,T)$ are admissible, then we can base-change the $\PP V^*$-linear (resp. $\PP V$-linear) decomposition (\ref{sod:H_X}) of $\shH_{X}$ (resp. decomposition (\ref{sod:H_S}) of $\shH_S$) to $S$ (resp. $X$), and get a decomposition of $\shH_X \times_{\PP V^*} S$ (resp. $X \times_{\PP V} \shH_S$).  Notice the schemes $\shH_X \times_{\PP V^*} S$ and $X \times_{\PP V} \shH_S$ are isomorphic, and both isomorphic to the subscheme $\shH : = Q(X,S) \subset X \times S$ defined by incidence relation $\{(x, s)~|~ s(x)= 0\}$. Denote by $i_{\shH}: \shH \to X\times S$ the inclusion.

 \begin{proposition}\label{prop:base-change} If the two HP-dual pairs $(X,Y)$ and $(S,T)$ are admissible, then the funtors $\Phi_{\shE_{X}|S}:D(Y_S) \to D(\shH)$ and $\shA_r(r)\boxtimes D(S) \subset D(X\times S) \xrightarrow{i_{\shH}^*} D(\shH)$ ($1\le r \le i-1$), are all fully faithful, and give rise to a semiorthogonal decomposition
	\begin{equation}\label{sod:H for X}
	D(\shH) = \langle \shD_{Y_S}, \shA_1(1)\boxtimes D(S), \ldots, \shA_{i-1}(i-1)\boxtimes D(S) \rangle .
	\end{equation}
where $\shD_{Y_S} =  \Phi_{\shE_{X}|S}(D(Y_S))\simeq D(Y_S)$. Similarly, the functors $\Phi_{\shE_{S}|X}:D(X_T) \to D(\shH)$, $D(X)\boxtimes \shC_r(r) \subset D(X\times S) \xrightarrow{i_{\shH}^*} D(\shH)$, and $D(X)\boxtimes \shC^L_r(r)\subset D(X\times S) \xrightarrow{i_{\shH}^*} D(\shH)$, where $1\le r \le l-1$, are all fully faithful, and give rise to semiorthogonal decompositions
	\begin{align} 
	D(\shH) &= \langle \shD_{X_T}, ~D(X) \boxtimes \shC_1(1) , \ldots, D(X)\boxtimes \shC_{l-1}(l-1) \rangle,\label{sod:H for S}\\
	 &=  \langle D(X) \boxtimes \shC^L_1(2-l) , \ldots, D(X)\boxtimes \shC^L_{l-1}, ~ \shD_{X_T} \rangle,\label{sod:H for S:Serre}
	\end{align}
where $\shD_{X_T} =  \Phi_{\shE_{S}|X}(D(X_T))\simeq D(X_T)$ denotes the image.\footnote{Note we use the same symbol $\shA_k(k)\boxtimes D(S)$, $D(X)\boxtimes \shC_k$, etc to denote their full faithful images under the pullback functor $i_{\shH}^*$, for simplicity of notations.}
\end{proposition}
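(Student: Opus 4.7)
The plan is to obtain all three decompositions by base-changing the established decompositions on $\shH_X$ and $\shH_S$, exploiting the base-change machinery of Prop.~\ref{prop:bcFM} and Prop.~\ref{prop:bcsod}. The admissibility hypothesis of Def.~\ref{def:admissible} is designed precisely so that the relevant base-changes are faithful.

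First I would focus on~(\ref{sod:H for X}). By admissibility, $q:S\to \PP V^*$ is a faithful base-change with respect to the pair $(\shH_X, Y)$, so the cartesian squares for $\shH_X\to\PP V^*$, $Y\to\PP V^*$, and $\shH_X\times_{\PP V^*} Y\to \PP V^*$ obtained by pulling back along $q$ are all exact cartesian. Applying Prop.~\ref{prop:bcFM} to the fully faithful Fourier--Mukai functor $\Phi_{\shE_X}:D(Y)\to D(\shH_X)$ then yields fully faithfulness of $\Phi_{\shE_X|S}:D(Y_S)\to D(\shH)$ with kernel the pullback of $\shE_X$. Applying Prop.~\ref{prop:bcsod} to the $\PP V^*$-linear decomposition~(\ref{sod:H_X}) produces an $S$-linear semiorthogonal decomposition of $D(\shH)$ whose components are the abstract base-changes of $\sC$ and of each $\shA_k(k)\boxtimes D(\PP V^*)$ along $q$.

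It remains to identify these abstract base-change components with the explicit categories named in the statement. The first component is tautologically $\shD_{Y_S}$ by construction of $\Phi_{\shE_X|S}$. For the others, the key point is that the exterior product $\shA_k(k)\boxtimes D(\PP V^*)\subset D(X\times \PP V^*)$ base-changes to $\shA_k(k)\boxtimes D(S)\subset D(X\times S)$ along the flat map $\mathrm{id}_X\times q$, which follows from Prop.~\ref{prop:product}. The two-step diagram
\begin{equation*}
\begin{tikzcd}
\shH \ar{r} \ar{d} & X\times S \ar{d} \ar{r} & S \ar{d}{q} \\
\shH_X \ar{r} & X\times \PP V^* \ar{r} & \PP V^*
\end{tikzcd}
\end{equation*}
has an exact-cartesian right square (since $X\times \PP V^*\to\PP V^*$ is flat) and an exact-cartesian outer square (by admissibility, since the outer square is the base-change square for $\shH_X\to\PP V^*$ along $q$), so by Lem.~\ref{lem:3squares} the left square is exact cartesian as well. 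Hence the base-change of the subcategory $\shA_k(k)\boxtimes D(\PP V^*) \subset D(\shH_X)$ agrees with the image of $\shA_k(k)\boxtimes D(S)$ under $i_{\shH}^*$, as required.

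Decompositions~(\ref{sod:H for S}) and~(\ref{sod:H for S:Serre}) follow by the symmetric argument: $f:X\to \PP V$ is faithful with respect to $(\shH_S, T)$ by admissibility, and one base-changes~(\ref{sod:H_S}) and~(\ref{sod:serre:H_S}) respectively along $f$, applying Prop.~\ref{prop:bcFM} to $\Phi_{\shE_S}$ and Prop.~\ref{prop:bcsod} to the $\PP V$-linear SODs of $D(\shH_S)$. The whole argument is largely formal once the faithful base-change conditions have been verified; the only mildly subtle step is the identification of the abstract base-change subcategories with the concrete exterior-product subcategories, accomplished in the previous paragraph via the transitivity Lem.~\ref{lem:3squares} and Prop.~\ref{prop:product}.
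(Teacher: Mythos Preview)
Your proposal is correct and follows essentially the same route as the paper: base-change the $\PP V^*$-linear decomposition~(\ref{sod:H_X}) along $q:S\to\PP V^*$ and the $\PP V$-linear decompositions~(\ref{sod:H_S}),~(\ref{sod:serre:H_S}) along $f:X\to\PP V$, using Prop.~\ref{prop:bcFM} and Prop.~\ref{prop:bcsod}. The only minor difference is emphasis: you spell out the identification of the ambient components via Lem.~\ref{lem:3squares} more carefully than the paper does, while for the identification of the first component $\sC_S$ with $\Phi_{\shE_X|S}(D(Y_S))$ the paper points to \cite[Thm.~6.4]{Kuz11Bas} rather than calling it tautological.
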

\begin{proof} Consider the $\PP V^*$-linear decomposition (\ref{sod:H_X}). Since the base-change $S \to \PP V^*$ is faithful for $\shH_X \to \PP V^*$, then from Prop. \ref{prop:bcsod} it induces a $S$-linear semiorthogonal decompositions of $D(\shH_X \times_{\PP V^*} S ) = D(\shH)$. It is clear that under base change the fully faithful $\PP V^*$-linear functor $\shA_r(r)\boxtimes D(\PP V^*) \subset D(X\times \PP V^*) \xrightarrow{i_{\shH_X}^*} D(\shH_X)$ induces fully faithful $S$-linear functor $\shA_r(r)\boxtimes D(S) \subset D(X\times S) \xrightarrow{i_{\shH}^*} D(\shH)$, where $1\le r \le i-1$, and the images of these coincide with the components obtained from base-change in Prop. \ref{prop:bcsod}, i.e. $[i_{\shH_X}^*(\shA_r(r)\boxtimes D(\PP V^*))]_{S} = i_{\shH}^* (\shA_r(r)\boxtimes D(S))$. For the first component, since $S\to \PP V^*$ is faithful for the pair $(\shH_X, Y)$, hence the fully faithful embedding $\Phi_{\shE_{X}}:D(Y) \to D(\shH_X)$ induces a fully faithful embedding $\Phi_{\shE_{X}|S}:D(Y_S) \to D(\shH)$ by Prop. \ref{prop:bcFM}, where the Fourier-Mukai kernel is given by $\shE_{X}|S := \phi ^* \shE_X$, where $\phi$ is the natural map $Y_S \times_S \shH \to Y \times_{\PP V^*} \shH_X$. It is not hard to see the image of the Fourier-Mukai transform coincide with the first component of the induced decomposition from  (\ref{sod:H_X}), cf. Thm. 6.4, \cite{Kuz11Bas}.

Similarly, from $\PP V$-linear decomposition (\ref{sod:H_S}) and its mutated decompositions using Serre functor, we have the following decompositions induced from base-change along $X \to \PP V$: for all $k = 0, 1, \ldots, l-1$,
	\begin{align} 
	\begin{split}
	D(\shH) =& \big\langle D(X) \boxtimes \shC^L_{k+1}(k+2-l) , \ldots, D(X)\boxtimes \shC^L_{l-1}, \\
		& ~ \shD_{X_T}~, D(X)\boxtimes \shC_1(1), \ldots,\ D(X)\boxtimes\shC_{k}(k) \big \rangle,\label{sod:H:k} 
	\end{split}
	\end{align}
The theorem then follows.
\end{proof}

\noindent\emph{Notations.} Denote $\shD_{X_T} =  \Phi_{\shE_{S}|X}(D(X_T))$ and $\shD_{Y_S} =  \Phi_{\shE_{X}|S}(D(Y_S))$ are the fully faithful images, and let $i_T:  \shD_{X_T} \hookrightarrow D(\shH)$ and $i_S : \shD_{Y_S} \hookrightarrow D(\shH)$ be the inclusions, i.e. 
\begin{equation*}
 	\begin{tikzcd}
		 D(X_T) \ar{r}{\Phi_{\shE_{S}|X}}[swap]{\sim}  &\shD_{X_T} \ar[hook]{r}{i_T} &  D(\shH)  & \shD_{Y_S} \ar[swap, hook']{l}{i_S} & D(Y_S) \ar{l}{\sim}[swap]{\Phi_{\shE_{X}|S}} .
	\end{tikzcd}
 \end{equation*}
Denote by $\pi_T = i_T^*: D(\shH) \to \shD_{X_T}$ (resp. $\pi_S = i_S^*:D(\shH) \to \shD_{Y_S}$) the left adjoint functor of the inclusion $i_T$ (resp. $i_S$), and by $i_T^!$ (resp $i_T^!$) the right adjoint. Note $\pi_T$ (resp. $\pi_S$) is given by the left mutations through the orthogonal of $\shD_{X_T}$ in (\ref{sod:H for X}) (resp. of $\shD_{Y_S}$ in (\ref{sod:H for S})). 

 %
 %
 
\subsection{Main result}

The base-change Prop. \ref{prop:base-change} enables us to compare $D(X_T)$ and $D(Y_S)$ inside $D(\shH)$. Notice theirs orthogonal in $D(\shH)$ are 'linear', given by box-tensor categories $i_{\shH}^*(\shA_{\alpha}(\alpha) \boxtimes \shC_{\beta}(\beta))$. We will write $\shA_{\alpha}(\alpha) \boxtimes \shC_{\beta}(\beta)$ for simplicity. The situation is illustrated in Figure \ref{Fig_chessboard}, which is direct analogue of the diagram in \cite{RT15HPD}. Every square of the chessboard, which we call a \emph{box}, correspond to a box-tensor $\shA_{\alpha}(\alpha) \boxtimes \shC_{\beta}(\beta)$. Then everything reduces to playing the game of mutations 
on this 'chessboard'.

Later we will consider the extended 'chessboard', Figure \ref{Figure:l<=i} and Figure \ref{Figure:l>=i}, where not only the orthogonal components of decomposition (\ref{sod:H for X}) 
but also of mutated decompositions (\ref{sod:H for S:Serre}) and (\ref{sod:H:k}) 
are all indicated in the diagram. Notice in these 'chessboards', we draw every boxes as if they are of the same size. But actually this is only for simplicity of drawing, and in reality the boxes are of different sizes. For example, a more realistic version of Figure \ref{Fig_chessboard} is illustrated in Figure \ref{Fig_real}.

\begin{figure}
\begin{center}
\includegraphics[height=3.2in]{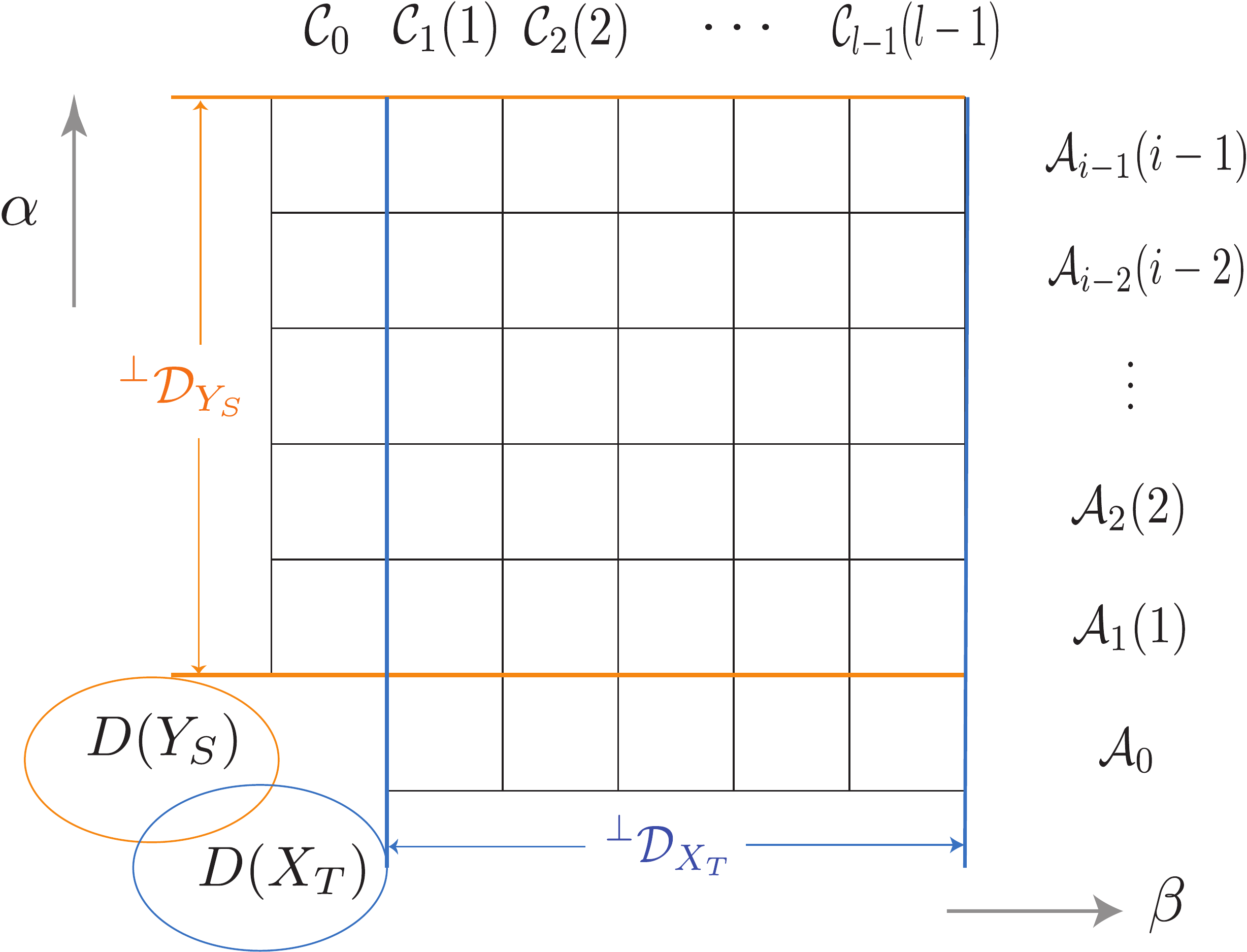}
\caption{The 'chessboard' which describes the 'differences' of the embeddings of $D(X_T)$ and $D(Y_S)$ into $D(\shH)$ (in the case $l = i =6$). The horizontal (resp. vertical) direction corresponds to $\sO_S(1)-$ (resp. $\sO_X(1)-$) action. Each square of the 'chessboard' corresponds to a fully faithful subcategory $i_{\shH}^*(\shA_{\alpha}(\alpha) \boxtimes \shC_{\beta}(\beta)) \subset D(\shH)$,  the factor $\shA_{\alpha}(\alpha)$, $\alpha \in [0,i-1]$ (resp. $\shC_{\beta}(\beta)$, $\beta \in [0, l-1]$) of which is indicated on the right of (resp. above) the diagram.
} \label{Fig_chessboard}
\end{center}
\end{figure}

\begin{figure}
\begin{center}
\includegraphics[height=2.4in]{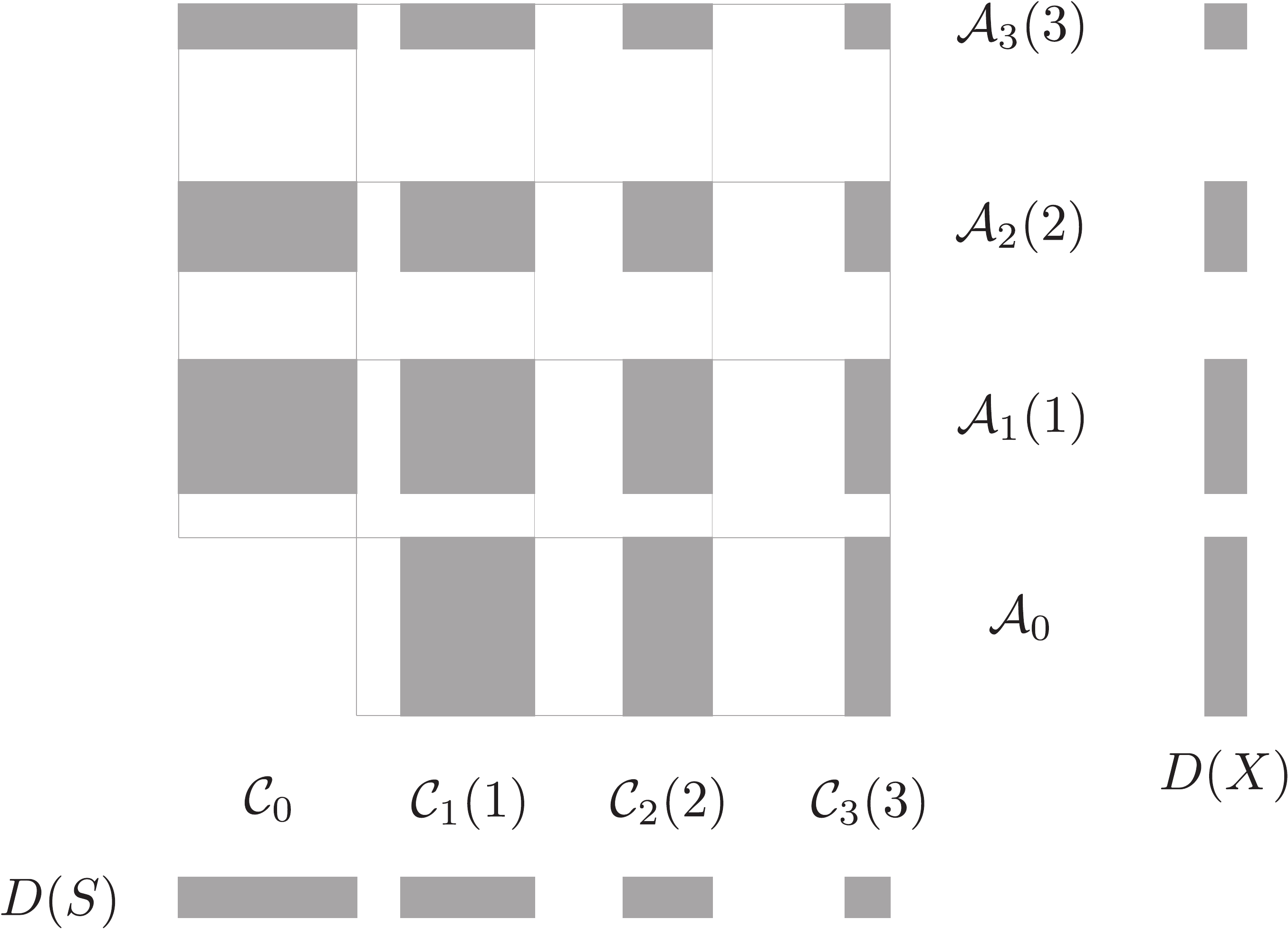}
\caption{A more realistic picture for Figure \ref{Fig_chessboard} in the case $l=i=4$. 
The shaded regions inside each boxes of the 'chessboard' indicate the 'real sizes' of the boxes $\shA_\alpha(\alpha) \boxtimes \shC_{\beta}(\beta)\subset D(\shH)$.} \label{Fig_real}
\end{center}
\end{figure}

\newpage
\begin{theorem}[HP-duality theorem for general sections] \label{thm:HPDgen} If the two HP-dual pairs 
	$$X \to \PP V \quad \text{HP-dual to} \quad Y \to \PP V^*$$
and 
	$$S \to \PP V^* \quad \text{HP-dual to} \quad T \to \PP V$$
with respect to Lefschetz decompositions (\ref{lef:X}) and (\ref{lef:S}) are \textbf{admissible} (Def. \ref{def:admissible}). Then
\begin{enumerate}[leftmargin=*]
\item We have semiorthogonal decompositions
	\begin{align}
	D(Y) & = \langle \shB^1 (2-N), \cdots, \shB^{N-2}(-1),\shB^{N-1} \rangle, \quad \shB^1 \subset \shB^2 \subset \cdots \subset\shB^{N-1} = \shA_0 \label{sod:Y},\\
	D(T) & = \langle \shD^1(2-N), \cdots, \shD^{N-2}(-1), \shD^{N-1} \rangle, \quad \shD^1 \subset \shD^2 \subset \cdots \subset\shD^{N-2} = \shC_0  \label{sod:T},
	\end{align} 
where $\shB^k$ and resp. $\shD^k$ are defined by (\ref{def:B^k}) and resp. (\ref{def:D^k}).
\item There exists a full triangulated subcategory $\sE \subset D(\shH)$, such that the restrictions of $\pi_T: D(\shH) \to \shD_{X_T}$ (resp. $\pi_S: D(\shH) \to \shD_{Y_S} $) to $\sE$ and $(\shA_{k}(k) \boxtimes \shD^k)|_{\shH}$ (resp. $\sE$ and $(\shB^k \boxtimes \shC_{k}^L(k+1-l) )|_{\shH}$) are fully faithful for all $ k \in \ZZ$, and their images give rise to semiorthogonal decompositions
\begin{align}
		D(X_T) \simeq \shD_{X_T} &= \langle \sE, ~\shA_1(1)\boxtimes \shD^1, \shA_2(2) \boxtimes \shD^2, \ldots, \shA_{i-1}(i-1) \boxtimes \shD^{i-1} \rangle. \label{sod:X_T} \\
		D(Y_S )\simeq \shD_{Y_S} & =  \langle \shB^1 \boxtimes \shC_{1}^L(2-l) , \shB^2 \boxtimes \shC_{2}^L(3-l), \ldots, \shB^{l-1}\boxtimes \shC_{l-1}^L, ~\sE \rangle. \label{sod:Y_S}
\end{align}
where recall $\shC^L_k = S_{D(S)}(\shC_k) \otimes \sO_S(l) \simeq \shC_k$. If $D(S)$ is rectangular, then $\shC^L_k = \shC_k$.
\end{enumerate}
\end{theorem}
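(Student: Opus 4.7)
\emph{Strategy.} The plan is to carry out the entire argument inside the common ambient category $D(\shH)$, using the three semiorthogonal decompositions (\ref{sod:H for X}), (\ref{sod:H for S}), (\ref{sod:H for S:Serre}) produced by Prop. \ref{prop:base-change}. Refining each linear orthogonal piece $\shA_\alpha(\alpha)\boxtimes D(S)$ (resp.\ $D(X)\boxtimes \shC_\beta(\beta)$) via the Lefschetz decomposition of $D(S)$ (resp.\ $D(X)$) using Prop. \ref{prop:product} yields the ``chessboard'' of Fig.~\ref{Fig_chessboard} whose boxes are $\shA_\alpha(\alpha)\boxtimes \shC_\beta(\beta)$ for $\alpha\in[0,i-1]$, $\beta\in[0,l-1]$, together with extra $\shC_\beta^L$-columns arising from the Serre mutation. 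Every functor in the statement---the inclusions $i_T$, $i_S$, their left adjoints $\pi_T$, $\pi_S$ realised as iterated left mutations, the Serre functors, and the twists by $\sO_X(1)$ and $\sO_S(1)$---has an explicit description as a displacement or mutation on this board, so the theorem reduces to combinatorial bookkeeping there.

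\emph{Fully-faithfulness.} Using Lem.~\ref{lem:sod:A_0} to rewrite $\shD^k=\langle\gamma_0^*(\foc_0(1)),\ldots,\gamma_0^*(\foc_{k-1}(k))\rangle$, I interpret $\shA_k(k)\boxtimes\shD^k$ as the image, under iterated left mutation through the columns $D(X)\boxtimes\shC_j(j)$ for $j=1,\ldots,l-1$, of the ``diagonal'' boxes $\shA_k(k)\boxtimes \foc_{j-1}(j)$ of the chessboard. Since $\pi_T$ is by definition that iterated mutation (cf.~Lem.~\ref{lem:mut}(3)), parts (4)--(5) of Lem.~\ref{lem:mut} show $\pi_T(\shA_k(k)\boxtimes\shD^k)$ is the image of a semiorthogonal collection, giving both full faithfulness and the order displayed in (\ref{sod:X_T}); the vanishings needed are the obvious Hom-vanishings between boxes read off from the Lefschetz filtrations $\shA_0\supset\cdots\supset\shA_{i-1}$ and $\shC_0\supset\cdots\supset\shC_{l-1}$. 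The analogous argument, starting from the Serre-mutated decomposition (\ref{sod:H for S:Serre}) which places the $\shC_\beta^L$-columns to the left of $\shD_{X_T}$, handles $\pi_S$ on $\shB^k\boxtimes\shC_k^L(k+1-l)$ and produces (\ref{sod:Y_S}).

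\emph{Primitive piece and generation.} I would define
\[
\sE\;:=\;\Big\langle \pi_T(\shA_k(k)\boxtimes\shD^k)\Big\rangle_{k=1}^{i-1}{}^{\perp}\cap \shD_{X_T}\;\subset\;D(\shH),
\]
so that (\ref{sod:X_T}) is automatic from the fully-faithful part together with admissibility of $\shD_{X_T}\subset D(\shH)$, and $\pi_T|_\sE$ is trivially fully faithful. To obtain (\ref{sod:Y_S}) with the same $\sE$ as primitive piece I must show three things: (i) $\sE\subset\shD_{Y_S}$; (ii) $\sE$ is right-orthogonal in $D(\shH)$ to every $\shB^k\boxtimes\shC_k^L(k+1-l)$; (iii) $\sE$ together with the $\pi_S$-images of the $\shB^k\boxtimes\shC_k^L(k+1-l)$ generate $\shD_{Y_S}$. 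Points (i)--(ii) follow by replaying the mutation calculations of the fully-faithful step in the mirror configuration on the chessboard. Point (iii) is the substantial ``Zig-Zag'' generation argument: for $b\in\shD_{Y_S}$ right-orthogonal to $\sE$ and to every ambient piece of (\ref{sod:Y_S}), I test $R\Hom_{D(\shH)}(b,-)$ against the boxes $\shA_\alpha(\alpha)\boxtimes\shC_\beta(\beta)$ and $\shA_\alpha(\alpha)\boxtimes\shC_\beta^L(\beta+1-l)$, invoke Lem.~\ref{lem:component_van} repeatedly to pass from vanishings against composite subcategories to vanishings against individual boxes, and propagate these vanishings along a zigzag path using the twists $\sO_X(1)$ (vertical shift) and $\sO_S(1)$ (horizontal shift); the Lefschetz filtrations ensure each step stays within a region where the chessboard structure controls the mutations, until $b$ is orthogonal to every box and hence to a generating set of $D(\shH)$, forcing $b=0$.

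\emph{Part (1) and main obstacle.} Statement (1) is recovered as a degenerate case of (2): applying (\ref{sod:Y_S}) to the admissible HP-dual pair $(S,T)=(\PP V^*,\emptyset)$ makes $X_T=\emptyset$ (so $\sE=0$) and collapses each $\shC_k^L$ to $\langle\sO\rangle$, recovering (\ref{sod:Y}); the symmetric specialization $(X,Y)=(\PP V,\emptyset)$ gives (\ref{sod:T}). The hard part I anticipate is the Zig-Zag generation step (iii): a path that genuinely visits every box must be designed case-by-case according to whether $l\le i$ or $l\ge i$ (cf.\ Figures~\ref{Figure:l<=i} and \ref{Figure:l>=i}), and one must verify that the iterated use of Lem.~\ref{lem:component_van} along the path remains inside the Lefschetz-controlled region so that each vanishing truly propagates to the next. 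The fully-faithful part, by contrast, is essentially mutation bookkeeping once the chessboard is set up.
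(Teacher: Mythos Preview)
Your overall architecture matches the paper's, but there is a genuine gap in the treatment of the primitive piece. Your claim (i), that $\sE\subset\shD_{Y_S}$ as subcategories of $D(\shH)$, is false in general. By construction $\sE\subset\shD_{X_T}$, while membership in $\shD_{Y_S}$ requires $R\Hom(\shA_k(k)\boxtimes D(S),\sE)=0$ for all $k\ge 1$; the definition of $\sE$ only gives $R\Hom(\shA_k(k)\boxtimes\shD^k,\sE)=0$, and $\shD^k\subsetneq\shC_0$. What must actually be shown is that $\pi_S|_\sE$ is fully faithful, and this is \emph{not} trivial: the paper proves it (Lem.~\ref{lem:region:pi_S:sE}) by computing that $\cone(b\to\pi_S b)$ lands in a specific ``staircase'' region $\shC_\sE\subset\shD_{X_T}^\perp$; this cone is nonzero, but lies in $\sE^\perp$, which is exactly the adjunction criterion (\ref{eqn:Van}). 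Your (ii) is then also misformulated: one needs $R\Hom(\pi_S(\sE),\pi_S(\shB^k\boxtimes\shC_k^L(\cdots)))=0$, proved again by controlling where the cones land (Lem.~\ref{lem:region:pi_S}). The tool that makes all of these cone computations tractable---and which you do not invoke---is the Vanishing Lemma~\ref{lem:van}: $R\Hom_\shH$ between restricted box-tensors is a two-term cone of K\"unneth type, one term twisted by $\sO(-1,-1)$, and it is this $(-1,-1)$-shift that forces the staircase shape (Rmk.~\ref{rmk:staircase}). Relatedly, your reinterpretation of $\shA_k(k)\boxtimes\shD^k$ as an image of mutations through chessboard columns conflates mutations on $D(S)$ with mutations on $D(\shH)$; restriction $i_\shH^*$ does not intertwine these, so the cones must be computed directly via Lem.~\ref{lem:van} as in Lem.~\ref{lem:region:pi_T}.

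In the generation step your probe $R\Hom_{D(\shH)}(b,-)$ is in the wrong direction: for $b$ in the right orthogonal of the images one has $R\Hom(-,b)=0$ on those images, and the paper's argument decomposes $b$ into components $b^\alpha_\beta\in\shA_\alpha(\alpha)\boxtimes\shC^L_\beta(\beta+1-l)$ with respect to the refinement of $\shD_{X_T}^\perp$, then kills them one at a time by finding a probe category $\shA_\alpha(\alpha-1)\boxtimes\shC_\beta(\beta)$ (or its $\foa$-refinement in Step~2) that has no Homs to any \emph{other} surviving component---this is where the staircase induction hypothesis is used---so that Lem.~\ref{lem:component_van} forces $R\Hom(\text{probe},b^\alpha_\beta)=0$ and hence $b^\alpha_\beta=0$. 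The Zig-Zag path is not chosen to ``visit every box'' but to ensure at each step that the already-killed components leave exactly one target exposed to the chosen probe.
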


\begin{figure}
\begin{center}
\includegraphics[height=2.8in]{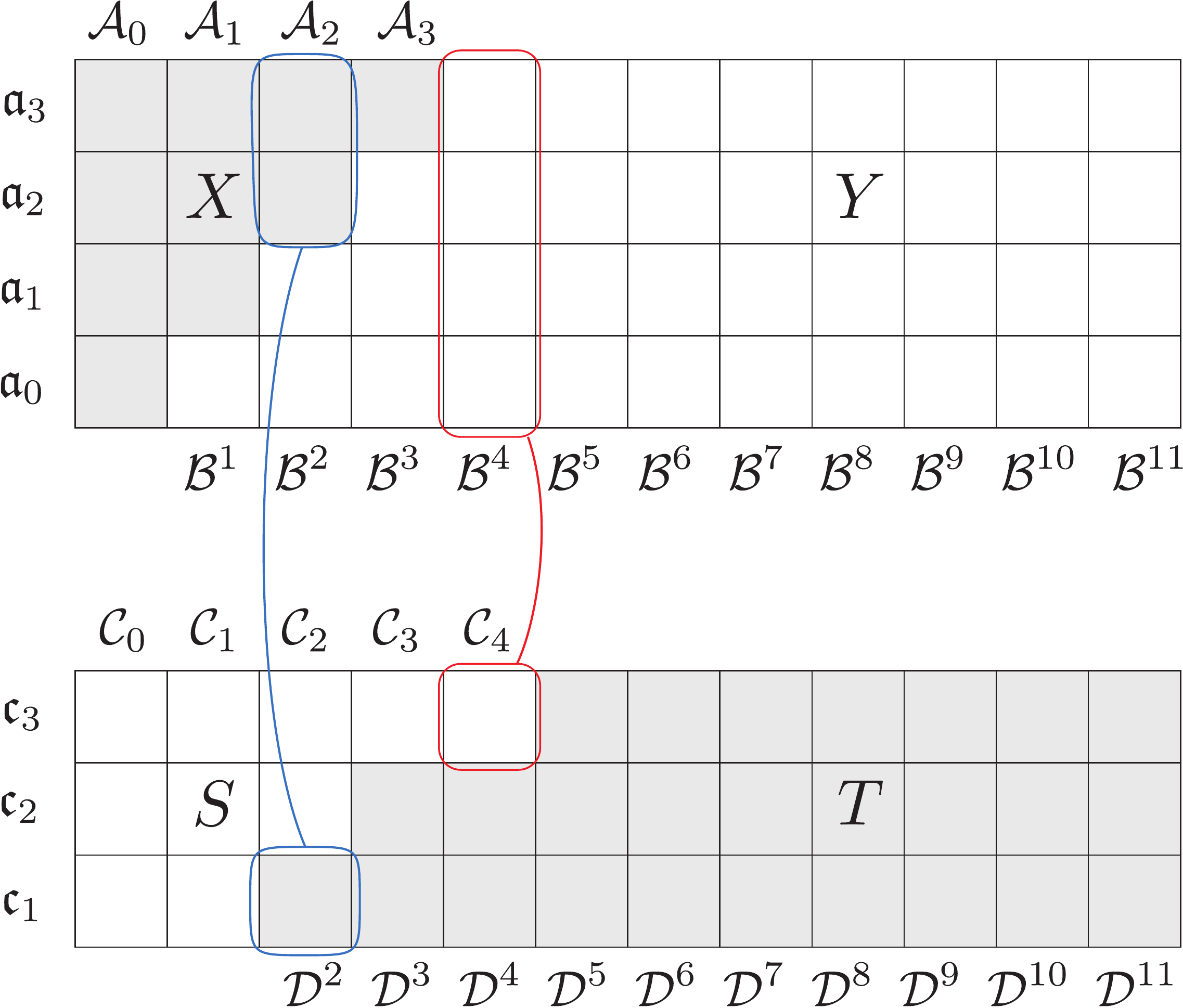}
\caption{Lefschetz decompositions for $D(X)$ and $D(S)$, and the dual decompositions for their HP-duals $D(Y)$ and $D(T)$ (in the figure $N=12$, $i=4$, $l=5$). 
Then Thm. \ref{thm:HPDgen} implies the ambient part of $D(X_T)$ (resp. $D(Y_S)$) is generated by box-tensors of the pairs of categories inside shaded (resp. unshaded) region connected by vertical lines (after twist). In the figure, two vertical lines can be drawn connecting boxes of shaded regions (e.g blue line), and therefore the ambient part of $D(X_T)$ is $\langle \shA_2 \boxtimes \shD^2(2), \shA_3 \boxtimes \shD^3(3)\rangle$. Similarly four vertical lines can be drawn connecting boxes of unshaded regions (e.g red line), therefore the ambient part of $D(Y_S)$, generated by the corresponding four terms after twist, is $\langle \shB^1 \boxtimes \shC_1 (-3), \shB^2 \boxtimes \shC_2 (-2), \shB^3 \boxtimes \shC_3 (-1),  \shB^4  \boxtimes  \shC_4\rangle$.
}\label{Figure:lf_2}
\end{center}
\end{figure}

The theorem takes a more symmetric than linear section case and is quite easy to remember: for example, the 'ambient' part of $D(X_T)$, if written suggestively as $\sum (\shA_k \boxtimes \shD^k) \otimes \sO_X(k)$, are simply generated by box-tensors $\shA_k \boxtimes \shD^k$'s of all the components $\shA_k \subset D(X)$ and $\shD^k \subset D(T)$ with the same index $k$, twisted by $\sO_{X_T}(k)$. Notice $\shA_k \boxtimes \shD^k$ can only be non-zero if $k  \in [1, i-1]$, hence we can take sum over all $k \in \ZZ$, with the range automatically being taken care of.
Similarly for $D(Y_S)$. 
The situation are intuitively illustrated in Figure \ref{Figure:lf_2}.

\begin{remark} Totally symmetrically and using the exactly the same method, we have
	\begin{align*} D(X_T) & = \Big\langle \big\langle (\shA_k^L \boxtimes \shD^k) \otimes \sO_{X_T}(k) \big\rangle_{k=1, \ldots, i-1}, \, \sE_{X_T}' \Big\rangle\\
		D(Y_S) & = \Big\langle \sE_{Y_S}' ,\, \big\langle  (\shB^k \boxtimes \shC_k) \otimes \sO_{Y_S}(1-l+k) \big\rangle_{k=1, \ldots, l-1} \Big\rangle,
	\end{align*}
and a derived equivalence $\sE_{X_T}' \simeq \sE_{Y_S}'$. The $\sE_{X_T}'$ and $\sE_{Y_S}'$ are equivalent to the privious ones given in the theorem by Serre functors. 

\end{remark}

\begin{remark}[Fourier-Mukai kernels]\label{rmk:FM}We omit the Fourier-Mukai functors in the expressions of the theorem for simplicity of notations. They can be described explicitly as follows:

\medskip\noindent $(1)~$ The items appearing in the decomposition (\ref{sod:Y}) of $D(Y)$ are actually
		$$\Phi^L_{\shE_X} ((\shB^k \boxtimes  \sO_{\PP V^*}(1 + k -N) )|_{\shH_X}) = \Phi^L_{\shE_X} (\shB^k|_{\shH_X})  \otimes \sO_{Y} (1+k-N), $$
		where $\Phi^L_{\shE_X} : D(\shH_X) \to D(Y)$ is the $\PP V^*$-linear left adjoint functor of the fully faithful functor $\Phi_{\shE_X}: D(Y) \to D(\shH_X)$, $k \in [1,N-1]$, and $(\shB^k \boxtimes  \sO_{\PP V^*}(1 + k -N))|_{\shH_X} \subset (\shA_0 \boxtimes D(\PP V^*))|_{\shH_X} \subset D(\shH_X)$. Similarly the items in the decomposition (\ref{sod:T}) of $D(T)$ are actually
		$$\Phi^L_{\shE_S} ((\sO_{\PP V}(1 + k -N)  \boxtimes \shD^k)|_{\shH_S}) = \Phi^L_{\shE_S} (\shD^k|_{\shH_S})  \otimes \sO_{T} (1+k-N), $$ 
		where $\Phi^L_{\shE_S} : D(\shH_S) \to D(T)$ is the $\PP V$-linear left adjoint functor of the fully faithful functor $\Phi_{\shE_S}: D(T) \to D(\shH_S)$, $k \in [1,N-1]$, and $(\sO_{\PP V}(1 + k -N) \boxtimes \shD^k)|_{\shH_X} \subset 
		D(\shH_S)$.
		
\medskip\noindent $(2)~$ The decomposition (\ref{sod:X_T}) of $D(X_T)$ is 
		\begin{align*} D(X_T)  & = \big\langle \sE_{X_T} ,~ \langle \Phi^L_{\shE_{S}|X} ((\shA_{k}(k) \boxtimes \shD^k)|_{\shH}) \rangle _{k=1,2,\ldots} \big\rangle \\
		& = \big\langle \sE_{X_T} ,~ \langle ( \shA_k \boxtimes \Phi^L_{\shE_S} (\shD^k) )|_{X_T} \otimes \sO_{X_T}(k)\rangle_{k = 1,2,\ldots}\big\rangle,
		\end{align*}
		where $\shE_{S}|X = f^* \shE_S$ is the pullback of $\shE_{S}$ along base-change $f: X \to \PP V$, and $\Phi^L_{\shE_{S}|X}$ is the left adjoint of the $X$-linear functor $\Phi_{\shE_{S}|X}$, $\sE_{X_T}$ is the fully faithful image of $\sE$. The last equality follows from the faithfulness of the base-change $f: X\to \PP V$, and that $\PP V$-linear ($X$-linear) functor intertwines with base-change $f^*$. Therefore $\Phi^L_{\shE_{S}|X}$ respects the $X$-linear structure, and we can regard $\shA_k \boxtimes \shD^k$ as coming from the product  $X \times T$. Similarly, decomposition (\ref{sod:Y_S}) of $D(Y_S)$ is 
		\begin{align*} D(Y_S)  & = \big\langle  \langle \Phi^L_{\shE_{X}|S} ((\shB^k \boxtimes S_{D(S)}(\shC_k)(1+k))|_{\shH}) \rangle _{k=1,2,\ldots}, ~\sE_{Y_S}  \big\rangle \\
		& = \big\langle  \langle ( \Phi^L_{\shE_X} (\shB^k) \boxtimes S_{D(S)}(\shC_k)) |_{Y_S} \otimes \sO_{Y_S}(1+k)\rangle_{k = 1,2,\ldots}, ~\sE_{Y_S}\big\rangle.
		\end{align*}
		This gives the formulation of {\em Main theorem} in the introduction.
		
\medskip\noindent $(3)~$ The Fourier-Mukai functor $D(X_T) \to D(Y_S)$ which induces $\sE_{X_T} \xrightarrow{\sim} \sE_{Y_S}$ are
		$$ \Phi^L_{\shE_{X}|S} \circ \Phi_{\shE_{S}|X}: D(X_T) \to D(Y_S),$$
		and the inverse $\sE_{Y_S} \xrightarrow{\sim} \sE_{X_T}$ is induced from
		$$ \Phi^L_{\shE_{S}|X} \circ \Phi_{\shE_{X}|S}: D(Y_S) \to D(X_T).$$
		The Fourier-Mukai kernel for $\Phi^L_{\shE_{X}|S}$ can be described explicitly when the singularities of $Q(X,Y)$ and $Q(S,T)$ are mild (e.g. when they are smooth, cf. Lem. \ref{lem:op}). However, the Fourier-Mukai kernel of the 'dual' of the left adjoint, which still induces the equivalence, has a very simple form, cf. e.g. \S \ref{sec:duality}.				
\end{remark}

%
%

\begin{remark}[Decomposition of HP-dual]\label{rmk:dual-decomposition} The second statement $(2)$ implies the first $(1)$ by taking $S = \PP V^*$, $T = \emptyset$ or $X = \PP V$, $Y  = \emptyset$. Let us first give more details and illustrate our strategy of proof in the case when $S= \PP V^*$, $T = \emptyset$. It is easy to see the admissibility conditions are always satisfied by any HP-dual pair $(X,Y)$ together with $(\PP V^*, \emptyset)$, and the base-change results are nothing but the fact $Y$ being HP-dual of $X$:
	$$D(\shH_X) = \langle \sC, \shA_1(1) \boxtimes D(\PP V^*), \ldots, \shA_{i-1}(i-1) \boxtimes D(\PP V^*) \rangle,$$ 
where $\sC \simeq D(Y)$ is given by a Fourier-Mukai functor, and the Orlov's results for the projective bundle $\sH_X \to X$:
	\begin{equation}\label{sod:DHX} D(\shH_X) = \langle D(X)(0, 2-N), D(X)(0, 3-N), \ldots, D(X)(0,0)\rangle.\end{equation}
Denote $\pi_Y$ the projection functor from $D(\shH_X)$ to $\sC$. Then our calculation in Sec. \ref{sec:fullyfaithful} shows the restriction of $\pi_Y$ to the subcategories $\shB^1(0,2-N), \ldots, \shB^{N-1}(0,0)$ are fully faithful, and their image remains a semiorthogonal sequence. This agrees with Prop. 5.10 in \cite{Kuz07HPD}. Then to show they generate $\sC$, we show the right orthogonal of their image inside $\sC$ is zero. For every element of this right orthogonal, all its components with respect to the refined decomposition of (\ref{sod:DHX}) using decomposition of $D(X)$ can be shown to be zero inductively following a 'staircase' pattern, c.f. Sec. \ref{sec:generation}. Hence our proof in a way answers the question of Kuznetsov after \cite[Prop. 5.10]{Kuz07HPD} of 'finding a direct proof' of the fact that $\shB^1(2-N), \ldots, \shB^{N-1}$ generate $\sC \simeq D(Y)$.
\end{remark}

\begin{remark}[When one is rectangular] \label{rmk:rectangular}
 If we assume decomposition (\ref{lef:S}) for $D(S)$ is \text{rectangular}, i.e. $\shC_k =\shC$ for all $k\in [0,l-1]$. Then $\shD^k  = 0$ for $k \le l-1$, and $\shD^{k}  = \shC_0$ for $k\ge l$, and the theorem reduces to
	\begin{align*}
	D(X_T)&= \langle \sE, \shA_{l}(l) \boxtimes \shC, \ldots, \shA_{i-1}(i-1)\boxtimes \shC \rangle, \\
	D(Y_S) &= \langle \shB^1 \boxtimes \shC(2-l), \ldots, \shB^{l-1} \boxtimes \shC, \sE \rangle.
	\end{align*}
This takes the similar form as Thm. \ref{thm:HPD} for linear sections.
\end{remark}

\begin{remark}[Kuznetsov's HP-duality theorem]\label{rmk:KuznetsovHPD} If $f: X \to \PP V$ is non-degenerate \footnote{In \cite{Kuz07HPD} it was assumed that $V^* \subset \Gamma(X, \sO_X(1))$, which is equivalent to $f: X \to \PP V$ is non-degenerate. But as we will see, it also holds for at least generic linear sections even when $f$ is degenerate.}, and $S =L$, $T = L^\perp$ be a pair of dual linear spaces. Then $Q(L^\perp, L) = L^\perp \times L$. Therefore $(X,Y)$ and $(L,L^\perp)$ are admissible as long as $X_{L^\perp}$ and $Y_L$ are of expected dimensions. Notice $D(S)$ is rectangular, hence our takes the form of Rmk. \ref{rmk:rectangular}, recovers Kuznetsov's HPD theorem Thm. \ref{thm:HPD} in non-degenerate case.

If $f: X \to \PP V$ is degenerate, and the image spans the linear space $L_X$. Then if $X_{L^\perp}$ and $Y_L$ are of expected dimensions, the condition $L \not\subset L_X^\perp$ is automatically satisfied since $L_X \not\subset L^\perp$. If $Q(X,Y) = X \times Y$, for example when $X,Y$ are dual linear spaces, then the pair are admissible. If $Q(X,Y) \neq X \times Y$, then we need one more condition: $Q(X,Y_L) \subset X \times Y_L$ is a divisor\footnote{It is very likely this condition can be removed when $X_{L^\perp}$ and $Y_L$ already have expected dimensions since we know $g_L(Y_L)=L \not\subset L_X^\perp$. But the authors do not know how to show this yet.}. Then the pairs are admissible. This is certainly satisfied for example if $Y_L$ are integral, and therefore is satisfied by a generic linear section $L$.
\end{remark}

		
 
The second statement of the theorem, hence the whole theorem, will be proved in the next two sections. Before that, let's first illustrate why it is true on Hochschild homology level.

\medskip \noindent \textbf{Hochschild homology and Pl\"ucker formula.} Prop. \ref{prop:base-change} allows us to compare invariants of $X_T$ and $Y_S$ inside $\shH$, and this can be easily done on Hochschild homology level. 

The Hochschild homology of a smooth projective variety $X$ is defined to be
	$$\HH_\bullet(X) : = \Ext^\bullet_{X \times X}(\Delta_* \sO_X, \Delta_* \omega_X),$$
where $\Delta: X \to X \times X$ is the diagonal embedding, $\omega_X$ is the dualizing sheaf. The Hochschild homology is also defined for any differential graded (dg) categories, hence also defined for triangulated categories with dg-enhancement, for example, $D(X)$; and the two definitions agree, cf. \cite{Keller1, Keller2} and also the discussion in \cite{Kuz09HHSOD}.

For an admissible subcategory $\shA \subset D(X)$, where $X$ is a smooth projective variety, Kuznetsov \cite{Kuz09HHSOD} defines the Hochschild homology $\HH_*(\shA)$ using the Fourier-Mukai kernel $\shP \in D(X \times X)$ of projection functor to $\shA$, and shows that it is independent of the embedding $\shA \subset D(X)$, and agrees with definition using the natural dg-enhancement of $\shA$. One of the most important features of Hochschild homology is that it is additive for semiorthogonal decompositions: suppose $D(X) = \langle \shA_1, \shA_2, \ldots, \shA_n\rangle$ is a semiorthogonal decomposition into admissible subcategories, then
	$$\HH_\bullet(X) = \HH_\bullet(\shA_1) \oplus \HH_\bullet (\shA_2) \oplus \ldots \oplus \HH_\bullet (\shA_n). $$
Cf. \cite[Thm. 7.3]{Kuz09HHSOD}. Also since K\"unneth formula holds for dg-categories, cf. \cite[Prop. 1.1.4]{PV12}, or \cite[\S 2.4]{Sh07}, and Hochschild homologies of admissible subcategories agree with the definition via their dg-enhancements. Therefore we have for any $\shA \subset D(X)$ and $\shB \subset D(Y)$,
	$$\HH_*(\shA \boxtimes \shB) = \HH_*(\shA) \otimes \HH_*(\shB),$$
where the tensor is regarded as a tensor product of graded vector spaces. For an admissible subcategory $\shA$, we define its (Hochschild) homological Euler characteristic to be
	$$\chi^H(\shA) : = \sum_k (-1)^k \rank \HH_k(\shA).$$
We denote $\chi^H(X) := \chi^H(D(X))$ for a possibly noncommutative variety $X$. Recall for a smooth projective (commutative) variety $X$, the famous Hochschild-Kostant-Rosenberg (HKR) isomorphism states (cf. \cite{CalHKR}):
	$$\HH_i(X) \simeq \bigoplus_{p-q = i} H^q(X,\Omega^p).$$
Hence $\chi^H(X) = \chi(X)$, where $\chi(X)$ is the topological Euler characteristic. The homological Euler characteristic is additive for semiorthogonal decompositions: $\chi(X) = \sum_k \chi^H(\shA_k)$ if $D(X) = \langle \shA_0, \ldots, \shA_n \rangle$, and multiplicative for exterior products: $\chi^H(\shA \boxtimes \shB) = \chi^H(\shA) \cdot \chi^H(\shB)$ by K\"unneth formula.

Now we apply these to our situation. Assume $X_T$ and $Y_S$  are smooth varieties. Then from (\ref{sod:H for X}) and (\ref{sod:H for S}) we have
	$$\chi(\shH) = \chi(X_T) + \chi(X) \cdot (\chi(S) - \chi^H(\shC_0)) = \chi(Y_S) + \chi(S) \cdot (\chi(X) - \chi^H(\shA_0)).$$
From the first statement (1) of our main theorem Thm. \ref{thm:HPDgen} we have $\chi(X) + \chi(Y)  = N\cdot\chi^H(\shA_0)$ and $\chi(S) + \chi(T) = N\cdot \chi^H(\shC_0)$, where recall $N = \dim_{\kk} V$. Direct computations lead to

\begin{corollary}[Pl\"ucker formula for HP-dual]\label{cor:plucker} In the same situation as Thm. \ref{thm:HPDgen}, we further assume $X_T$ and $Y_S$ are smooth, then
	$$\chi(X_T)  - \frac{\chi(X) \cdot \chi(T)}{N} = \chi(Y_S) - \frac{\chi(Y)\cdot \chi(S)}{N}.$$
\end{corollary}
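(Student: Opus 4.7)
The plan is a short additivity computation on Hochschild Euler characteristics, with no new geometric input beyond part (1) of Theorem~\ref{thm:HPDgen} and the two presentations of $D(\shH)$ from Proposition~\ref{prop:base-change}. The smoothness hypothesis on $X_T$ and $Y_S$, together with the HKR isomorphism quoted above, lets me identify $\chi^H(\shD_{X_T}) = \chi(X_T)$ and $\chi^H(\shD_{Y_S}) = \chi(Y_S)$, so the whole argument reduces to linear algebra.

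First I would re-derive the two identities
$$
\chi(\shH) \;=\; \chi(X_T) + \chi(X)\bigl(\chi(S) - \chi^H(\shC_0)\bigr) \;=\; \chi(Y_S) + \chi(S)\bigl(\chi(X) - \chi^H(\shA_0)\bigr)
$$
by applying additivity of $\chi^H$ under semiorthogonal decompositions to (\ref{sod:H for X}) and (\ref{sod:H for S}), using K\"unneth to write $\chi^H(\shA_k\boxtimes D(S)) = \chi^H(\shA_k)\cdot \chi(S)$, invariance of $\chi^H$ under the twists $(k)$, and the tautology $\sum_{k\ge 1}\chi^H(\shA_k) = \chi(X) - \chi^H(\shA_0)$ from the Lefschetz decomposition of $D(X)$ (dually for $S$).

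Next I would feed in the dual Lefschetz decomposition~(\ref{sod:Y}) provided by Theorem~\ref{thm:HPDgen}(1), which (together with twist-invariance) yields $\chi(Y) = \sum_{k=1}^{N-1}\chi^H(\shB^k)$. Because $\shB^k = \langle \foa_0,\ldots,\foa_{k-1}\rangle$ and $\shA_k = \langle \foa_k,\ldots,\foa_{i-1}\rangle$ (with $\shA_k = 0$ for $k\ge i$), one has $\chi^H(\shB^k) + \chi^H(\shA_k) = \chi^H(\shA_0)$ for every $k\in[1,N-1]$. Combined with $\chi(X) = \chi^H(\shA_0) + \sum_{k\ge 1}\chi^H(\shA_k)$, summing gives the clean relation
$$
\chi(X) + \chi(Y) \;=\; N\cdot \chi^H(\shA_0),
$$
and symmetrically $\chi(S) + \chi(T) = N\cdot \chi^H(\shC_0)$.

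To finish, I would subtract the two expressions for $\chi(\shH)$ to get $\chi(X_T) - \chi(Y_S) = \chi(X)\,\chi^H(\shC_0) - \chi(S)\,\chi^H(\shA_0)$, then substitute $\chi^H(\shA_0) = (\chi(X)+\chi(Y))/N$ and $\chi^H(\shC_0) = (\chi(S)+\chi(T))/N$. The right-hand side collapses to $\bigl(\chi(X)\chi(T) - \chi(S)\chi(Y)\bigr)/N$, which is exactly the claimed identity after transposing one term. There is no real obstacle: once Theorem~\ref{thm:HPDgen} is in hand the proof is pure bookkeeping on Hochschild Euler characteristics, and the only point to verify carefully is that $\chi^H$ is invariant under the twists $(k)$ appearing in every decomposition in sight, which is immediate because $-\otimes\sO(k)$ is an autoequivalence.
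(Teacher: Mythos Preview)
Your proposal is correct and follows essentially the same approach as the paper: the paper's proof (given just before the corollary) computes $\chi(\shH)$ two ways via the decompositions (\ref{sod:H for X}) and (\ref{sod:H for S}), invokes Theorem~\ref{thm:HPDgen}(1) to obtain $\chi(X)+\chi(Y)=N\chi^H(\shA_0)$ and $\chi(S)+\chi(T)=N\chi^H(\shC_0)$, and then says ``direct computations lead to'' the result. You have simply spelled out those direct computations in full, including the reason $\chi^H(\shB^k)+\chi^H(\shA_k)=\chi^H(\shA_0)$, which the paper leaves implicit.
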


\begin{example} As in Example \ref{Pf-Gr}, if we take $X = \Gr(2,7)$, and let $\tilde{Y}$ be the non-commutative resolution of the Pfaffian loci $Y=\Pf(4,7)$, $S = \PP^6$ and $T= \PP^{13}$ be a generic pair of dual linear sections. Then $N = 21$, $\chi(S) = 7$, $\chi(T) = 14$, $\chi(X) = 21$, $\chi^H(\tilde{Y}) = 42$. Then the corollary implies $\chi(X_T) = \chi(Y_S)$, which agrees with they having the same Hodge diamond. If we take $X = \Gr(2,6)$, and let $\tilde{Y}$ be the non-commutative resolution of the Pfaffian loci $Y=\Pf(4,6)$,  $S = \PP^5$ and $T = \PP^8$ be a generic pair of dual linear sections. Then $N=15$, $\chi(S)=6$, $\chi(T) = 9$, $\chi(X) = 15$, $\chi^H(\tilde{Y}) = 30$. Then the corollary implies $\chi(X_T) + 3 =  \chi(Y_S)$. Since $X_T$ is a $K3$ surface, $\chi(X_T) = 24$. Therefore $\chi(Y_S) = 27$. This agrees with $Y_S$ being a cubic fourfold. 
\end{example}

\medskip\noindent \textbf{Vanishing results}. As a preparation for playing the game of mutations, we show certain vanishing results on $D(\shH)$. These results are analogous to \cite[Lem. 5.1]{Kuz07HPD} and \cite[Lem. 4.3]{RT15HPD}.
Since $i_{\shH}: \shH \subset X\times S$ is a divisor in $|\sO_{X\times S}(1,1)|$, we have a short exact sequence
    $$ 0 \to \sO_{X\times S}(-1,-1) \to \sO_{X\times S} \to i_{\shH *} \sO_{\shH} \to 0.$$
This enables us to express $R\Hom_{\shH}$ in terms of $R\Hom$s on $X$ and $S$. 

\begin{lemma}[Vanishing lemma] \label{lem:van} For any $F_1,F_2 \in D(X)$, $G_1, G_2 \in D(S)$, 
    \begin{align}\label{cone-for-Hom}
    \begin{split}
     R\Hom_{\shH}(i_\shH^* (F_1\boxtimes\, &G_1), i_\shH^*(F_2 \boxtimes \,G_2))  =  \cone\big[R\Hom_X(F_1,F_2(-1))\otimes \\
    & R\Hom_S(G_1,G_2(-1)) \to R\Hom_X(F_1,F_2)\otimes R\Hom_S(G_1,G_2) \big]
    \end{split}
    \end{align}
\end{lemma}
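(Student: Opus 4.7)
The plan is to reduce the computation on $\shH$ to a computation on the smooth ambient product $X \times S$ via adjunction, and then use the Koszul-type resolution of $\sO_{\shH}$ together with K\"unneth to split things into factors over $X$ and over $S$.

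First, by the adjunction $(i_{\shH}^*, i_{\shH *})$, for any $A, B \in D(X \times S)$ one has
\[
R\Hom_{\shH}(i_{\shH}^* A,\, i_{\shH}^* B) \;=\; R\Hom_{X \times S}(A,\, i_{\shH *}\, i_{\shH}^* B).
\]
Apply this to $A = F_1 \boxtimes G_1$ and $B = F_2 \boxtimes G_2$. By the projection formula, $i_{\shH *}\, i_{\shH}^* B = B \otimes i_{\shH *} \sO_{\shH}$, so tensoring the defining short exact sequence
\[
0 \to \sO_{X \times S}(-1,-1) \to \sO_{X \times S} \to i_{\shH *} \sO_{\shH} \to 0
\]
with $B$ yields a distinguished triangle $B(-1,-1) \to B \to i_{\shH *} i_{\shH}^* B \xrightarrow{[1]}$ in $D(X \times S)$.

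Next, apply $R\Hom_{X \times S}(A,-)$ to this triangle. The resulting distinguished triangle identifies $R\Hom_{\shH}(i_{\shH}^* A, i_{\shH}^* B)$ as the cone of the natural map
\[
R\Hom_{X \times S}(A,\, B(-1,-1)) \;\longrightarrow\; R\Hom_{X \times S}(A,\, B).
\]
Finally, since $A$ and $B$ are external products, the K\"unneth formula on the smooth product $X \times S$ gives
\[
R\Hom_{X \times S}(F_1 \boxtimes G_1,\, F_2 \boxtimes G_2) \;=\; R\Hom_X(F_1, F_2) \otimes R\Hom_S(G_1, G_2),
\]
and similarly for the twisted version (using $\sO_{X \times S}(-1,-1) = \sO_X(-1) \boxtimes \sO_S(-1)$). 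Substituting these identifications yields the desired cone expression.

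There is no serious obstacle here: the only mild issue is making sure K\"unneth applies, which is standard since $X$ and $S$ are smooth projective (so all objects are perfect and one of the factors can be taken locally free up to a bounded resolution). The shift/sign in the triangle is the usual one from the Koszul sequence, and the naturality of the map inside the cone comes automatically from the adjunction.
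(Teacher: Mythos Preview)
Your proof is correct and follows essentially the same approach as the paper: adjunction plus projection formula reduce the $R\Hom$ on $\shH$ to one on $X\times S$ against $i_{\shH*}\sO_{\shH}$, the Koszul sequence for the divisor $\shH$ produces the cone, and K\"unneth splits the endpoints into $X$- and $S$-factors. The only cosmetic difference is the order of operations (you apply adjunction first, then tensor with the sequence; the paper tensors first at the sheaf level, then takes global sections), but the argument is the same.
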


\begin{proof} For any $E_1, E_2$ on $X \times S$, we have exact triangle
	$$R\sHom(E_1, E_2(-1,-1)) \to R \sHom(E_1, E_2) \to R \sHom(E_1, E_2 \otimes i_{\shH *} \sO_{\shH}) \xrightarrow{[1]}.$$
Apply global section functor, and notice by adjunction and projection formula we have $R \Hom_{X\times S}(E_1, E_2 \otimes  i_{\shH *} \sO_{\shH} ) = R\Hom_{X\times S}(E_1, i_{\shH *} i_\shH^* E_2) =R\Hom_{\shH}(i_\shH^* E_1, i_\shH^* E_2)$, one gets an exact triangle
 	$$R\Hom_{X\times S}(E_1, E_2(-1,-1)) \to R \Hom_{X \times S}(E_1, E_2) \to R \Hom_{\shH}(i_\shH^* E_1, i_\shH^* E_2) \xrightarrow{[1]}.$$ 	
Apply to the case $E_1 = F_1 \boxtimes G_1$, $E_2 = F_2 \boxtimes  G_2$ and use K\"unneth formula, we are done.\footnote{Note the adjunction, projection formula, etc hold for the derived category of all locally ringed spaces, cf. \cite{Lip09}, hence we do not need $\shH$ to be smooth. }
\end{proof}

\begin{remark}['$\alpha$- and $\beta$-vanishing']\label{rmk:alpha&beta_van}
The lemma will be repeatedly used later. In order to show a vanishing of the form
        \begin{equation}\label{formula:van}
        R\Hom_\shH(\shA_{\alpha_1}(\alpha_1)\boxtimes \shC_{\beta_1}(\beta_1), \shA_{\alpha_2}(\alpha_2)\boxtimes \shC_{\beta_2}(\beta_2)) = 0,
        \end{equation}
it suffices to show each term inside the cone in (\ref{cone-for-Hom}) has a factor which vanishes. We will refer to the first term as 'twisted' term (twisted by $\sO(-1,-1)$), and the second term as 'untwisted' term. There will be three typical situations:

\begin{enumerate}
\item ($\alpha$-vanishing). The $R \Hom_X$ factors of each term of the cone vanish. This happens for example if $ i-1 \ge  \alpha_1 > \alpha_2 \ge 1$.
\item ($\beta$-vanishing). The $R \Hom_S$ factors of each term of the cone vanish. This happens for example, if $ l-1 \ge \beta_1 > \beta_2  \ge 1$.
\item (Mixed type). One of the terms inside the cone has a $R \Hom_X$ factor which vanishes (we will refer this term as 'having $\alpha$-vanishing') and the other term has a vanishing $R \Hom_S$ factor (which we will refer as 'having $\beta$-vanishing'). This happens for example, if  $\alpha_1 = \alpha_2 \ge 1,~ \beta_1 > \beta_2 = 0$ or  $\alpha_1 >  \alpha_2 = 0,~ \beta_1 = \beta_2 \ge 1$.
\end{enumerate}
\end{remark}



\subsection{Fully-faithfulness}\label{sec:fullyfaithful} We first prove the statements about fully-faithfulness in Thm \ref{thm:HPDgen}. The strategy, following \cite{RT15HPD}, is as follows.

First, suppose we want to show $\pi_S: D(\shH) \to \shD_{Y_S}$ is fully faithful when restricted to some full triangulated subcategory, say $\shE \subset D(\shH)$, that is to show for all $a,b \in \shE$,
    $$R\Hom_{Y_S}(\pi_S \,a, \pi_S\, b) = R\Hom_{\shH}(a,b).$$
Since $\pi_S$ is left adjoint to the inclusion functor $i_S$, the left hand side is equal to $R\Hom_{\shH}(a, i_S \,\pi_S\, b) = R\Hom_{\shH}(a, \pi_S \,b)$\footnote{We omit the inclusion functor $i_S$ when regarding $\pi_S b$ as an object in $D(\shH)$, for simplicity of notations.}. Hence it is equivalent to show
        \begin{equation}\label{eqn:Van} R\Hom_{\shH}(a, \cone(b \to \pi_S \,b)) = 0,
        \end{equation}
where $b \to  \pi_S \,b$ is the counit map of the adjunction. If we can show for all $b \in \shE$, $ \cone(b \to \pi_S \,b) \in \shE^\perp$, we are done. Hence the key is to compute $\cone(b \to \pi_S \,b)$ for $b$ in the subcategories of interest. Similarly for $\pi_T$. 
    
Next, observe that $\pi_S$ (resp. $\pi_T$), as the projection to the component $\shD_{Y_S}$ (resp. $\shD_{X_T}$) of  of (\ref{sod:H for X}) (resp. (\ref{sod:H for S})), is the left mutation functor passing through the remaining components on its right  in (\ref{sod:H for X}) (resp. (\ref{sod:H for S})). Hence the computation follows from playing with of properties of mutation functors.
    
\medskip\noindent \textbf{Fully-faithfulness about $\pi_T$.}  We first show statements about fully-faithfulness of $\pi_T$.

\begin{proposition} \label{prop:pi_T} The restrictions of $\pi_T$ to the subcategories $\shA_1(1)\boxtimes \shD^1, \ldots,\shA_{i-1}(i-1)\boxtimes \shD^{i-1} \subset D(\shH)$ are fully faithful, and their images remain a semiorthogonal sequence in $\shD_{X_T}$. 
\end{proposition}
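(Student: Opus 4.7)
The strategy is to combine the adjunction $(\pi_T, i_T)$ with the defining triangle of $\pi_T$ as a projection. Since $\shD_{X_T}$ is the leftmost component of the semiorthogonal decomposition (\ref{sod:H for S}), Lemma \ref{lem:mut} identifies $\pi_T$ with the iterated left mutation $\LL_\shB$ through $\shB:=\langle D(X)\boxtimes\shC_j(j)\rangle_{j=1,\ldots,l-1}$, and every $a'\in D(\shH)$ sits in a triangle $b'\to a'\to\pi_T a'$ with $b'\in\shB$. Applying $R\Hom_\shH(a,-)$ and using the adjunction identity $R\Hom_{\shD_{X_T}}(\pi_T a,\pi_T a')=R\Hom_\shH(a,\pi_T a')$, both the fully-faithfulness of $\pi_T$ on each $\shA_\alpha(\alpha)\boxtimes\shD^\alpha$ and the semiorthogonality of its images reduce to the \emph{key vanishing}
\[
R\Hom_\shH\bigl(\shA_\alpha(\alpha)\boxtimes\shD^\alpha,\ D(X)\boxtimes\shC_j(j)\bigr)=0, \quad \alpha\in[1,i-1],\ j\in[1,l-1],
\]
together with, in the semiorthogonality case, the analogous vanishing against $\shA_{\alpha'}(\alpha')\boxtimes\shD^{\alpha'}$ for $\alpha>\alpha'$.

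To establish the key vanishing I would apply Lemma \ref{lem:van} to each pair of generators, which presents the Hom as the cone
\[
R\Hom_X(\shA_\alpha(\alpha), D(X)(-1))\otimes R\Hom_S(\shD^\alpha, \shC_j(j-1)) \longrightarrow R\Hom_X(\shA_\alpha(\alpha), D(X))\otimes R\Hom_S(\shD^\alpha, \shC_j(j)),
\]
and then produce in each of the two terms a vanishing tensor factor, giving a ``mixed-type'' vanishing in the sense of Remark \ref{rmk:alpha&beta_van}. Because $D(X)$ on the right carries no $\alpha$-restriction and $\shD^\alpha\subset\shC_0$ sits in slot $0$ of the Lefschetz decomposition of $D(S)$, neither pure $\alpha$- nor pure $\beta$-vanishing applies directly. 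I would instead use the mutation presentation $\shD^\alpha=\langle\gamma_0^*(\foc_{k-1}(k))\rangle_{k=1}^{\alpha}$ from Lemma \ref{lem:sod:A_0}, by which each generator equals $\LL_{\langle\shC_1(1),\ldots,\shC_{k-1}(k-1)\rangle}\foc_{k-1}(k)$, together with the Serre-twisted Lefschetz decompositions (\ref{sod:S:C^L}) of $D(S)$, to locate $\shD^\alpha$ relative to $\shC_j(j)$ and $\shC_j(j-1)$ inside the refined decompositions (\ref{sod:H:k}) of $D(\shH)$. The bookkeeping of the resulting case analysis is exactly what the chessboard of Figure \ref{Fig_chessboard} is designed to encode.

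Granting the key vanishing, fully-faithfulness of $\pi_T|_{\shA_\alpha(\alpha)\boxtimes\shD^\alpha}$ is immediate: the inclusion $\shA_\alpha(\alpha)\boxtimes\shD^\alpha\hookrightarrow D(\shH)$ is itself fully faithful in the range $\alpha\ge 1$ by Proposition \ref{prop:base-change}, and the equality $R\Hom_{\shD_{X_T}}(\pi_T a,\pi_T a')=R\Hom_\shH(a,a')$ transports this across $\pi_T$. For semiorthogonality with $\alpha>\alpha'$, the same adjunction trick reduces to $R\Hom_\shH(\shA_\alpha(\alpha)\boxtimes\shD^\alpha,\shA_{\alpha'}(\alpha')\boxtimes\shD^{\alpha'})=0$; expanding once more via Lemma \ref{lem:van}, the untwisted $R\Hom_X$ factor vanishes by Lefschetz $\alpha$-vanishing in $D(X)$ (since $i-1\ge\alpha>\alpha'\ge 1$), and the twisted term is dispatched by a parallel mutation-based argument on the pair $(\shD^\alpha,\shD^{\alpha'}(-1))$ using the inclusion $\shD^{\alpha'}\subset\shD^\alpha$.

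The main obstacle will be the key vanishing itself: neither $R\Hom_X$ nor $R\Hom_S$ in either term of the cone vanishes for purely index-theoretic reasons, so the argument must exploit the fine mutation structure of $\shD^\alpha$ against the several Lefschetz decompositions of $D(S)$ and $D(\shH)$ simultaneously. This is precisely the ``game on the chessboard'' in the sense of Kuznetsov--Thomas that the introduction advertises, and I expect it to be the main technical step.
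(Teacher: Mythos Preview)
Your reduction via the adjunction $(\pi_T,i_T)$ and the triangle $b'\to a'\to\pi_T a'$ is correct, and so is the observation that everything comes down to $R\Hom_\shH(a,b')=0$. But the ``key vanishing'' you then assert,
\[
R\Hom_\shH\bigl(\shA_\alpha(\alpha)\boxtimes\shD^\alpha,\ D(X)\boxtimes\shC_j(j)\bigr)=0\quad\text{for all }j\in[1,l-1],
\]
is false in general, so no amount of mutation bookkeeping will rescue it. A concrete failure: take the target to be $\shA_\alpha(\alpha)\boxtimes\shC_1(1)\subset D(X)\boxtimes\shC_1(1)$. In Lemma~\ref{lem:van} the twisted term has $R\Hom_X(\shA_\alpha(\alpha),\shA_\alpha(\alpha-1))=0$ (since $\shA_\alpha(\alpha-1)\subset\shA_{\alpha-1}(\alpha-1)$), but the untwisted term is $R\Hom_X(\shA_\alpha(\alpha),\shA_\alpha(\alpha))\otimes R\Hom_S(\shD^\alpha,\shC_1(1))$, and neither factor vanishes: the first contains the identity, and the second is nonzero already for $S=\PP^{l-1}$. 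So the cone is nonzero. Equivalently, your key vanishing would say $\shA_\alpha(\alpha)\boxtimes\shD^\alpha\subset{}^\perp\shB$, which is simply not where these subcategories sit.

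The correct move is the one the paper makes in Lemma~\ref{lem:region:pi_T}: rather than proving $a$ is left orthogonal to \emph{all} of $\shB$, you control the \emph{specific} $b'$ arising from $a'\in\shA_{k}(k)\boxtimes\shD^{k}$. The point is that although $b'\in\shB=\langle D(X)\boxtimes\shC_j(j)\rangle_j$ by definition, one shows (by performing the mutation stepwise and using mixed-type vanishing at each stage) that in fact $b'$ lies in the much smaller subcategory $\langle\shA_1(1)\boxtimes D(S),\ldots,\shA_{k-1}(k-1)\boxtimes D(S)\rangle$ --- that is, one locates $b'$ with respect to the \emph{other} decomposition~(\ref{sod:H for X}), not~(\ref{sod:H for S}). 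Then for $a\in\shA_m(m)\boxtimes\shD^m$ with $m\ge k$, the vanishing $R\Hom(a,b')=0$ is immediate from the semiorthogonality of~(\ref{sod:H for X}). This interplay between the two decompositions is precisely the content of the ``chessboard'' game; your proposal tries to play it on one board only.
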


This will follow from the following lemma, which we will show in more detail. The lemma, which although appears complicated, is actually very intuitive if one visualizes the process and the regions in Figure \ref{Figure:l<=i}, \ref{Figure:l>=i}. For example, in Figure \ref{Figure:l<=i}, the shaded region inside the 'middle column' $D(X) \boxtimes \shC_0$ is where $b \in \shA_{k}(k) \boxtimes\, \shD^k$ belongs, and the shaded 'staircase region' on the right (inside the 'columns' indicated by ${}^\perp \shD_{X_T}$) is where the $\cone(b \to \pi_T(b))$ belongs.
 
 \begin{figure}
\begin{center}
\includegraphics[height=3.6 in]{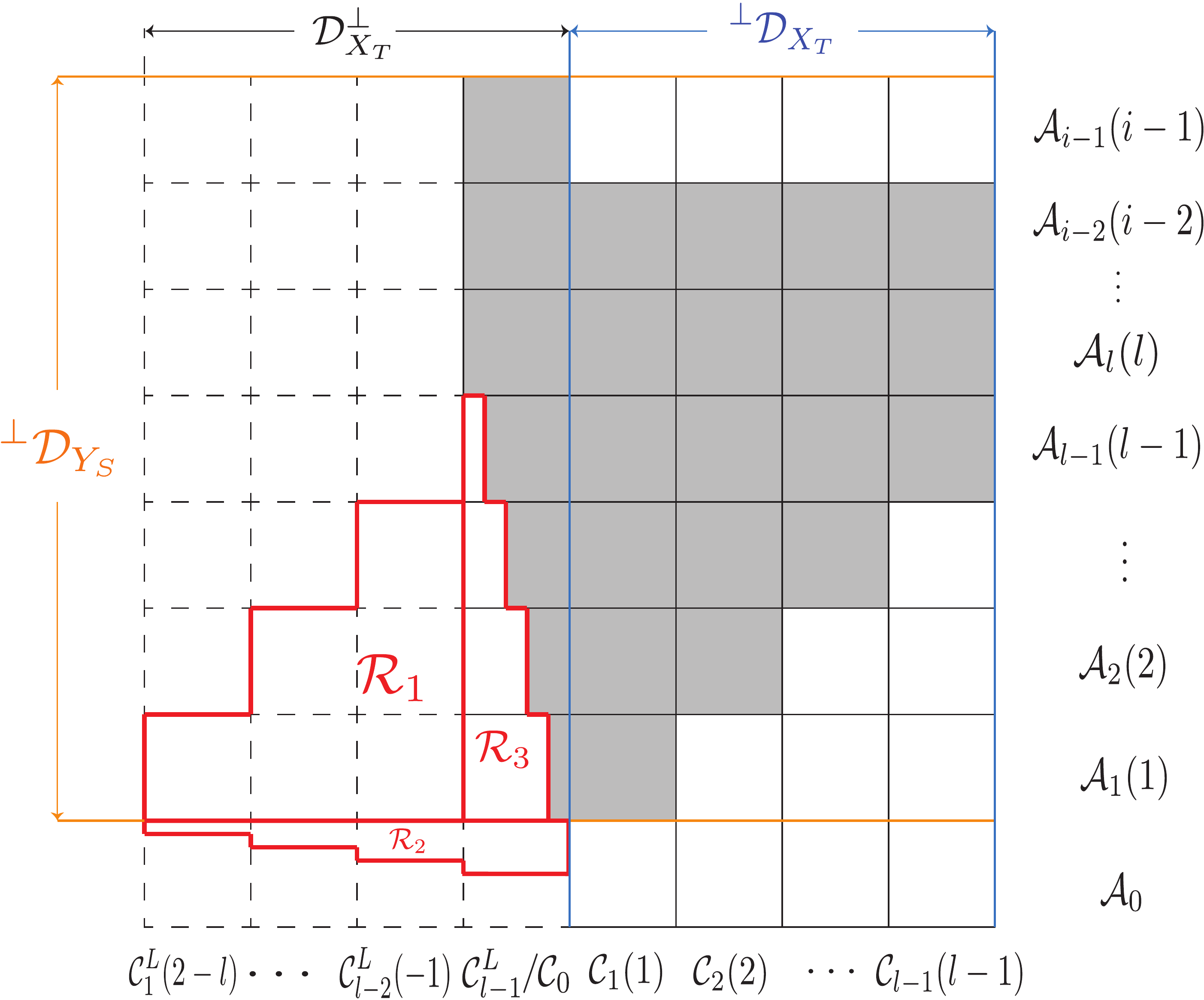}
\caption{The extended 'chessboard', where we glue the two decompositions $D(\shH) = \langle \shD_{X_T}, {}^\perp \shD_{X_T}\rangle$ and $D(\shH) = \langle \shD_{X_T}^{\perp}, \shD_{X_T}\rangle$ together. The blocks correspond whether $\shA_{\alpha}(\alpha) \boxtimes \shC_{\beta}(\beta)$ or $\shA_{\alpha}(\alpha) \boxtimes \shC^L_{\beta}(\beta+1-l)$ as indicated by the labels on axes, embedded into $D(\shH)$ by pullback $i_{\shH}^*$. Then the overlapping 'middle column' will correspond to whether the larger category $D(X) \boxtimes \shC_0$ or the smaller category $D(X) \boxtimes \shC^L_{l-1}$, depending on which decompositions we are considering. The staircases regions inside the diagram (whether the shaded ones or the ones enclosed by red lines) play essential roles in the proof of fully-faithfulness, as well as generation. 
} \label{Figure:l<=i}
\end{center}
\end{figure}
 
    \begin{lemma} \label{lem:region:pi_T} Let $b \in \shA_{k}(k) \boxtimes\, \shD^k$, $k = 1, \ldots, i-1$.  
If $k \in [1, l]$, then $\cone(b \to \pi_T(b))$ belongs to the subcategory
	\begin{equation*} \label{eqn:pi_T1}
	\left\langle\begin{array}{l}
	\shA_{k-1}(k-1)\boxtimes\shC_{1}(1), ~~
	\shA_{k-1}(k-1)\boxtimes \shC_{2}(2), \quad \dots \quad
 	\shA_{k-1}(k-1)\boxtimes\shC_{k-1}(k-1)\\
	\shA_{k-2}(k-2)\boxtimes\shC_{1}(1), \qquad \dots \qquad
	\shA_{k-2}(k-2)\boxtimes\shC_{k-2}(k-2)\\
	\qquad\qquad \vdots\\
	\shA_{1}(1)\boxtimes\shC_{1}(1)
	\end{array}\right\rangle \footnote{The the order of the semiorthogonal sequence is from bottom to top, and from left to right.}.
	\end{equation*}
If $k \in [l, i-1]$, then $\shD^k = \shC_0$, and $\cone(b \to \pi_T(b))$ belongs to the subcategory
	\begin{equation*} \label{eqn:pi_T2}
	\left\langle\begin{array}{l}
 	\shA_{k-1}(k-1)\boxtimes\shC_{1}(1), ~~
	\shA_{k-1}(k-1)\boxtimes \shC_{2}(2), \quad \dots \quad
 	\shA_{k-1}(k-1)\boxtimes\shC_{l-1}(l-1)\\
	\shA_{k-2}(k-2)\boxtimes\shC_{1}(1), \qquad \dots \qquad
	\shA_{k-2}(k-2)\boxtimes\shC_{l-2}(l-2)\\
	\qquad\qquad \vdots\\
	\shA_{k-l+1}(k-l+1)\boxtimes\shC_{1}(1)
	\end{array}\right\rangle.
	\end{equation*}
Therefore, the image of $\langle \shA_1(1)\boxtimes \shD^1, \ldots, \shA_{i-1}(i-1) \boxtimes \shD^{i-1}\rangle$ under $\pi_T$ is contained in the subcategory generated by themselves and
		$$\shA_{\alpha}(\alpha)\boxtimes \shC_{\beta}(\beta) \quad \text{for} \quad \beta \in [1, l-1], ~\alpha \in [\beta, i-2].$$
See the shaded region in Figure \ref{Figure:l<=i}. In particular, the image is contained in ${}^\perp \shD_{Y_S} = \langle  \shA_1(1) \boxtimes D(S), \ldots, \shA_{i-1}(i-1)\boxtimes D(S)\rangle$.
\end{lemma}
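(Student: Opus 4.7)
The strategy is to play the Kuznetsov--Thomas ``chess game'' on the extended chessboard (Figure~\ref{Figure:l<=i}), realizing $\pi_T$ as an iterated sequence of left mutations and tracking where the correction terms live block-by-block. Applying Lem~\ref{lem:mut}(\ref{lem:mut:sod}) to the SOD~(\ref{sod:H for S}), write
$$\pi_T = \LL_{D(X) \boxtimes \shC_1(1)} \circ \LL_{D(X) \boxtimes \shC_2(2)} \circ \cdots \circ \LL_{D(X) \boxtimes \shC_{l-1}(l-1)}.$$
Starting from $b_0 = b$ and setting $b_s = \LL_{D(X) \boxtimes \shC_{l-s}(l-s)}(b_{s-1})$ so that $b_{l-1} = \pi_T b$, Lem~\ref{lem:mut}(2) puts each step-wise cone $\cone(b_{s-1} \to b_s)$ in $D(X) \boxtimes \shC_{l-s}(l-s)$, which by Prop~\ref{prop:product} decomposes into the column of blocks $\{\shA_\alpha(\alpha) \boxtimes \shC_{l-s}(l-s)\}_\alpha$. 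The total cone $\cone(b \to \pi_T b)$ is then assembled from these columnar corrections.

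The ``outer envelope'' of the staircase---vanishing of columns $\beta \ge k$ and rows $\alpha \ge k$---follows directly from Lem~\ref{lem:van} with three ingredients: (i) the untwisted $S$-factor $R\Hom_S(\shC_\beta(\beta), \shD^k)$ vanishes for all $\beta \ge 1$, since $\shD^k \subset \shC_0$ and $\shC_\beta(\beta)$ sits in slot $\beta \ge 1$ of the Lefschetz SOD of $D(S)$; (ii) the $X$-factor $R\Hom_X(\shA_\alpha(\alpha), \shA_k(k-1))$ vanishes for $\alpha \ge k$ by $X$-side Lefschetz semiorthogonality; (iii) combining Serre duality in $D(S)$ with the Serre-dual decomposition~(\ref{sod:S:C^L}) yields $R\Hom_S(\shC_\beta(\beta+1), \shD^k) = 0$ for $\beta \ge k$, since $\shD^k$ belongs to the ``right block'' of $D(S) = \langle \shC_k^L(k-l+1), \ldots, \shC_{l-1}^L, \shC_0(1), \ldots, \shC_{k-1}(k)\rangle$. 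Together (i)--(iii) kill all $\alpha \ge k$ and $\beta \ge k$ blocks outright.

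The delicate ``interior'' of the staircase---blocks with $\alpha < k$, $\beta < k$ but still outside the staircase---requires tracking cross-step cancellations, which is the characteristic chess-game bookkeeping. Here the mutation-theoretic identity
$$\langle \shD^k, \shC_1(1), \ldots, \shC_k(k)\rangle = \langle \shC_0(1), \shC_1(2), \ldots, \shC_{k-1}(k)\rangle \subset D(S)$$
from Lem~\ref{lem:sod:A_0} applied to $S$ is the essential structural input: it says $b$ is intrinsically a left-mutation of an object in $\shA_k(k) \boxtimes \langle \shC_0(1), \ldots, \shC_{k-1}(k)\rangle$, and this hidden mutation history propagates through the iterates $b_s$ so that corrections from the later (leftward) columns absorb the ostensibly extra contributions coming from non-staircase blocks. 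The case split $k \in [1, l]$ versus $k \in [l, i-1]$ reflects whether $\shD^k \subsetneq \shC_0$ (triangular staircase) or $\shD^k = \shC_0$ (trapezoidal staircase); in the latter the ingredient (iii) is vacuous and the $X$-side bound in (ii) produces the upper edge.

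For the assertion that the images $\pi_T(\shA_k(k) \boxtimes \shD^k)$ for $k = 1, \ldots, i-1$ form a semiorthogonal sequence inside $\shD_{X_T}$, adjunction $R\Hom_{\shD_{X_T}}(\pi_T b_1, \pi_T b_2) = R\Hom_\shH(b_1, \pi_T b_2)$ together with the triangle $\cone(b_2 \to \pi_T b_2)[-1] \to b_2 \to \pi_T b_2$ reduces matters to showing $R\Hom_\shH(b_1, \cone(b_2 \to \pi_T b_2)) = 0$ for $b_i \in \shA_{k_i}(k_i) \boxtimes \shD^{k_i}$ with $k_1 > k_2$. Given the staircase support just established, this becomes a routine application of Lem~\ref{lem:van} between $\shA_{k_1}(k_1) \boxtimes \shD^{k_1}$ and the staircase blocks $\shA_\alpha(\alpha) \boxtimes \shC_\beta(\beta)$. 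The main obstacle of the whole argument is the interior cancellation above: naive Hom-vanishings produce only the outer envelope, and pinning down the sharp staircase requires careful step-by-step bookkeeping in the chess game, exploiting not merely $\shD^k \subset \shC_0$ but the precise mutation-theoretic definition of $\shD^k$ via $\gamma_0^*$.
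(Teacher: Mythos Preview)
Your overall setup --- realizing $\pi_T$ as the composite of left mutations past the columns $D(X) \boxtimes \shC_\beta(\beta)$ and tracking the step-wise cones via Lem.~\ref{lem:van} --- is correct and matches the paper. Your ``outer envelope'' analysis is also right: columns $\beta \ge k$ contribute nothing (this is the paper's opening observation), so $\pi_T b = \LL_{D(X)\boxtimes\shC_1(1)} \circ \cdots \circ \LL_{D(X)\boxtimes\shC_{k-1}(k-1)}\, b$. One small correction: your justification of (iii) via (\ref{sod:S:C^L}) is off, since $\shD^k \subset \shC_0$, not $\shC_0(1)$, so the cited decomposition does not directly place $\shD^k$ in its ``right block''. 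The clean argument is the paper's, purely via Lem.~\ref{lem:sod:A_0} for $S$: that lemma gives $\shD^k \subset \langle \shC_0(1), \ldots, \shC_{k-1}(k)\rangle$, and then $\shC_\beta(\beta+1)$ for $\beta \ge k$ lies in ${}^\perp\langle \shC_0(1), \ldots, \shC_{k-1}(k)\rangle$ inside the twisted Lefschetz SOD of $D(S)$.

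The substantive gap is your treatment of the ``interior'' of the staircase. There is no absorption or cancellation: the paper shows the unwanted contributions never arise, and the mechanism lives on the $X$-side, not the $S$-side. The point you are missing is that at each mutation step one uses an \emph{adapted} SOD of $D(X)$, not the standard Lefschetz one. At step $\gamma+1$ (mutating past column $k-\gamma-1$) the paper takes
$$D(X) = \big\langle \shA_{k-\gamma-1}(k-\gamma-1), \ldots, \shA_{k-1}(k-1),\ {}^\perp\langle \shA_{k-\gamma-1}(k-\gamma-1), \ldots, \shA_{k-1}(k-1)\rangle \big\rangle$$
and shows the ``rest'' has no $\Hom$s to $b^{(\gamma)}$. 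For $b$ itself this is mixed-type vanishing: untwisted by $\beta$-vanishing, twisted by the inclusion $\shA_k(k-1) \subset \shA_{k-1}(k-1)$. For the accumulated cone $\cone(b \to b^{(\gamma)})$, which by the induction hypothesis lives in rows $\alpha \in [k-\gamma, k-1]$, this is pure $\alpha$-vanishing: both $\shA_\alpha(\alpha)$ and $\shA_\alpha(\alpha-1)$ lie in the first bracket, hence receive no $\Hom$s from the ``rest''. If instead you decompose $D(X)$ into the standard Lefschetz rows $\shA_\alpha(\alpha)$ and check row by row, the twisted $X$-factor $R\Hom_X(\shA_\alpha(\alpha), \shA_k(k-1))$ does \emph{not} vanish for small $\alpha$, and nothing downstream absorbs this --- later mutations only create new corrections in lower columns, they do not remove contributions from columns already passed. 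The $S$-side identity from Lem.~\ref{lem:sod:A_0} you invoke is already fully spent in establishing (iii); the staircase shape is produced by the adapted $D(X)$-decomposition together with the inductive $\alpha$-vanishing on the accumulated cone.
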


\begin{proof}
We show the case $1 \le k \le l$, the argument for $k \ge l$ are similar. In order to compute $\pi_T \, b$ and $\cone(b \to \pi_T\,b)$ for $b \in \shA_{k}(k) \boxtimes \shD^{k}$, where $\pi_T$ is the left mutation functor passing through the semiorthogonal sequence
	$$ D(X) \boxtimes \shC_1(1) , \ldots, D(X) \boxtimes \shC_{k-1}(k-1), D(X)\boxtimes \shC_k (k), \ldots, D(X)\boxtimes \shC_{l-1}(l-1),$$
we first observe that the subcategories $D(X)\boxtimes \shC_k (k), \ldots, D(X)\boxtimes \shC_{l-1}(l-1)$ has no (derived) $\Hom$s to $b$. This is from what we call '$\beta$-vanishing'. In fact, let $\beta \in [k,l-1]$, then the untwisted term is zero by $R\Hom_S(\shC_{\beta}(\beta), \shD^{k}) = 0$ since $\shD^{k} \subset \shC_0$, and twisted term is zero since
	\begin{align*} & R\Hom_S(\shC_{\beta}(\beta), \shD^k(-1)) =   R\Hom_S(\shC_{\beta}(\beta+1), \shD^k)\\
 	=&  R\Hom_{\shC_0}((\gamma_0^*( \shC_{\beta}(\beta+1)), \shD^k)  = R\Hom_{\shC_0} (\gamma_0^*( \foc_{\beta}(\beta+1)), \shD^k) = 0
	\end{align*}
exactly by the way we define $\shD^k$ in (\ref{def:D^k}) and Lem. \ref{lem:sod:A_0}. Therefore by Lem. \ref{lem:mut}, 
 	$$ \pi_T\,b = \LL_{D(X) \boxtimes \shC_1(1)} \circ \cdots \circ \LL_{D(X) \boxtimes \shC_{k-1}(k-1)} \, b.$$
Let $b^{(0)} = b$, and $b^{(\gamma)} = \LL_{D(X)\boxtimes\,\shC_{k-\gamma}(k-\gamma)}\,b^{(\gamma-1)}$ for $\gamma =  1, \ldots, k-1$. Then $b^{k-1} = \pi_T(b)$. Notice if $k=1$, $\pi_T(b) = b$, hence we are already done. Therefore only need to consider when $k\ge 2$. To prove $\cone(b \to \pi_T(b))$ belongs to the desired region, we prove by induction on $\gamma$ that $\cone(b \to b^{(\gamma)})$ belongs to the 'subregion' generated by
	\begin{equation} \label{ind:pi_T}
	\shA_{\alpha}(\alpha) \boxtimes \shC_\beta(\beta), \quad \text{for} \quad \beta \in [k-\gamma, k-1], ~\alpha \in [\beta, k-1].
	\end{equation}

\medskip \noindent \textit{Base case.} For $\gamma = 1$, to compute $b^{(1)} = \LL_{D(X) \boxtimes \shC_{k-1}(k-1)} \,b$, we use the following mutated decomposition for $D(X)$:
	$$D(X) = \langle \shA_{k-1}(k-1), {}^\perp (\shA_{k-1}(k-1))\rangle. 
	\footnote{We know explicitly that the second component is 
		$${}^\perp (\shA_{k-1}(k-1)) = \shA_{k}(k), \ldots, \shA_{i-1}(i-1), S_X^{-1}(\shA_0), \ldots, S_X^{-1}(\shA_{k-1}(k-1))$$ but this is actually not relevant for our computation.}	$$
Note of all components of $D(X)\boxtimes \shC_{k-1}(k-1)$ induced by this decomposition, only $\shA_{k-1}(k-1) \boxtimes \shC_{k-1}(k-1)$ has $\Hom$s to $b$. This is a vanishing of 'mixed type'. In fact, for the untwisted term, $R\Hom_S(\shC_{k-1}(k-1), \shD^k) = 0$ since $k \ge 2$, and for the twisted term, 
	$$R\Hom_X({}^\perp (\shA_{k-1}(k-1)) (1), \shA_{k}(k)) = R\Hom_X({}^\perp (\shA_{k-1}(k-1)), \shA_{k}(k-1)) = 0$$
by $\shA_{k}(k-1)\subset \shA_{k-1}(k-1)$. Thus $\cone(b \to b^{(1)}) \in \shA_{k-1}(k-1)\boxtimes \shC_{k-1}(k-1)$ as required.

\medskip \noindent \textit{Inductive step.} Next, suppose $\cone(b \to b^{\gamma})$ belongs to (\ref{ind:pi_T}), then $b^{\gamma}$ belongs the category generated by $\shA_k(k)\boxtimes \shD^k$ and (\ref{ind:pi_T}). To analyze $b^{(\gamma+1)} =\LL_{D(X)\boxtimes\,\shC_{k-\gamma-1}(k-\gamma-1)}\, b^{(\gamma)}$, we use the following decomposition of $D(X)$:
	$$D(X) = \big\langle \shA_{k-\gamma-1}(k-\gamma-1), \ldots, \shA_{k-1}(k-1), {}^\perp \langle \shA_{k-\gamma-1}(k-\gamma-1), \ldots, \shA_{k-1}(k-1)\rangle \big\rangle.$$
We claim that for this decomposition, only the fist $\gamma+1$ terms 
	\begin{equation} \label{eqn:ind:pi_T}\shA_{k-\gamma-1}(k-\gamma-1) \boxtimes \shC_{k-\gamma-1} (k-\gamma -1),\ldots, \shA_{k-1}(k-1)\boxtimes \shC_{k-\gamma-1}(k-\gamma -1)\end{equation}
has $\Hom$s to $b^{(\gamma)}$. First notice the rest of (\ref{eqn:ind:pi_T}),
	\begin{equation} \label{eqn:ind:pi_T:rest}  {}^\perp \langle \shA_{k-\gamma-1}(k-\gamma-1), \ldots, \shA_{k-1}(k-1)\rangle  \boxtimes \shC_{k-\gamma-1}(k-\gamma-1)\end{equation}	
has no $\Hom$s to $b$. The reason is similar as base case: the untwisted term is always zero from $\beta$-vanishing, and the twisted term is zero by $\alpha$-vanishing since $\shA_{k}(k-1) \subset \shA_{k-1}(k-1)$.
Then we only need to show there are no $\Hom$s from (\ref{eqn:ind:pi_T:rest}) to $\cone(b \to b^{(\gamma)})$. This is now from what we call '$\alpha$-vanishing': since the $\shA_{\alpha}(\alpha)$ appearing in (\ref{ind:pi_T}) has range $k - \gamma  \le \alpha \le k-1$, then the corresponding $\shA_{\alpha}(\alpha)$ and $\shA_{\alpha}(\alpha - 1)$ are both contained in $\langle  \shA_{k-\gamma-1}(k-\gamma-1) \ldots, \shA_{k-1}(k-1) \rangle$, therefore the $R\Hom_X$-factors of both twisted and untwisted terms are zero from above decomposition of $D(X)$. Therefore $\cone(b^{(\gamma)} \to b^{(\gamma+1)})$ belongs to (\ref{eqn:ind:pi_T}). Now from the distinguished triangle 
	$$\cone(b \to b^{(\gamma)}) \to \cone(b \to b^{(\gamma+1)}) \to \cone(b^{(\gamma)} \to b^{(\gamma+1)}) \xrightarrow{[1]}, $$ 
$\cone(b \to b^{(\gamma+1)})$ belongs to the desired region (\ref{ind:pi_T}). This completes the induction and hence the proof of the lemma.
\end{proof}

\begin{remark} \label{rmk:staircase} We see from the induction step, it is the twisted term $R\Hom_X(F_1, F_2(-1))$ (resp. $R\Hom_S(G_1, G_2(-1))$ for $\pi_S$) which is responsible for the \textbf{'staircase shape'} of the region. This interesting phenomenon will occur repeatedly: once we show the vanishing (resp. non-vanishing) for a box in the diagram in the process of mutations or by vanishing conditions, then the rest of vanishing (resp. non-vanishing) area will automatically follows a staircase pattern.
\end{remark}

\medskip\noindent\textit{Proof of Prop. \ref{prop:pi_T}}. This directly follows from Lem. \ref{lem:region:pi_T} as follows. For any $a \in \shA_m(m) \boxtimes \shD^m$, and $b \in \shA_k(k) \boxtimes \shD^k $ with $1 \le k \le m \le i-1$, from Lem. \ref{lem:region:pi_T}, $\cone(b \to \pi_T\,b)$ belongs to the subcategory generated by $\shA_{1}(1)\boxtimes D(S)$, $\ldots$, $\shA_{k-1}(k-1)\boxtimes D(S)$, but $a \in \shA_{m}(m)\boxtimes D(S)$ with $k-1 < m \le i-1$, hence $R\Hom(a, \cone(b \to \pi_T\,b)) = 0$ by semiorthogonality of (\ref{sod:H for X}). From the discussion at the beginning of the section, we are done. \hfill$\square$

\medskip
\begin{definition} We define $\sE \subset D(\shH)$ be the image of  the right orthogonal $\langle \pi_T(\shA_1(1)\boxtimes \shD^1), \ldots, \pi_T(\shA_{i-1}(i-1) \boxtimes \shD^{i-1}) \rangle^\perp \subset \shD_{X_T}$ under the inclusion $i_T$, i.e.,
	\begin{equation}\label{eqn:X_T}
	D(X_T) \simeq \shD_{X_T} =\big\langle \sE,  \pi_T(\shA_1(1)\boxtimes \shD^1), \ldots, \pi_T(\shA_{i-1}(i-1) \boxtimes \shD^{i-1}) \big\rangle
	\end{equation}
\end{definition}

\medskip\noindent \textbf{Fully-faithfulness about $\pi_S$.} We next show fully-faithfulness statements about $\pi_S$.

\begin{proposition} \label{prop:pi_S} The restrictions of $\pi_S$ to the subcategories $\shB^1 \boxtimes \shC^L_1(2-l) , \ldots, \shB^{l-1}\boxtimes \shC^L_{l-1}$ and $\sE$ are fully faithful, and their images form a semiorthogonal sequence in $\shD_{Y_S}$. 
\end{proposition}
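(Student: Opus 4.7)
The plan is to mirror the proof of Proposition~\ref{prop:pi_T} by exploiting the symmetry of the two ambient decompositions~(\ref{sod:H for X}) and~(\ref{sod:H for S:Serre}) depicted on the chessboard of Figure~\ref{Figure:l<=i}, and then to add one further argument handling the component~$\sE$ via its defining orthogonality.

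For the subcategories $\shB^k \boxtimes \shC^L_k(k+1-l)$, $1\le k\le l-1$, I would first establish the symmetric counterpart of Lemma~\ref{lem:region:pi_T}. Since $\pi_S$ is the iterated left mutation through $\shA_1(1)\boxtimes D(S),\ldots,\shA_{i-1}(i-1)\boxtimes D(S)$, the vanishing Lemma~\ref{lem:van} shows that $\LL_{\shA_\alpha(\alpha)\boxtimes D(S)}$ acts trivially on any $b\in \shB^k\boxtimes \shC^L_k(k+1-l)$ whenever $\alpha\ge k$: the untwisted term of (\ref{cone-for-Hom}) vanishes by Lefschetz semiorthogonality ($\shB^k\subset\shA_0$), while the twisted term vanishes for $\alpha\ge k$ from the construction~(\ref{def:B^k}) of $\shB^k$ together with Lemma~\ref{lem:sod:A_0}. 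An induction identical in structure to the one in Lemma~\ref{lem:region:pi_T}, but with the roles of $X$/$S$ and $\alpha$/$\beta$ interchanged, then places $\cone(b \to \pi_S b)$ inside the symmetric staircase region generated by $\shA_\alpha(\alpha)\boxtimes \shC^L_\beta(\beta+1-l)$ with $\beta\in[1,k-1]$ and $\alpha$ constrained by $\beta$. Fully-faithfulness of $\pi_S$ on each $\shB^k\boxtimes \shC^L_k(k+1-l)$, together with the semiorthogonality of their images in $\shD_{Y_S}$, then follows from the adjunction argument used at the end of the proof of Proposition~\ref{prop:pi_T}, now invoking~(\ref{sod:H for S:Serre}) in place of~(\ref{sod:H for X}).

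For the component $\sE$, the semiorthogonality $\Hom(\pi_S\,\sE,\pi_S(\shB^k\boxtimes\shC^L_k(k+1-l)))=0$ is immediate: $\sE \subset \shD_{X_T}$ is left-orthogonal to every $D(X)\boxtimes \shC^L_\beta(\beta+1-l)$ by~(\ref{sod:H for S:Serre}), and both $b$ and the staircase containing $\cone(b\to\pi_S b)$ lie inside these subcategories, so both $R\Hom(a,b)$ and $R\Hom(a,\cone(b\to\pi_S b))$ vanish for $a\in\sE$. The fully-faithfulness of $\pi_S|_\sE$ itself is more delicate: for $a,b\in\sE$ one needs $R\Hom(a,\cone(b\to\pi_S b))=0$, and after decomposing $\shA_\alpha(\alpha)\boxtimes D(S)$ via the mutated SOD~(\ref{sod:S:C^L}) of $D(S)$ one obtains components in $\shA_\alpha(\alpha)\boxtimes \shC^L_\beta(\beta+1-l)$ (killed as above) plus a residual component in $\shA_\alpha(\alpha)\boxtimes \shC_0(1)$. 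I would kill this residual by invoking the defining right-orthogonality $R\Hom(\shA_k(k)\boxtimes\shD^k,\sE)=0$ for all $k\in[1,i-1]$ and translating it, via Serre duality on $D(\shH)$ (using $\omega_\shH=(\omega_X\boxtimes\omega_S)(1,1)|_\shH$) together with the symmetric $D(S)$-analogue of Lemma~\ref{lem:sod:A_0}, into a left-orthogonality of $\sE$ against the missing $\shC_0(1)$-row.

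The main obstacle I expect is precisely this last step: transporting, via Serre duality, the right-orthogonality defining $\sE\subset\shD_{X_T}$ into the left-orthogonality needed to annihilate the residual $\shA_\alpha(\alpha)\boxtimes\shC_0(1)$-component of the cone. This is where the two complementary staircases for $\pi_T$ and $\pi_S$ on the chessboard of Figure~\ref{Figure:l<=i} interact non-trivially; everything else in the proposition reduces to the symmetric mutation arguments already used for $\pi_T$.
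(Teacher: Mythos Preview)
Your treatment of the subcategories $\shB^k\boxtimes\shC^L_k(k+1-l)$ is correct and matches the paper exactly: this is Lemma~\ref{lem:region:pi_S}, the symmetric analogue of Lemma~\ref{lem:region:pi_T}, and the conclusion follows from~(\ref{sod:H for S:Serre}) just as you describe.

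For $\sE$, however, your proposal contains a genuine misconception. You frame the difficulty as needing a \emph{left}-orthogonality of $\sE$ against a residual $\shA_\alpha(\alpha)\boxtimes\shC_0(1)$-component of $\cone(b\to\pi_S b)$, and propose Serre duality on $D(\shH)$ to flip the defining right-orthogonality $R\Hom(\shA_k(k)\boxtimes\shD^k,\sE)=0$. This is both unnecessary and unavailable: $\shH$ need not be smooth (cf.\ the footnote to Lemma~\ref{lem:van}), so $S_{\shH}$ is not at your disposal; and more importantly, no direction flip is needed at all. The issue is not ``$a$ orthogonal to a residual of the cone'' but rather ``the residual does not exist'': to compute each left mutation $\LL_{\shA_\alpha(\alpha)\boxtimes D(S)}\,b$ you must determine which pieces of $\shA_\alpha(\alpha)\boxtimes D(S)$ have Homs \emph{to} $b$, and this is the same direction as the defining condition of $\sE$.

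The paper's Lemma~\ref{lem:region:pi_S:sE} handles this directly. Decomposing $D(S)$ via~(\ref{sod:S:C^L}), the ``dangerous'' piece at step $\gamma$ is $\shA_{l-\gamma-1}(l-\gamma-1)\boxtimes\langle\shC_0(1),\ldots,\shC_{l-\gamma-2}(l-\gamma-1)\rangle$. Now the $D(S)$-analogue of Lemma~\ref{lem:sod:A_0} (which you do invoke, but for the wrong purpose) gives
\[
\langle\shC_0(1),\ldots,\shC_{l-\gamma-2}(l-\gamma-1)\rangle=\langle\shD^{l-\gamma-1},\,\shC_1(1),\ldots,\shC_{l-\gamma-1}(l-\gamma-1)\rangle.
\]
The first summand has no Homs to $b$ by the defining orthogonality of $\sE$; the second has no Homs to $b$ because $b\in\sE\subset\shD_{X_T}$ and~(\ref{sod:H for S}). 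Hence the cone lands entirely in $\shD_{X_T}^\perp$, and $R\Hom(a,\cone(b\to\pi_S b))=0$ for $a\in\sE\subset\shD_{X_T}$ is automatic. The moral is that Lemma~\ref{lem:sod:A_0} is used to \emph{split the source} of the potential Hom, not to Serre-dualise its target.
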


This will follow from the next two lemmas. The first lemma is exactly symmetric to the situation of Lem. \ref{lem:region:pi_T}. The process and results can be intuitively visualized using the 'staircase regions' enclosed by red lines in Figure \ref{Figure:l<=i}, \ref{Figure:l>=i}: the 'staircase region $\shR_2$' inside the first row is where $b \in \shB^k \boxtimes\, \shC^L_{k}(k+1-l)$ belongs, and 'staircase region $\shR_1$' above the first row (contained in the rows indicated by ${}^\perp \shD_{Y_S}$) is where the $\cone(b \to \pi_S(b))$ belongs.

\begin{lemma}\label{lem:region:pi_S} Let $b \in \shB^k \boxtimes\, \shC^L_{k}(k+1-l)$, $k = 1, \ldots, l-1$. If $k \in [1, i]$, then $\cone(b \to \pi_S(b))$ belongs to
	$$
	\left\langle\begin{array}{r}
	\shA_{k-1}(k-1)\boxtimes\shC_{k-1}^L(k-l) \\
	\vdots \qquad \qquad\\
	\shA_2(2) \boxtimes \shC^L_2(3-l), \quad \dots \quad 
	\shA_{2}(2) \boxtimes \shC^L_{k-1}(k-l) \\
	\shA_{1}(1)\boxtimes\shC^L_{1}(2-l), ~~\shA_1(1) \boxtimes \shC^L_2(3-l), \quad \dots \quad  
	\shA_{1}(1) \boxtimes \shC^L_{k-1}(k-l)
	\end{array}\right\rangle.
	$$
If $k \in [i,l-1]$, then $\shB^k = \shA_0$, and $\cone(b \to \pi_S(b))$ belongs to
	$$
	\left\langle\begin{array}{r}
	\shA_{i-1}(i-1)\boxtimes\shC_{k-1}^L(k-l) \\
	\vdots \qquad \qquad\\
	\shA_2(2) \boxtimes \shC^L_{k-i+2}(k-i+3-l), \quad \dots \quad 
	\shA_{2}(2) \boxtimes \shC^L_{k-1}(k-l) \\
	\shA_{1}(1)\boxtimes\shC^L_{k-i+1}(k-i+2-l),
	 ~~\shA_1(1) \boxtimes \shC^L_{k-i+2}(k-i+3-l), \quad \dots \quad  
	\shA_{1}(1) \boxtimes \shC^L_{k-1}(k-l)
	\end{array}\right\rangle.
	$$
Therefore the image of $\langle \shB^1\boxtimes \shC^L_{1}(2-l), \ldots, \shB^{l-1}\boxtimes \shC^L_{l-1}\rangle$ under $\pi_S$ is contained in the subcategory generated by themselves and 
		$$\shA_{\alpha}(\alpha) \boxtimes \shC^L_{\beta} (\beta+1-l) \quad \text{for} \quad \beta \in [1, l-2], ~\alpha \in [1, \min\{i-1, \beta\}].$$
	See the union of staircase regions enclosed by red lines ('$\shR_1 \cup \shR_2$') in Figure \ref{Figure:l<=i} and \ref{Figure:l>=i}. In particular, the image is contained in $\shD_{X_T}^\perp$.
\end{lemma}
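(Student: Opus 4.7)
The plan is to mirror the proof of Lem.~\ref{lem:region:pi_T}, with the roles of the $D(X)$- and $D(S)$-factors of $D(X \times S) \supset D(\shH)$ interchanged. By Lem.~\ref{lem:mut}, the projection $\pi_S$ factors as
$$\pi_S \;=\; \LL_{\shA_1(1) \boxtimes D(S)} \circ \LL_{\shA_2(2) \boxtimes D(S)} \circ \cdots \circ \LL_{\shA_{i-1}(i-1) \boxtimes D(S)},$$
the composition of left mutations through the orthogonal of $\shD_{Y_S}$ in (\ref{sod:H for X}). I would treat the case $1 \le k \le i$ in detail; the case $k \in [i, l-1]$ runs along identical lines, with $\shB^k = \shA_0$ accounting for the extra truncation $\alpha \le i-1$ that appears in the second form of the region.

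The first step is to use Lem.~\ref{lem:van} to show that the outer mutations $\LL_{\shA_\alpha(\alpha) \boxtimes D(S)}$ for $\alpha \in [k, i-1]$ fix $b \in \shB^k \boxtimes \shC^L_k(k+1-l)$. This is an $\alpha$-vanishing: the untwisted factor $R\Hom_X(\shA_\alpha(\alpha), \shB^k)$ vanishes for every $\alpha \ge 1$ by semiorthogonality of the Lefschetz decomposition (\ref{lef:X}) together with the inclusion $\shB^k \subset \shA_0$; the twisted factor $R\Hom_X(\shA_\alpha(\alpha+1), \shB^k)$ vanishes for $\alpha \ge k$ by combining the explicit description $\shB^k = \langle \alpha_0^*(\foa_0(1)), \ldots, \alpha_0^*(\foa_{k-1}(k)) \rangle$ with the decomposition $\shA_0 = \langle \alpha_0^*(\foa_0(1)), \ldots, \alpha_0^*(\foa_{i-1}(i))\rangle$ furnished by Lem.~\ref{lem:sod:A_0} and the adjunction $R\Hom_X(-, \shB^k) = R\Hom_{\shA_0}(\alpha_0^*(-), \shB^k)$. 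Consequently $\pi_S(b)$ reduces to the composition of the inner mutations through $\shA_{k-1}(k-1) \boxtimes D(S), \ldots, \shA_1(1) \boxtimes D(S)$.

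Setting $b^{(0)} = b$ and $b^{(\gamma)} = \LL_{\shA_{k-\gamma}(k-\gamma) \boxtimes D(S)} \, b^{(\gamma-1)}$ for $\gamma \ge 1$, the core of the argument is an induction on $\gamma$ showing that $\cone(b \to b^{(\gamma)})$ lies in the staircase
$$\bigl\langle \shA_\alpha(\alpha) \boxtimes \shC^L_\beta(\beta + 1 - l) \;:\; \alpha \in [k-\gamma,\, k-1],\; \beta \in [\alpha,\, k-1] \bigr\rangle.$$
At the inductive step I would refine $\shA_{k-\gamma-1}(k-\gamma-1) \boxtimes D(S)$ by applying to the second factor the mutated form of the Lefschetz decomposition of $D(S)$ extracted from (\ref{sod:S:C^L}) after the relabelling $k \leftrightarrow k-\gamma-1$, which isolates the sub-collection $\shC^L_\beta(\beta+1-l)$ for $\beta \in [k-\gamma-1, l-1]$. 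A mix of $\alpha$- and $\beta$-vanishings in (\ref{cone-for-Hom}), carried out exactly as in the corresponding step of Lem.~\ref{lem:region:pi_T}, kills every summand except those with $\beta \in [k-\gamma-1, k-1]$; these constitute precisely the new row $\alpha = k-\gamma-1$ of the staircase.

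The main obstacle, as in Lem.~\ref{lem:region:pi_T} (cf.\ Rmk.~\ref{rmk:staircase}), is the combinatorial bookkeeping of how the $\otimes \sO_{X \times S}(-1,-1)$ twist in (\ref{cone-for-Hom}) interlocks with the Serre shift built into $\shC^L_\beta(\beta+1-l)$ to produce exactly the claimed staircase and no extraneous boxes. Once the staircase is secured, the final assertion is immediate: every $\shA_\alpha(\alpha) \boxtimes \shC^L_\beta(\beta+1-l)$ appearing in it has $\beta \in [1, l-1]$, hence lies in $D(X) \boxtimes \shC^L_\beta(\beta+1-l)$, which is part of the orthogonal $\shD_{X_T}^\perp$ by decomposition (\ref{sod:H for S:Serre}).
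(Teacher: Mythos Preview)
Your proposal is correct and follows essentially the same approach as the paper's own proof: both mirror Lem.~\ref{lem:region:pi_T} by interchanging the roles of the $X$- and $S$-factors, reduce $\pi_S$ to the inner mutations via $\alpha$-vanishing (using the adjunction $(\alpha_0^*, \alpha_0)$ and Lem.~\ref{lem:sod:A_0}), and run the same staircase induction on $\gamma$ with the decomposition of $D(S)$ coming from (\ref{sod:S:C^L}). The only cosmetic difference is that the paper phrases the inductive step with the coarser two-piece decomposition $D(S) = \big\langle \langle\shC^L_{k-\gamma-1}(k-\gamma-l), \ldots, \shC^L_{k-1}(k-l) \rangle,\; {}^\perp\langle\cdots\rangle \big\rangle$ rather than the finer one you describe, but this amounts to the same vanishing computation.
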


\begin{proof} This proof is similar to Lem. \ref{lem:region:pi_T}. We shall only sketch the key steps for $k \in [1, i]$. The case $k \in [i, l-1]$ is similar. $\pi_S$ is the left mutation past through
	$$\shA_1(1)\boxtimes D(S), \ldots, \shA_{k-1}(k-1)\boxtimes D(S), \shA_{k}(k)\boxtimes D(S), \ldots, \shA_{i-1}(i-1)\boxtimes D(S).$$
From similar argument as, $b$ receives no $\Hom$s from $\shA_{k}(k)\boxtimes D(S), \ldots, \shA_{i-1}(i-1)\boxtimes D(S)$ by $\alpha$-vanishing, which is a consequence of the way we define $\shB^k$ in (\ref{def:D^k}), therefore
	$$\pi_S\,b = \LL_{\shA_1(1)\boxtimes D(S)} \circ \cdots \circ \LL_{\shA_{k-1}(k-1)\boxtimes D(S)} \,b$$
Let $b^{(0)} = b$ and $b^{(\gamma)} = \LL_{\shA_{k-\gamma}(k-\gamma) \boxtimes D(S)} \, b^{(\gamma-1)}$ for $\gamma = 1, 2, \ldots, k-1$. Then $b^{(k-1)} = \pi_S\,b$, and one can show inductively $\cone(b \to b^{(\gamma)})$ belongs to the subcategory generated by
	$$\shA_{\alpha}(\alpha)\boxtimes \shC^L_{\beta}(\beta+1-l), \quad \text{for} \quad \beta \in [k-\gamma, k-1], ~\alpha\in [k-\gamma,\beta] .$$
Assume it is true for $\gamma \in [0, k-2]$,\footnote{Here $\gamma=0$ corresponds to the base case, for which the inductive assumption is trivial.} then to compute $b^{(\gamma+1)} =  \LL_{\shA_{k-\gamma-1}(k-\gamma-1) \boxtimes D(S)} \, b^{(\gamma)}$, we consider the following decomposition of $D(S)$, obtained from (\ref{sod:S:C^L}):
	$$D(S) = \big\langle \langle\shC^L_{k-\gamma-1}(k-\gamma-l), \ldots, \shC^L_{k-1}(k-l) \rangle, {}^\perp \langle\shC^L_{k-\gamma-1}(k-\gamma-l), \ldots, \shC^L_{k-1}(k-l) \rangle \big\rangle.$$
It's easy to see for this decomposition, only the part $\shA_{k-\gamma-1}(k-\gamma-1) \boxtimes \langle\shC^L_{k-\gamma-1}(k-\gamma-l), \ldots, \shC^L_{k-1}(k-l) \rangle$ has $\Hom$s to $b^{(\gamma)}$, and therefore completes the induction step.
\end{proof}

\begin{lemma}\label{lem:region:pi_S:sE} Let $b \in \sE$, then $\cone(b \to \pi_S(b))$ belongs to the subcategory $\shC_\sE$ generated by
		$$\shA_{\alpha}(\alpha) \boxtimes \shC_{\beta}^L (\beta+1-l) \quad \text{for} \quad \beta \in [1, l-1], ~\alpha \in [1, \min\{i-1, \beta\}].$$
In particular, $\cone(b \to \pi_S(b)) \in \shD_{X_T}^\perp$ by the semiorthogonal decomposition (\ref{sod:H for S:Serre}). 
\end{lemma}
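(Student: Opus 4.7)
My plan is to mirror the inductive mutation strategy used in Lemmas~\ref{lem:region:pi_T} and \ref{lem:region:pi_S}: I will write $\pi_S$ as an iterated left mutation through the orthogonal components of $\shD_{Y_S}$, apply it step by step to $b\in\sE$, and control the cone at each stage.

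Using Lem.~\ref{lem:mut}(\ref{lem:mut:sod}), I have $\pi_S=\LL_{\shA_1(1)\boxtimes D(S)}\circ\cdots\circ\LL_{\shA_{i-1}(i-1)\boxtimes D(S)}$. Set $b^{(0)}=b$ and $b^{(\gamma)}=\LL_{\shA_{i-\gamma}(i-\gamma)\boxtimes D(S)}b^{(\gamma-1)}$, so $b^{(i-1)}=\pi_S(b)$. The two key vanishing properties of $b\in\sE$ are: \emph{(i)} from $b\in\shD_{X_T}$ and the SOD's (\ref{sod:H for S}), (\ref{sod:H for S:Serre}), we have $R\Hom(D(X)\boxtimes \shC_\beta(\beta),b)=0$ and $R\Hom(b,D(X)\boxtimes \shC^L_\beta(\beta+1-l))=0$ for every $\beta\in[1,l-1]$; and \emph{(ii)} by adjunction of $\pi_T$ applied to the orthogonality that defines $\sE$, we have $R\Hom(\shA_k(k)\boxtimes\shD^k,b)=0$ for every $k\in[1,i-1]$.

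The inductive claim is that $\cone(b\to b^{(\gamma)})$ belongs to the subcategory generated by the boxes $\shA_\alpha(\alpha)\boxtimes\shC^L_\beta(\beta+1-l)$ with $\alpha\in[i-\gamma,i-1]$ and $\beta\in[\max\{1,\alpha\},l-1]$ --- i.e.\ the top $\gamma$ rows of the target staircase $\shC_\sE$ in Fig.~\ref{Figure:l<=i} (with the symmetric picture in Fig.~\ref{Figure:l>=i} when $l\ge i$). To perform step $\gamma$, I decompose $\shA_{i-\gamma}(i-\gamma)\boxtimes D(S)$ using the Serre-mutated decomposition (\ref{sod:S:C^L}) of $D(S)$ with index $k=\min\{i-\gamma,l-1\}$, which separates the ``staircase part'' $\bigl\langle \shA_{i-\gamma}(i-\gamma)\boxtimes \shC^L_\beta(\beta+1-l)\bigr\rangle_{\beta\in[i-\gamma,l-1]}$ from a ``dangerous part'' involving pieces of $\shA_{i-\gamma}(i-\gamma)\boxtimes\shC_j(j{+}\text{twist})$. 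Only the staircase part is allowed to contribute to the new cone; the dangerous part must have no $\Hom$'s to $b^{(\gamma-1)}$.

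The main obstacle is killing this dangerous part, and this is where \emph{(ii)} is essential. I plan to further refine $\shC_0$ via the decomposition $\shC_0=\langle \gamma_0^*(\foc_0(1)),\ldots,\gamma_0^*(\foc_{l-1}(l))\rangle$ from Lem.~\ref{lem:sod:A_0} applied to $S$: the ``low'' pieces reassemble into twisted copies of $\shD^{i-\gamma}$, on which the Homs to $b$ vanish directly by (ii); the ``high'' pieces are handled by $\beta$-vanishing from Rmk.~\ref{rmk:alpha&beta_van} and by (i). For the step $\gamma\ge 2$, the extra Hom contributions from the previously placed staircase pieces $\shA_\alpha(\alpha)\boxtimes \shC^L_\beta(\beta+1-l)$ ($\alpha>i-\gamma$) inside $\cone(b\to b^{(\gamma-1)})$ are killed by $\alpha$-vanishing, since the relevant $\shA_\alpha$'s satisfy $\shA_\alpha\subset\shA_{\alpha-1}\subset\cdots\subset\shA_{i-\gamma}$ and its shift by $(-1)$ still lies in $\shA_{i-\gamma-1}$, exactly as in the proof of Lem.~\ref{lem:region:pi_T}; the characteristic ``staircase'' geometry noted in Rmk.~\ref{rmk:staircase} then emerges automatically. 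Completing the induction gives $\cone(b\to\pi_S(b))\in\shC_\sE$, which in turn lies in $\shD_{X_T}^\perp$ by the SOD (\ref{sod:H for S:Serre}), yielding the two stated conclusions.
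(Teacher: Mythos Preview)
Your overall strategy matches the paper's: write $\pi_S$ as an iterated left mutation, at each step decompose $D(S)$ via (\ref{sod:S:C^L}), separate a ``staircase'' part (which feeds into the cone) from a ``dangerous'' part, and kill the dangerous part's Homs into $b$ by combining $b\in\shD_{X_T}$ (your (i)) with the defining orthogonality of $\sE$ (your (ii)) after rewriting via Lem.~\ref{lem:sod:A_0}. This is exactly the paper's argument; the paper first disposes of the outer mutations $\shA_l(l)\boxtimes D(S),\ldots,\shA_{i-1}(i-1)\boxtimes D(S)$ separately (case $l\le i$) rather than folding them into a uniform $k=\min\{i-\gamma,l-1\}$, but the content is the same.

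There is, however, one genuine error. For $\gamma\ge 2$ you assert that the Homs from the dangerous part $\shA_{i-\gamma}(i-\gamma)\boxtimes\langle\shC_0(1),\ldots\rangle$ into the previously-accumulated cone pieces $\shA_\alpha(\alpha)\boxtimes\shC^L_\beta(\beta+1-l)$ (with $\alpha>i-\gamma$) vanish by $\alpha$-vanishing, citing Lem.~\ref{lem:region:pi_T}. This does not work: the untwisted $X$-factor $R\Hom_X(\shA_{i-\gamma}(i-\gamma),\shA_\alpha(\alpha))$ with $\alpha>i-\gamma$ has no reason to vanish, since in the Lefschetz SOD the source sits to the \emph{left} of the target. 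The containment $\shA_\alpha\subset\shA_{i-\gamma}$ you invoke points the wrong way for this Hom. The correct mechanism, used in the paper (and in the directly analogous Lem.~\ref{lem:region:pi_S}, which is the lemma you should be citing), is $\beta$-vanishing: the dangerous $S$-pieces $\shC_j(j+1)$ lie in ${}^\perp\langle\shC^L_{i-\gamma}(i-\gamma+1-l),\ldots,\shC^L_{l-1}\rangle$ by the very SOD (\ref{sod:S:C^L}) you chose, while the cone's $S$-factors $\shC^L_\beta(\beta+1-l)$ and their $(-1)$-twists $\shC^L_\beta(\beta-l)\subset\shC^L_{\beta-1}(\beta-l)$ both sit inside that block (since $\beta\ge\alpha>i-\gamma$). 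With this one fix your argument is complete and coincides with the paper's.
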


In Figure \ref{Figure:l<=i} and \ref{Figure:l>=i}, $\shC_\sE = \shR_1 \cup \shR_3'$, where $\shR_3' = \langle \shA_{\alpha}(\alpha) \boxtimes \shC^L_{l-1}~|~\alpha \in [1, l-1]\rangle$ contained in the \emph{smaller} middle column $D(X) \boxtimes \shC^L_{l-1}$. However, if we consider the decomposition (\ref{sod:S:C^L}) of $D(S)$, one can regard the part $\shR_3'$ as \emph{placed} in the \emph{the complement} of the shaded regions $\shD^k \boxtimes \shC_0$ (i.e. the place of $\shR_3$) inside the \emph{larger} middle column $D(X)\boxtimes \shC_0$. Therefore we may regard $\shC_\sE$ as the union $\shR_1 \cup \shR_3$ in the diagrams. This point of view will useful in the proof.

\begin{proof} The method is the same as before. The only thing we need to pay attention is the vanishing for $\sE$, which is not obtained through computing the cone as in (\ref{cone-for-Hom}). Let's show the case when $l \le i$, the other case $i \le l$ is similar with only some changes of subscripts. First notice since there are no Homs from $\shA_l(l)\boxtimes D(S), \dots, \shA_{i-1}(i-1)\boxtimes D(S)$ to $b$, 
 $$\pi_S(b)= \LL_{\shA_1(1)\boxtimes D(S)}\circ \cdots \circ \LL_{\shA_{l-1}(l-1)\boxtimes D(S)} \, b,$$
Then similarly let  $b^{(0)}=b$ and $b^{(\gamma)}=\LL_{\shA_{l-\gamma}(l-\gamma)\boxtimes D(S)} \,b^{(\gamma-1)}$ where $\gamma\in [1, l-1]$. We show by induction on $\gamma$ that $\cone(b \to b^{(\gamma)})$ belongs to the subcategory of $D(\shH)$ generated by
	$$\shA_{\alpha}(\alpha) \boxtimes \shC^L_{\beta}(\beta+l-1) \quad \text{for} \quad \beta \in [l-\gamma, l-1],~\alpha \in [l-\gamma, \beta].$$
Assume true for $\gamma \in [0, l-2]$. Then to compute $b^{(\gamma+1)} = \LL_{\shA_{l-\gamma-1}(l-\gamma-1)\boxtimes D(S)} \,b^{(\gamma)}$, consider the decomposition (\ref{sod:S:C^L}):
	$$D(S) = \langle \shC^L_{l-\gamma-1}(-\gamma), \shC^L_{l-\gamma}(1-\gamma), \ldots, \shC^L_{l-1}, \shC_0(1), \ldots, \shC_{l-\gamma-2}(l-\gamma-1)\rangle.$$
As before, $\cone(b \to b^{(\gamma)})$ receives no $\Hom$ from 
	\begin{equation}\label{eqn:ind:E} \shA_{l-\gamma-1}(l-\gamma-1) \boxtimes \langle \shC_0(1), \ldots, \shC_{l-\gamma-2}(l-\gamma-1)\rangle.
	\end{equation}
The only difference from before is, in order to show this holds for $b^{(\gamma)}$, hence make the induction to work, we need to show $b$ also receives no $\Hom$s from (\ref{eqn:ind:E}) for all $\gamma = 0,1, \ldots, l-2$. Since $b \in \sE \subset {}^\perp \shD_{X_T}$, it receives no $\Hom$s from
	$$\shA_{l-\gamma-1}(l-\gamma-1) \boxtimes \langle \shC_0(1), \ldots, \shC_{l-\gamma-1}(l-\gamma-1) \rangle.$$
Also from the definition of $\sE$, 
	$$R\Hom(\shA_{l-\gamma-1}(l-\gamma-1) \boxtimes \shD^{l-\gamma-1}, \sE) = 0.$$
But recall $\shD^{l-\gamma-1} =  \langle \gamma_0^* (\foc_0(1)), \ldots, \gamma_0^*(\foc_{l-\gamma-2}(l-\gamma-1))\rangle$, and Lem. \ref{lem:sod:A_0} for $D(S)$ says
	\begin{align*}  & \langle \shC_0(1), \ldots, \shC_{l-\gamma-2}(l-\gamma-1) \rangle  
	  =  \langle \shD^{l-\gamma-1}, \shC_1(1), \ldots, \shC_{l-\gamma-1}(l-\gamma-1)\rangle
	\end{align*}
for all $\gamma = 0,1, \ldots, l-2$. Hence we are done.
\end{proof}

\noindent \textit{Proof of Prop. \ref{prop:pi_S}.} To prove the proposition, we show 
	$$R\Hom(a, \cone(b \to \pi_S\,b)) = 0$$
for any $a,b$ in one of the two cases:
\begin{enumerate}
\item \label{case:1}  $a\in  \shB^m \boxtimes\, \shC^L_{m}(m+1-l)$ or $a\in \sE$, $b \in  \shB^k \boxtimes\, \shC^L_{k}(k+1-l)$, $1\le k \le m \le l-1$;
\item   \label{case:2} $a \in \sE$, $b \in \sE$.
\end{enumerate}
For case (\ref{case:1}), from Lem. \ref{lem:region:pi_S}, $\cone(b\to \pi_S\,b)$ belongs to the subcategory generated by $D(X)\boxtimes \shC^L_{1}(2-l), \ldots, D(X)\boxtimes \shC^L_{k-1}(k-l)$, hence receives no $\Hom$s from $D(X)\boxtimes \shC^L_{m}(m+1-l)$ for $k \le m \le l-1$, or $\sE \subset \shD_{X_T}$, by semiorthogonality of (\ref{sod:H for S:Serre}). 
For case (\ref{case:2}), Lem. \ref{lem:region:pi_S:sE} implies $\cone(b\to \pi_S\,b)$ belongs to $\shD_{X_T}^\perp$ 
hence receives no $\Hom$s from $b \in \sE \subset \shD_{X_T}$. \hfill$\square$

\subsection{Generation.} \label{sec:generation}

To finish the proof of Thm. \ref{thm:HPDgen}, we need to show the fully faithful images of the subcategories in 
Prop \ref{prop:pi_S} generate $\shD_{Y_S}$. Our strategy is to show the right orthogonal of these images inside $\shD_{Y_S}$ is zero. Using adjunction of the functor $\pi_S$, the orthogonality conditions translate into vanishing conditions on $D(\shH)$, on which we can play the familiar game on the 'chessboard' Figure \ref{Figure:l>=i}, and show the desired vanishing 'block-by-block'.

\begin{figure}
\begin{center}
\includegraphics[height=2.8in]{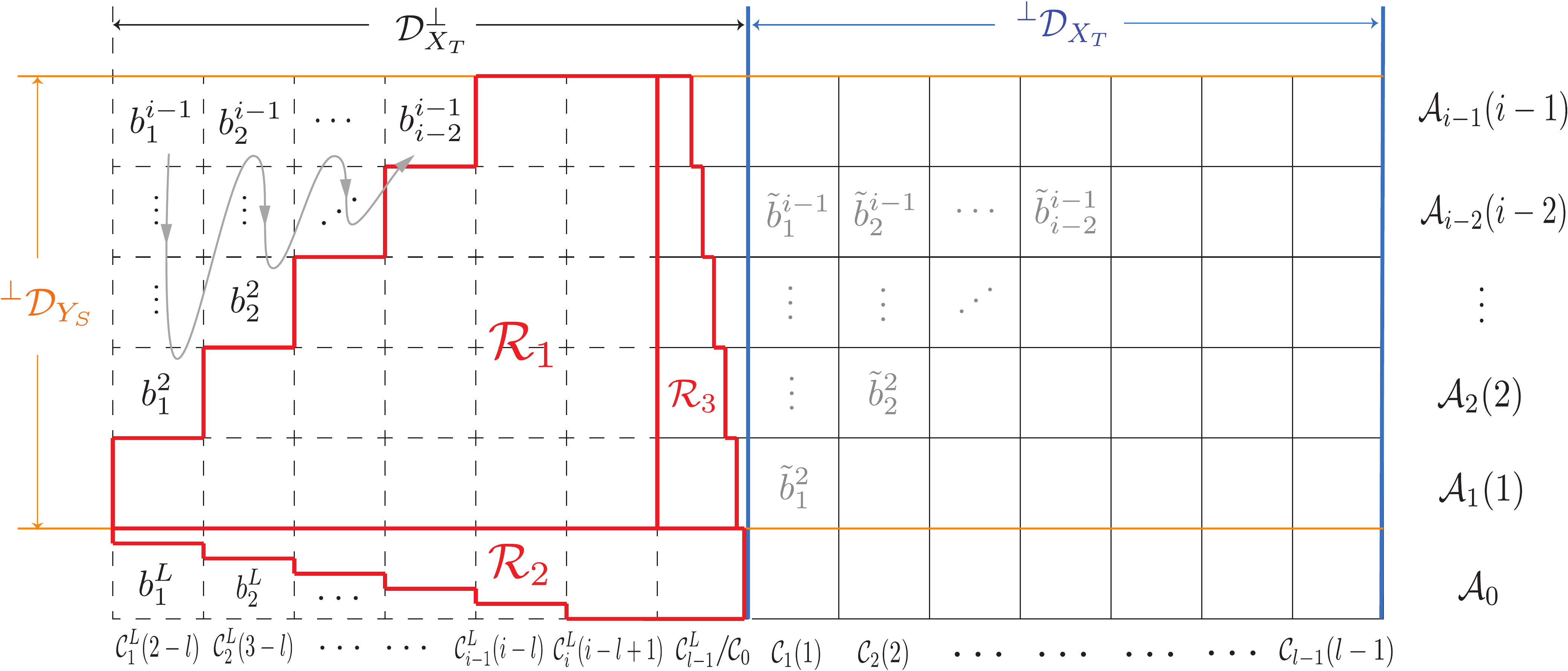}
\caption{The 'chessboard' in the case $l=8$, $i=6$. The grey line with arrows indicates the 'Zig-Zag' routine ($b_1^{i-1} \to \cdots \to b_1^2 \to b_2^{i-1} \to \cdots \to b_2^2 \to \cdots \to b_{i-1}^{i-1}$) of induction in Step $1$ of the proof of {\em generation} part, and the grey $\tilde{b}^{\alpha}_{\beta}$ on the right indicates the block we use to detect the vanishing of $b^{\alpha}_\beta$. }\label{Figure:l>=i}
\end{center}
\end{figure}

First notice from Lem. \ref{lem:region:pi_T}, the (left) orthogonal of $\sE$ inside $\shD_{X_T}$ in the decomposition (\ref{eqn:X_T}) are totally contained in ${}^\perp \shD_{Y_S}$, therefore $\sE$ and $\shD_{X_T}$ have the same image under $\pi_S = \LL_{^\perp \shD_{Y_S}}$ by Lem. \ref{lem:mut}. Hence we need only to show the images under $\pi_S$ of
	$$\shB^1 \boxtimes \shC^L_1(2-l) , \ldots, \shB^{l-1}\boxtimes \shC^L_{l-1}, \quad \text{and} \quad \shD_{X_T}$$
 generate $\shD_{Y_S}$.  Let $b\in \shD_{Y_S}$ be an object in the right orthogonal of these images. 
	\begin{align}
	 R\Hom_{\shH}(a,b) & = 0  \quad \forall ~a\in \shD_{X_T}. \label{cond:X_T} \\
	 R\Hom_{\shH}(a,b)  & = 0  \quad \forall ~a\in \shB^1\boxtimes \shC^L_1(2-l), \ldots, \shB^{l-1}\boxtimes \shC^L_{l-1} \label{cond:B}
	\end{align}	
by adjunction $R\Hom_{\shH}(a,b) = R\Hom_{\shD_{Y_S}}(\pi_S\,a, b)$ of $\pi_S$. We want to show $b = 0$. From decomposition (\ref{sod:H for S:Serre}), the first condition (\ref{cond:X_T}) is equivalent to
	$$b \in \shD_{X_T}^{\perp} = \langle D(X) \boxtimes \shC_1^L(2-l), \ldots, D(X) \boxtimes \shC^L_{l-1}\rangle.$$
We can further decompose $\shD_{X_T}^{\perp}$ using $D(X) = \langle \shA_0, \shA_1(1), \ldots, \shA_{i-1}(i-1)\rangle$,
	$$\shD_{X_T}^{\perp} = \big\langle  \shA_\alpha(\alpha) \boxtimes \shC^L_{\beta}(\beta+1-l) \big\rangle_{\alpha \in [0,i-1], \beta \in [1,l-1].}$$
 Therefore $b$ has the following components with respect to the decomposition:
	\begin{equation}b^{\alpha}_\beta \in \shA_\alpha(\alpha) \boxtimes \shC^L_{\beta}(\beta+1-l),\quad \text{with} \quad \alpha \in [0,i-1], ~\beta \in [1,l-1].\end{equation}
Notice may further put $b^0_\beta$ into a distinguished triangle
	$$b_\beta^R \to b_\beta^0 \to b_\beta^L \xrightarrow{[1]}, \quad \beta \in [1, l-1],$$
using decomposition $\shA_0 = \langle (\shB^\beta)^\perp, \shB^\beta \rangle$, where $(\shB^\beta)^{ \perp}$ is the right orthogonal of $\shB^\beta$ inside $\shA_0$,
	\begin{equation}\label{def:b^Lb^R}  b_\beta^L \in (\shB^\beta)^\perp \boxtimes \shC^L_{\beta}(\beta+1-l), \quad b_\beta^R \in  \shB^\beta \boxtimes \shC^L_{\beta}(\beta+1-l).
	\end{equation}
See Figure \ref{Figure:l>=i}. Now the game is played as follows.

\bigskip
\noindent\textbf{Step 1.} {\em The components of $b$ which are 'above the staircase region $\shR_1$' are zero, i.e.}
	\begin{equation}\label{van:balpha} b^\alpha_\beta = 0 \quad \text{\em for} \quad \beta \in [1,l-1],~ \alpha \in [\beta+1, i-1].
	\end{equation}
Let's show this by induction, with an induction routine:
	$$(b_1^{i-1} = 0) \implies (b_1^{i-2} = 0) \implies \cdots \implies (b_1^2=0) \implies (b_2^{i-1} = 0) \implies \cdots \implies (b_{i-1}^{i-2} = 0) $$ 
which is indicated by the grey curved line with arrows in Figure \ref{Figure:l>=i}. Assume we have done proving (\ref{van:balpha}) those $b^{\alpha}_{\beta}$'s in the region with smaller $\beta$ or with same $\beta$ but larger $\alpha$ (for the base case $\alpha=i-1,\beta=1$ this assumption is empty). To show $b^{\alpha}_{\beta} = 0$, we look at $\Hom$s from
	\begin{equation}\label{eqn:step1:X}
	D(X)\boxtimes \shC_{\beta}(\beta)= D(X)(-1)\boxtimes \shC_{\beta}(\beta).
	\end{equation}
If there is no $\Hom$s from above category to $b^{\alpha}_{\beta}$, then from (\ref{sod:H:k}), $b^{\alpha}_{\beta} \in D(X) \boxtimes \shC^L_{\beta}(\beta+1-l)$ will belong to subcategory generated by
	$$D(X) \boxtimes \shC^L_{\beta+1}(\beta-l), \ldots, D(X) \boxtimes \shC^L_{l-1},$$
which is contained in the left orthogonal of $D(X) \boxtimes \shC^L_{\beta}(\beta+1-l)$. This will force $b^{\alpha}_{\beta}=0$.  We now claim the $\Hom$s from (\ref{eqn:step1:X}) to $b^\alpha_\beta$ totally come from
	\begin{equation}\label{eqn:step1:A}
	\shA_{\alpha}(\alpha-1) \boxtimes \shC_\beta(\beta).
	\end{equation}	
The reason is, if we consider $D(X) = \langle \shA_{\alpha}(\alpha), {}^\perp(\shA_{\alpha}(\alpha))\rangle$
\footnote{More explicitly, ${}^\perp(\shA_{\alpha}(\alpha)) = \langle \shA_{\alpha+1}(\alpha+1), \ldots, \shA_{i-1}(i-1), S_X^{-1}(\shA_0), \ldots, S_X^{-1}(\shA_{\alpha-1} (\alpha-1)) \rangle.$ }, 
then $D(X)(-1) \boxtimes \shC_\beta(\beta) = \langle \shA_{\alpha}(\alpha-1), {}^\perp(\shA_{\alpha}(\alpha))(-1) \rangle \boxtimes \shC_\beta(\beta)$. The $\Hom$ from the latter component to $b^{\alpha}_\beta \in \shA_{\alpha}(\alpha) \boxtimes \shC^L_{\beta}(\beta+1-l)$ is a cone of the form (\ref{cone-for-Hom}), with the untwisted term zero by $\beta$-vanishing \footnote{More precisely, $\Hom_{S}(\shC_{\beta-1}(\beta), \shC^L_{\beta}(\beta+1-l) )= 0$ and $\shC_{\beta}(\beta) \subset \shC_{\beta-1}(\beta)$.}, and the twisted term is zero by
	$$R\Hom_{X}({}^\perp(\shA_{\alpha}(\alpha))(-1), \shA_{\alpha}(\alpha)(-1)) = R\Hom_X ({}^\perp(\shA_{\alpha}(\alpha)),\shA_{\alpha}(\alpha)) = 0.$$
To show vanishing from (\ref{eqn:step1:A}) is where the 'staircase' induction routine comes in. We show all the other surviving components of $b$, except $b^\alpha_\beta$, receive no $\Hom$s from (\ref{eqn:step1:A}). In fact, for components with larger $\beta$, the $\Hom$s are zero by $\beta$-vanishing; For components with the same $\beta$ but smaller $\alpha$, i.e. components in $\langle \shA_0, \shA_1(1), \ldots \shA_{\alpha-1}(\alpha-1)\rangle \boxtimes \shC^L_{\beta}(\beta+1-l)$, then $\Hom$s are zero since untwisted terms vanishes by $\beta$-vanishing, and twisted term contains factor
	$$R\Hom_X(\shA_\alpha(\alpha), \langle \shA_0, \shA_1(1), \ldots \shA_{\alpha-1}(\alpha-1)\rangle) = 0;$$
For components with smaller $\beta$, by induction assumption, these are components of the form $b^{\alpha'}_{\beta'}$ with $0 \le \alpha ' \le \beta'$, $\beta ' < \beta$. Note $\alpha' \le \beta -1 \le \alpha -2$, then the $\Hom$s from (\ref{eqn:step1:A}) are  zero by $\alpha$-vanishing.

Now note $b\in \shD_{Y_S}$ itself receives no $\Hom$s from (\ref{eqn:step1:A}), therefore by Lem. \ref{lem:component_van}, $b^\alpha_\beta$ receives no $\Hom$s from (\ref{eqn:step1:A}) also. Hence $b^\alpha_\beta = 0$, and by induction we have proved (\ref{van:balpha}).

\bigskip\noindent\textbf{Step 2.} {\em The components of $b$ which are 'below staircase region $\shR_2$' are zero:}
	\begin{equation}\label{van:b^L}	b^L_\beta = 0 \quad \text{\em for} \quad \beta=1,2,\ldots, l-1.
	\end{equation}
We show this by induction on $\beta \in [1, l-1]$. Assume it is true for components with smaller $\beta$ (for base case $\beta=1$ the assumption is empty), then to show $b^L_\beta = 0$ we again only need to look at $\Hom$s from $D(X) \boxtimes \shC_\beta(\beta)$. But this time we consider decomposition
	\begin{equation}D(X) = \langle \foa_0, \shA_1, \foa_1(1), \shA_2(1), \ldots, \foa_{i-2}(i-2), \shA_{i-1}(i-2), \foa_{i-1}(i-1) \rangle.\end{equation}
	First, there are no $\Hom$s from the subcategories $\shA_1 \boxtimes \shC_\beta(\beta), \shA_2 (1) \boxtimes \shC_\beta(\beta), \ldots, \shA_{i-1}(i-2) \boxtimes \shC_\beta(\beta)$ to $b^L_\beta$, since the cones (\ref{cone-for-Hom}) of which have zero untwisted terms by $\beta$-vanishing, and zero twisted terms by $\alpha$-vanishing
		$$R\Hom_{X}(\langle \shA_1, \ldots, \shA_{i-1}(i-2)\rangle(1), \shA_{0}) = 0.$$	
	Second, there are also no $\Hom$s from subcategories $\langle \foa_0, \foa_{1}(1), \ldots, \foa_{\beta-1}(\beta-1) \rangle \boxtimes \shC_\beta(\beta)$ to $b^L_\beta$ by definition of $b^L_\beta$. More precisely, the $\Hom$s of these totally come from the twisted terms, which are zero exactly by adjoint pairs $(\alpha_0^*, \alpha_0)$ and the defining equation (\ref{def:b^Lb^R}) of $b^L_\beta$:
	\begin{align*} R\Hom_X(\langle \foa_0, \foa_{1}(1), \ldots,  \foa_{\beta-1}(\beta-1)\rangle (1), (\shB^\beta)^\perp ) =  R\Hom_{\shA_0}(\shB^\beta, (\shB^\beta)^\perp ) = 0.
	\end{align*}
	Hence it remains to show there are no $\Hom$s from the following subcategories to $b^L_\beta$:
	\begin{equation}\label{eqn:step2}
	\foa_{\beta}(\beta) \boxtimes \shC_\beta(\beta), \ldots, \foa_{i-1}(i-1) \boxtimes \shC_\beta (\beta).
	\end{equation}
	Now the 'staircase' shape of the region and the induction hypothesis begin to play their roles. We show there are no $\Hom$s from (\ref{eqn:step2}) to all other non-zero components other than $b^L_\beta$. These components can be formed into three different categories:
	\begin{enumerate}[leftmargin = *]
	\item the components with larger $\beta$'s, i.e., ones in $D(X)\boxtimes \shC^L_{\beta+1}(\beta+2-l), \ldots, D(X)\boxtimes \shC^L_{l-1}$. These are no $\Hom$s from (\ref{eqn:step2}) to these simply by $\beta$-vanishing.
	\item The surviving components from the staircase region of \textit{Step 1} with equal or smaller $\beta$, i.e. $b^{\alpha'}_{\beta'}$ with $1 \le \beta' \le \beta$, and $1 \le \alpha' \le \beta'$. The $\Hom$'s from (\ref{eqn:step2}) to these are zero by $\alpha$-vanishing: for components with $\beta' = \beta$, the untwisted terms are zero, and for twisted terms
		$$R\Hom_X(\langle \foa_{\beta}(\beta), \ldots, \foa_{i-1}(i-1)\rangle, \shA_{\alpha'}(\alpha'-1)) = 0 $$
	since $\shA_{\alpha'}(\alpha'-1) \subset \shA_{\alpha'-1}(\alpha'-1)$ and $1 \le \alpha' \le \beta$. For components with $\beta' \le \beta -1$, apart from twisted terms zero as above, we also have for untwisted terms 
		$$R\Hom_X(\langle \foa_{\beta}(\beta), \ldots, \foa_{i-1}(i-1)\rangle, \shA_{\alpha'}(\alpha')) = 0 $$
	since $\alpha' \le \beta' \le \beta - 1$.
	\item The surviving $b^R_{\beta'}$ with $1 \le \beta' \le \beta$ from our induction. Notice the untwisted terms in the cone for $\Hom$s from (\ref{eqn:step2}) to these are always zero, since $\beta \le 1$, and all these $\shB^{\beta'} \subset \shA_0$. For twisted terms,  
		\begin{align*} & R\Hom_X(\langle \foa_{\beta}(\beta), \ldots, \foa_{i-1}(i-1)\rangle (1), \shB^{\beta'}) \\
		& = R\Hom_{\shA_0}(\langle \alpha_0^* (\foa_{\beta}(\beta+1)), \ldots, \alpha_0^*(\foa_{i-1}(i)), \shB^{\beta'})  = 0
		\end{align*}
by $\shB^{\beta'} \subset \shB^\beta$ for $\beta' \le \beta$, Lem. \ref{lem:sod:A_0}, and definition (\ref{def:B^k}) of $\shB^\beta$.
	\end{enumerate}
Finally $b$ itself receives no $\Hom$s from (\ref{eqn:step2}) where $\beta \ge 1$ 
by Lem. \ref{lem:mut}, $b^L_\beta$ receives no $\Hom$s from (\ref{eqn:step2}). All together, $R\Hom(D(X) \boxtimes \shC_\beta(\beta), b^L_\beta) = 0$, which forces $b^L_\beta = 0$ and concludes the induction for (\ref{van:b^L}).	

\medskip
\noindent\textbf{Final step.} By previous two steps, $b$ belongs to 'the region $\shR_1 \cup \shR_2$', i.e. the subcategory generated by 
	$$ \shA_{\alpha}(\alpha) \boxtimes \shC^L_{\beta} (\beta+1-l), \quad \text{for} \quad \alpha \in [1, \min\{\beta, i-1\}], ~\beta \in [1,l-1],$$
and $ \shB^1 \boxtimes \shC^L_1(2-l) , \ldots, \shB^{l-1}\boxtimes \shC^L_{l-1}$.
But the fact $b\in\shD_{Y_S}$ and the condition (\ref{cond:B}) imply $b$ belongs to the right orthogonal of these categories as well. Therefore $R\Hom_{\shH}(b,b) = 0$, which implies $b=0$. This concludes the proof of generation and hence the proof Thm. \ref{thm:HPDgen}.	

\hfill$\square$

\section{Generalizations and Applications} \label{sec:nceg}

\subsection{Noncommutative HP-duals}  \label{sec:ncHPD} As our proof is purely \textbf{categorical}, it is straightforward to generalize our main theorem to noncommutative cases. The point of this consideration is that the HP-dual category $\sC$ in (\ref{sod:H_X}) is not necessarily given by the derived category of an algebraic variety $Y$, but in various examples are given by interesting noncommutative varieties which in a way is still geometric.

\subsubsection{Azumaya Varieties} \label{sec:Azumaya}The simplest but interesting cases of noncommutative varieties are Azumaya varieties (or more generally, Azumaya algebras over schemes), which can be regarded as most geometric among non-commutative varieties, and was introduced by Grothendieck \cite{Gr} in the study of Brauer groups. Other standard references are \cite[Chap. IV]{Milne}, \cite[Chap. 1]{Cal00}. See also \cite[\S 2]{Kuz06Hyp}.

\begin{definition} An algebraic variety $X$ together with a sheaf of algebras $\sA_X$ (noncommutative) is called an \textbf{Azumaya variety}, if $\sA_X$ is a locally free $\sO_X$-module of finite rank, and for any $x \in X$, there exists an \'etale (or analytic) neighbourhood $U \to X$ of $x$ and a locally free sheaf $\sE$ on $U$ such that $\sA_U \simeq \underline{\End}_{\sO_U}(\sE)$, the local endomorphism sheaf of $\sE$.
\end{definition}

The definitions are the same no matter whether we are using \'etale topology or analytic one (but not true for Zariski topology), and the definitions given here are equivalent to ones given in above references, cf. \cite[Thm. 1.1.6]{Cal00}. A morphism of Azumaya varieties $f:(X,\sA_X) \to (Y,\sA_Y)$ is a morphism $\underline{f}$ of underlying varieties and a morphism of $\sO_X$-algebras $f^{\#}: \underline{f}^* \sA_Y \to \sA_X$. A morphism $f$ is called strict if $f^{\#}$ is an isomorphism. An ordinary variety is an Azumaya variety by taking $\sA_X = \sO_X$, and morphisms between varieties are always strict.

The abelian category $\Coh(X,\sA_X)$ of right coherent $\sA_X$-modules on $(X,\sA_X)$ can also be described by coherent $\alpha$-twisted sheaves on $X$, where $\alpha \in \check{H}_{\textrm{\'et}}^2(X,\sO_X^{*})$ is the class determined by $\sA_X$, cf. \cite[\S 1.2, 1.3]{Cal00}. Similarly for quasi-coherent $\sA_X$-modules. We will be interested in the bounded derived categories $D(X,\sA_X)$ of the abelian category $\Coh(X,\sA_X)$. Following Kuznetsov, we assume $X$ to be embeddable. Standard derived functors like tensor $\otimes$, $R\sHom$, $f^*$, $f_*$, etc are well defined in the categories of (embeddable) Azumaya varieties as in variety cases. See \cite{Kuz06Hyp} for more details.

It makes perfect sense to say an Azumaya variety $(Y,\sA_Y)$ together with a morphism\footnote{Actually it works even if the underlying varieties $Y$ admit a rational morphism towards $\PP V^*$ under certain conditions. For more details, see \cite{Kuz06Hyp}.} $Y \to \PP V^*$ to be the \textbf{HP-dual} of the variety\footnote{We expect $X$ can also be Azumaya variety, as long as we define properly what do we mean by universal hyperplanes for $(X,\sA_X)$. We expect this even for more general noncommutative schemes $(X,\sR_X)$.} $X$ with $X \to \PP V$ (and a Lefschetz decomposition). This means we have a $\PP V^*$-linear derived equivalence $D(Y,\sA_Y) \simeq \sC$, where $\sC$ is the HP-dual category defined by (\ref{sod:H_X}), and we further require it to be given by an Fourier-Mukai kernel supported on $\sH_X \times_{\PP V^*} Y$.

Fibre products and faithful base-changes can also be defined for Azumaya varieties, and everything in Sec. \ref{sec:base-change} holds for strict base changes of Azumaya varieties, cf. \cite{Kuz06Hyp}. We can similarly define two pair of morphisms to $\PP V$ and $\PP V^*$ to be admissible similarly as in Def. \ref{def:admissible} (the faithful conditions for the pair hold). Similar criterion for admissibility also holds:
\begin{lemma} Lem. \ref{lem:admissible pairs} is true for Azumaya varieties if $f$ and $q$ are strict morphisms \footnote{Notice when $X$ is noncommutative, then $\PP V$ has to be noncommutative also if $f$ is strict.}, and all the conditions of inclusions and expected dimensions hold for the underlying varieties.
\end{lemma}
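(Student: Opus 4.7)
The plan is to reduce the Azumaya-variety case to the underlying variety case that was already handled in Lem.~\ref{lem:admissible pairs}. The key observation is that when $f:(X,\sA_X)\to(\PP V,\sA_{\PP V})$ and $q:(S,\sA_S)\to(\PP V^*,\sA_{\PP V^*})$ are strict, all the relevant fibre products, universal hyperplanes, and incidence quadrics appearing in Def.~\ref{def:admissible} have underlying varieties equal to the usual ones, and their Azumaya algebra structures are simply the strict pullbacks from the base. Consequently, all the Tor-independence conditions which encode the exact cartesian property can be detected on the underlying varieties, because tensoring (over $\sO$) with an Azumaya algebra, being a locally free sheaf of finite rank, is exact and commutes with $\Tor$.

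First I would verify that the Azumaya-variety analogues of Lem.~\ref{lemma-faithful-base-change} and Lem.~\ref{lem:3squares} hold for strict morphisms. By the above observation, a cartesian square of strict morphisms is exact cartesian in the Azumaya sense of Def.~\ref{def:bc} if and only if the underlying square of varieties is; hence the three criteria (flatness, smoothness with expected dimension, l.c.i.\ into Cohen-Macaulay with expected dimension) transfer verbatim, as does the three-squares composition property. This is essentially the content of the base-change theory developed in \cite[\S 2]{Kuz06Hyp}. A small separate point is Lem.~\ref{lem:H_flat}: for a strict $f$, the universal hyperplane $\sH_{(X,\sA_X)}$ has underlying variety $\sH_{\underline{X}}$ with the Azumaya algebra pulled back, so its flatness over $\PP V^*$ is a statement purely about underlying varieties and hence follows directly.

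With these tools the proof proceeds exactly as in the variety case. We check the same six squares involving $X_T$, $\sH$, $Q(X_T,S)$ (and the symmetric three for $Y_S$, $\sH$, $Q(X,Y_S)$) are exact cartesian. By strictness, each such square is exact cartesian iff its underlying-variety analogue is, and the latter is handled under the stated hypotheses (non-degeneracy of $\underline{f}$ and $\underline{q}$, the alternatives on the incidence quadrics $Q(S,T)$, $Q(X,Y)$, and expected dimension of the underlying $X_T$, $Y_S$) by the original argument, using the Azumaya versions of Lem.~\ref{lemma-faithful-base-change} and Lem.~\ref{lem:3squares} at each step and Lem.~\ref{lem:3squares} to assemble them when one of the factors involves the universal hyperplane.

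The main (and essentially only) obstacle is making precise the equivalence between Tor-independence in the Azumaya setting and Tor-independence of underlying varieties for strict morphisms; once this is in hand, the argument is a purely formal translation and introduces no new geometric content. All the other ingredients — expected dimensions, smoothness, Cohen-Macaulayness, locally complete intersection, non-degeneracy — are by hypothesis conditions on the underlying varieties, so no additional Azumaya-specific verification is needed beyond the bookkeeping of algebra structures along each strict morphism.
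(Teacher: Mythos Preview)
Your proposal is correct and follows exactly the approach the paper has in mind: the paper does not spell out a proof for this lemma, simply stating it after noting that fibre products and faithful base-changes for Azumaya varieties are handled by \cite{Kuz06Hyp} and that everything in \S\ref{sec:base-change} holds for strict base changes. Your reduction---strict morphisms force all relevant fibre products to have the underlying varieties of the commutative case with Azumaya algebras strictly pulled back, and Tor-independence is detected on underlying varieties because Azumaya algebras are locally free---is precisely the intended (but omitted) justification.
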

Then exactly the same method as in last section gives:
\begin{proposition} Thm. \ref{thm:HPDgen} holds for admissible HP-dual pairs  $X\to \PP V$, $(Y,\sA_Y) \to \PP V^*$ and $S\to \PP V^*$, $(T,\sA_T) \to \PP V$, where $(Y,\sA_Y) $ and $(T,\sA_T)$ are Azumaya varieties.
\end{proposition}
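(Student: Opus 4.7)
The plan is to follow the proof of Thm.~\ref{thm:HPDgen} verbatim, checking only that each ingredient survives replacement of $Y$ and $T$ by Azumaya varieties. The crucial observation is that although $(Y,\sA_Y)$ and $(T,\sA_T)$ are noncommutative, every space on which the chess-game is actually played is ordinary: since $X$, $S$, $\PP V$, $\PP V^*$ are varieties, the universal hyperplanes $\shH_X\subset X\times \PP V^*$, $\shH_S\subset \PP V\times S$, and the common incidence variety $\shH=Q(X,S)\subset X\times S$ are all ordinary. The Lefschetz decompositions (\ref{lef:X}), (\ref{lef:S}) and the derived subcategories $\shB^k\subset \shA_0$, $\shD^k\subset \shC_0$ defined by (\ref{def:B^k}), (\ref{def:D^k}) live entirely inside the ordinary categories $D(X)$, $D(S)$, so they are unaffected. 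The Azumaya structure only enters through the two full subcategories $\shD_{Y_S}$ and $\shD_{X_T}$ of $D(\shH)$, which we want to identify with $D(Y_S,\sA_Y|_{Y_S})$ and $D(X_T,\sA_T|_{X_T})$ respectively.

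The main preparatory step is to establish the Azumaya analogue of the base-change Prop.~\ref{prop:base-change}. For this I would invoke Kuznetsov's noncommutative base-change machinery \cite{Kuz06Hyp} under the (extended) admissibility hypothesis just stated in the lemma preceding the proposition: since $X\to \PP V$ and $S\to \PP V^*$ are strict morphisms, the Fourier-Mukai kernels $\shE_X\in D(\shH_X\times_{\PP V^*}Y)$ and $\shE_S\in D(T\times_{\PP V}\shH_S)$ pull back to strict Azumaya fibre products and yield fully faithful Fourier-Mukai embeddings
\[
\Phi_{\shE_X|S}:D(Y_S,\sA_Y|_{Y_S})\hookrightarrow D(\shH),\qquad
\Phi_{\shE_S|X}:D(X_T,\sA_T|_{X_T})\hookrightarrow D(\shH),
\]
with images $\shD_{Y_S}$ and $\shD_{X_T}$ fitting into the two $S$-linear and $X$-linear semiorthogonal decompositions (\ref{sod:H for X}), (\ref{sod:H for S}), (\ref{sod:H for S:Serre}) exactly as before. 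The orthogonal components $\shA_k(k)\boxtimes D(S)$, $D(X)\boxtimes \shC_k(k)$, and $D(X)\boxtimes \shC^L_k(k+1-l)$ are box-tensors of ordinary categories restricted to $\shH$, so they are literally identical to the commutative case.

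Once this base-change is in hand, the remaining ingredients carry over without modification: the vanishing Lem.~\ref{lem:van} is proved on the ordinary divisor $\shH\subset X\times S$ using only the defining short exact sequence, the K\"unneth-type exterior product Prop.~\ref{prop:product} concerns only $D(X)$ and $D(S)$, the Serre functors and mutations entering the definitions of $\shB^k$, $\shD^k$, $\shC_k^L$ are likewise ordinary, and the adjoints $i_T^*,i_S^*$ to the two inclusions are guaranteed by admissibility of $\shD_{X_T}$ and $\shD_{Y_S}$ in $D(\shH)$. The fully-faithfulness computation of \S\ref{sec:fullyfaithful} and the zig-zag generation argument of \S\ref{sec:generation} then apply word-for-word: every $R\Hom$ computation, every $\alpha$- and $\beta$-vanishing in the sense of Rmk.~\ref{rmk:alpha&beta_van}, and every mutation manipulation is performed on boxes $\shA_\alpha(\alpha)\boxtimes\shC_\beta(\beta)$ in $D(\shH)$ and never sees the sheaves of algebras. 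The single point requiring genuine care is therefore the Azumaya base-change statement in the previous paragraph; granting it, the rest of the proof is purely formal.
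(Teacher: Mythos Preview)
Your proposal is correct and follows exactly the paper's approach: the paper's proof is the single clause ``Then exactly the same method as in last section gives'' after noting that fibre products, faithful base-change, and the admissibility criterion all extend to (strict) Azumaya morphisms via \cite{Kuz06Hyp}. Your writeup is a careful unpacking of precisely this, making explicit that the chess-game takes place entirely in the ordinary category $D(\shH)$ while the Azumaya structure enters only through the identification of $\shD_{Y_S}$ and $\shD_{X_T}$; there is nothing to add or correct.
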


\subsubsection{Categorical HP-duals.} \label{sec:catHPD}The other extreme case is we do not require $Y$ to be geometric at all, and regard HP-dual of $X \to \PP V$ purely as the $\PP V^*$-linear category $\sC$ in (\ref{sod:H_X}). Let's denote the HP-dual category of $S \to \PP V^*$ to be $\sD$, which is $\PP V$-linear. Then since HP-dual category always exists, Thm. \ref{thm:HPDgen} holds almost without conditions.

\begin{proposition}\label{prop:HPDcat} Let $X \to \PP V$ and $S \to \PP V^*$ be morphisms from smooth varieties, with Lefschetz decomposition (\ref{lef:X}) and resp. (\ref{lef:S}). Then there are decompositions
	\begin{align*} 
	 \sC & = \langle \shB^1 (2-N), \cdots, \shB^{N-2}(-1),\shB^{N-1} \rangle, \quad \shB^1 \subset \shB^2 \subset \cdots \subset\shB^{N-1} = \shA_0, \\
	\sD & = \langle \shD^1(2-N), \cdots, \shD^{N-2}(-1), \shD^{N-1} \rangle, \quad \shD^1 \subset \shD^2 \subset \cdots \subset\shD^{N-2} = \shC_0, 
	\end{align*} 
where $\shB^k$ and resp. $\shD^k$ are defined by (\ref{def:B^k}) and resp. (\ref{def:D^k}). Assume $\shH \ne X \times S$ \footnote{ Recall $\shH : = Q(X,S) \subset X \times S$ defined by incidence relation $\{(x, s)~|~ s(x)= 0\}$.}. Then we have decompositions
	\begin{align*}
		\sD_X & = \langle \sE, ~\shA_1(1)\boxtimes \shD^1, \shA_2(2) \boxtimes \shD^2, \ldots, \shA_{i-1}(i-1) \boxtimes 		\shD^{i-1} \rangle,  \\
		\sC_S  & =  \langle \shB^1 \boxtimes \shC_{1}^L(2-l) , \shB^2 \boxtimes \shC_{2}^L(3-l), \ldots, \shB^{l-1}\boxtimes 		\shC_{l-1}^L, ~\sE \rangle,
	\end{align*}
where  $\sE$ is the same triangulated category, $\sD_X$ (resp. $\sC_S$) is the base-change category of $\sD$ (resp. $\sC$) along $X \to \PP V$ (resp. $S \to \PP V^*$) given by Prop. \ref{prop:bcsod}.
\end{proposition}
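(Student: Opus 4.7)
The plan is to follow the proof of Theorem \ref{thm:HPDgen} essentially verbatim, taking advantage of the fact that that argument is entirely categorical: it never uses the geometric existence of $Y$ or $T$, only the $\PP V^*$-linear (resp.\ $\PP V$-linear) structure of the HP-dual categories together with the mutation/Serre-functor calculus on the common ambient $D(\shH)$.

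First I would set up the analogue of Proposition \ref{prop:base-change} under the weaker hypothesis $\shH\neq X\times S$. Because $Q\subset \PP V\times \PP V^*$ is a smooth divisor and $\shH$ is cut out of the smooth variety $X\times S$ by the single equation pulled back from $Q$, the condition $\shH\neq X\times S$ forces $\shH$ to be a Cartier divisor in $X\times S$, hence a locally complete intersection. By Lemma \ref{lemma-faithful-base-change}(3) this gives Tor-independence of the two squares
\[
\begin{tikzcd}
\shH\ar[r]\ar[d] & \shH_X\ar[d]\\
S\ar[r] & \PP V^*
\end{tikzcd}
\qquad\text{and}\qquad
\begin{tikzcd}
\shH\ar[r]\ar[d] & \shH_S\ar[d]\\
X\ar[r] & \PP V.
\end{tikzcd}
\]
Applying Proposition \ref{prop:bcsod} to the $\PP V^*$-linear decomposition (\ref{sod:H_X}) of $D(\shH_X)$ and to the $\PP V$-linear decomposition (\ref{sod:H_S}) of $D(\shH_S)$ then yields $S$-linear and $X$-linear decompositions of $D(\shH)$ of exactly the shape of (\ref{sod:H for X}) and (\ref{sod:H for S}), in which the first components are now the base-change categories $\sC_S$ and $\sD_X$. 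In particular $\sC_S$ and $\sD_X$ are admissible subcategories of the common ambient $D(\shH)$.

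Second, I would establish the decompositions of $\sC$ and $\sD$ themselves by specializing the main argument to the degenerate cases, as indicated in Remark \ref{rmk:dual-decomposition}. Taking the ``other pair'' to be $(\PP V^*,\emptyset)$ (respectively $(\PP V,\emptyset)$) collapses the chessboard to a single column (respectively row), and the fully-faithfulness and generation arguments of \S\ref{sec:fullyfaithful}--\S\ref{sec:generation} go through with no changes, producing the claimed filtration of $\sC$ by $\shB^k(1+k-N)$'s and of $\sD$ by $\shD^k(1+k-N)$'s. (This also recovers Kuznetsov's \cite[Prop.~5.10]{Kuz07HPD} as a byproduct, answering the question after that proposition.)

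Third, with the two decompositions of $D(\shH)$ from the first step in hand, I would run the chess game of \S\ref{sec:fullyfaithful}--\S\ref{sec:generation} word-for-word to produce the decompositions of $\sD_X$ and $\sC_S$ with a common primitive subcategory $\sE\subset D(\shH)$. All the inputs of that proof, namely the Lefschetz decompositions of $D(X)$ and $D(S)$, the Serre functor on $D(S)$ used in (\ref{sod:serre:H_S}), the vanishing Lemma \ref{lem:van}, the mutation formulas of Lemma \ref{lem:mut}, and the $S$- and $X$-linear decompositions of $D(\shH)$ produced in Step~1, are available without any geometric hypothesis on $\sC$ or $\sD$. The main obstacle I anticipate is purely in the first step: verifying that $\shH\neq X\times S$ alone suffices to make both base-change squares above exact cartesian. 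Because we no longer have fibre products $X_T$ and $Y_S$ whose expected dimension would control things, we cannot invoke Lemma \ref{lem:admissible pairs}; everything rests on the single observation that $\shH$ is a divisor in $X\times S$, so one must apply Lemma \ref{lemma-faithful-base-change}(3) carefully to both universal hyperplane projections $\shH_X\to\PP V^*$ and $\shH_S\to\PP V$.
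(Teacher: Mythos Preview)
Your proposal is correct and matches the paper's approach: the paper does not give a separate proof of this proposition, treating it as an immediate consequence of the fact that the entire argument of \S\ref{sec:fullyfaithful}--\S\ref{sec:generation} is purely categorical and never invokes the geometric existence of $Y$ or $T$. One small simplification for your Step~1: since both $S$ and $\shH_X$ (resp.\ $X$ and $\shH_S$) are smooth and the condition $\shH\neq X\times S$ forces $\shH$ to have the expected dimension $\dim X+\dim S-1$, part~(2) of Lemma~\ref{lemma-faithful-base-change} applies directly, so you need not worry about the locally-complete-intersection hypothesis of part~(3); the obstacle you anticipate is thus easily overcome.
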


If we assume one of the pair to be geometric, say $T$, then $\sD_X$ can be realized geometrically by $D(X_T)$ if the $X_T$ satisfies admissible condition. In particular, the Lefschetz type decomposition results always hold for the admissible general section $X_T$:

\begin{corollary}\label{cor:HPDdec} Let $X \to \PP V$ be a morphism from a smooth projective variety with a Lefschetz decomposition (\ref{lef:X}). Let $(S,T)$ be a HP-pair with respect to Lefschetz decomposition (\ref{lef:S}). Assume $\shH \ne X \times S$. Then if $X_T$ is of expected dimension, and whether $Q(S,T) = S \times T$ (e.g. $S=L$, $T = L^\perp$) or $Q(S,T) \ne S \times T$ but $Q(X_T,S)$ is a divisor in $X_T \times S$. Then Thm. \ref{thm:HPDgen} holds if $D(Y)$ (resp. $D(Y_S)$) is replaced by $\sC$ (resp. $\sC_S$). In particular, we have Lefschetz type decompositions for the fiber product $X_T$:
	$$D(X_T) = \langle \sE, ~\shA_1(1)\boxtimes \shD^1, \shA_2(2) \boxtimes \shD^2, \ldots, \shA_{i-1}(i-1) \boxtimes 		\shD^{i-1} \rangle. $$
\end{corollary}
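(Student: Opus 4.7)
The plan is to run the argument of Thm.~\ref{thm:HPDgen} verbatim, replacing the geometric HP-dual $D(Y)$ of $X$ by the abstract $\PP V^*$-linear HP-dual category $\sC\subset D(\shH_X)$ furnished by~(\ref{sod:H_X}), and the geometric fiber $D(Y_S)$ by its base-change $\sC_S$. Since $T\to\PP V^*$ is genuinely geometric, the other half of the data is unchanged. The crucial observation is that the entire ``chessboard game'' of \S\ref{sec:fullyfaithful}--\S\ref{sec:generation}, together with the preparatory mutation computations, uses only the abstract semiorthogonal structure of $D(\shH)$ together with the Serre functor and mutations through the ``linear'' components $\shA_\alpha(\alpha)\boxtimes D(S)$ and $D(X)\boxtimes\shC_\beta(\beta)$; it never invokes the geometric nature of $\shD_{Y_S}$. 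Hence once the analog of Prop.~\ref{prop:base-change} is in place, the two decompositions and the equivalence of primitive parts follow word-for-word. The Lefschetz-type decompositions of $\sC$ and $\sD$ themselves are precisely the content of Prop.~\ref{prop:HPDcat}.

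What must be checked under the weaker hypotheses of the corollary is therefore only the base-change step of \S\ref{sec:base-change}, namely: (i) that $q:S\to\PP V^*$ is faithful with respect to $\shH_X\to\PP V^*$, so that~(\ref{sod:H_X}) pulls back via Prop.~\ref{prop:bcsod} to a decomposition of $D(\shH)$ with leading component $\sC_S$; and (ii) that $f:X\to\PP V$ is faithful with respect to the pair $(\shH_S,T)$, so that $\Phi_{\shE_S}:D(T)\hookrightarrow D(\shH_S)$ pulls back via Prop.~\ref{prop:bcFM} to a fully faithful $X$-linear embedding $D(X_T)\hookrightarrow D(\shH)$ with image identified with $\shD_{X_T}$.

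For (i), since $X\times S$ is smooth and the incidence locus $\shH\subset X\times S$ is cut out by a single bihomogeneous section of $\sO_{X\times S}(1,1)$, the hypothesis $\shH\ne X\times S$ forces $\shH$ to be an effective Cartier divisor of the expected dimension $\dim X+\dim S-1$, whence Lem.~\ref{lemma-faithful-base-change}(3) applies. Note that Lem.~\ref{lem:H_flat}, which would require non-degeneracy of $f$, is bypassed here by testing Tor-independence directly on the smooth ambient $X\times S$ rather than on $\shH_X$. For (ii), the proof of Lem.~\ref{lem:admissible pairs} reduces us to checking that $\shH_S$, $T$, and $Q(T,S)$ all base-change faithfully through $f$: the first is (i) rearranged, the second is the expected-dimension hypothesis on $X_T$ combined with Lem.~\ref{lemma-faithful-base-change}, and the third splits according to the two cases of the hypothesis. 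When $Q(S,T)=S\times T$, the pull-back $Q(X_T,S)=X_T\times S$ automatically has the expected dimension; when $Q(S,T)\ne S\times T$, the hypothesis that $Q(X_T,S)$ is a divisor in $X_T\times S$ places us in the Cartier-divisor situation already handled in the proof of Lem.~\ref{lem:admissible pairs}.

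With (i) and (ii) established, the analog of Prop.~\ref{prop:base-change} holds with $\shD_{Y_S}$ replaced by $\sC_S$ throughout, and the remainder of Thm.~\ref{thm:HPDgen} applies without modification. The main subtlety, and the only place where the weakened hypotheses play any role, is the verification of (i) and (ii); everything downstream is purely categorical and is indifferent to whether or not the first component of~(\ref{sod:H for X}) arises from an honest variety.
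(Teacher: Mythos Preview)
Your approach is correct and matches the paper's implicit argument: the corollary follows from Prop.~\ref{prop:HPDcat} (the purely categorical statement) together with the identification $\sD_X\simeq D(X_T)$, which requires precisely the faithful base-change verifications you carry out in (i) and (ii), mirroring the proof of Lem.~\ref{lem:admissible pairs}. One small correction: in (i) you should cite Lem.~\ref{lemma-faithful-base-change}(2) rather than~(3), since $q:S\to\PP V^*$ need not be a closed embedding, whereas both $S$ and $\shH_X$ are smooth and $\shH$ has the expected dimension.
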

Notice the ambient part of the above decompositions can be zero. For example, if $S$ is rectangular, then the ambient part is non-zero precisely if $l < i$. Since the corollary requires no condition on $X$ at all except from the fact that $X$ admits decomposition (\ref{lef:X}), hence it is a very powerful tool in exploring the properties of Lefschetz decompositions. For example, assume $X$ to be connected, and take $S=L$ to be a generic $(i-2)$-dimensional linear subspace, then we obtain the the length $i$ of a Lefschetz decomposition of a projective variety $X$ should be smaller than or equal to $\dim X + 1$, cf. \cite[Prop. 7.6]{Kuz07HPD}.

\subsubsection{General noncommutative HP-duals} \label{sec:gen} Most interesting cases are in between the two cases, where the HP-dual category is equivalent to a bounded derived category $D(Y,\sR_Y)$ of coherent $\sR_Y$-modules on an algebraic algebraic variety $Y$, with $Y \to \PP V^*$, where $\sR_Y$ is a finite (noncommutative) $\sO_Y$-algebra on $Y$, and the equivalence is given by Fourier-Mukai kernel is an object of derived category $D(Q(X,Y),\sR)$ of certain non-commutative incidence loci. Usually $\sR_Y$ is not an Azumaya algebra, but locally isomorphic to a matrix algebra in an open subset of $Y$, typically on smooth loci of $Y$. Typical situations are when $(Y,\sR_Y)$ is given by the categorical resolution of singularities of the variety $Y$. 

Although so far we do not have as a complete framework for these noncommutative varieties as for Azumaya varieties, especially for the theory about fibre products, base-change properties and criterion, etc, we can apply results of last section to the HP-dual categories, and work out what is the corresponding base-change categories in concrete examples. This type of arguments have already been used by Kuznetsov extensively in \cite{Kuz14SODinAG}. And as long as a proper framework has been set up, we expect our arguments in section \ref{sec:main} apply directly.

\subsection{Applications: HP-dual for universal linear sections} 
As a first application, we show a relative version of HP-duality theorem can follow from Thm. \ref{thm:HPDgen}, and using this result we show the HP-duality theorem for universal linear sections. The more general relative version will be treated in \S \ref{sec:relative}. For simplicity of notations we denote $P = \PP V$, $P^* = \PP V^*$.

Consider the following relative setting in Kuzentsov's \cite{Kuz07HPD}. Let $X$ be a smooth projective variety, with a non-degenerate morphism $f: X \to P$,  equipped with a Lefschetz decomposition (\ref{lef:X}) with respect to $\sO_X(1) = f^* \sO_{\PP V} (1)$, and $g:Y \to P^*$ is the HP-dual. Let $B$ be a smooth algebraic variety, and $L \subset \sO_B \otimes V^*$ be a vector subbundle of rank $l$, and $L^\perp \subset B \otimes V$ the orthogonal vector subbundle over $B$. Denote also$P_B : = P \times B$, $P_B^* = P^* \times B$.

\begin{theorem}[Kuznetsov {\cite[Thm. 6.27]{Kuz07HPD}}] \label{thm:HPD_rel} Assume the following fibre products
	$$\sX_{L^\perp} :=  (X \times B) \times_{P_B} \PP(L^\perp), \qquad \sY_{L} := (Y \times B) \times_{P^*_B} \PP(L)$$
are of expected dimensions. Then there are semiorthogonal decompositions
				\begin{align*}
				D(\sX_{L^\perp}) & = \langle \sC_L, \shA_l(l) \boxtimes D(B),  \ldots, \shA_{i-1}(i-1)\boxtimes D(B) \rangle, \\
				D(\sY_L) & = \langle \shB^1\boxtimes D(B)(2-l), \ldots, \shB^{l-1} \boxtimes D(B), \sC_L \rangle
				\end{align*}
			with a same triangulated category $\sC_L$ as the 'primitive' parts.				
\end{theorem}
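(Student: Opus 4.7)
The plan is to deduce this relative statement as a direct consequence of the main theorem, Thm.~\ref{thm:HPDgen}, by taking the second HP-dual pair to be $(S,T)=(\PP(L),\PP(L^\perp))$, viewed through the natural projections $q\colon\PP(L)\hookrightarrow P^*\times B\to P^*$ and $p\colon\PP(L^\perp)\hookrightarrow P\times B\to P$. The intersections produced by Thm.~\ref{thm:HPDgen} then match the desired ones, since
	$$X\times_{P}\PP(L^\perp)=\{(x,b):f(x)\in L^\perp_b\}=\sX_{L^\perp},\qquad Y\times_{P^*}\PP(L)=\sY_L,$$
and the hypothesis that these have expected dimension is exactly the geometric content required.

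The first step is to exhibit a Lefschetz decomposition on $D(\PP(L))$ with respect to $\sO_{\PP(L)}(1):=q^*\sO_{P^*}(1)$. Since $\PP(L)\to B$ is a projective bundle of rank $l-1$ and $\sO_{\PP(L)}(1)$ agrees with the relative $\sO(1)$ of this bundle, Orlov's projective bundle formula yields the rectangular decomposition
	$$D(\PP(L))=\langle D(B),\,D(B)(1),\,\ldots,\,D(B)(l-1)\rangle,$$
so we are in the rectangular case of Rmk.~\ref{rmk:rectangular} with $\shC_0=\cdots=\shC_{l-1}=D(B)$ (embedded via the bundle projection). In particular $\shD^k=0$ for $k\le l-1$ and $\shD^k=D(B)$ for $k\ge l$.

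The second step is to verify that $\PP(L^\perp)\to P$ is HP-dual to $\PP(L)\to P^*$ with respect to this decomposition, in the (absolute) sense of the excerpt. This is the relative analogue of Ex.~\ref{ex:dual-linear-sections}: the universal hyperplane $\sH_{\PP(L)}\subset P\times\PP(L)$ projects to $P$ with generic fiber $\PP^{l-2}$-bundle geometry, degenerating to $\PP(L)$-fibers precisely over $\PP(L^\perp)$. One produces the Fourier--Mukai kernel $\sO_{\PP(L^\perp)\times_{P}\sH_{\PP(L)}}$ and, following exactly the argument sketched in Ex.~\ref{ex:dual-linear-sections} (pushforward from the fiber product along $j_*\,p^*$), obtains a fully faithful $P$-linear embedding $D(\PP(L^\perp))\hookrightarrow D(\sH_{\PP(L)})$ whose image is the orthogonal to the rectangular blocks $D(P)\boxtimes D(B)(k)$, $k=1,\dots,l-1$. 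Establishing this $P$-linear equivalence is the principal obstacle of the plan; however, the argument is purely fiberwise over $B$ and reduces, étale-locally, to the honest Ex.~\ref{ex:dual-linear-sections} applied to each fiber, so no new ideas are required beyond bookkeeping of the $B$-linear structure.

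The third step is to check admissibility. Since $q:\PP(L)\to P^*$ is in general degenerate along the direction $B$, we appeal to Lem.~\ref{lem:admissible pairs}(4): the image of $\PP(L^\perp)$ in $P$ is not contained in the annihilator of the image of $\PP(L)$ (generically $L_b$ and $L_b^\perp$ span $V^*$ and $V$ respectively), and $Q(\PP(L),\PP(L^\perp))=\{(v,w,b):v\in L^\perp_b,\,w\in L_b,\,v\cdot w=0\}$ is a divisor in $\PP(L^\perp)\times_B\PP(L)$ by a dimension count; the expected-dimension hypotheses on $\sX_{L^\perp}$ and $\sY_L$ supply the remaining conditions. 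The final step applies Thm.~\ref{thm:HPDgen} in the rectangular form of Rmk.~\ref{rmk:rectangular}: the ambient parts become $\langle\shA_k(k)\boxtimes D(B)\rangle_{k=l}^{i-1}$ for $\sX_{L^\perp}$ and $\langle\shB^k\boxtimes D(B)(k+1-l)\rangle_{k=1}^{l-1}$ for $\sY_L$, while the primitive parts are identified with a common category which we christen $\sC_L$. The $\PP V$-linearity of the HP-duality kernel established in the second step ensures the ambient pieces are indeed pulled back from $X\times B$ resp.\ $Y\times B$, yielding Kuznetsov's formulation verbatim.
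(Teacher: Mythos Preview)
Your approach is essentially identical to the paper's: set $(S,T)=(\PP(L),\PP(L^\perp))$ with the natural projections to $P^*$ and $P$, equip $D(\PP(L))$ with the rectangular Lefschetz decomposition from the projective bundle formula, establish that $\PP(L^\perp)\to P$ is HP-dual to $\PP(L)\to P^*$ (the paper packages this as Cor.~\ref{cor:linear-duality}), and then apply Thm.~\ref{thm:HPDgen} in the rectangular form of Rmk.~\ref{rmk:rectangular}.

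Two minor corrections to your admissibility step. First, since $f:X\to P$ is assumed \emph{non-degenerate} in the setup immediately preceding the theorem, you should invoke Lem.~\ref{lem:admissible pairs}(2) rather than~(4). Second, your description of $Q(\PP(L),\PP(L^\perp))$ is off: in the paper's notation $Q(S,T)$ lives in the \emph{absolute} product $S\times T$, not the fibre product over $B$; and on $\PP(L^\perp)\times_B\PP(L)$ the pairing $v\cdot w$ vanishes identically (since $v\in L_b^\perp$ and $w\in L_b$), so the set you wrote down is the whole space rather than a divisor. The correct incidence locus is $\{((v,b_1),(w,b_2)):v\cdot w=0\}\subset\PP(L^\perp)\times\PP(L)$. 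These are bookkeeping issues rather than gaps; the paper's own proof in fact omits the admissibility verification entirely and simply cites Cor.~\ref{cor:linear-duality} and Thm.~\ref{thm:HPDgen}.
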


\begin{proof} Let $S = \PP(L) \subset P^* \times B$ with the projection map to $p: S \to P^*$, and let $T = \PP(L^\perp) \subset P \times B$ with canonical map $q: T \to P$. Then a relative version of Orlov-type result shows $T$ is HP-dual to $S$ with the following semiorthogonal decomposition of the projective bundle $S= \PP(L)$ over $B$ (cf. Cor. \ref{cor:linear-duality}):
	$$D(S) = \langle D(B), D(B)(1), \ldots, D(B)(l-1) \rangle.$$
Now the result follows from applying Thm. \ref{thm:HPDgen} to the two HP-dual pairs $X \to P$, $Y \to P^*$ and $S \to P^*$, $T \to P$.
\end{proof}

\begin{theorem}[Duality of universal linear sections, Kuznetsov {\cite[\S 6]{Kuz07HPD}}] \label{thm:univ_lin_sec} Let $G_l : = \Gr(l, V^*)$ be the Grassmannian of linear $l$-dimensional subspaces of $V^*$, and $L_l \subset V^* \otimes \sO_{G_l}$ tautological rank $l$ sub-bundle, and $L_l^\perp \subset V \otimes \sO_{G_l}$ the orthogonal bundle, i.e. $L_l^\perp := \Ker ( V \otimes \sO_{G_l} \to L_l^*)$. Define the \textbf{universal linear sections} of $X$ and $Y$ to be
	\begin{align*} 
		\sX_l : &= (X \times G_l) \times _{P \times G_l} \PP(L_l^\perp) \subset X \times G_l ;\\
		\sY_l: & = (Y \times G_l) \times_{P^* \times G_l} \PP(L_l) \subset Y \times G_l.
	\end{align*}
Then we have semiorthogonal decompositions
	\begin{align*}
		D(\sX_l) & = \big\langle \sC_l ,~ \shA_l(l) \boxtimes D(G_l), \ldots, \shA_{i-1}(i-1) \boxtimes D(G_l) \big\rangle, \\
		 D(\sY_l) & =  \big\langle \shB^1 \boxtimes D(G_l)(2-l), \ldots, \shB^{l-1} \boxtimes D(G_l), ~\sC_l \big\rangle.
	\end{align*}	
\end{theorem}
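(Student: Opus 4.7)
The plan is to deduce Thm. \ref{thm:univ_lin_sec} as a direct instance of the relative HP-duality result Thm. \ref{thm:HPD_rel}, applied with base scheme $B = G_l$ and subbundle $L = L_l \subset V^* \otimes \sO_{G_l}$. Under this identification $L^\perp = L_l^\perp$, and the fibre products $\sX_{L^\perp}, \sY_L$ of Thm. \ref{thm:HPD_rel} become precisely $\sX_l, \sY_l$ (the factors of $G_l$ in the base-change description are absorbed by the natural map $\PP(L_l^\perp) \to G_l$). The ambient components $\shA_k(k)\boxtimes D(B)$ and $\shB^k \boxtimes D(B)(2-k)$ match the ones in the statement verbatim, and the two decompositions share a common primitive piece, which we label $\sC_l$. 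So the entire theorem will drop out once the sole hypothesis of Thm. \ref{thm:HPD_rel} --- that $\sX_l$ and $\sY_l$ be of expected dimension --- is verified.

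To verify this, I would show directly that the projection $\pi\colon \PP(L_l^\perp) \to \PP V$ is flat. Concretely, the fibre of $\pi$ over $x \in \PP V$ consists of those $l$-planes $L\subset V^*$ for which $x$ lies in the annihilator $L^\perp$, equivalently those $L$ contained in the $(N-1)$-dimensional hyperplane $x^\perp\subset V^*$; hence the fibre is $\Gr(l, N-1)$, of constant dimension $l(N-l-1)$. Thus $\pi$ is a smooth surjection, in particular flat, so by Lem.~\ref{lemma-faithful-base-change}(1) the base change $\sX_l = X \times_{\PP V} \PP(L_l^\perp)$ has dimension $\dim X + l(N-l-1)$, which is exactly the expected dimension of the fibre product. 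The parallel argument applied to $\PP(L_l) \to \PP V^*$ gives the expected-dimension statement for $\sY_l$.

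The main potential obstacle is simply confirming that the universal set-up is genuinely a strict specialization of Thm.~\ref{thm:HPD_rel}: that the natural identifications of fibre products, the subbundle structure of $L_l$, and the ambient Lefschetz categories all align without hidden compatibility requirements. Since flatness of $\pi$ is global over $G_l$, no further genericity or admissibility assumption is needed. As an alternative that sidesteps using Thm.~\ref{thm:HPD_rel} as a black box, one could instead apply the main Thm.~\ref{thm:HPDgen} directly to the HP-dual pair $(S,T) = (\PP(L_l),\PP(L_l^\perp))$ equipped with the rectangular Lefschetz decomposition $D(\PP(L_l)) = \langle D(G_l), D(G_l)(1), \ldots, D(G_l)(l-1)\rangle$ coming from the relative Orlov decomposition (Cor.~\ref{cor:linear-duality}). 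Because $(\PP(L_l),\PP(L_l^\perp))$ are fibrewise dual linear subspaces, the incidence locus satisfies $Q(S,T) = S\times T$, and Lem.~\ref{lem:admissible pairs}(2a) then reduces admissibility to exactly the expected-dimension check above; the rectangular specialisation in Rmk.~\ref{rmk:rectangular} produces the stated decompositions.
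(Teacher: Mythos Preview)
Your proposal is correct and follows essentially the same route as the paper: both apply Thm.~\ref{thm:HPD_rel} with $B=G_l$ and $L=L_l$ (the paper also notes this amounts to applying Thm.~\ref{thm:HPDgen} to $(S,T)=(\PP(L_l),\PP(L_l^\perp))$, which is exactly your alternative). The only difference is in the expected-dimension check: the paper argues that over a generic point $[L]\in G_l$ the fibres $X_{L^\perp}$ and $Y_L$ have expected dimension, whereas you give the cleaner global argument that $\PP(L_l^\perp)\to\PP V$ is flat with fibre $\Gr(l,N-1)$, so the base change automatically has the right dimension.
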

\begin{proof} $f: X \to P$ and $Y \to P^*$ are HP-dual. Notice since over generic points $[L] \in G_l$, the fibers of $\sX_l$ and $\sY_l$, which are the linear sections $X_{L^\perp}$ and $Y_L$, are of expected dimensions. Hence the smooth projective varieties $\sX_l$ and $\sY_l$ are of expected dimensions. Now apply previous theorem to $B = G_l$, which amounts to apply Thm. \ref{thm:HPDgen} to the two HP-dual pairs $X \to P$, $Y \to P^*$ and $S = \PP(L_l) \to P^*$, $T = \PP (L_l^\perp) \to P$. 
\end{proof}

\subsection{Application: duality} \label{sec:duality} As a second application, we reprove that 'HP-duality is indeed a duality relation' based on our theorem. Recall to say $g: Y \to P^*$ is HP-dual to $f: X \to P$ with respect to a decomposition (\ref{lef:X}), is to specify a Fourier-Mukai kernel $\shP  \in D(Y \times_{P^*} \shH_X)$ which gives a fully faithful embedding $\Phi_{\shP} : D(Y) \to D(\shH_X)$ with image the HP-dual category $\sC$ in (\ref{sod:H_X}). The following is a refinement of {\cite[Thm. 7.3]{Kuz07HPD}}:


\begin{theorem}[Duality] \label{thm:duality} Assume $g: Y \to P^*$ is HP-dual to $f: X \to P$ with respect to a Lefschetz decomposition (\ref{lef:X}), where the embedding $D(Y) \to D(\shH_X)$ is given by a $P^*$-linear Fourier-Mukai transform with kernel
	$$\shP   \in D(Y \times_{P^*} \shH_X) = D(Q(X,Y)).$$
Then $f: X\to P$ is HP-dual to $g:Y \to P^*$ with respect to the Lefschetz decomposition 
	\begin{equation}\label{lef:Y:dual}
	D(Y) = \langle (\shB^{N-1})^\vee, (\shB^{N-2})^\vee (1), \ldots, (\shB^1)^\vee (N-2) \rangle,
	\end{equation}
which is the dual of the decomposition (\ref{lef:Y}) obtained in Thm. \ref{thm:HPDgen}, and $(-)^\vee$ means the anti-autoequivalence $R\sHom_Y(-, \sO_Y): D(Y)^{op} \to D(Y)$, and the $P$-linear embedding $D(X) \to D(\shH_Y)$ is given by the Fourier-Mukai transform with the same kernel
	$$\shP \in D(X \times_{P} \shH_Y ) = D(Q(X,Y)).$$
\end{theorem}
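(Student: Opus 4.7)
The proof naturally splits into two parts: establishing the dual Lefschetz decomposition (\ref{lef:Y:dual}) of $D(Y)$, and showing that the kernel $\shP$ itself, viewed in $D(X \times_P \shH_Y) = D(Q(X,Y))$, realizes $X \to P$ as HP-dual to $Y \to P^*$ with respect to that decomposition.

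For the first part, I would apply Thm.~\ref{thm:HPDgen} to the HP-dual pairs $(X, Y)$ and $(S,T) = (\PP V^*, \emptyset)$. The admissibility of this pair is automatic (both intersections are trivially of expected dimension), so part (1) of the main theorem produces
$$D(Y) = \langle \shB^1(2-N), \shB^2(3-N), \ldots, \shB^{N-1}\rangle.$$
The anti-autoequivalence $(-)^\vee = R\sHom_Y(-,\sO_Y)$ reverses the order of a semiorthogonal sequence and sends $\otimes\sO_Y(k)$ to $\otimes\sO_Y(-k)$. Applying it term by term yields the dual Lefschetz decomposition (\ref{lef:Y:dual}).

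For the second part, the natural approach is to apply Thm.~\ref{thm:univ_lin_sec} (already a consequence of Thm.~\ref{thm:HPDgen}) with $l = N-1$. Under the canonical identification $G_{N-1} = \Gr(N-1,V^*) \cong \PP V = P$ via $L \mapsto L^\perp$, one has $\sX_{N-1} \cong X$ (the graph of $f$) and $\sY_{N-1} \cong \shH_Y$. Because all the length-$i$ components $\shA_k$ with $k \ge N-1$ vanish (length assumption in $(\dagger)$), the ``ambient'' part of the decomposition of $D(\sX_{N-1}) = D(X)$ is empty, so the primitive part $\sC_{N-1}$ equals $D(X)$, and Thm.~\ref{thm:univ_lin_sec} gives
$$D(\shH_Y) = \langle \shB^1 \boxtimes D(P)(3-N), \ldots, \shB^{N-2}\boxtimes D(P),\ \sC_{N-1}\rangle, \qquad \sC_{N-1} \simeq D(X). \tag{$*$}$$
It remains to turn $(*)$ into the form required by HP-duality with respect to (\ref{lef:Y:dual}), namely
$$D(\shH_Y) = \langle \Phi_\shP^{X\to\shH_Y}(D(X)),\ (\shB^{N-2})^\vee(1)\boxtimes D(P),\ \ldots,\ (\shB^1)^\vee(N-2)\boxtimes D(P)\rangle. \tag{$**$}$$
I would do this by mutating the primitive component $\sC_{N-1}$ from the rightmost to the leftmost slot of $(*)$, moving it past each ambient component using the Serre functor of $D(\shH_Y)$ as in Lem.~\ref{lem:serre}. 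Since $\omega_{\shH_Y} = (\omega_P\boxtimes\omega_Y)(1,1)|_{\shH_Y}$ and each ambient component $\shB^k\boxtimes D(P)(k+1-N)$ is a product category, the Serre-induced twist reduces to twisting the $\shB^k$-factor by its Serre functor (which produces $(\shB^k)^\vee$ after composing with $(-)^\vee$) and raising the $D(P)$-twist by the correct amount. One verifies that the resulting ambient components are precisely those appearing in $(**)$.

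The final and most delicate step is the identification of kernels. Applying Rmk.~\ref{rmk:FM} to the construction in Thm.~\ref{thm:HPDgen} with $(S,T)=(\PP L_{N-1},\PP L_{N-1}^\perp)$, the equivalence $\sC_{N-1} \simeq D(X)$ is induced by a convolution (Lem.~\ref{lem:rel_convolution}) of $\shE_X$ with the structure-sheaf kernel of the Orlov-type equivalence giving HP-duality of $(\PP L_{N-1}, \PP L_{N-1}^\perp)$. Because the latter kernel is essentially the structure sheaf of the universal incidence locus, the convolution collapses to (the pullback of) $\shE_X = \shP$ on $Q(X,Y) = X\times_P \shH_Y$, possibly twisted by a line bundle absorbed in the $\sO_{\shH_Y}(k)$-shifts from the Serre mutation above. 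The main obstacle is precisely this bookkeeping: one must check Tor-independence of the squares appearing in the convolution so that Lem.~\ref{lem:rel_convolution} applies, and then verify that the leftover twists from the Serre mutation exactly cancel those introduced by the convolution, leaving the kernel as $\shP$ on the nose rather than merely up to an equivalence.
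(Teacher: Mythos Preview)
Your overall architecture is right: the $l=N-1$ instance of the main theorem gives $(\ast)$, and the task is to pass to $(\ast\ast)$ and identify the kernel. But two of your steps do not work as stated.

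\medskip
\textbf{From $(\ast)$ to $(\ast\ast)$.} Moving $\sC_{N-1}$ to the left by a Serre mutation leaves the ambient pieces $\shB^k\boxtimes D(P)(k+2-N)$ untouched; the Serre functor is an autoequivalence and does not convert $\shB^k$ into $(\shB^k)^\vee$. What the paper does is apply the global anti-autoequivalence $(-)^\vee=R\sHom_{\shH_Y}(-,\sO_{\shH_Y})$ to the entire decomposition $(\ast)$ and then tensor by $\sO_{P^*}(1)$. This reverses the order and sends $\shB^k(k+2-N)\boxtimes D(P)$ to $(\shB^k)^\vee(N-2-k)\boxtimes D(P)$, giving precisely $(\ast\ast)$ with the embedding
\[
F \;=\; \otimes\,\sO_{P^*}(1)\;\circ\;(-)^\vee\;\circ\;\Phi^L_{\shP|Q}\;\circ\;\Phi_{\sO_Q|X}\;\circ\;(-)^\vee \;:\; D(X)\longrightarrow D(\shH_Y).
\]

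\medskip
\textbf{The kernel.} Your sketch ``convolution collapses to $\shP$'' hides the main difficulty. The functor supplied by Rmk.~\ref{rmk:FM} is $\Phi^L_{\shP|Q}\circ\Phi_{\sO_Q|X}$, and the left-adjoint kernel $\shP^L$ cannot be written down directly on the possibly singular $Q(X,Y)$. The point of working with $F$ is that the two dualizations turn $\Phi^L_{\shP|Q}$ into its \emph{opposite}, and the paper's key input is Lem.~\ref{lem:op_of_L}:
\[
(\shP^L)^{op}=\shP \in D(Q(X,Y)).
\]
With this in hand one computes $\Phi_{\sO_Q|X}^{op}$ (Lem.~\ref{lem:op}), checks Tor-independence of the relevant square via a splitting diagram, and applies the convolution Lem.~\ref{lem:rel_convolution} to obtain the kernel of $F$ as $\shP\otimes\sO_X(N-1)[2-N]$. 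Hence $\Phi^{X\to\shH_Y}_{\shP}=F\circ(\otimes\sO_X(1-N))\circ[N-2]$ has the same image as $F$, which finishes the proof. Without the passage through opposites, the ``bookkeeping'' you allude to cannot be carried out.
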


The first part of the theorem, $f: X \to P$ is HP-dual to $g: Y \to P^*$, is Kuznetsov's {\cite[Thm. 7.3]{Kuz07HPD}}, and the proof is given by setting $l = N-1$ in Thm. \ref{thm:univ_lin_sec}. The new part is the statement that exactly same Fourier-Mukai kernel $\shP$ gives rise to the embedding $D(X) \to D(\shH_X)$, which is based on computations using Thm. \ref{thm:HPDgen}.

\begin{proof} As the proof of Thm. \ref{thm:HPD_rel} and \ref{thm:univ_lin_sec}, apply Thm. \ref{thm:HPDgen} to $f: X \to P$, $g: Y \to P^*$, and $q: T= \PP (L_{N-1}^\perp) = P \xrightarrow{\id} P$, $p: S= \PP(L_{N-1}) = Q \to P^*$, where $Q \subset P \times P^*$ is the universal quadric. Note $T=P \to P$ is HP-dual to $S=Q \to P^*$ (linear duality) with respect to
	$$D(Q) = \langle D(P), D(P)(1), \ldots, D(P)(N-2)\rangle,$$
i.e. the decomposition (\ref{lef:S}) of $D(S)$ is rectangular with $\shC = D(P)$, $l = N-1$.
\begin{align*}
\begin{ytableau}
\none[D(S) = D(Q)]
&\none 
&\none
&\scriptstyle 
&\scriptstyle 
&\scriptstyle 
&\none[\cdots]
&\scriptstyle 
&\scriptstyle
&\scriptstyle 
&*(lightgray) \scriptstyle 
&\none &\none
&\none[D(T)=D(P)]
\end{ytableau}
\end{align*}
The Fourier-Mukai transform $D(P) \to D(\shH_Q)$ is actually $P\times P$-linear, rather than just $P$-linear, with the kernel given by $\sO_Q \in D(\shH_Q \times_{P \times P} \Delta_P) = D(Q)$. 
Then Thm. \ref{thm:HPDgen} implies we have decompositions
	\begin{align}
	D(X_T) & = D(X) = \sE_{X_T}, \\
	D(Y_S) & = D(\shH_Y) = \langle \shB^1(3-N) \boxtimes D(P)  , \ldots, \shB^{N-2}  \boxtimes D(P), ~\sE_{Y_S}\rangle, \label{duallef:H_Y}
	\end{align}
and $\sE_{X_T} \simeq \sE_{Y_S}$. By Rmk. \ref{rmk:FM} the $P$-linear fully faithful embedding $D(X) = \sE_{X_T} \to D(\shH_Y)$ is given by
	$$ \Phi^L_{\shP|Q} \circ \Phi_{\sO_Q|X}: D(X) \to D(\shH_Y)$$
with image $\sE_{Y_S}$. To obtain the dual decomposition of (\ref{duallef:H_Y}), we define
	$$F: = \otimes \sO_{P^*}(1) \circ (-)^\vee \circ \Phi^L_{\shP|Q} \circ \Phi_{\sO_Q|X} \circ (-)^\vee: D(X) \to D(\shH_Y),$$
where $(-)^\vee= R\sHom( -, \sO)$ is dual functor on the respective spaces\footnote{Note we also take dual $(-)^\vee$ on $D(X)$ to make $F$ a covariant rather than a contravariant functor.}. Then $F$ is also a $P$-linear fully faithful functor which embeds $D(X)$ into $D(\shH_Y)$. Taking dual of the  decomposition (\ref{duallef:H_Y}) of $D(\shH_Y)$, then tensoring $\sO(1)$, we obtain $F$ induces decomposition
	\begin{equation}\label{eqn:sod:F}
	D(\shH_Y) = \langle F(D(X)), ~(\shB^{N-2})^\vee(1) \boxtimes D(P), \ldots, (\shB^1)^\vee (N-2) \boxtimes D(P)\rangle.
	\end{equation}
It remains to compute the Fourier-Mukai kernel $\shP^\dagger$ of $F$. Notice $F$ can be expressed as, 
 	$$F = \otimes \sO_{P^*}(1) \circ (\Phi^L_{\shP|Q})^{op} \circ  \Phi_{\sO_Q|X}^{op}: D(X) \to D(\shH_Y),$$
where for a functor $\Phi$, $\Phi^{op}$ is the \textbf{opposite} functor defined in Def. \ref{def:op}.
	 
First, we compute $\Phi_{\sO_Q|X}^{op}$. Note the kernel $\sO_Q|_X = f''^* \sO_Q \in \shD(\shH_X)$ is obtained from the kernel $\sO_Q \in D(Q)$ by base-change along $f:X \to P$. The map $f''$ is base-change map obtained from $f$ as in the following diagram:
\begin{equation}
\begin{tikzcd}[back line/.style={dashed}, row sep=1.5 em, column sep=3.0 em]
Q\ar[crossing over,hook]{r}{j} \ar{dd}{p}	& \shH_Q	\ar[back line]{dd}\\
	&& \shH_X \ar[crossing over]{llu}[near start]{f''} \ar[hook]{r}[swap]{\Gamma'_f}	&\shH_{X,Q} \ar{llu}[swap]{f'}	\ar{dd} \\
	P \ar[back line, hook]{r}{\Delta_P} 	& P \times P  \ar[leftarrow, back line]{rrd}[very near start]{f \times 1}	\\
				&&	X \ar[crossing over]{llu}{f} \ar[hook]{r}[swap]{\Gamma_f} \ar[leftarrow, crossing over]{uu}[swap]{p_X}	& X \times P  
\end{tikzcd}
\end{equation}
Therefore $\sO_Q|_X = \sO_{\shH_X}$, and $\Phi_{\sO_Q|X} = \Phi_{\sO_{\shH_X}}: D(X) \to D(\shH_{X,Q})$ is $X\times P$-linear. Hence the oposite is $\Phi_{\sO_Q|X}^{op} = \Phi_{\sO_{\shH_X}}^{op} = \Phi_{\sO_{\shH_X}^{op}}$, where by Lem. \ref{lem:op},
	$$\sO_{\shH_X}^{op} =\omega_{\Gamma_f'} [\dim \Gamma_f'] \in D(\shH_X).$$
Here notice $\shH_X$ and $\shH_{X,Q}$ are both smooth. (Notice the map $\shH_{X,Q} \to X$ is base-changed from $\shH_Q \to P$, hence smooth, therefore $\shH_{X,Q}$ is smooth.)

Second, $(\Phi^L_{\shP|Q})^{op}$ is obtained from $(\Phi^L_{\shP})^{op} = \Phi_{(\shP^L)^{op}}$ by base change along $q: Q\to P^*$:
\begin{equation}\label{diagram:base_change_Y_Q}
\begin{tikzcd}[back line/.style={dashed}, row sep=2.6 em, column sep=2.6 em]
Q(X,Y)\ar{r}{p_1} \ar{dd}	& \shH_X	\ar[back line]{dd}[swap]{p_2}\\
	&& Q(X,\shH_Y) \ar[crossing over]{llu}[near start]{q''} \ar{r}[swap]{p_1'}	&\shH_{X,Q} \ar{llu}[swap]{q'}	\ar{dd}{p_2'} \\
	Y \ar[back line]{r}{g}	& P^*\ar[leftarrow, back line]{rrd}[near start]{q}	\\
				&&	\shH_Y = Y_Q \ar{llu}{q_Y} \ar{r}[swap]{g_Q} \ar[leftarrow, crossing over]{uu}	& Q ,
\end{tikzcd}
\end{equation}
From Lem. \ref{lem:op_of_L} we have
	$$(\shP^L)^{op} = \shP \in D(Q(X,Y)),$$
Then $(\Phi^L_{\shP|Q})^{op}: D(\shH_{X,Q}) \to D(Y)$ is $Q$-linear, given by the kernel $q''^* \shP \in D(Q(X,\shH_Y))$. 

\medskip
Third, to compute the kernel of $(\Phi^L_{\shP|Q})^{op} \circ  \Phi_{\sO_Q|X}^{op}: D(X) \to D(\shH_{X,Q}) \to D(\shH_Y)$, denoted by $\shP_1$, we use Lem. \ref{lem:rel_convolution} for base $S = X \times P$, $T = Q$, $U = P$. We have diagram

\begin{equation}\label{diagram:composition}
\begin{tikzcd}[back line/.style={}]		
		&	& Q(X,Y)   \ar{ld}[swap]{p_1} \ar[equal]{d} \ar[hook]{dr}{\Gamma_{f}''}	  \\
		& \shH_X   \ar{ld} \ar[back line, near end, hook]{rd}[swap]{\Gamma_f'} & Q(X,Y) \ar[crossing over]{lld} \ar[back line]{rrd} & Q(X,\shH_Y)  \ar[crossing over]{ld}{p_1'} \ar{rd} \\
	X \ar[hook]{rd}{\Gamma_f}	&	& \shH_{X,Q} \ar{ld}  \ar{rd}	&	& \shH_Y \ar{ld}{g_Q}\\
		&X\times P \ar{r}	& P	&Q  \ar{l} 	
\end{tikzcd}
\end{equation}

Note that it is essential to utilize the $X\times P$-(resp. $Q$-) linearity of  $\Phi_{\sO_Q|X}^{op}$ (resp. $(\Phi^L_{\shP|Q})^{op}$): the condition of Lem. \ref{lem:rel_convolution} is not satisfied for the bases $S= T= P$.  

To check the condition of Lem. \ref{lem:rel_convolution} for the above diagram is satisfied, consider the following splitting diagram:
\begin{equation}\label{diagram:splitting}
\begin{tikzcd}[back line/.style={dashed}, row sep=3 em, column sep=2.6 em]
	Q(X,Y) \ar[hook]{r}[swap]{\Gamma_f''} 	\ar{d}{p_1} \ar[bend left = 15]{rr}{\id}	&Q(X,\shH_Y) \ar{r}[swap]{q''} 	\ar{d}{p_1'}	&Q(X,Y) 	\ar{d}{p_1}\\
	\shH_X  \ar[hook]{r}{\Gamma_f'} \ar[bend right = 15]{rr}[swap]{\id}	& \shH_{X,Q}   \ar{r}{q'} 	 &\shH_X 
\end{tikzcd}
\end{equation}
Since the ambient square is identity, and the right square is Tor-independent since $q$ and therefore $q', q''$ are flat, so the left square is Tor-independent, which is the square \ref{eqn:convolution_sq} of Lem. \ref{lem:rel_convolution} in the current case. Hence the condition of  Lem. \ref{lem:rel_convolution} is satisfied, and the composition is given by the kernel
	$$\shP_1 = p_1^* \, \omega_{\Gamma_f'} [\dim \Gamma_f']  \otimes \Gamma_f''^*\, q''^* \shP \in D(Q(X,Y)).$$
Note $\Gamma_f''^*\, q''^*= \id$ and $\omega_{\Gamma_f'} = \Gamma_f'^* \, \omega_{q'}^\vee$ by the splitting diagram (\ref{diagram:splitting}). However $\omega_{q'} =  p_2'^*\, \omega_{Q/P^*}= p_2'^* \sO_{Q}(1-N,1)$, where $p_2': \shH_{X,Q} \to Q$ is the projection, cf. diagram (\ref{diagram:base_change_Y_Q}). Therefore
	$$\shP_1  = \shP \otimes \sO_{P \times P^*}(N-1,-1)[2-N] \in D(Q(X,Y)),$$
and $F = \otimes \sO_{P^*}(1) \circ \Phi_{\shP_1}$ is given by the kernel
	$$\shP^\dagger = \shP \otimes \sO_{X}(N-1) [2-N]  \in D(Q(X,Y)).$$ 
Therefore $\Phi^{X \to \shH_Y}_{\shP}: D(X) \to D(\shH_Y)$ for $\shP \in D(X \times_{P} \shH_Y)$ is related to $F = \Phi_{\shP^\dagger}$ through
	$$\Phi^{X \to \shH_Y}_\shP = F \circ \otimes \sO_X(1-N) \circ [N-2]: D(X) \to D(\shH_X).$$
Hence $\Phi^{X \to \shH_Y}_{\shP}$ is $P$-linear and fully faithful, satisfies $\Phi^{X \to \shH_Y}_\shP(D(X)) = F(D(X))$. From (\ref{eqn:sod:F}) we have
	$$D(\shH_Y) = \langle \Phi_{\shP}^{X \to \shH_Y}(D(X)), ~(\shB^{N-2})^\vee(1) \boxtimes D(P), \ldots, (\shB^1)^\vee (N-2) \boxtimes D(P)\rangle.$$
Hence $f: X \to P$ is HP-dual to $g: Y \to P^*$ with respect to the Lefschetz decomposition (\ref{lef:Y:dual}) of $Y$, with a $P$-linear embedding $D(X) \to D(\shH_X)$ given by the same $\shP \in D(Q(X,Y))$. 
\end{proof}

\subsection{Examples} \label{sec:examples}
\begin{example}[Intersections of Quadrics] Let $X= Q_1^{2m-1} \subset \PP^{2m}$ be a $(2m-1)$ dimensional quadric, $m \in \ZZ_{\ge 1}$, with a Lefschetz decomposition $\shA_0 = \langle \slashed{S}_X, \sO_X \rangle$, and $\shA_{1} = \shA_{2} = \ldots = \shA_{2m-2} = \langle \sO_X \rangle$. Then its HP-dual is a $2m$-dimensional quadric $Y = Q_1^{2m}$ with a ramified double covering map onto the dual projective space $\check{\PP}^{2m}$, ramified over the dual quadric $\check{Q}_1^{2m-1} \subset \check{\PP}^{2m}$ of $Q_1^{2m-1}$. The decompositions are indicated by
\begin{align*}
\begin{ytableau}
\none[\ \ \ \  \sO_X]&\none
&*(lightgray)
&*(lightgray)
&*(lightgray)
&\none&\none[\cdots]&\none
&*(lightgray)
&*(lightgray)
&&
&\none[\ \ \ \  \slashed{S}_Y] \\
\none[\ \ \ \  \slashed{S}_X]&\none
&*(lightgray)
&&
&\none&\none[\cdots]&\none
&&&&&
\none[\ \ \ \  \sO_Y]&\none
\end{ytableau}
\end{align*}
Let $T = Q_2^{2m-1}\subset \PP^{2m}$ and $S=Q_2^{2m} \to \check{\PP}^{2m}$ be another such pair, with decomposition
\begin{align*}
\begin{ytableau}
\none[\ \ \ \  \sO_S]&\none
&
&
&
&\none&\none[\cdots]&\none
&
&
&*(lightgray)&*(lightgray)
&\none[\ \ \ \  \slashed{S}_T] \\
\none[\ \ \ \  \slashed{S}_S]&\none
&
&*(lightgray)
&*(lightgray)
&\none&\none[\cdots]&\none
&*(lightgray)&*(lightgray)&*(lightgray)&*(lightgray)&
\none[\ \ \ \  \sO_T]&\none
\end{ytableau}
\end{align*}

Assume $X$ and $T$ intersects transversely.\footnote{We actually don't need this, since both $X$ and $S$ are non-degenerate, then admissibility conditions are satisfied as long as $X \cap T$ and $Y \times_{\check{\PP}^{2m}} S$ are of expected dimensions ($2m-2$ and resp. $2m$).} Then so is the two maps $Y\to \check{\PP}^{2m}$ and $S \to \check{\PP}^{2m}$. Our theorem implies there is decomposition
	\begin{align*}
	D(Q_1^{2m-1} \cap Q_2^{2m-1} )  = \big\langle \sE, \sO(1), \ldots, \sO(2m-3) \big\rangle.
	\end{align*}
This agrees with the decomposition given by \cite[Cor. 5.7]{Kuz08quadric}, and from which we know the essential part is given by $\sE \simeq D(C)$, where $C$ is an orbifold $\PP^1$ with $\ZZ/2 \ZZ$-stack structure over $2m+1$ points. Then second decomposition of our theorem implies there is a decomposition
	\begin{align*}	
		D(Q_1^{2m} \times_{\check{\PP}^{2m}} Q_2^{2m})  =\big\langle \langle \slashed{S},\sO \rangle (1-2m), \sO(2-2m), \ldots, \sO(-2), \langle \slashed{S},\sO \rangle (-1),  D(C) \big\rangle,
	\end{align*}
where $Q_1^{2m} \times_{\check{\PP}^{2m}} Q_2^{2m}$ is a smooth $2m$-dimensional manifold which admits degree $4$ finite surjection onto $\PP^{2m}$.

Similarly, for intersection of two even dimensional quadrics $Q_k^{2m} \subset \PP^{2m+1}$, $k=1,2$, our theorem implies there is a decomposition
	$$D(Q_1^{2m} \cap Q_2^{2m} )  = \big\langle \sE', \sO(1), \ldots, \sO(2m-2) \big\rangle.$$
This agrees with \cite[Cor. 5.7]{Kuz08quadric}, and from which we know $\sE' \simeq D(C')$, where $C'$ is a hyperelliptic curve ramified over $2m+2$ points on  $\PP^1$. Then the second decomposition of the theorem simply states similar thing happens to the dual quadrics:
	$$D(\check{Q}_1^{2m} \cap \check{Q}_2^{2m}) =  \big\langle  \sO(2-2m), \ldots, \sO(-1) , \sE' \big\rangle.$$
Where $\sE'$ is given by the derived category of the same hyperelliptic curve $C'$. The game can be also played for two odd dimensional quadrics with ramified double covering over the projective spaces, and we omit the details.

\end{example}

\begin{example}[Quadric sections of Pfaffian-Grassmannian duality]\leavevmode

\medskip\noindent $(1)$. Let $X = \Gr(2,5) \subset \PP(\wedge^2 \CC^5) = \PP^9$, with Lefschetz decomposition 
		$$D(X) = \big\langle \shA, \shA(1), \shA(2),\shA(3), \shA(4) \big\rangle,$$ 
	where $\shA = \langle \sU, \sO \rangle$. Then it is HP-dual to $Y = \Pf(2,5) = {\Gr}(2,5) \subset \check{\PP}^9$.
	\begin{align*}
	\begin{ytableau}
	\none[\ \ \ \  \sO_X]
	&\none
	&*(lightgray)&*(lightgray)&*(lightgray)&*(lightgray)&*(lightgray)
	&&&&&&
	\none[\ \ \ \  \sU_Y]
	\\
	\none[\ \ \ \  \sU_X]
	&\none
	&*(lightgray)&*(lightgray)&*(lightgray)&*(lightgray)&*(lightgray)
	&&&&&&
	\none[\ \ \ \  \sO_Y]
	\end{ytableau}
	\end{align*}
Let $T = Q^{8}\subset \PP^9$, $S = \check{Q}^8 \subset \check{\PP}^9$, with
	\begin{align*}
	\begin{ytableau}
	\none[\ \ \ \  \sO_S]
	&\none
	&&&&&&&&&*(lightgray)&*(lightgray)
	&\none[\ \ \ \ \slashed{S}_T]
	\\
	\none[\ \ \ \  \slashed{S}_S]
	&\none
	&&&*(lightgray)&*(lightgray)&*(lightgray)&*(lightgray)&*(lightgray)&*(lightgray)&*(lightgray)&*(lightgray)
	&\none[\ \ \ \  \sO_T]
	\end{ytableau}
	\end{align*}
then as long as $X \cap T$ and $Y \cap S$ are of expected dimension, we have
		$$D(\Gr(2,5) \cap Q^8) = \big\langle \sE, \shA, \shA(1), \shA(2) \big\rangle,$$
which agrees with \cite[Cor. 4.4]{Kuz15CY} and from which we know $\sE$ satisfies $S_{\sE}^2 = [4]$ \footnote{ $\sE$ is \textit{not} a $2$-Calabi-Yau category. The Serre functor is $S_{\sE} = \sigma \circ [2]$, where $[2]$ is the shift functor and $\sigma$ is a non-trivial involution. The reason is, if $\sigma$ is trivial, then by $2$-Calabi-Yau property $\HH_{-2}(M) \simeq \HH^0(M) \ne 0$, where $M = \Gr(2,5) \cap Q^8$, but computation shows $\HH_{-2}(M) = 0$, cf. Prop. 4.5, \cite{KP16}.}. We also similar decomposition for the dual intersections:
		$$D(\Pf(2,5) \cap \check{Q}^8 )= \big\langle \shB(-2), \shB(-1), \shB, \sE \big\rangle, $$
where $\sE$ is given by the same category.

	If we let $T = Q^{9} \to \PP^9$, $S = \check{Q}^{9} \to \check{\PP}^9$ to be $9$-dimensional quadric with ramified double coverings over $\PP^9$ resp. $\check{\PP}^9$ ramified over $Q^8$ resp. $\check{Q}^8$, then 
	\begin{align*}
	\begin{ytableau}
	\none[\ \ \ \  \sO_S]
	&\none
	&&&&&&&&&&*(lightgray)
	&\none[\ \ \ \ \slashed{S}_T]
	\\
	\none[\ \ \ \  \slashed{S}_S]
	&\none
	&&*(lightgray)&*(lightgray)&*(lightgray)&*(lightgray)&*(lightgray)&*(lightgray)&*(lightgray)&*(lightgray)&*(lightgray)
	&\none[\ \ \ \  \sO_T]
	\end{ytableau}
	\end{align*}
Then we have
		$$D(\Gr(2,5) \times_{\PP^9} Q^9) = \big\langle \sE', \shA, \shA(1), \shA(2), \shA(3) \big\rangle.$$
This agrees with \cite[Cor 4.7]{Kuz15CY}, and from which we know $\sE'$ is a Calabi-Yau category of dimension $2$. We have similar decompositions for the dual intersections $\Gr(2,5) \times_{\check{\PP}^9} \check{Q}^9$. The fibre products $\Gr(2,5) \times_{\PP^9} Q^9$ and $\Gr(2,5) \times_{\check{\PP}^9} \check{Q}^9$ are called Gushel-Mukai varieties, and were studied by \cite{KP14, KP16}.

\medskip\noindent $(2)$.  Let $X = \Gr(2,7)\subset \PP(\wedge^2 \CC^7) = \PP^{20}$, and $(Y,\sR_Y) \to \check{\PP}^{20}$ be the noncommutative Pfaffian. $T = Q^{19} \subset \PP^{20}$ and $S = Q^{20} \to \check{\PP}^{20}$ ramified double covering as in last example. Since these maps are again non-degenerate, admissibility conditions are satisfied as long as the intersections are of expected dimensions. We have
		$$D(\Gr(2,7) \cap Q^{19})  = \big\langle \sE, \shA, \shA(1), \ldots, \shA(4) \big\rangle,$$
where $\shA = \langle S^2\sU, \sU, \sO \rangle$. This agrees with Cor 4.4 of \cite{Kuz15CY}, and from which we know $\sE$ satisfies $S_{\sE}^2 = [8]$ \footnote{ Similarly, $\sE$ is not a $4$-Calabi-Yau category since the Serre functor is $S_{\sE} = \sigma \circ [4]$, where $\sigma$ is a non-trivial involution. The argument is similar: if $\sigma$ is trivial, then $\HH_{-4}(M) \simeq \HH^0(M) \ne 0$, where $M = \Gr(2,7) \cap Q^{19}$, but from Lefschetz theorem and Schubert calculus we know $\HH_{-4}(M) = 0$, a contradiction.}. We also have
		$$D(Y_S, \sR_S) = \langle \shB(-12), \ldots, \shB(-1),\shB, ~ \sE \rangle,$$
where $(Y_S,\sR_S)$ is the pullback of the noncommutative variety $(Y,\sR_Y)$ along the flat base-change (ramified double covering) $S \to \check{\PP}^{20}$, $\shB \simeq \shA$, and $\sE$ is the same category as above.
	If we take $T = Q^{20} \to \PP^{20}$ and $S = Q^{19} \subset \check{\PP}^{20}$, then we have
		$$D(\Gr(2,7) \times_{\PP^{20}} Q^{20})  = \big\langle \sE', \shA, \shA(1), \ldots, \shA(5) \big\rangle,$$
where $\shA = \langle S^2\sU, \sU, \sO \rangle$, which agrees with \cite[Cor. 4.7]{Kuz15CY} and from which we know $\sE'$ is a Calabi-Yau category of dimension $5$. We also have similar decomposition for the quadric section of noncommutative Pfaffian.

\medskip\noindent $(3)$.  Let $X = \Gr(2,6)\subset \PP(\wedge^2 \CC^6) = \PP^{14}$, and $(Y,\sR_Y) \to \check{\PP}^{14}$ be the noncommutative Pfaffian. $T = Q^{13} \subset \PP^{14}$ and $S = Q^{14} \to \check{\PP}^{14}$. Then
		$$D(\Gr(2,6) \cap Q^{13}) = \big\langle \sE_1, \shA_2, \shA_3(1), \shA_4(2), \shA_5(3) \big\rangle,$$ 
where $\shA_2 = \langle S^2\sU, \sU, \sO \rangle$, $\shA_2 = \shA_3 = \shA_4 = \langle \sU, \sO \rangle$. The problem is that if we only care about semiorthogonal decompositions of the quadric sections of $\Gr(2,6)$, not the derived equivalence, then this is definitely not the best we can do: consider $X = \Gr(2,6) \subset \PP(\CC^{15}) \subset \PP(S^2\CC^{15})$ with the second map the double Veronese embedding, then it is equipped with a rectangular decomposition 
		$$D(\Gr(2,6)) = \big\langle \shB, \shB\otimes \sO_{\PP(S^2\CC^{15})}(1), \shB \otimes \sO_{\PP(S^2\CC^{15})}(2) \big\rangle,$$
where $\shB = \langle \sO, \sU^\vee, S^2 \sU^\vee, \sO_{ \PP(\CC^{15})}(1), \sU^\vee \otimes  \sO_{ \PP(\CC^{15})}(1)\rangle$. Then $X \to  \PP(S^2\CC^{15})$ always have a HP-dual category $\sC$, and $T = H \subset  \PP(S^2\CC^{15})$ is a hyperplane, HP-dual to $S = \pt$.  Then $H \cap  \PP(\CC^{15}) = Q^{13} $ is a quadric, and Cor. \ref{cor:HPDdec} implies
		$$D(\Gr(2,6) \cap Q^{13}) = \big\langle \sE_2, \shB, \shB\otimes  \sO_{\PP(S^2\CC^{15})}(1) \big\rangle.$$
Now $\sE_2$ is a smaller category than $\sE_1$. This agrees with \cite{Kuz15CY}, and from which we know $\sE_2$ is a Calabi-Yau category of dimension $3$.

\end{example}

The readers are invited to apply Thm. \ref{thm:HPDgen} to more examples in \cite{Kuz07HPD}, \cite{Kuz15CY} and \cite{Kuz14SODinAG}.


\section{Relative version}\label{sec:relative}
In this section we prove the relative version of our main result, Thm. \ref{thm:HPDgen:rel}. In the relative theory, we fix a base scheme $B$, and the projective morphism $f: X \to \PP V$ is replaced by a $B$-morphisms $f: X \to \PP_B(E)$, where $E$ is a vector bundle over $B$. Similarly for $Y$, $S$, $T$. Then the main theorem Thm. \ref{thm:HPDgen} holds in this relative settings for their respective fiber products  over $\PP_B(E)$ resp. $\PP_B(E^*)$.

In particular, apply to case of dual linear sections, our result implies the relative HP-duality theorem for linear sections (Thm. \ref{thm:HPD:rel}). This relative HP-duality theorem was first studied in the case when $E$ is trivial bundle by Kuznetsov \cite[Thm. 6.27]{Kuz07HPD} (cf. Thm. \ref{thm:HPD_rel}), later in the case of relative degree $2$ Veronese embeddings by Auel, Bernardara, and Bolognesi \cite[Thm 1.13]{ABB14}, and in a more general situation (including degree $d$ Veronese embeddings, formulated in the framework of gauged Landau-Ginzburg models) using the methods of Variations of Geometric Invariant Theory (VGIT) by Ballard et al. \cite[Thm. 3.1.3]{BDF+}. 

For a fixed base $B$, the upshot is: everything of our theory in absolute case still holds, with all the categories being $B$-linear, and all orthogonal relations $R\Hom_X(-,-) = 0$ replaced by $a_* R\sHom_X(-,-) =0$, where $a: X \to B$ is the structure morphism of a $B$-scheme $X$. 

\medskip\noindent \textbf{Notational Conventions.} Let's fix a base scheme $B$ and assume it is a smooth quasi-projective variety over $\kk$. We will consider the category of $B$-schemes, i.e. schemes with a morphism to $B$, and the morphisms between schemes are compatible with the maps to $B$. For a $B$-scheme $X$, we will denote by $a_X: X \to B$ the structural morphism. For a coherent sheaf $\sF \in \coh(B)$, we denote the associated projective bundle $\PP(\sF): =  \Proj_B \Sym_{\sO_B}^\bullet \sF$. We fix a locally free sheaf $\sE$ is of rank $N$ on $B$, and its corresponding vector bundle $E = V(\sE):= \Spec_{\sO_B} \Sym^\bullet \sE^\vee$. Let $\PP (E) := \PP(\sE^\vee):= \Proj_B \Sym_{\sO_B}^\bullet \sE^\vee$ be the projective bundle of $E$. \footnote{Notice $\PP(E) = \PP(\sE^\vee)$. We define in this way to let $\PP(E)$ parametrise one dimensional subbundles of $E$ rather than one dimensional quotients, hence agree with absolute notation $\PP(V)$.} Denote $E^*$ the dual vector bundle of $E$ on $B$, i.e. $E^* = V(\sE^\vee)$. Sometimes we will write $P=P_B = \PP(E)$ and $P^*=P_B^* = \PP(E^*)$ if there is no confusion. Given a $B$-morphism $f: X \to Y$ and a point $b \in B$, we denote $X_b : = \Spec k(b) \times_B X$ (resp. $Y_b: = \Spec k(b) \times_B Y$) the fiber of $X$ (resp. $Y$) over $b$, where $k(b)$ is the residue field of $b$, and $f_b : X_b \to Y_b$ the induced map of $f$ on the fibers. Note the fiber $\PP(E)_b$ is just the projective space $\PP(E_b)$, where $E_b := E \otimes k(b)$ is the fiber of the vector bundle $E$ over $b$.

\subsection{Relative semiorthogonal decompositions} Assume $X$ is a smooth variety with a map  $a_X: X \to B$ to the base scheme $B$.
\begin{definition}  A semiorthogonal decomposition $D(X) = \langle \shA_1, \ldots, \shA_n \rangle$ is said to be \textbf{$B$-linear} if every $\shA_k$ is a $B$-linear subcategory of $D(X)$, $k=1,\ldots, n$.
\end{definition}

\begin{lemma} Let $\shA, \shB \subset D(X)$ be $B$-linear subcategories. Then the following two conditions are equivalent
\begin{enumerate} 
	\item $R\Hom_X(F, G) = 0$, for all $F\in \shB$, $G \in \shA$.
	\item $a_{X*}\, R\sHom_X(F,G) = 0$,  for all $F \in \shB$, $G \in \shA$. 
\end{enumerate}
\end{lemma}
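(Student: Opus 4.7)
The plan is to prove the two directions separately; the implication (2)$\Rightarrow$(1) is essentially immediate, while (1)$\Rightarrow$(2) is where the $B$-linearity hypothesis enters decisively. For (2)$\Rightarrow$(1), I will just apply the functor $R\Gamma(B,-)$ and use the identity $R\Gamma(X,-) = R\Gamma(B,-)\circ a_{X*}$ together with $R\Hom_X(F,G) = R\Gamma(X, R\sHom_X(F,G))$, so that vanishing of $a_{X*}\,R\sHom_X(F,G)$ immediately forces $R\Hom_X(F,G)=0$.

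For the converse, I would fix $F\in\shB$, $G\in\shA$, and set $\sH := a_{X*}\,R\sHom_X(F,G)$. For any $T\in D^{perf}(B)$, $B$-linearity of $\shB$ gives $F\otimes a_X^*T\in\shB$, so (1) yields $R\Hom_X(F\otimes a_X^*T,G)=0$. The idea is then to rewrite this vanishing as a vanishing statement on $B$: using the standard tensor-Hom identity together with the $(a_X^*,a_{X*})$-adjunction,
$$R\Hom_X(F\otimes a_X^*T,G) \;=\; R\Hom_X(a_X^*T,\,R\sHom_X(F,G)) \;=\; R\Hom_B(T,\sH).$$
Hence $R\Hom_B(T,\sH)=0$ for every $T\in D^{perf}(B)$.

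To conclude, I will use that $B$ is smooth quasi-projective, so that $D^{perf}(B)=D^b(B)$, together with the fact that $\sH$ itself lies in $D^b(B)$: this uses $X$ smooth (so $R\sHom_X(F,G)\in D^b(X)$ for $F,G\in D^b(X)$) and $a_X$ proper (so $a_{X*}$ preserves boundedness). Consequently $\sH\in D^{perf}(B)$ is allowed as a test object, and taking $T=\sH$ in the vanishing above gives $R\Hom_B(\sH,\sH)=0$, which forces $\id_\sH=0$ and therefore $\sH=0$. The main subtlety to watch is the boundedness of $\sH$ — this is the place where one genuinely needs $X$ smooth and the structure morphism $a_X$ proper, properties that are present in all the situations considered in the paper.
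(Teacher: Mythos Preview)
Your proof is correct and shares the same skeleton as the paper's: for (1)$\Rightarrow$(2), both arguments set $\sH = a_{X*}R\sHom_X(F,G)$, use $B$-linearity of $\shB$ together with the adjunction $(a_X^*, a_{X*})$ to obtain $R\Hom_B(T,\sH)=0$ for a class of test objects $T$, and then conclude $\sH=0$. The difference lies only in the choice of test objects. The paper takes $T = k(b)$ for closed points $b\in B$ and finishes by invoking that skyscraper sheaves span $D(B)$ (citing \cite[Prop.~3.17]{Huy}); you instead let $T$ range over all of $D^{perf}(B)=D(B)$ and finish by taking $T=\sH$ itself, so that $R\Hom_B(\sH,\sH)=0$ forces $\sH=0$. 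Your route is a bit more self-contained (no spanning-class citation), at the cost of making explicit the boundedness of $\sH$, hence the properness of $a_X$ --- which, as you rightly observe, is not part of the lemma's hypotheses but holds in every situation where the lemma is applied. The paper's argument implicitly needs the same boundedness for the spanning-class statement to apply, so neither approach is strictly more general.
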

\begin{proof} If $a_{X*}\, R\sHom_X(F,G) = 0$, taking global section we have $R\Hom_X(F, G) = 0$. On the other hand, if $(1)$ holds, then for every $F\in \shB$, $G\in \shA$, we have
	\begin{align*} & R\Hom(k(b), a_{X*} R\sHom(F, G)) = R\Hom(a_X^* k(b), R\sHom(F,G)) \\
	&= R\Hom(F \otimes a_X^* k(b), G) = 0, \quad \text{for every closed point $b \in B$.}
	\end{align*}
The last equality follows from $B$-linearity of $\shB$. Since $\{k(b)\in D(B)~|~ b \in B\}$ spans $D(B)$ (cf. {\cite[Prop. 3.17]{Huy}}), we have $a_{X*}\, \sHom_X(F, G) = 0$.	
\end{proof}

Therefore for a $B$-linear semiorthogonal decomposition $D(X) = \langle \shA_1, \ldots, \shA_n \rangle$, the semiorthogonal condition $(1)$ of Def. \ref{def:sod} is equivalent to the sheaf version:
	$$a_{X*} R\sHom_X(F ,G) = 0  \quad \text{for all} \quad F \in \shA_k, G \in \shA_l, 1\le l< k \le n.$$

 Let $a_X: X \to B$ and $a_Y: Y \to B$ be two projective morphisms. Consider the diagram
	\begin{equation} \label{eqn:X-times_B-Y}
	\begin{tikzcd}
	X \times_{B} Y\ar{d}{p_X} \ar{r}{p_Y} & Y \ar{d}{a_Y}\\
	X \ar{r}{a_X}          &B
	\end{tikzcd}
	\end{equation}
For $F \in D(X)$, $G \in D(Y)$,  we denote
	$$F \boxtimes_B G : = p_X^* \,F \,\otimes p_Y^* \, G \in D(X\times_B Y).$$
 Here note all the functors are derived functors. Denote by $a: X\times_B Y \to B$ the structure map. If the diagram (\ref{eqn:X-times_B-Y}) is Tor-independent, then for $F \in D(X)$, $G \in D(Y)$, we have $a_*(p_X^*\, F \otimes p_Y^*\, G)  = a_{X*} (F \otimes  p_{X*} p_Y^*\,G) = a_{X*}  (F \otimes  a_X^*\, a_{Y*} G)  =  a_{X*} F \,\otimes a_{Y*}G$, i.e. 
 	\begin{align}\label{eqn:rel:Kunneth}
	 a_*( F \boxtimes_B G) =  a_{X*} F \,\otimes a_{Y*}G \in D(B).
	\end{align}
We will call (\ref{eqn:rel:Kunneth}) the \textbf{relative K\"unneth formula}. Taking global sections, we directly have
	$$R \Gamma(X \times_B Y, F \boxtimes_B G) = R \Gamma(B, a_{X*} F \otimes_{\sO_B} a_{Y*}G).$$
If further $X \times_B Y$ is smooth, the for every $F_1, F_2 \in D(X)$, $G_1, G_2 \in D(Y)$, we have
 	\begin{align}\label{eqn:rel:Kunneth:Hom}a_* R\sHom_{} (F_1\boxtimes_B G_1, F_2 \boxtimes_B G_2) = a_{X*}  R\sHom_X(F_1, F_2) \otimes a_{Y*} R\sHom_Y(G_1, G_2).
	\end{align}

\subsection{Relative HP-dual theory}  Let $f: X \to \PP(E)$ be a $B$-morphism, where $X$ is a smooth $\kk$-variety, proper over $B$.
\begin{definition} A \textbf{$B$-linear Lefschetz decomposition} of $D(X)$ with respect to $\sO_X(1) := f^* \sO_{\PP (E)}(1)$ is a semiorthogonal decomposition of the form
\begin{equation} \label{lef:X:rel} 
	D(X) = \langle \shA_0, \shA_1 (1), \ldots, \shA_{i-1}(i-1) \rangle, \quad \shA_0 \supset \shA_1 \supset \cdots \supset \shA_{i-1}
	\end{equation}
where $\shA_{k}$ are all $B$-linear, $k=0,\ldots, i-1$, and $\shA_k(k)$ means $\shA_k \otimes \sO_X(k)$.
\end{definition}

Then all the constructions and results of \S \ref{sec:lef} and \S \ref{sec:set up} still work, with all subcategories involved being \emph{$B$-linear}, and all orthogonal relations $R\Hom(F,G) = 0$ being replaced by the relative version $a_{X*} R\sHom_X(F,G) = 0$. In particular, the $\shB^k$ defined by (\ref{def:B^k}) are well defined $B$-linear subcategories, with $\shB^{N-1} = \shA_0$. And similarly for $\shD^k$'s.

\begin{definition} Let $Q_B \subset \PP(E) \times_B \PP(E^*)$ be the \textbf{relative incidence quadric} over $B$. Denote by $\shH_X : = X \times_{\PP(E)} Q_B$ the \textbf{relative universal hyperplane} of $f:X \to \PP(E)$. \footnote{Some authors use the symbol $\sX := \shH_X$ and $\sX_0 := Q_B$}
\end{definition}

\begin{remark}$Q_B$ and $\shH_X$ admit several descriptions. $Q_B$ is first of all, the fiberwise universal quadric, whose fiber over $b\in B$ is universal quadric $Q_b = \{ (p, s)~|~s(p) = 0\} \subset  \PP(E_b) \times \PP(E^*_b)$. Alternatively, $Q_B$ is the zero locus of a canonical section $\theta_E$ of the line bundle $\sO_{\PP(E) \times_B \PP(E^*)}(1,1) : = \sO_{\PP(E)}(1) \boxtimes_{B}  \sO_{\PP(E^*)}(1)$, where the section $\theta_E$ under the identification
	\begin{align*}
	 \Gamma(\PP(E) \times_B \PP(E^*), \sO_{\PP(E)}(1) \boxtimes_{B}  \sO_{\PP(E^*)}(1)) = \Gamma(B, E^* \otimes E).
	\end{align*}
corresponds to the identity map $E \xrightarrow{\id} E$. 

Therefore $\shH_X \subset X \times_B \PP(E^*)$ is the fiberwise universal hypersurface $\shH_{X_b} \subset X_b \times \PP (E_b^*)$ for $f_b: X_b \to \PP(E_b)$, $b \in B$, and also the zero loci of the canonical section $\theta_X := f^* \theta_E \in \Gamma(X \times_B \PP(E^*), \sO_{X \times_B \PP(E^*)}(1,1))$, where $\sO_{X \times_B \PP(E^*)}(1,1) = f^*\,\sO_{\PP(E) \times_B \PP(E^*)}(1,1) = \sO_{X}(1) \boxtimes_{B}  \sO_{\PP(E^*)}(1)$. Since $Q_B$ is smooth over $\PP(E)$, $\shH_X = X \times_{\PP(E)} Q_B$ is smooth over X, hence smooth. 
\end{remark}

\begin{lemma} \label{sod:H_X:rel} Assume $f: X \to \PP(E)$ has a $B$-linear Lefschetz decomposition (\ref{lef:X:rel}). Denote $i_{\shH_X}: \shH_X \hookrightarrow X \times_B \PP (E^*)$ the inclusion morphism and $i_{\shH_X}^*$ the derived pullback functor. Then $i_{\shH_X}^*$ is fully faithful on the subcategories $\shA_1 (1)\boxtimes_B D(\PP (E^*)), \ldots, \shA_{i-1}(i-1)\boxtimes_B D(\PP (E^*)) $, and the images induce a $\PP(E^*)$-linear decomposition
	\begin{equation} \label{sod:H_X:rel}
	D(\shH_X) = \langle \sC_B, \shA_1(1) \boxtimes_B D(\PP (E^*)), \cdots, \shA_{i-1}(i-1)\boxtimes_B D(\PP (E^*))\rangle.
	\end{equation}
Here $\shA_k(k)\boxtimes_B D(\PP (E^*))$ denotes $i_{\shH_X}^*(\shA_k(k)\boxtimes_B D(\PP (E^*)))$.
\end{lemma}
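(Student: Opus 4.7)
The plan is to mirror the proof of the absolute version (Lemma \ref{sod:H_X}) throughout, but with every orthogonality condition $R\Hom(-,-)=0$ replaced by its relative sheaf-level counterpart $a_{X*}R\sHom(-,-)=0$, and with the ordinary K\"unneth formula replaced by the relative one (\ref{eqn:rel:Kunneth:Hom}). The key input is that $\shH_X \subset X\times_B \PP(E^*)$ is cut out by the canonical section $\theta_X = f^*\theta_E$ of $\sO_{X\times_B \PP(E^*)}(1,1)$, which is a regular section (since $Q_B \subset \PP(E)\times_B\PP(E^*)$ is a divisor in $|\sO(1,1)|$ and $X$ is smooth), producing a Koszul resolution
\[
0 \to \sO_{X\times_B \PP(E^*)}(-1,-1) \to \sO_{X\times_B \PP(E^*)} \to i_{\shH_X*}\sO_{\shH_X} \to 0.
\]

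First I would establish the relative version of the vanishing lemma (Lem.~\ref{lem:van}): applying $a_{X\times_B\PP(E^*),*}\,R\sHom(F_1\boxtimes_B G_1,\,-)$ to the above sequence (after tensoring with $F_2\boxtimes_B G_2$), and using projection formula, adjunction $i_{\shH_X*}i_{\shH_X}^*(-) = -\otimes i_{\shH_X*}\sO_{\shH_X}$, smoothness of $X\times_B \PP(E^*)$ (so that (\ref{eqn:rel:Kunneth:Hom}) applies; Tor-independence of the square (\ref{eqn:X-times_B-Y}) for $Y=\PP(E^*)$ is automatic since $\PP(E^*)\to B$ is smooth), one obtains a distinguished triangle whose shape is exactly analogous to (\ref{cone-for-Hom}), with all $R\Hom$'s replaced by $a_{(-)*}R\sHom$'s on $B$. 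The same $\alpha$- and $\beta$-vanishing bookkeeping of Remark~\ref{rmk:alpha&beta_van} then applies verbatim.

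Next, the fully-faithfulness of $i_{\shH_X}^*$ on $\shA_k(k)\boxtimes_B D(\PP(E^*))$ for $k\ge 1$ reduces to the vanishing of the twisted term in that cone: for $F_1,F_2\in\shA_k(k)$ we have $F_2(-1)\in\shA_k(k-1)\subset\shA_{k-1}(k-1)$, so $a_{X*}R\sHom_X(F_1,F_2(-1))=0$ by $B$-linear semiorthogonality of (\ref{lef:X:rel}); the untwisted term then computes $a_{(X\times_B\PP(E^*))*}R\sHom$ exactly, which recovers the Hom on the product side. Semiorthogonality of $i_{\shH_X}^*(\shA_k(k)\boxtimes_B D(\PP(E^*)))$ for $1\le k<l\le i-1$ is an identical computation: both twisted and untwisted terms carry the factor $a_{X*}R\sHom_X(\shA_l(l),\shA_{k-\epsilon}(k-\epsilon))=0$ for $\epsilon\in\{0,1\}$, since $\shA_{k}(k-1)\subset\shA_{k-1}(k-1)$ always precedes $\shA_l(l)$ in (\ref{lef:X:rel}). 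The $\PP(E^*)$-linearity of the images is clear from the fact that $i_{\shH_X}^*$ commutes with pullback from $\PP(E^*)$.

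Finally, to upgrade this semiorthogonal collection to a semiorthogonal decomposition with first component $\sC_B$, one notes that each $\shA_k(k)\boxtimes_B D(\PP(E^*))$ is admissible in $D(X\times_B\PP(E^*))$ by the relative exterior product formalism (Prop.~\ref{prop:product}; cf. Kuznetsov \cite[\S 5.6]{Kuz11Bas}), and pulling back along the divisorial inclusion $i_{\shH_X}$, whose image is smooth projective over $B$, preserves admissibility of these subcategories by standard arguments. One then defines $\sC_B$ to be the right orthogonal inside $D(\shH_X)$ of the generated semiorthogonal span, which is automatically admissible and $\PP(E^*)$-linear, yielding the required decomposition. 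The main technical point to be careful about is the admissibility claim and the verification that the pulled-back subcategories remain closed under the $\PP(E^*)$-module structure; but both are routine once the relative vanishing lemma is in hand.
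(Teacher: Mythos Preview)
Your proposal is correct and takes essentially the same approach as the paper. The paper's own proof is a single line citing the relative vanishing lemma (Lem.~\ref{lem:van:rel}) applied with $S=\PP(E^*)$; you have simply unpacked that lemma's content and its application in detail within your argument, which amounts to the same thing.
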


\begin{proof} This follows directly from the general vanishing Lem.\ref{lem:van:rel} applied to $S = \PP(E^*)$.
\end{proof}

\begin{definition}[Relative HP-dual] \leavevmode
\begin{enumerate}[leftmargin = *]
	\item The $\sC_B$ in (\ref{sod:H_X:rel}) is called the \textbf{relative HP-dual category} of $f: X \to \PP(E)$ with respect to decomposition (\ref{lef:X:rel}).
	\item A $B$-morphism $g: Y \to \PP(E^*)$ is called the \textbf{relative HP-dual} of $f: X \to \PP(E)$ with respect to decomposition (\ref{lef:X:rel}), if there exists a kernel $\shE \in D(Y \times_{\PP(E^*)} \shH_X) = D(Q_B(X,Y))$ such that the $\PP(E^*)$-linear Fourier-Mukai transform $\Phi_{\shE}: D(Y) \to D(\shH_X)$ is fully faithful, and induces $D(Y) \simeq \sC_B$.
\end{enumerate}
\end{definition}

\begin{remark} In \cite{BDF+} Ballard et al.  introduce the notion \textbf{Weak HP-dual}, where we simply require there is a $\PP(E^*)$-linear embedding $\Phi: D(Y) \to D(\shH_X)$, not necessarily given by a geometric FM transform $\Phi_{\shE}$ with kernel $\shE$ \emph{scheme-theoretically} supported on $Y \times_{\PP(E^*)} \shH_X$. And it is not hard to see our result Thm. \ref{thm:HPDgen:rel} directly applies to weak HP-duals pairs, except for the part of descriptions of FM-kernels.
\end{remark}

\medskip\noindent\textbf{Vanishing results.} Now let $q: S \to \PP(E^*)$ be another $B$-morphism, with $S/\kk$ smooth and $a_S: S \to B$ proper. Let $\shH: = \shH_{X,S} = \shH_X \times_{\PP(E^*)} S$. Then
	$$\shH = X \times_{\PP(E)} \shH_S = X  \times_{\PP(E)} Q_B  \times_{\PP(E^*)} S $$
is the fiberwise universal quadric $Q(X_b, S_b) \subset X_b \times S_b$ defined by $\{(x,s)~|~ s(x) = 0\}$. We also use the notation $Q_B(X,S)$. Again the inclusion $i_{\shH}: \shH \hookrightarrow X \times_B S$ is defined by a canonical section $\theta_{X,S}$ of the line bundle $\sO_{X\times_B S}(1,1)$. 

\begin{lemma}[Relative vanishing lemma] \label{lem:van:rel} Suppose $X\times_B S$ is a smooth $\kk$-variety of expected dimension \footnote{As before, by this we mean $\dim X\times_B S = \dim X + \dim S - \dim B$.}, and $\theta_{X,S} \ne 0$ (i.e. $\shH \ne X \times_B S$). Then for any $F_1, F_2 \in D(X)$, $G_1, G_2 \in D(S)$, we have $a_{X\times S *} R\sHom_{\shH}(i_\shH^* (F_1\boxtimes\, G_1), i_\shH^*(F_2 \boxtimes \,G_2))$ equals the cone of 
    \begin{align}\label{cone:rel}
    \begin{split}
     \big[a_{X*}\, R\sHom_X(F_1(1),& F_2)\,\otimes\,  a_{S*} \, R\sHom_S(G_1(1),G_2)\\
    & \rightarrow a_{X*}\, R\sHom_X(F_1,F_2)\,\otimes \,a_{S*} \,R\sHom_S(G_1,G_2)\big].
    \end{split}
    \end{align}
\end{lemma}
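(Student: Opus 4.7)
The plan is to mimic the proof of the absolute vanishing lemma (Lem. \ref{lem:van}) in the relative setting, with two essential adaptations: the Koszul-type short exact sequence for $\shH \subset X\times_B S$ and the relative K\"unneth formula (\ref{eqn:rel:Kunneth:Hom}) for computing ambient $\sHom$s after pushing forward to $B$.

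First I would argue that under the hypotheses, $\shH$ is cut out inside $X\times_B S$ by the canonical section $\theta_{X,S}\in\Gamma(X\times_B S,\sO_{X\times_B S}(1,1))$ as an effective Cartier divisor. Since $X\times_B S$ is smooth (hence locally factorial) and $\theta_{X,S}\ne 0$ by the hypothesis $\shH\ne X\times_B S$, the section is a non-zerodivisor on each irreducible component, yielding the Koszul short exact sequence
$$0\to \sO_{X\times_B S}(-1,-1)\xrightarrow{\cdot \theta_{X,S}}\sO_{X\times_B S}\to i_{\shH *}\sO_{\shH}\to 0.$$
Setting $E_1:=F_1\boxtimes_B G_1$ and $E_2:=F_2\boxtimes_B G_2$, I would apply $R\sHom_{X\times_B S}(E_1,E_2\otimes -)$ to this sequence to get a distinguished triangle in $D(X\times_B S)$. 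By $(i_{\shH}^*,i_{\shH *})$-adjunction combined with the projection formula, the third term identifies with $i_{\shH *}R\sHom_{\shH}(i_{\shH}^* E_1,i_{\shH}^* E_2)$.

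Next I would push the resulting triangle forward along $a_{X\times_B S\,*}:D(X\times_B S)\to D(B)$. The third term becomes exactly $a_{\shH *}R\sHom_{\shH}(i_{\shH}^* E_1,i_{\shH}^* E_2)$, matching the left side of the assertion. For the first two terms, I would invoke the relative K\"unneth formula (\ref{eqn:rel:Kunneth:Hom}); to apply it one must know the fibre square defining $X\times_B S$ is Tor-independent, and this follows from the hypothesis that $X\times_B S$ is smooth of expected dimension via Lem. \ref{lemma-faithful-base-change}(2). The K\"unneth formula then yields the untwisted term $a_{X *}R\sHom_X(F_1,F_2)\otimes a_{S *}R\sHom_S(G_1,G_2)$ and, after the identification $E_2\otimes \sO(-1,-1)=F_2(-1)\boxtimes_B G_2(-1)$ and the adjunction $R\sHom_X(F_1,F_2(-1))=R\sHom_X(F_1(1),F_2)$, the twisted term $a_{X *}R\sHom_X(F_1(1),F_2)\otimes a_{S *}R\sHom_S(G_1(1),G_2)$.

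The main delicate point is the first step: verifying that $\theta_{X,S}$ is a non-zerodivisor so that the Koszul sequence exists. On a smooth variety a global section of a line bundle is a non-zerodivisor on each connected component where it is nonzero; if $X\times_B S$ is irreducible (which is the relevant situation in applications), $\theta_{X,S}\ne 0$ immediately suffices. In general one needs to know $\theta_{X,S}$ restricts nontrivially to every irreducible component of $X\times_B S$, i.e. no component of $X\times_B S$ is entirely contained in $\shH$; this is a mild genericity condition implicit in the ``dimensionally transverse'' philosophy underlying the admissibility hypothesis, and can be verified separately in each case of interest. Once this point is in hand, the rest of the argument is a formal manipulation of adjunctions, projection formula, and relative K\"unneth, exactly as in the absolute case.
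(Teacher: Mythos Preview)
Your proposal is correct and follows essentially the same route as the paper: Koszul resolution of $\sO_\shH$ on $X\times_B S$, adjunction/projection formula to identify the third term, then push forward to $B$ and apply the relative K\"unneth formula (\ref{eqn:rel:Kunneth:Hom}). You are more explicit than the paper about two points the paper leaves implicit---the Tor-independence of the square (needed for (\ref{eqn:rel:Kunneth:Hom})) and the non-zerodivisor issue for $\theta_{X,S}$ on a possibly reducible $X\times_B S$---but the argument is the same.
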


\begin{proof} For any $E_1, E_2 \in D(X \times_B S)$, we have a distinguished triangle
	$$R\sHom(E_1, E_2(-1,-1)) \to R \sHom(E_1, E_2) \to R \sHom(E_1, E_2 \otimes i_{\shH *} \sO_{\shH}) \xrightarrow{[1]}.$$
For last term of the triangle, we have 
	$$R\sHom_{X\times_B S}(E_1, E_2 \otimes  i_{\shH *} \sO_{\shH} ) = R\sHom_{X\times_B S}(E_1, i_{\shH *} i_{\shH}^* E_2) = R\sHom_{\shH}(i_\shH^* E_1, i_\shH^* E_2).$$
Now apply to the case $E_1 = F_1 \boxtimes_B G_1$, $E_2 = F_2 \boxtimes_B  G_2$, and use relative K\"unneth formula (\ref{eqn:rel:Kunneth:Hom}) for the first two terms of the above triangle, we are done.
\end{proof}

Suppose now $q:S \to \PP (E^*)$ admits a $B$-linear Lefschetz decomposition
	  \begin{equation} \label{lef:S:rel}
            D(S) = \langle \shC_{0}, \shC_1(1), \ldots, \shC_{l-1}(l-1)\rangle, 
   	 \end{equation}
with respect to the line bundle $\sO_S(1) = q^* \sO_{\PP (E^*)}(1)$, where $\shC_0 \supset \shC_1 \supset \cdots \supset \shC_{l-1}$  a descending sequence of $B$-linear admissible subcategories, $1 \le l \le N$. 
Then all the '$\alpha$-' and '$\beta$-vanishing' results in the Rmk. \ref{rmk:alpha&beta_van} hold with box-tensor terms '$\shA \boxtimes \shC$' replaced by '$\shA \boxtimes_B \shC$'.

\subsection{Relative version of main theorem}
Suppose the $B$-schemes $X$, $S$ are smooth over $\kk$, proper over $B$, and $g: Y \to \PP(E^*)$ is relative HP-dual to $f: X \to \PP(E)$ with respect to relative Lefschetz decomposition (\ref{lef:X:rel}), and $p: T \to \PP(E)$ is relative HP-dual to $g: S \to \PP(E^*)$ with respect to (\ref{lef:S:rel}). As before, we denote $X_T : = X \times_{\PP(E)} T$ and $Y_S : = Y \times_{\PP(E^*)} S$. From now on we make the following assumption:

\medskip \noindent \emph{Assumption $(\dagger ')$.} We assume $X \times_B S$ is a smooth variety of expected dimension, $\shH \subset X\times_B S$ is of pure codimension one, and $\overline{f(X)}$ (resp. $\overline{q(S)}$) is of relative dimension at least $2$ over $B$, and length of Lefschetz decompositions (\ref{lef:X:rel}) (resp. (\ref{lef:S:rel})) is smaller than $\rank E$.

\begin{definition} \label{def:adm:rel} The two pairs of morphisms $(X \to \PP(E), \,Y \to \PP(E^*))$, and $(S \to \PP(E^*), \,T \to \PP(E))$ are called \textbf{admissible} if the base-change $X \to \PP(E)$ (resp. $S \to \PP(E^*)$) is faithful with respect to $(\shH_S, T )$ (resp. $(\shH_X, Y)$).
\end{definition}

\begin{definition}\label{def:non-deg_rel} A $B$-morphism $f: X \to \PP(E)$ is called \textbf{non-degenerate} if for any closed point $b \in B$ the image $f_b(X_b)$ is not contained in any hyperplane of $\PP(E_b)$. 
\end{definition}

If $X/B$ is proper and flat with integral geometric fibres, then the non-degeneracy of $f$ is equivalent to the following condition: denote the natural map $\lambda: E^* \to a_{X*} \sO_X(1)$ corresponding to $f$, where $\sO_X(1): = f^* \sO_{\PP(E)}(1)$, then for any closed point $b \in B$, the composition $E^*_b \to a_{X*} \sO_X(1) \otimes k(b) \to \Gamma(X_b, \sO_{X_b}(1))$ is an inclusion of vector spaces \footnote{The condition is satisfied, for example, if $a_{X*} \sO_X(1)$ is locally free, and the natural map $\lambda: E^* \subseteq a_{X*} \sO_X(1)$ is an inclusion of vector subbundle.}. This in particular implies $\lambda: E^* \to a_{X*} \sO_X(1)$ is an inclusion of $\sO_B$-submodule. 

\begin{lemma} Suppose $X/B$ is proper and flat with integral geometric fibers, $\pi : \PP(E) \to B$ a projective bundle. If the $B$-morphism $f: X \to \PP(E)$ is \emph{non-degenerate}, then the universal hyperplane $\shH_X $ is \emph{flat} over the dual projective bundle $\PP(E^*)$.
\end{lemma}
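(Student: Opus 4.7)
The plan is to adapt the argument used in the absolute case (the proof that $\shH_X \to \PP V^*$ is flat given earlier in the paper) to the relative setting, by exhibiting $\shH_X$ as a \emph{relative effective Cartier divisor} inside $X \times_B \PP(E^*)$ over $\PP(E^*)$. The main tool will be the standard criterion (cf.~\cite[Def.-Prop.~1.11]{Kol} or \cite[Lem.~9.3.4]{FAG}): if $Z \to T$ is flat and $D \subset Z$ is an effective Cartier divisor such that $D \cap Z_t$ is an effective Cartier divisor in $Z_t$ for every $t \in T$, then $D$ is a relative effective Cartier divisor over $T$, and in particular $D$ is flat over $T$.

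First I would verify the ambient hypotheses. The projection $X \times_B \PP(E^*) \to \PP(E^*)$ is flat, being the base change of the flat morphism $a_X : X \to B$ along $\pi : \PP(E^*) \to B$. Since $X/B$ is proper and flat with integral geometric fibers and $B$ is a smooth (hence integral) variety, $X$ itself is integral, and so is $X \times_B \PP(E^*)$. The subscheme $\shH_X \subset X \times_B \PP(E^*)$ is cut out by the canonical section $\theta_X \in \Gamma(X \times_B \PP(E^*),\, \sO(1,1))$; as we will see from the fiberwise analysis below $\theta_X$ is not identically zero, and integrality of the ambient space forces it to be a non-zerodivisor globally, so $\shH_X$ is an effective Cartier divisor.

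Next I would check the fiberwise Cartier condition. For a closed point $t \in \PP(E^*)$ with $b := \pi(t) \in B$, the point $t$ corresponds to a nonzero element $s_t \in E^*_b$ (up to scaling), and the fiber of the ambient space over $t$ is canonically identified with $X_b$. Under this identification, the restriction of $\theta_X$ is exactly the image of $s_t$ under the composition
\[
E^*_b \;\longrightarrow\; a_{X*}\sO_X(1)\otimes k(b) \;\longrightarrow\; \Gamma(X_b, \sO_{X_b}(1))
\]
induced by $f_b$. Non-degeneracy of $f$ (Def.~\ref{def:non-deg_rel}) says exactly that this map is injective, so $s_t|_{X_b}$ is a \emph{nonzero} section of $\sO_{X_b}(1)$. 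Since $X_b$ is integral (the geometric fiber is integral and $\kk$ is algebraically closed, so $X_b = X_{\bar b}$), this section is a non-zerodivisor locally at every point of $X_b$, and hence $\shH_X \cap (X_b \times \{t\})$ is an effective Cartier divisor in $X_b$.

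Applying the criterion above then yields the flatness of $\shH_X \to \PP(E^*)$. I do not anticipate a serious obstacle: the argument is essentially a direct translation of the absolute case to the relative setting, and the only genuine input beyond flat base change is the pointwise reformulation of non-degeneracy combined with integrality of the geometric fibers of $X/B$ (which ensures that ``nonzero section'' on each fiber upgrades to ``non-zerodivisor''). One minor point worth double-checking during the write-up is the case of non-closed points $t \in \PP(E^*)$, but by the standard specialization/local-ring formulation of the Cartier criterion it suffices to verify the fiberwise condition at closed points.
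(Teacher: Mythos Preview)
Your argument is correct, but the paper takes a different route. Instead of directly verifying the relative Cartier divisor criterion over $\PP(E^*)$, the paper invokes the fibral flatness criterion of \cite[$\mathrm{IV}_3$, 11.3.11]{EGA}: to show $F:\shH_X \to \PP(E^*)$ is flat, it suffices to check that $G:\shH_X \to B$ is flat (which follows since $\shH_X \to X$ is smooth and $X \to B$ is flat) and that for each $b \in B$ the fiber map $(\shH_X)_b \to \PP(E^*_b)$ is flat, which is exactly the absolute Lemma~\ref{lem:H_flat}.

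The paper's approach is a clean reduction to the absolute case already proved, at the cost of importing the EGA fibral criterion. Your approach is more self-contained: you essentially rerun the absolute argument directly in the relative setting, using only the relative Cartier divisor criterion and the fiberwise integrality of $X/B$. Both are short; yours avoids the extra citation, while the paper's avoids re-verifying the ambient integrality of $X \times_B \PP(E^*)$ and the fiberwise section computation. Your caveat about non-closed points is harmless here: since everything is of finite type over $\kk$, checking the Cartier condition at closed fibers suffices (flatness is local and closed points are dense), exactly as you anticipate.
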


\begin{proof}
Consider $F: \shH_X \to \PP(E^*)$, $G: \shH_X \to B$, $\pi': \PP(E^*) \to B$. By \cite[$\mathrm{IV}_3,$ 11.3.11]{EGA}, in order to show $F$ is flat, it suffices to test the flatness on fibers, i.e. to show that $G$ is flat (which follows from $\shH_X \to X$ is smooth, and $X \to B$ is flat), and that $f_b: X_b \to \PP(E^*_b)$ is flat for any $b\in B$, which follows from Lem. \ref{lem:H_flat}.
\end{proof}

The analogue of Lem.\ref{lem:admissible pairs} holds. We mention a few cases and leave the rest to the readers.

\begin{lemma}[Criterion for admissibility] \label{lem:admissible pairs:rel} 
Suppose $X/B$ is proper and flat with integral geometric fibers, and $f: X \to \PP(E)$ is non-degenerate.
	\begin{enumerate}
	\item If $Q_B(S,T) = S \times_B T$  (for example $S = L_B$, $T = L_B^\perp$ dual linear subbundles). Then the two pairs are admissible provided $X_T$ and $Y_S$ are of expected dimensions.
	\item If $Q_B(S,T) \subset S \times_B T$, and $Q_B(X_T, S) \subset X_T \times_B S$ are divisors. Then the two pairs are admissible provided $X_T$ and $Y_S$ are of expected dimensions.
	\end{enumerate}
\end{lemma}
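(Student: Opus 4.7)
The plan is to follow the strategy of the proof of Lemma \ref{lem:admissible pairs} in the absolute setting, checking Tor-independence of the six base-change squares in Definition \ref{def:adm:rel} using the relative analogues of Lemmas \ref{lemma-faithful-base-change} and \ref{lem:3squares}. By the symmetry of the statement under swapping the two HP-dual pairs, it suffices to verify the three squares arising from the base change $f: X \to \PP(E)$:
\begin{equation*}
\begin{tikzcd}[column sep = 1.5em, row sep = 1.5em]
X_T \ar{d} \ar{r} &  T \ar{d}\\
X \ar{r}{f}          &\PP(E)
\end{tikzcd}
\qquad
\begin{tikzcd}[column sep = 1.5em, row sep = 1.5em]
\shH \ar{d} \ar{r} & \shH_S \ar{d}\\
X \ar{r}{f}          &\PP(E)
\end{tikzcd}
\qquad
\begin{tikzcd}[column sep = 1.5em, row sep = 1.5em]
Q_B(X_T,S) \ar{d} \ar{r} &  Q_B(T,S) \ar{d}\\
X \ar{r}{f}          &\PP(E)
\end{tikzcd}
\end{equation*}
the mirror squares for $S \to \PP(E^*)$ are then handled by identical arguments.

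The first square is Tor-independent directly by Lemma \ref{lemma-faithful-base-change}(2), since $X$, $T$, $\PP(E)$ are smooth and $X_T$ has expected dimension. For the middle square, the argument that in the absolute case invoked $L_X \not\subset L_S^\perp$ is replaced by assumption $(\dagger')$, which provides that $X \times_B S$ is smooth of expected dimension and $\shH \subset X \times_B S$ is of pure codimension one. Factor the square as the two stacked squares
\begin{equation*}
\begin{tikzcd}[column sep = 2em, row sep = 1.5em]
\shH \ar{d} \ar{r} & \shH_S \ar[hook]{d} \\
X \times_B S \ar{r}{f \times_B \id_S} \ar{d} & \PP(E) \times_B S \ar{d} \\
X \ar{r}{f} & \PP(E)
\end{tikzcd}
\end{equation*}
The lower square is Tor-independent by Lemma \ref{lemma-faithful-base-change}(2) (using that $X \times_B S$ has expected dimension), and the upper square is Tor-independent by Lemma \ref{lemma-faithful-base-change}(3) applied to the Cartier divisor $\shH_S \hookrightarrow \PP(E)\times_B S$ (both ambient spaces are smooth, and $\shH$ has the expected codimension one in $X \times_B S$). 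Lemma \ref{lem:3squares} then yields the exact cartesianness of the middle square.

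For the third square we split on the two cases of the hypothesis. In Case (1), the equality $Q_B(S,T) = S \times_B T$ forces $Q_B(X_T, S) = X_T \times_B S$ (the incidence condition $s(f(x)) = 0$ holds automatically for $f(x) \in T$), and the square factors through the row $(X_T, T)$; a two-step application of Lemma \ref{lemma-faithful-base-change}(2) for the row $X_T \times_B S \to X_T \to X$ combined with Lemma \ref{lem:3squares} reduces it to the first square. In Case (2), the argument of Case (2b) in Lemma \ref{lem:admissible pairs} transcribes verbatim after replacing $X\times \PP V$ with $X \times_B \PP(E)$: consider the factorization
\begin{equation*}
\begin{tikzcd}[column sep = 2.2em, row sep = 1.5em]
Q_B(X_T,S) \ar{d} \ar[hook]{r}&  X \times_B Q_B(T,S) \ar{d} \ar{r} & Q_B(T,S) \ar{d}\\
X   \ar[hook]{r}{\Gamma_f}   &X \times_B \PP(E) \ar{r}          &\PP(E)
\end{tikzcd}
\end{equation*}
The right square is Tor-independent since $X \times_B \PP(E) \to \PP(E)$ is a smooth projection. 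For the left square, the graph $\Gamma_f$ is a regular closed embedding, $X \times_B Q_B(T,S)$ is Cohen-Macaulay (as $Q_B(T,S)$ is a divisor in the smooth $T \times_B S$), and $Q_B(X_T, S) \subset X_T \times_B S$ is assumed to be a divisor, giving the expected dimension; hence Lemma \ref{lemma-faithful-base-change}(3) applies. Lemma \ref{lem:3squares} then concludes.

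The main obstacle is the systematic bookkeeping of which morphisms in each intermediate square are flat and which ambient spaces are smooth or Cohen-Macaulay, so that the divisorial form of Lemma \ref{lemma-faithful-base-change}(3) genuinely applies in the relative setting (in particular to check that assumption $(\dagger')$ together with the divisorial hypothesis on $Q_B(X_T,S)$ really supplies the expected-dimension input needed in Case (2)). Once this checking is made, each case is a faithful transcription of the argument in Lemma \ref{lem:admissible pairs}.
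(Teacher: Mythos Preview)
Your treatment of the three squares for the base change $f: X \to \PP(E)$ is essentially the relative transcription of Lemma \ref{lem:admissible pairs} and is fine. The gap is in the sentence ``the mirror squares for $S \to \PP(E^*)$ are then handled by identical arguments.'' The hypotheses of the lemma are \emph{not} symmetric under swapping the two pairs: only $X/B$ is assumed proper and flat with integral geometric fibers, only $f$ is assumed non-degenerate, and the case hypotheses concern $Q_B(S,T)$ and $Q_B(X_T,S)$ with no counterpart for $Q_B(X,Y)$ or $Q_B(X,Y_S)$. So you cannot simply invoke symmetry to dispatch the three squares on the $S \to \PP(E^*)$ side; in particular, nothing you have written controls the square for $\shH_X \times_{\PP(E^*)} Y$.

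The paper fixes this asymmetry by exploiting the one-sided hypothesis directly: non-degeneracy of $f$ implies (via the lemma immediately preceding) that the universal hyperplane $\shH_X \to \PP(E^*)$ is \emph{flat}. Flatness of one leg of a cartesian square gives Tor-independence for free, so the square for $\shH_X$ is automatic; base-changing this flatness along $Y \to \PP(E^*)$ makes $Q_B(X,Y) \to Y$ flat as well, and combining with the expected-dimension square for $Y_S$ via Lemma \ref{lem:3squares} handles $Q_B(X,Y)$. Thus only the assumption that $Y_S$ has expected dimension is needed on the $S$ side, and no hypothesis on $S$, $q$, or $Q_B(X,Y)$ is required. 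You should replace your symmetry appeal with this flatness argument.
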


\begin{proof} The non-degeneracy condition of $f$ implies $\shH_X \to \PP(E^*)$ is flat. Hence we immediately get $S \to \PP(E^*)$ is faithful for the pair $(\shH_X, Y)$ provided $Y_{S}$ is of expected dimension. Also since $\shH_{X, S} \to S$ is flat, $\shH_{X, S}$ is a divisor in $X \times_B S$. 
On the other hand, if $Q_B(S,T) = S \times_B T$, then $X_{T} \times_B S \to X_T$ is flat, and $\shH_{X, S}$ is of expected dimension, therefore the base-change $X \to \PP(E)$ is faithful for the pair $(\shH_{S},T)$ provided $X_{T}$ is of expected dimension. If $Q_B(S,T) \subset S \times_B T$, and $Q_B(X_T, S) \subset X_T \times S$ are divisors, then $f$ is faithful for $Q_B(S,T)$. Then also $f$ is faithful for the pair $(\shH_{S},T)$ provided $X_{T}$ is of expected dimension. 
\end{proof}
	
\begin{theorem}[Relative HPD theorem for general sections] \label{thm:HPDgen:rel} If the above two relative HP-dual pairs $(X \to \PP(E), \,Y \to \PP(E^*))$, and $(S \to \PP(E^*), \,T \to \PP(E))$ satisfying $(\dagger')$ are \textbf{admissible} (Def. \ref{def:adm:rel}). Then
\begin{enumerate}[leftmargin = *]
\item We have $B$-linear semiorthogonal decompositions
	\begin{align}
	D(Y) & = \langle \shB^1 (2-N), \cdots, \shB^{N-2}(-1),\shB^{N-1} \rangle, \quad \shB^1 \subset \shB^2 \subset \cdots \subset\shB^{N-1} = \shA_0 \label{sod:Y:rel},\\
	D(T) & = \langle \shD^1(2-N), \cdots, \shD^{N-2}(-1), \shD^{N-1} \rangle, \quad \shD^1 \subset \shD^2 \subset \cdots \subset\shD^{N-2} = \shC_0  \label{sod:T:rel},
	\end{align} 
where $\shB^k$ and resp. $\shD^k$ are defined by the exactly same formula (\ref{def:B^k}) and resp. (\ref{def:D^k}).
\item There exit semiorthogonal decompositions for $X_T = X \times_{\PP(E)} T$ and $Y_S = Y \times_{\PP(E^*)} S$:
	\begin{align}
		D(X_T) &= \langle \sE_{X_T}, ~\shA_1(1)\boxtimes_B \shD^1, \shA_2(2) \boxtimes_B \shD^2, \ldots, \shA_{i-1}(i-1) \boxtimes_B \shD^{i-1} \rangle, \label{sod:X_T:rel} \\
		D(Y_S )&= \langle \shB^1 \boxtimes_B \shC_{1}^L(2-l) , \shB^2 \boxtimes_B \shC_{2}^L(3-l), \ldots, \shB^{l-1}\boxtimes_B \shC_{l-1}^L, ~\sE_{Y_S} \rangle, \label{sod:Y_S:rel}
	\end{align}
and an equivalence $\sE_{X_T} \simeq \sE_{Y_S}$. Here $\shC^L_k = S_{D(S)}(\shC_k) \otimes \sO_S(l) \simeq \shC_k$. If the decomposition for $D(S)$ is rectangular, then $\shC^L_k = \shC_k$.
\end{enumerate}
\end{theorem}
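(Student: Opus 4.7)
The plan is to adapt verbatim the strategy of the absolute version Thm.~\ref{thm:HPDgen}, with every occurrence of $\boxtimes$ replaced by $\boxtimes_B$ and every $\Hom$-vanishing replaced by its sheaf-level analogue $a_* R\sHom = 0$. First I would establish the relative base-change picture: under assumption $(\dagger')$ together with the admissibility condition (Def.~\ref{def:adm:rel}), Prop.~\ref{prop:bcsod} and Prop.~\ref{prop:bcFM} (which work for arbitrary faithful base-change, not just $\phi:T\to\Spec\kk$) show that pulling back the $\PP(E^*)$-linear decomposition (\ref{sod:H_X:rel}) along $q:S\to\PP(E^*)$, and dually pulling back the $\PP(E)$-linear decomposition of $D(\shH_S)$ along $f:X\to\PP(E)$, yield two $S$-linear, resp.\ $X$-linear, $B$-linear semiorthogonal decompositions of the common ambient category $D(\shH)$, $\shH := Q_B(X,S)$:
\begin{align*}
D(\shH)&=\langle\shD_{Y_S},\,\shA_1(1)\boxtimes_B D(S),\,\ldots,\,\shA_{i-1}(i-1)\boxtimes_B D(S)\rangle,\\
D(\shH)&=\langle\shD_{X_T},\,D(X)\boxtimes_B\shC_1(1),\,\ldots,\,D(X)\boxtimes_B\shC_{l-1}(l-1)\rangle,
\end{align*}
together with the mutated versions obtained by applying the Serre functors, exactly as in (\ref{sod:H for S:Serre}) and (\ref{sod:H:k}). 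Here $\shD_{X_T}\simeq D(X_T)$ and $\shD_{Y_S}\simeq D(Y_S)$ are the fully-faithful images of the relative Fourier--Mukai transforms obtained by pulling back the kernels $\shE_S$, $\shE_X$ along $f$ resp.\ $q$.

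Next I would repeat the chess-game mutations of \S\ref{sec:fullyfaithful} on the same chessboard (Fig.~\ref{Figure:l<=i}, \ref{Figure:l>=i}). The only non-cosmetic change is that the basic $\Hom$-computation Lem.~\ref{lem:van} is replaced by its relative counterpart, the Relative Vanishing Lemma~\ref{lem:van:rel}, whose proof was already set up precisely so that the cone~(\ref{cone:rel}) has the same $\alpha$/$\beta$ factor structure as~(\ref{cone-for-Hom}). Consequently the $\alpha$-vanishing and $\beta$-vanishing arguments driving Lem.~\ref{lem:region:pi_T}, Lem.~\ref{lem:region:pi_S}, and Lem.~\ref{lem:region:pi_S:sE} go through unchanged, simply by observing that for any pair of $B$-linear subcategories $\shA,\shB\subset D(X)$ with the property $a_{X*}R\sHom_X(\shB,\shA)=0$, the relative K\"unneth formula~(\ref{eqn:rel:Kunneth:Hom}) forces both terms of the cone~(\ref{cone:rel}) to vanish exactly when they vanished in the absolute case. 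This proves fully-faithfulness of the restrictions of $\pi_T$ and $\pi_S$ to the prescribed ambient blocks and to $\sE$, and yields the desired $B$-linear semiorthogonal sequences inside $\shD_{X_T}$ and $\shD_{Y_S}$.

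Third, the generation step of \S\ref{sec:generation} transfers line by line. Given a $b\in\shD_{Y_S}$ orthogonal to the ambient blocks and to $\pi_S(\shD_{X_T})$, its components $b^\alpha_\beta$ with respect to the refined decomposition of $\shD_{X_T}^\perp$ are analysed by the same Zig-Zag induction: at each step one tests against the appropriate block $\shA_\alpha(\alpha-1)\boxtimes_B\shC_\beta(\beta)$ or $\foa_k(k)\boxtimes_B\shC_\beta(\beta)$ and uses Lem.~\ref{lem:component_van} together with the relative $\alpha$/$\beta$-vanishing to conclude $b^\alpha_\beta=0$; the final step $R\Hom_\shH(b,b)=0\Rightarrow b=0$ is formal. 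Part~(1) of the theorem then follows from Part~(2) by specializing to $(S,T)=(\PP(E^*),\emptyset)$ and $(X,Y)=(\PP(E),\emptyset)$, exactly as in Rmk.~\ref{rmk:dual-decomposition}; one only needs that $\shH_X\to \PP(E^*)$ is flat when $f$ is non-degenerate, which gives the required faithful base-change, and otherwise note that these two degenerate pairs automatically satisfy the relative admissibility hypothesis.

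The one step that needs genuine care, and which I expect to be the main obstacle, is verifying the relative faithful base-change statements behind Prop.~\ref{prop:bcFM} and Prop.~\ref{prop:bcsod} in the present setting: one must check that the Tor-independence of the squares
\[
\begin{tikzcd}[row sep=1.4em,column sep=1.6em]
\shH_{X,S}\ar[r]\ar[d] & \shH_X\ar[d]\\
S\ar[r,"q"] & \PP(E^*)
\end{tikzcd}\qquad
\begin{tikzcd}[row sep=1.4em,column sep=1.6em]
Y_S\ar[r]\ar[d] & Y\ar[d]\\
S\ar[r,"q"] & \PP(E^*)
\end{tikzcd}
\]
(and the symmetric squares with $f$) actually hold under the admissibility hypotheses, using Lem.~\ref{lem:admissible pairs:rel} and Lem.~\ref{lem:3squares}. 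Once this is in place, together with the relative convolution formula (Lem.~\ref{lem:rel_convolution}) for describing the resulting Fourier--Mukai kernels $\shE_S|X$, $\shE_X|S$ on $\shH$, the rest of the proof is a mechanical transcription of \S\ref{sec:fullyfaithful}--\S\ref{sec:generation}, and the explicit form of the equivalence $\sE_{X_T}\simeq\sE_{Y_S}$ is given by $\Phi^L_{\shE_X|S}\circ\Phi_{\shE_S|X}$ as in Rmk.~\ref{rmk:FM}.
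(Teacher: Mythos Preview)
Your proposal is correct and follows essentially the same two-step approach as the paper: (Step~1) base-change the HP-dual decompositions of $D(\shH_X)$ and $D(\shH_S)$ to the common ambient $D(\shH)$ using admissibility, and (Step~2) replay the chessboard mutations of \S\ref{sec:fullyfaithful}--\S\ref{sec:generation} with Lem.~\ref{lem:van:rel} in place of Lem.~\ref{lem:van}. Your only misconception is the ``main obstacle'': the Tor-independence of the two squares you display is not something to be \emph{verified} but is precisely the \emph{content} of the admissibility hypothesis Def.~\ref{def:adm:rel} (faithfulness of $q$ for the pair $(\shH_X,Y)$ means, by Def.~\ref{def:bc}, faithfulness for each of $\shH_X\to\PP(E^*)$, $Y\to\PP(E^*)$, and their fibre product), so no work is required there beyond invoking the definition; Lem.~\ref{lem:admissible pairs:rel} is only a criterion for users to check admissibility in examples, not an ingredient of the proof itself.
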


The Fourier-Mukai functors involved (for example $D(X_T) \hookrightarrow D(\shH)$, $D(Y_S) \hookrightarrow D(\shH)$, $\sE_{X_T} \simeq \sE_{Y_S}$, and etc) are explicitly given as in Rmk. \ref{rmk:FM} before. And as Rmk. \ref{rmk:FM}, the components $\shA_k(k)\boxtimes_B \shD^k = (\shA_k \boxtimes_B \shD^k)|_{X_T} \otimes \sO_{X_T}(k) \subset D(X_T)$ can be regarded as restricted from the ambient product $X\times_B T$ to the fiber product $X \times_{\PP(E)} T \subset X \times_B T $, i.e. are actually 'ambient parts'. Similarly the terms $\shB^k \boxtimes_B \shC^L_k(1+k-l) = (\shB^k \boxtimes_B \shC^L_k) |_{Y_S} \otimes \sO_{Y_S}(1+k - l)$ are also 'ambient contributions', i.e. can regarded as restrictions from ambient products to $Y\times_{\PP(E^*)} S \subset Y\times_B S$.

\begin{proof} As the absolute case, this can be shown in two major steps.

\medskip\noindent\textbf{Step 1. Base-change.}  From definition of HP-dual, we have decompositions
	\begin{align} 
	D(\shH_X) & = \langle \Phi_{\shE_X}(D(Y)),  \shA_1(1)\boxtimes_B D(\PP (E^*)), \cdots, \shA_{i-1}(i-1)\boxtimes_B D(\PP 	(E^*))\rangle.\label{sod:H_X:rel} \\
	D(\shH_S) &=  \langle \Phi_{\shE_S} (D(T)),~ D(\PP (E))\boxtimes_B  \shC_1(1), \ldots, D(\PP (E)) \boxtimes _B 			\shC_{l-1}(l-1) \rangle, \label{sod:H_S:rel} \\
	&=  \big\langle  \langle S_{\shH_S}( D(\PP(E))\boxtimes_B  \shC_k(k)\rangle_{k=1,\ldots, l-1}~,~\Phi_{\shE_S} (D(T)) \big		\rangle, \label{sod:H_S:Serre}
	\end{align}
As before we omit the restriction symbols $|_{\shH_{X}}$ and $|_{\shH_{S}}$ for simplicity of notations. Here $S_{\shH_{S}}$ is the Serre functor on the smooth variety $\shH_S$. Note $\shH_S \subset \PP(E) \times_B S$ is a divisor of  $\sO_{\PP(E) \times_B S}(1,1)$ on the smooth variety $\PP(E) \times_B S$. Denote $P_B := \PP(E)$, then $\omega_{P_B/B} = \sO_{P_B}(-N) \otimes a_{P_B}^* \det E^\vee$, $\omega_{P_B  \times_B S / S} = pr_1^* \omega_{P_B / B}$, and
	$$\omega_{\shH_{S}} =\omega_{\PP(E) \times_B S} |_{\shH_S} (1,1) = (\sO_{P_B} (1-N) \boxtimes_B \omega_S(1)) \otimes a_{\shH_S}^* \, \det E^\vee.$$
Therefore we have
	$$S_{\shH_S}( D(\PP E) \boxtimes_B  \shC_k(k) )= D(\PP(E)) \boxtimes_B S_{D(S)}(\shC_k)(k+1) = D(\PP(E))\boxtimes_B \shC^L_{k}(1+k -l), $$
where $\shC_{k}^L: =S_{D(S)}(\shC_k) \otimes \sO_S(l)$, and the term $a^*\,\det E^\vee$ is absorbed by $B$-linearity.

By \textbf{admissibility condition}, we can faithfully base-change (\ref{sod:H_S:rel}) along $q:S \to \PP(E)$,
	\begin{equation}\label{diagram:base_change_Y_S:rel}
	\begin{tikzcd}[back line/.style={dashed}, row sep=1.5 em, column sep=2.6 em]
	Q_B(X,Y)\ar{r}{} \ar{dd}	& \shH_X	\ar[back line]{dd}[swap]{}\\
	&& Q_B(X,Y_S) \ar[crossing over]{llu}[near start]{} \ar{r}[swap]{}	&\shH_{X,S} = \shH \ar{llu}{}	\ar{dd} \\
	Y \ar[back line]{r}{g}	& \PP(E^*)\ar[leftarrow, back line]{rrd}[near start]{q}	\\
				&&	Y_S \ar{llu}{} \ar{r}[swap]{} \ar[leftarrow, crossing over]{uu}	& S ,
	\end{tikzcd}
	\end{equation}
Then $\Phi_{\shE_X|S}:D(Y_S) \to D(\shH)$ is $S$-linear and fully faithful, where $\shE_X|S = q^* \shE_X \in D(Q_B(X,Y_S))$, and we have $S$-linear semiorthogonal decomposition:
	\begin{equation}\label{sod:H for X:rel}
	D(\shH) = \langle \Phi_{\shE_X|S}(D(Y_S)), \shA_1(1)\boxtimes_B D(S), \ldots, \shA_{i-1}(i-1)\boxtimes_B D(S) \rangle .
	\end{equation}
Similarly we can faithfully base-change (\ref{sod:H_X:rel}) and (\ref{sod:H_S:Serre}) along $f: X \to \PP(E)$,
	\begin{equation}\label{diagram:base_change_X_T:rel}
	\begin{tikzcd}[back line/.style={dashed}, row sep=1.5 em, column sep=2.6 em]
	Q_B(S,T)\ar{r}{} \ar{dd}	& \shH_S	\ar[back line]{dd}[swap]{}\\
	&& Q_B(S,X_T) \ar[crossing over]{llu}[near start]{} \ar{r}[swap]{}	&\shH_{S,X} = \shH \ar{llu}{}	\ar{dd} \\
	T \ar[back line]{r}{p}	& \PP(E)\ar[leftarrow, back line]{rrd}[near start]{f}	\\
				&&   X_T  \ar{llu}{} \ar{r}[swap]{} \ar[leftarrow, crossing over]{uu}	& X ,
	\end{tikzcd}
	\end{equation}
Then $\Phi_{\shE_S|X}:D(X_T) \to D(\shH)$ is $X$-linear and fully faithful, where $\shE_S|X = f^* \shE_S \in D(Q_B(S,X_T))$, and we have $X$-linear semiorthogonal decomposition:
	\begin{align} 
	D(\shH) &= \langle \Phi_{\shE_S|X}(D(X_T)), ~D(X) \boxtimes_B \shC_1(1) , \ldots, D(X)\boxtimes_B \shC_{l-1}(l-1) \rangle,\label{sod:H for S:rel}\\
	 &=  \langle D(X) \boxtimes_B \shC^L_1(2-l) , \ldots, D(X)\boxtimes_B \shC^L_{l-1}, ~  \Phi_{\shE_S|X}(D(X_T)) \rangle. \label{sod:H for S:Serre:rel} 
	\end{align}
These are exactly the relative versions of (\ref{sod:H for X}) and (\ref{sod:H for S}), (\ref{sod:H for S:Serre}). Relative version of (\ref{sod:H:k}) also holds. 

\medskip \noindent\textbf{Step 2. Game on the 'chessboard'.} Now we put everything into the common variety $D(\shH)$  again, and the situation is encoded in 'chessboard' of Figure \ref{Figure:l<=i} and \ref{Figure:l>=i} as before, with all blocks now corresponding to $\shA_{\alpha}(\alpha)\boxtimes_B \shC_{\beta}(\beta)$. Since the vanishing has exactly the same pattern, the game on the 'chessboard' can be played just same as before with all $R\Hom_{\shH}$'s (resp. $R\Hom_{X}$'s, $R\Hom_{S}$'s) being replaced by $a_{\shH*} \sHom$'s (resp. $a_{X*} R\sHom_{X}$'s, $a_{S*} R\sHom_{S}$'s).

For fully-faithful part, notice the computations of Lemma \ref{lem:region:pi_T} , \ref{lem:region:pi_S}, \ref{lem:region:pi_S:sE} and hence the statements are the same, with the resulting cones belonging to the same region (with $-\boxtimes -$ replaced by $- \boxtimes_B -$). Therefore all statements hold.  For generation part, we still show the right orthogonal of the images under $\pi_S$ is zero, and let $b$ belongs to this. Then again the components of $b$ can be shown to be zero in three steps. Notice all the semiorthogonal decompositions we use in \textbf{Step $1$}, \textbf{Step $2$} in absolute case hold in relative version, therefore same argument shows $b$ belongs to $\shR_1 \cup \shR_2$, hence $b =0$.
\end{proof}

\subsection{Relative HP-duality theorem for linear sections}
Apply the relative version of main theorem to linear section case, we obtain relative version of HP-duality theorem. Let $\sL \subset \sE^*$ to be (the locally free sheaf associated to) a vector sub-bundle of $E$ of rank $l$, $\sL^\perp \subset \sE$ the orthogonal sub-bundle. Denote $ L_B:= \PP(V(\sL)) \subset \PP(E^*)$ and $L_B^\perp:= \PP(V(\sL^\perp)) \subset \PP(E)$ the linear projective subbundles, where recall $V(\sL)$ is the vector bundle associated to the locally free sheaf $\sL$. Suppose $f: X \to \PP(E)$ is a $B$-morphism between smooth varieties, with $\sO_X(1):= f^* \sO_{\PP(E)}(1)$. For this section we always assume $X/B$ is proper and flat with integral geometric fibers, and $f$ is \textbf{non-degenerate} (Def. \ref{def:non-deg_rel}), which is equivalent to the natural map of $\sO_X$-modules
	\begin{equation}\label{eqn:relHPD:cond}
		\lambda: E^* \subseteq a_{X*} \sO_X(1) 
	\end{equation}
satisfies for every closed point $b \in B$, the induced map $E^*_b \to \Gamma(X_b, \sO_{X_b}(1))$ is an inclusion of vector spaces.

\begin{proposition}[Relative Orlov type theorem] \label{prop:orlov_rel}Suppose $f: X \to \PP(E)$ is a $B$-morphism between smooth varieties, and $L_B \subset \PP(E^*)$ a sub-linear projective bundle defined as above, such that $X_{L_B^\perp} : = X \times_{\PP(E)} L_B$ is of expected dimension over $B$, then there is a $X$-linear semiorthogonal decomposition
	$$D(\shH_{X,L_B}) = \langle j_* p^* D(X_{L_B^\perp}), \pi^* D(X) \otimes D(B)(1), \ldots , \pi^* D(X) \otimes D(B)(l-1) \rangle.$$
Here $\pi^* D(X) \otimes D(B)(k)$is the restriction of $D(X) \boxtimes_B D(B)(k)$ to $\shH_{X,L_B} \subset X \times_B L_B$ ,where $k =1, \ldots, l-1$ . 
 \end{proposition}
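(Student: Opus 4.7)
The strategy is to reduce this statement to the absolute case of Example \ref{ex:dual-linear-sections}, proved in \cite[Prop.~3.6]{RT15HPD} and \cite[Thm.~8.2]{Kuz07HPD}, by systematically working $B$-linearly and invoking the relative vanishing Lemma~\ref{lem:van:rel}. The key geometric input is that the universal hyperplane $\shH_{X, L_B}$ is cut out inside the ambient product $X \times_B L_B$ by a regular section of $\sO_X(1) \boxtimes_B \sO_{L_B}(1)$, obtained by restricting the incidence section $\theta_E$ defining $Q_B \subset \PP(E) \times_B \PP(E^*)$. Moreover $X \times_B L_B \to X$ is the pullback along $a_X: X \to B$ of the $\PP^{l-1}$-bundle $L_B = \PP(V(\sL)) \to B$, so by the relative Orlov projective-bundle theorem it admits an $X$-linear semiorthogonal decomposition into $l$ copies
$$D(X \times_B L_B) = \big\langle \mathrm{pr}_X^* D(X) \otimes D(B)(0),\ \mathrm{pr}_X^* D(X) \otimes D(B)(1),\ \ldots,\ \mathrm{pr}_X^* D(X) \otimes D(B)(l-1) \big\rangle.$$

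First I would show that the pullbacks to $\shH_{X, L_B}$ of the last $l-1$ pieces are fully faithful and remain a semiorthogonal sequence. This is a direct application of Lemma~\ref{lem:van:rel} with $S = L_B$: for $F_1, F_2 \in D(X)$ and $1 \le k_2 \le k_1 \le l-1$, the relevant $a_{\shH *} R\sHom$ is the cone of a map whose twisted term contains the factor $a_{L_B *} R\sHom(\sO_{L_B}(k_1+1), \sO_{L_B}(k_2)) = a_{L_B *}\sO_{L_B}(k_2 - k_1 - 1) = 0$, and whose untwisted term likewise vanishes when $k_1 > k_2$ and reduces to $a_{X*} R\sHom(F_1, F_2)$ when $k_1 = k_2$. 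This is exactly the same pattern as Lemma~\ref{sod:H_X:rel}, now specialised to trivial Lefschetz data.

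Next I would show that $j_* p^*: D(X_{L_B^\perp}) \to D(\shH_{X, L_B})$ is fully faithful with image lying in the right orthogonal of the previous collection. The orthogonality is a relative Beilinson-type vanishing: by adjunction $R\Hom(\mathrm{pr}_X^* F \otimes \sO_{L_B}(k), j_* p^* G)$ reduces, via the projection formula on the $\PP^{l-1}$-bundle $p: X_{L_B^\perp} \times_B L_B \to X_{L_B^\perp}$, to a term involving $p_* \sO_{L_B}(-k)$, which vanishes for $k \in [1, l-1]$. Fully faithfulness of $j_* p^*$ amounts to $p_* j^! j_* p^* \simeq \id_{D(X_{L_B^\perp})}$, which again comes from the same projective-bundle cancellation combined with the description of $j^! j_*$ for the lci closed embedding $j$ into the smooth $\shH_{X, L_B}$. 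For generation, any object of $D(\shH_{X, L_B})$ may be represented (via the Koszul sequence $0 \to \sO_{X \times_B L_B}(-1,-1) \to \sO_{X \times_B L_B} \to i_{\shH *} \sO_{\shH_{X, L_B}} \to 0$) by restrictions from $X \times_B L_B$, so the $k \ge 1$ pieces of the Orlov decomposition restrict directly to the right-hand components, while the $k = 0$ piece, after mutation past the others, becomes the image of $j_* p^*$.

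The hard part will be this last identification of the $k = 0$ piece with $j_* p^* D(X_{L_B^\perp})$, since $\pi: \shH_{X, L_B} \to X$ is not flat: its fiber dimension jumps from $l-2$ generically to $l-1$ over the base locus $X_{L_B^\perp}$, so no pure projective-bundle formula applies to $\pi$ itself. The cleanest route is to reinterpret $\shH_{X, L_B}$ as a relative $\PP^{l-2}$-bundle over the blow-up $\mathrm{Bl}_{X_{L_B^\perp}} X$ (valid under the non-degeneracy hypothesis on $\lambda: \sE^\vee \hookrightarrow a_{X*}\sO_X(1)$) and apply Orlov's blow-up and projective-bundle formulas in the $B$-linear setting; alternatively, one may argue entirely within $X \times_B L_B$ by explicitly resolving $j_* p^* \sO_{X_{L_B^\perp}}$ via the Koszul complex. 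Once this identification is in place, the $X$-linearity of the resulting decomposition is automatic, since every functor involved ($\pi^*$, $j_* p^*$, tensor by $\sO_{L_B}(k)$) is $X$-linear by construction.
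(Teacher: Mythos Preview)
Your treatment of the semiorthogonality and full faithfulness of the ambient pieces $\pi^* D(X) \otimes D(B)(k)$, and of their orthogonality against $j_* p^* D(X_{L_B^\perp})$, matches the paper's argument exactly: both go through Lemma~\ref{lem:van:rel} and the projective-bundle vanishing $p_*\sO_{L_B}(-k)=0$ for $k\in[1,l-1]$. For full faithfulness of $j_* p^*$ the paper is more concrete than your sketch: rather than invoking $j^! j_*$ abstractly, it identifies the normal bundle as $N_j = \sO_{X_{L_B^\perp}}(1)\boxtimes_B \Omega_{L_B/B}(1)$ via the relative Euler sequence on $L_B$, then writes the cone of the counit $j^* j_* \to \id$ as an iterated extension of $\bigwedge^r N_j^*[r]$, each of which is annihilated by $Rp_*$ because $\{\Omega_{L_B/B}^r(r)\}_{r=0}^{l-1}$ is a relative full exceptional collection on $L_B$.

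For generation the two routes genuinely diverge. The paper does not carry out your blow-up/Koszul identification of the $k=0$ piece; it instead reduces to the absolute case \emph{fibrewise}. Under the additional hypothesis that $X/B$ is smooth with fibrewise expected-dimensional $X_{L_B^\perp}$ (which holds in particular for $X=\PP(E)$), all the relevant families are flat over $B$, so base-change along each closed point $b\hookrightarrow B$ commutes with $j_* p^*$ and $\pi^*$; an object $F$ right-orthogonal to the whole collection then restricts to something right-orthogonal in the absolute decomposition over each $b$, hence $F|_b=0$ for all $b$, hence $F=0$. The general statement is then obtained by base-changing the decomposition for $X=\PP(E)$ along $f:X\to\PP(E)$. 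The paper does remark that the direct argument of \cite[Prop.~3.6]{RT15HPD} (essentially your proposed route) also goes through, but the fibrewise reduction is what is actually written out, and it has the advantage of not having to redo the geometric identification of $\shH_{X,L_B}$ with a $\PP^{l-2}$-bundle over the blow-up in the relative setting.
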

 
\begin{proof} 
The proof is just the relative version of R.Thomas' proof of \cite[Prop. 3.6]{RT15HPD}. The situation and the notations of maps are illustrated in the following commutative diagram:

\begin{equation*}
	\begin{tikzcd}
	X_{L^\perp_B} \times_B L_B \ar{d}[swap]{p} \ar[hook]{r}{j} & \shH_{X,L_B} \ar{d}{\pi} \ar[hook]{r}{\iota} & X \times_B L_B\ar{ld}[swap]{q} \ar{rd}\\
	X_{L^\perp_B} \ar[hook]{r}{i}         & X \ar{r}{a_X} & B  & L _B \subset \PP(E^*) \ar{l}[swap]{a_{L_B}}
	\end{tikzcd}	
\end{equation*}


\medskip\noindent\textbf{Fully faithfulness.} The fully faithfulness of the functors and semiorthogonality of the images follows from a direct computation: 

\begin{enumerate}[leftmargin=*, wide, align=left]
\item For any $F_1, F_2 \in D(X)$ and $G_1, G_2 \in D(B)$, $l-1 \ge k_1\ge k_2 \ge 1$, we have $a_* R\sHom_{\shH}(\pi^*\,F_1 \otimes G_1(k_1), \pi^*\, F_2 \otimes G_2(k_2))$ is a cone of 
	$$a_{X*} R\sHom(F_1,F_2(-1)) \otimes a_{L_B *} R\sHom(G_1(k_1), G_2(k_2 -1)) $$ 
and
	$$a_{X*} R\sHom(F_1,F_2) \otimes a_{L_B *} R\sHom(G_1(k_1), G_2(k_2)) $$ 
From the $B$-linear semiorthogonal decomposition for $L_B$, we see the later terms of both tensor product vanish if $l-1 \ge k_1> k_2 \ge 1$, thus $R\Hom(\pi^*\,F_1 \otimes G_1(k_1), \pi^*\, F_2 \otimes G_2(k_2))) = 0$. If $l-1 \ge k_1 = k_2 \ge 1$, then and the twisted term vanish, so we have $R\Hom_\shH (\pi^*\,F_1 \otimes G_1(k_1), \pi^*\, F_2 \otimes G_2(k_1)) =  R\Hom_{X \times_B L_B}(F_1 \boxtimes_B G_1(k_1), F_2 \otimes_B G_2(k_1))$. 

\item For $F \in D(X)$, $P \in D(B)$, $k \in [1,l-1]$, and $G \in D(X_{L^\perp} )$, then 
	\begin{align*}
	&\Hom(\pi^*\,F(k) \otimes P(k), j_*\,p^*\, G) = \Hom(\pi^*\, F, j_*p^* (G \otimes P^\vee (-k))) \\
	 &= \Hom(F, \pi_* j_* (p^* \, G \otimes P^\vee (-k))) = \Hom(F, i_* p_* (p^* \, G \otimes P^\vee (-k)) ) \\
	 &= \Hom(F, i_*(G \otimes p_* (P^\vee (-k))) ) = 0.
	\end{align*}
where $p_* (P^\vee (-k)) = a_{X_{L_B^\perp}}^* a_{L_B*} (P^\vee (-k)) = 0$ for $1-l \le -k \le 1$ follows from Serre vanishing for projective bundle $a_{L_B}: L_B \to B$. 

\item To show $R\Hom(j_*\,p^*\,F, j_*\,p^*\, G) = R\Hom(\pi^*\, F, \pi^*\, G) = R\Hom(F,G)$ for $F,G \in D(X_{L^\perp})$, the key is to compute the cone of the counit $j^*\,j_* \to \id $. Consider $X_{L_B^\perp} \times_B L_B \overset{j}{\hookrightarrow} \shH_{X,L_B} \overset{\iota}{\hookrightarrow} X \times_B L_B$, then the composition is cut out by a canonical section
	$$s \in \Gamma(X\times_B L_B, \sO_X(1) \boxtimes_B \sL^*) = \Gamma(B, \sHom (\sL, a_{X*} \sO_X(1))$$
corresponding to the inclusion $\sL \subseteq E^* \subseteq a_{X*} \, \sO_X(1)$ of $\sO_B$-modules over $B$ (recall we use the strange notation $L_B = \PP(V(\sL))$), where the first map is an inclusion of vector subbundle and the second map is (\ref{eqn:relHPD:cond}). Consider the relative Euler sequence on $L_B = \PP(V(\sL))$:
	$$ 0 \to \Omega_{L_B/B}(1) \to \sL^* \otimes \sO_{L_B} \to \sO_{L_B}(1) \to 0.$$
From the fact that $\iota$ is cut out by $\sO_X(1) \boxtimes_B \sO_{L_B}(1)$, we know $j$ is cut out by the section in $\sO_{X_{L^\perp_B}} (1) \boxtimes_B \Omega_{L_B/B}(1)$, hence the normal bundle of $j$ is given by
	$$N_j = \sO_{X_{L_B ^\perp}} (1) \boxtimes_B \Omega_{L_B/B}(1).$$ 
Therefore by standard argument on Fourier-Mukai kernels
, we have the counit $j^*\,j_* \to \id $ is an iterated extension of (cf. e.g \cite[Prop. 11.8]{Huy}) 
	$$\bigwedge^r N_j^* [r] \otimes (-) \quad \text{for} \quad r=1, \ldots, l-1.$$
Therefore from $R\, p_* \bigwedge^r N_j = 0$, for $r=1,2,\ldots, l-1$, which comes from the fact that $\{\Omega_{L_B/B}^{l-1}(l-1), \ldots, \Omega_{L_B/B}^1(1), \sO_{L_B}\}$ is a relative full exceptional collection on $L_B$, we have
	$$R\Hom(\cone(j^*\,j_* \to \id) p^*F, p^* G) = R\Hom(F, p_*(\cone(j^*\,j_* \to \id)^\vee \otimes G) = 0.$$
Hence $R\Hom(j_*\,p^*\,F, j_*\,p^*\, G) = R\Hom(\pi^*\, F, \pi^*\, G) = R\Hom(F,G)$ for $F,G \in D(X_{L^\perp})$.
\end{enumerate}

\medskip\noindent\textbf{Generation.} The generation part also can follow from an argument analogous to \cite[Prop. 3.6]{RT15HPD}. However, we would like to point out it can be checked fibrewisely (in the case when $X_{L_B^\perp}$ has fiberwisely expected dimension over $B$) and hence reduced to the absolute case because all the families in considerations are flat over $B$. 

In the following argument we assume $X$ is \emph{smooth} over $B$, and $X_{L_B^\perp}$ has fiberwisely expected dimension (by which we mean $\dim (X_b)_{L_b^\perp} = \dim X  - \dim B - l$ for every $b \in B$). This condition is at least satisfied by $X =\PP(E)$. Then the generation in general can follow from base-change the decomposition for the case $X=\PP(E)$ along $X \to \PP(E)$. 

For any $F \in D(\shH_{X, L_B})$ that is left right orthogonal to all the components in desired decomposition. Then consider $F|_b: = i^*_{\shH_{X_b,L_b}} \, F \in D(\shH_{X_b,L_b})$. Under the above assumption the fibers of family $X_{L_B^\perp}$ are all cut out by same family of linear system, then $X_{L_B^\perp} \to B$ is automatically flat of relative dimension $\dim X  - \dim B - l$. Therefore all squares in the following diagrams are exact cartesian:
\begin{equation*}
\begin{tikzcd}
	\shH_{X_b, L_b} \ar[hook]{r}{i_b} 	&\shH_{X,L_B} \\
	(X_{b})_{ L^\perp_b} \times L_b \ar[hook]{u}{j_b}\ar[hook]{r}{i_b}  \ar{d}{p_b}	&X_{L_B^\perp} \times_B L_B \ar[hook]{u}{j} \ar{d}{p} \\
	(X_{b})_{ L^\perp_b} \ar[hook]{r}{i_b} \ar{d}		& X_{L_B^\perp} \ar{d} \\
	b  \ar[hook]{r}{i_b}	& B	
\end{tikzcd}
\qquad  \qquad
\begin{tikzcd} 
	\shH_{X_b, L_b}	 \ar[hook]{r}{i_b}\ar{d}{\pi_b}	& \shH_{X,L_B} \ar{d}{\pi} \\
	X_b \ar[hook]{r}{i_b} \ar{d}		& X\ar{d} \\
	b	 \ar[hook]{r}{i_b}	& B,
\end{tikzcd}
\end{equation*}
we have $ i_{b*} \, j_{b*}\, p^*_b = j_* \, i_{b*} \,p^*_b = j_*\, p^*\, i_{b*}:  D((X_b)_{L_b^\perp})  \to D(\shH_{X,L_B})$, and $ i_{b*}\, \pi_b^* = \pi^* \, i_{b*}:  D(X_b) \to D(\shH_{L_B}) $. Tensoring $\sO_{L_B}(1)$ obviously commutes with base-change also. Hence 
	\begin{align*}
	&R\Hom(F|_b, j_{b*}\, p^*_b D((X_b)_{L_b^\perp}) ) =  R\Hom(F, i_{b*} \, j_{b*}\, p^*_b D((X_b)_{L_b^\perp}))  \\
	& = R\Hom(F,  j_*\, p^*\, (i_{b*} D((X_b)_{L_b^\perp}) ) = 0,
	\end{align*}
and
	\begin{align*} 
	& R\Hom(F|_b,\pi_b^* D(X_b )\otimes \sO_{L_b}(k) ) = R\Hom(F, i_{b*}\, (\pi_b^* D(X_b)\otimes \sO_{L_b}(k) )) \\
	& = R\Hom(F,  \pi^* \,( i_{b*} D(X_b) \otimes \sO_{L}(k) ) ) = 0,
	\end{align*}
where $k=1,2,\ldots, l-1$. Now by the generation part of absolute case \cite[Prop. 3.6]{RT15HPD} for $X_b$, we have $F|_b = 0$ for every $b \in B$. Therefore $F=0$.
\end{proof}

In particular, in the case of $X = \PP(E)$, we have
\begin{corollary}[Linear duality]\label{cor:linear-duality} $L^\perp_B \to \PP(E)$ is relative HP-dual to $L_B \to \PP(E^*)$ with respect to the Lefschetz decomposition $D(L_B) = \langle D(B), D(B)(1), \ldots, D(B)(l-1)\rangle$, with the $\PP(E)$-linear fully faithful functor $D(L^\perp_B) \to D(\shH_{L_B})$ given by the Fourier-Mukai kernel $\sO_{L^\perp_B \times_B L_B} \in D(L^\perp \times_{\PP(E)} \shH_{L_B})$.
\end{corollary}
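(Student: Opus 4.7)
The plan is to obtain the corollary as the special case $X = \PP(E)$, $f = \id$ of the preceding Proposition \ref{prop:orlov_rel}, and then read off the HP-dual structure from the resulting decomposition.

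First I would check that the hypotheses of Proposition \ref{prop:orlov_rel} hold for $X = \PP(E)$ with $f = \id_{\PP(E)}$. The map $\id$ is manifestly non-degenerate in the sense of Definition \ref{def:non-deg_rel} (for every closed point $b \in B$, $E^*_b \hookrightarrow \Gamma(\PP(E_b), \sO(1))$ is the standard inclusion), and $\PP(E)$ is smooth over $B$. Moreover $X_{L_B^\perp} = \PP(E) \times_{\PP(E)} L^\perp_B = L^\perp_B$, which is the projective subbundle $\PP(V(\sL^\perp)) \to B$, hence flat over $B$ of the correct relative dimension $(N-1)-l = \rank E - l - 1$. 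Thus all assumptions are satisfied.

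Applying the proposition yields an $X = \PP(E)$-linear semiorthogonal decomposition
\begin{equation*}
D(\shH_{L_B}) = \langle j_* p^*\, D(L^\perp_B),\ \pi^* D(\PP(E))\otimes D(B)(1),\ \ldots,\ \pi^* D(\PP(E))\otimes D(B)(l-1) \rangle,
\end{equation*}
where the last $l-1$ components are the restrictions of $D(\PP(E))\boxtimes_B D(B)(k)$ to $\shH_{L_B} \subset \PP(E)\times_B L_B$. On the other hand, by Lemma \ref{sod:H_X:rel} applied to $L_B \to \PP(E^*)$ with its rectangular Lefschetz decomposition $D(L_B) = \langle D(B), D(B)(1), \ldots, D(B)(l-1)\rangle$, we have the $\PP(E)$-linear decomposition
\begin{equation*}
D(\shH_{L_B}) = \langle \sC_B,\ D(B)(1)\boxtimes_B D(\PP(E)),\ \ldots,\ D(B)(l-1)\boxtimes_B D(\PP(E))\rangle
\end{equation*}
with $\sC_B$ the HP-dual category of $L_B$. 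Since the last $l-1$ components of the two decompositions literally coincide (up to reordering factors in the box-product), uniqueness of the orthogonal gives $\sC_B = j_* p^*\, D(L^\perp_B)$, and the functor $j_* p^*$ is the desired $\PP(E)$-linear fully faithful embedding $D(L^\perp_B) \hookrightarrow D(\shH_{L_B})$.

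Finally I would identify the Fourier--Mukai kernel. The composition $j_* p^*$ fits into the commutative diagram
\begin{equation*}
\begin{tikzcd}
L^\perp_B \times_B L_B \ar[hook]{r}{j} \ar{d}{p} & \shH_{L_B} \\
L^\perp_B &
\end{tikzcd}
\end{equation*}
so it is the Fourier--Mukai transform with kernel $\sO_{L^\perp_B \times_B L_B}$. The key (and only) geometric observation to justify calling this a kernel on $L^\perp_B \times_{\PP(E)} \shH_{L_B}$ is the fiber-product identification
\begin{equation*}
L^\perp_B \times_{\PP(E)} \shH_{L_B} = L^\perp_B \times_B L_B,
\end{equation*}
which follows from the defining incidence relation: for $(x, s) \in L^\perp_B \times_B L_B$ one has $s(x) = 0$ automatically (since sections of $\sL \subset \sE$ vanish on $L^\perp_B = \PP(\Ker(\sE \to \sL^\vee))$), so $L^\perp_B \times_B L_B$ lies in the incidence subscheme $\shH_{L_B} \subset L_B \times_B \PP(E)$, and the reverse containment is clear. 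This verifies the kernel description and completes the identification of $L^\perp_B \to \PP(E)$ as the relative HP-dual of $L_B \to \PP(E^*)$. The only step requiring real care is this last fiber-product identification, but it is a routine incidence computation; everything else is bookkeeping on top of Proposition \ref{prop:orlov_rel}.
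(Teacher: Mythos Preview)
Your proposal is correct and follows exactly the approach the paper intends: the corollary is stated immediately after Proposition~\ref{prop:orlov_rel} with the phrase ``In particular, in the case of $X = \PP(E)$, we have'', so the paper's proof is simply the specialization $X = \PP(E)$, $f = \id$, which you have spelled out carefully. Your additional verification of the fiber-product identity $L^\perp_B \times_{\PP(E)} \shH_{L_B} = L^\perp_B \times_B L_B$ (needed to see that $j_* p^*$ really is the Fourier--Mukai transform with the stated kernel) is the only nontrivial detail, and your incidence argument for it is correct.
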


\begin{remark} If $E = B \times V$ is a trivial bundle, where $V$ is a $\kk$-vector space of dimension $N$, we have $\PP(E) = B \times \PP(V)$. Then the corollary implies that the composition with projection $L^\perp_B \to B \times \PP(V) \to \PP(V)$ is HP-dual to $L_B \to B \times \PP(V^*) \to \PP(V^*)$ with respect to the above decomposition for $D(L_B)$.
\end{remark}

\begin{theorem}[Relative HP-duality theorem]\label{thm:HPD:rel} If the $B$-morphism $g: Y \to \PP(E^*)$ is the relative HP-dual to the $f: X \to \PP(E)$ with respect to the relative Lefschetz decomposition (\ref{lef:X:rel}). Then we have $B$-linear semiorthogonal decompositions of the form (\ref{sod:Y:rel}) of $D(Y)$. If $f$ is non-degenerate (Def. \ref{def:non-deg_rel}), and the fiber products
	\begin{align*} X_{L^\perp_B} : = X \times_{\PP(E)} L_B^\perp, \quad \text{and} \quad Y_{L_B}: = Y \times_{\PP(E^*)} L_B,
	\end{align*}
are of expected dimensions. Then there are semiorthogonal decompositions 
	\begin{align*}
		D(X_{L^\perp_B}) & = \langle \sC_{L_B}, \shA_l \otimes \sO_{\PP(E)}(l),  \ldots, \shA_{i-1} \otimes \sO_{\PP(E)}(i-1) \rangle, \\
		D(Y_{L_B}) & = \langle \shB^1 \otimes \sO_{\PP(E^*)}(2-l), \ldots, \shB^{l-2} \otimes \sO_{\PP(E^*)}(-1), \shB^{l-1}, \sC_{L_B} \rangle,
	\end{align*}
where the 'primitive' parts are both equivalent to a same triangulated category $\sC_{L_B}$.
\end{theorem}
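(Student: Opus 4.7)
The plan is to deduce Theorem~\ref{thm:HPD:rel} as a direct application of the general relative result Theorem~\ref{thm:HPDgen:rel} by taking the second HP-dual pair to be the linear duality pair $(S,T)=(L_B,L_B^\perp)$. By Corollary~\ref{cor:linear-duality}, $L_B^\perp\to \PP(E)$ is the relative HP-dual of $L_B\to\PP(E^*)$ with respect to the rectangular Lefschetz decomposition
\[
D(L_B)=\langle D(B),\,D(B)(1),\,\ldots,\,D(B)(l-1)\rangle,
\]
so $\shC_k = D(B)$ for all $k\in[0,l-1]$ and hence (by Remark~\ref{rmk:rectangular}) $\shD^k = 0$ for $k\le l-1$ and $\shD^k = D(B) = \shC_0$ for $k\ge l$. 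With this input in hand, the semiorthogonal decomposition~(\ref{sod:Y:rel}) from Thm.~\ref{thm:HPDgen:rel}(1) is already the desired description of $D(Y)$ (note this part is independent of the particular admissible $(S,T)$ chosen, so it can be extracted by taking $L_B$ generic).

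The next step is to verify that $(X,Y)$ and $(L_B,L_B^\perp)$ satisfy the admissibility hypothesis of Def.~\ref{def:adm:rel} and the dimensional assumption $(\dagger')$. Since $L_B^\perp\subset\PP(E)$ and $L_B\subset\PP(E^*)$ are dual linear subbundles, one has the tautological equality $Q_B(L_B,L_B^\perp)=L_B\times_B L_B^\perp$. Under the non-degeneracy assumption on $f$ and the expected-dimension hypothesis on $X_{L_B^\perp}$ and $Y_{L_B}$, Lemma~\ref{lem:admissible pairs:rel}(1) then yields admissibility. The smoothness of $X\times_B L_B$ and the fact that $\shH=Q_B(X,L_B)$ is a proper divisor follow from the non-degeneracy of $f$ via Lem.~\ref{lem:H_flat}, which also ensures the flatness needed to check $(\dagger')$; and the relative dimension and length assumptions are inherited from the absolute case.

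Having verified the hypotheses, apply Thm.~\ref{thm:HPDgen:rel}(2) to get semiorthogonal decompositions for $D(X_T) = D(X_{L_B^\perp})$ and $D(Y_S) = D(Y_{L_B})$ together with an equivalence $\sE_{X_T}\simeq \sE_{Y_S}$ of their primitive parts, which we rename $\sC_{L_B}$. Because $\shD^k$ vanishes for $k<l$ and equals $\shC_0=D(B)$ for $k\ge l$, the ambient components $\shA_k(k)\boxtimes_B\shD^k$ in (\ref{sod:X_T:rel}) collapse to $\shA_k(k)\boxtimes_B D(B) = \shA_k\otimes \sO_{\PP(E)}(k)$ for $k\in [l,i-1]$ and vanish otherwise. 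Symmetrically, since the decomposition of $D(L_B)$ is rectangular, $\shC_k^L = \shC_k = D(B)$ and the components $\shB^k\boxtimes_B \shC_k^L(1+k-l)$ in (\ref{sod:Y_S:rel}) reduce to $\shB^k\otimes \sO_{\PP(E^*)}(1+k-l)$ for $k\in [1,l-1]$. This recovers the two decompositions in the statement with the primitive pieces matched.

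The only non-routine step is verifying admissibility in the degenerate cases (i.e.\ when one wants to drop the non-degeneracy assumption on $f$), which requires the more delicate criteria in Lem.~\ref{lem:admissible pairs:rel} and the relative Tor-independence analysis of the incidence loci; I expect the cleanest route to be exactly the reduction above, with non-degeneracy of $f$ assumed as in the statement, so that Lemma~\ref{lem:admissible pairs:rel}(1) applies directly and no additional work is needed.
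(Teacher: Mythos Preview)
Your proposal is correct and follows essentially the same route as the paper: apply Theorem~\ref{thm:HPDgen:rel} to the pair $(S,T)=(L_B,L_B^\perp)$ using the linear duality of Corollary~\ref{cor:linear-duality}, check admissibility via Lemma~\ref{lem:admissible pairs:rel}(1), and then simplify the output using the rectangular structure of $D(L_B)$. The paper's proof is just the two-line version of this; your added explanation of how $\shD^k$ and $\shC_k^L$ collapse is correct and helpful, and your final paragraph on the degenerate case is unnecessary since the statement assumes $f$ non-degenerate.
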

\begin{proof}
%
%
From Lem. \ref{lem:admissible pairs:rel}, the admissible conditions of Thm. \ref{thm:HPDgen:rel} are satisfied precisely if $X_{L^\perp_B}$ and $Y_{L_B}$ are of expected dimensions. Them the theorem follows from applying Thm. \ref{thm:HPDgen:rel} to the pairs of relative HP-duals $f: X \to \PP(E)$, $g: Y \to \PP(E^*)$ and $S = L_B \to \PP(E^*)$, $T = L_B^\perp \to \PP(E)$.  \end{proof}

The following is the relative version of duality theorem.
\begin{theorem}[Relative HP-duality is a duality relation] Suppose $g: Y \to \PP(E^*)$ is relative HP-dual to $f: X \to \PP(E)$ over $B$ with respect to a Lefschetz decomposition (\ref{lef:X:rel}), where the embedding $D(Y) \to D(\shH_X)$ is given by a $\PP(E^*)$-linear Fourier-Mukai transform with kernel
	$$\shP   \in D(Y \times_{\PP(E^*)} \shH_X) = D(Q_B(X,Y)).$$
Then $f: X\to \PP(E)$ is HP-dual to $g:Y \to \PP(E^*)$ with respect to the Lefschetz decomposition 
	\begin{equation*}\label{lef:Y:dual:rel}
	D(Y) = \langle (\shB^{N-1})^\vee, (\shB^{N-2})^\vee (1), \ldots, (\shB^1)^\vee (N-2) \rangle,
	\end{equation*}
which is the dual of the decomposition (\ref{lef:Y}) obtained in Thm. \ref{thm:HPDgen:rel}, and the $\PP(E)$-linear embedding $D(X) \to D(\shH_Y)$ is given by the same kernel $\shP \in D(X \times_{\PP(E)} \shH_Y ) = D(Q_B(X,Y)).$
\end{theorem}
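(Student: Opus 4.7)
The plan is to follow the strategy of the absolute case Thm.~\ref{thm:duality}, replacing the input Thm.~\ref{thm:univ_lin_sec} (which itself used only Thm.~\ref{thm:HPD_rel}, now a corollary of our relative machinery) by a direct application of the relative main theorem Thm.~\ref{thm:HPDgen:rel}. Concretely, I would take the second HP-dual pair to be $(S,T) = (Q_B \to \PP(E^*),\,\PP(E) \xrightarrow{\id}\PP(E))$, where $Q_B$ is the relative incidence quadric. By Cor.~\ref{cor:linear-duality} (the relative linear duality) this is a relative HP-dual pair with respect to the rectangular Lefschetz decomposition $D(Q_B) = \langle D(\PP(E)),D(\PP(E))(1),\ldots,D(\PP(E))(N-2)\rangle$ of length $l=N-1$, and the kernel inducing $D(\PP(E)) \hookrightarrow D(\shH_{Q_B})$ is the structure sheaf $\sO_{Q_B}$ on the diagonal of $\PP(E)$ inside $\shH_{Q_B}\times_{\PP(E)\times_B\PP(E)}\Delta_{\PP(E)}$. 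Admissibility of the two pairs $(X,Y)$ and $(Q_B,\PP(E))$ follows from Lem.~\ref{lem:admissible pairs:rel} since $X_T = X$ and $Y_S = \shH_Y$ are manifestly of expected dimension.

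With this setup Thm.~\ref{thm:HPDgen:rel} delivers the first statement of Thm.~\ref{thm:HPD:rel}, namely the dual Lefschetz decomposition (\ref{sod:Y:rel}) of $D(Y)$, and a decomposition
\[
D(\shH_Y) = \langle \shB^1(3-N)\boxtimes_B D(\PP(E)),\ldots,\shB^{N-2}\boxtimes_B D(\PP(E)),\,\sE_{Y_S}\rangle,
\]
together with an equivalence $D(X) = \sE_{X_T} \xrightarrow{\sim} \sE_{Y_S}$ realized by the Fourier--Mukai functor $\Phi^L_{\shP|Q_B}\circ \Phi_{\sO_{Q_B}|X}: D(X) \to D(\shH_Y)$ (cf.\ Rmk.~\ref{rmk:FM}). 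To obtain the desired Lefschetz (rather than dual Lefschetz) shape, I would apply the trick from Thm.~\ref{thm:duality}: conjugate the above functor by $R\sHom(-,\sO)$ and twist by $\sO_{\PP(E^*)}(1)$. Dualizing (\ref{sod:Y:rel}) block-by-block inside $D(\shH_Y)$ and shifting the grading by one produces a semiorthogonal decomposition of the form
\[
D(\shH_Y) = \langle F(D(X)),\,(\shB^{N-2})^\vee(1)\boxtimes_B D(\PP(E)),\ldots,(\shB^1)^\vee(N-2)\boxtimes_B D(\PP(E))\rangle,
\]
where $F := \otimes\sO_{\PP(E^*)}(1)\circ(-)^\vee\circ \Phi^L_{\shP|Q_B}\circ \Phi_{\sO_{Q_B}|X}\circ(-)^\vee$ is $\PP(E)$-linear and fully faithful. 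This will immediately yield the second assertion once the kernel of $F$ is identified as $\shP$ up to an invertible twist.

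The main computation — and the only place where the relative setting requires care — is the identification of the kernel of $F$. I would run exactly the computation of Thm.~\ref{thm:duality}, but over the base $B$. Rewriting $F$ as $\otimes \sO_{\PP(E^*)}(1)\circ(\Phi^L_{\shP|Q_B})^{op}\circ\Phi_{\sO_{Q_B}|X}^{op}$, the first factor is $\Phi_{\sO_{\shH_X}^{op}}$ with $\sO_{\shH_X}^{op} = \omega_{\Gamma_f'}[\dim\Gamma_f']$ by Lem.~\ref{lem:op}, and the second is a Fourier--Mukai transform with kernel $q''^{\,*}\shP$ by Lem.~\ref{lem:op_of_L}, as $q: Q_B\to\PP(E^*)$ is flat. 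The composition is then computed by the convolution Lem.~\ref{lem:rel_convolution} with bases $(S,T,U) = (X\times_B\PP(E),\,Q_B,\,\PP(E))$; Tor-independence of the relevant square is verified by the same splitting-diagram argument as in (\ref{diagram:splitting}), using flatness of $q$ (and therefore of $q',q''$). The relative canonical sheaf of $q'$ equals $p_2'^{\,*}\omega_{Q_B/\PP(E^*)}$, and over $B$ one has $\omega_{Q_B/\PP(E^*)}\simeq\sO_{Q_B}(1-N,1)$ modulo a pullback from $B$ (which is absorbed by $B$-linearity). Collecting the twists and the shift $[2-N]$, the convolution gives kernel $\shP\otimes\sO_X(N-1)\boxtimes_B\sO_{\PP(E^*)}(-1)[2-N]$, whence $F = \Phi_{\shP\otimes\sO_X(N-1)[2-N]}$, which equals $\Phi^{X\to\shH_Y}_{\shP}$ up to the autoequivalences $\otimes\sO_X(1-N)$ and $[N-2]$. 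Since these autoequivalences preserve images, $\Phi^{X\to\shH_Y}_{\shP}$ is $\PP(E)$-linear, fully faithful, and its image coincides with $F(D(X))$, completing the proof.

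The main obstacle I anticipate is bookkeeping: keeping track of how $\omega_{Q_B/\PP(E^*)}$ and pullbacks from $B$ conspire in the relative canonical bundle computation, and verifying that all the base-change squares appearing in the convolution are genuinely Tor-independent in the relative setting. Both should ultimately follow from flatness of $q:Q_B\to\PP(E^*)$ and the relative Euler sequence for $\PP(E^*)\to B$, but these are precisely the places where the argument must be executed carefully rather than copied verbatim from the absolute case.
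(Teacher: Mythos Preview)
Your proposal is correct and follows essentially the same approach as the paper's own proof. The one point worth sharpening is your appeal to Cor.~\ref{cor:linear-duality}: as stated, that corollary applies to a linear subbundle $L_B \subset \PP(E^*)$ over $B$, but $Q_B$ is not linear over $B$. The paper makes this explicit by changing the base to $B_1 = \PP(E)$, setting $E_1 = a_P^*E$, and identifying $Q_B = \PP_{\PP(E)}(T^*_{\PP(E)/B}(1)) \subset \PP_{B_1}(E_1^*)$ and $\PP(E) = \PP_{\PP(E)}(\sO_{\PP(E)}(-1)) \subset \PP_{B_1}(E_1)$ as an honest pair of orthogonal linear subbundles over $B_1$; linear duality over $\PP(E)$ then gives HP-duality over $\PP(E)$, hence a fortiori over $B$. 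Your description of the kernel and of the Lefschetz components $D(\PP(E))$ suggests you already have this picture in mind, but it should be stated. Everything else --- the convolution via Lem.~\ref{lem:rel_convolution} with the bases you name, Tor-independence from the splitting diagram using flatness of $q$, and the absorption of the $\det E$ factor in $\omega_{Q_B/\PP(E^*)}$ by $B$-linearity --- matches the paper exactly.
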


\begin{proof} The strategy is similar to Thm. \ref{thm:duality} before. First we claim $T= \PP(E)$ is relative HP-dual to $S =Q_B \to \PP(E^*)$ over $B$ with respect to the following Lefschetz decomposition of $Q_B \to \PP(E^*)$ (regard $Q_B$ as a $\PP^{N-2}$-bundle over $\PP(E)$):
	$$D(Q_B) = \langle D(\PP(E)), D(\PP(E))(1), \ldots, D(\PP(E))(N-2)\rangle.$$

The point is to notice this is again just linear duality: let $a_P: \PP(E) \to B$ be the natural projection, then $\PP(E) \times_B \PP(E^*) = \PP_{\PP(E)}(a_P^* \, E^*)$ is a projective bundle over $\PP(E)$, and then the relative universal quadric $Q_B = \PP_{\PP(E)} (T^*_{\PP(E)/B} (1) ) \subset \PP_{\PP(E)}(a_P^* \, E^*)$ is a linear projective subbundle, and $T  = \PP(E)  =  \PP_{\PP(E)} (\Tot(\sO_{\PP(E)} (-1)) ) \subset \PP(E) \times_B \PP(E) =  \PP_{\PP(E)}(a_P^* \, E) $ is the orthogonal projective linear bundle. Apply Cor \ref{cor:linear-duality} to the base $B_1 = \PP(E)$, the vector bundle $E_1 = a_P^* E$ and the subbundle $\sL_1 =T^*_{\PP(E)/B} (1) \subset E_1^*$, then $L_{B_1}^\perp = T = \PP(E)$ is relative HP-dual to $L_{B_1} = S = Q_B$ over $B_1 = \PP(E)$ (hence of course also over $B$).

Then we apply the relative theorem Thm. \ref{thm:HPDgen:rel} to the relative HP-dual pairs $X \to \PP(E)$, $Y \to \PP(E)$ and $T = \PP(E)$, $S  = Q_B \to \PP(E^*)$, the rest of the computation is exactly the same as Thm. \ref{thm:duality} with all $P$ (resp. $P^*$, $Q$, etc) replaced by $\PP(E)$ (resp. $\PP(E^*)$, $Q_B$, etc). The computation of kernels is guaranteed by admissibility condition for the pairs, which is always satisfied by above $(X,Y)$ and $(S,T)$. The computation of convolution can be performed the same way as before, since the condition of Lem. \ref{lem:rel_convolution}, i.e. the Tor-independence of the square
	$$
	\begin{tikzcd} Q_B(X,Y) \ar[hook]{r} \ar{d} & Q_B(X,\shH_Y) \ar[d] \\
		\shH_X \ar{r} & \shH_{X,Q}
	\end{tikzcd}
	$$
is satisfied again by considering the splitting (\ref{diagram:splitting}) in relative setting, where we notice $q',q''$ in this case is the base-change of $q: Q_B \to \PP(E^*)$ which is flat.
\end{proof}

\newpage

\end{document}